\documentclass[11pt]{article}
\usepackage{mathpazo }
\usepackage[margin=1in]{geometry}
\usepackage{yk, dsfont}
\usepackage{mathrsfs}
\usepackage{titletoc}
\usepackage[dvipsnames]{xcolor}
\usepackage{hyperref}
\usepackage{graphicx}
\usepackage[numbers,sort&compress]{natbib}
\usepackage{float}
\usepackage{subcaption}
\usepackage{url}
\usepackage{algorithm}
\usepackage{algpseudocode}
\algrenewcommand\algorithmicrequire{\textbf{Input:}}
\algrenewcommand\algorithmicensure{\textbf{Output:}}
\newcommand{\indep}{\perp\mkern-9.5mu\perp}

\newcommand{\R}{\mathrm{R}}

\newcommand{\rk}{s}
\newcommand{\sx}{\sigma_{0}}
\newcommand{\rout}{ \mathcal R^{\mathrm{pred}}(f;c)}
\newcommand{\rest}{ \mathcal R^{\mathrm{est}}(f;c)}

\usepackage{xcolor}
\usepackage{hyperref}
\usepackage{authblk}

\usepackage[T1]{fontenc}
\usepackage{forest}
\usepackage{xcolor}

\definecolor{thmcolor}{RGB}{173, 216, 230}   
\definecolor{lemcolor}{RGB}{144, 238, 144}   
\definecolor{propcolor}{RGB}{255, 218, 120}  
\definecolor{mainlemcolor}{RGB}{255, 223, 0}
\definecolor{emailcolor}{rgb}{0.0, 0.0, 0.0}

\title{Self-Distillation is Optimal Among Spectral Shrinkage Estimators in Spiked Covariance Models}

\author[1]{Radu Lecoiu}
\author[2,$*$]{Debarghya Mukherjee}
\author[1,$*$,$\dagger$]{Pragya Sur}

\affil[1]{\small Department of Statistics, Harvard University}
\affil[2]{\small Department of Mathematics \& Statistics, Boston University}

\makeatletter
\renewcommand\AB@affilsepx{\\ \protect\Affilfont}

\makeatother

\begin{document}
\maketitle

\renewcommand{\thefootnote}{}
\footnotetext{$^*$co-senior authors.}
\footnotetext{$^\dagger$Emails: 
\href{mailto:rlecoiu@college.harvard.edu}{\textcolor{emailcolor}{rlecoiu@college.harvard.edu}}; 
\href{mailto:mdeb@bu.edu}{\textcolor{emailcolor}{mdeb@bu.edu}}; 
\href{mailto:pragya@fas.harvard.edu}{\textcolor{emailcolor}{pragya@fas.harvard.edu}}.}

\begin{abstract}
Self-distillation has emerged as a promising technique for improving model performance in modern machine learning systems. 
We develop the statistical foundations of self-distillation in spiked covariance models, by introducing and analyzing a broad class of estimators, namely spectral shrinkage estimators. 
We establish that for spiked covariance matrices with $\rk$ spikes, $\rk$-step self-distillation achieves optimal performance among spectral shrinkage estimators, outperforming well-known estimators in statistics and machine learning. Moreover, we show that $\rk$ steps are necessary for optimality: any $(\rk-k)$-step distilled estimator is strictly suboptimal for $1 \leq k \leq s$.
For the special subclass of isotropic covariances, we show that optimally tuned Ridge regression performs best among spectral shrinkage estimators. We also study a federated approach where multiple data centers share spectral shrinkage estimators and a common server seeks to aggregate them to achieve optimal performance. In this case, we find that the best local rule again takes the form of self-distillation, though it differs from the optimal rule when data are hosted centrally on a single server. Together, our results elucidate why self-distillation improves predictive performance and provide a broader statistical framework connecting it with classical shrinkage-based methods.
\end{abstract}

\startcontents
\printcontents{}{1}{\setcounter{tocdepth}{2}}

\section{Introduction}\label{sec:intro}
Distillation is one of the dominant training paradigms in modern AI systems. It allows to transfer knowledge from large, complex models to smaller ones that can be deployed to many users.  Originally proposed in \cite{bucilua2006model,hinton2015distilling,ba2014deep}, the concept has been successfully applied across a wide variety of settings, ranging from theorem proving to real-time disaster damage assessment \cite{lin2025goedel,jeon2026edgev}. Despite these practical successes, the statistical foundations underlying when and why distillation works remain seriously underdeveloped.  In this paper, we bridge this substantial gap by developing a rigorous theory for self-distillation in the setting of high-dimensional regression.

We consider a supervised learning problem where the responses are generated from a linear model conditional on the covariates.  We are interested in covariates that are high-dimensional. Self-distillation refers to the subclass of distillation strategies where the model we distill from shares the same structure as the model we distill to. To illustrate in simple statistical terms: given training data, one might compute a preferred shrinkage estimator---say, ridge regression. Self-distillation uses this initial estimator to train new ridge estimators, encouraging each to align with those obtained in the previous stage.  While this may seem a counter-intuitive strategy that re-uses data multiple times, self-distillation has been empirically shown to perform remarkably well \cite{furlanello2018born, zhang2021self, li2017learningnoisylabelsdistillation}. 
In this paper, we develop a rigorous statistical theory that explains why self-distillation works and when it outperforms the first-stage estimator. In particular, we establish that self-distillation achieves optimal prediction error among a large class of shrinkage-based estimators. To the best of our knowledge, this paper provides the first such optimality result for self-distillation under high-dimensional statistical models.

We further expand our analysis to a decentralized learning setup, where data is distributed across multiple clients and communication with a central server is restricted due to privacy and budgetary constraints. Such settings are commonly studied under the broad paradigms of distributed learning \cite{dean2012large,zhang2012communication, zinkevich2010parallelized,deterministic_calculus} and federated learning \cite{konevcny2016federated,mcmahan2017communication,smith2017federated}, where the goal is to leverage information across clients without requiring direct sharing of raw data.

Federated learning combined with distillation---self-distillation in particular---has gained considerable popularity in recent years, owing to its strong empirical performance across a wide range of tasks (e.g., see \cite{yashwanthadaptive,he2022learning,wang2023personalized} and the recent survey \cite{li2024federated}). Despite this growing body of literature and its practical success, a rigorous statistical theory explaining when and why such distillation-based federated procedures are effective remains  underdeveloped. 
In this paper, we also develop rigorous statistical underpinnings for a federated procedure where each client computes a local spectral shrinkage estimator and a central server linearly aggregates the transmitted estimators.

Our main contributions are summarized as follows. 

\begin{itemize}
\item We introduce a class of spectral shrinkage-based estimators (Definition \ref{def:shrinkage_class}) and establish that self-distillation achieves optimal out-of-sample risk in this class when the covariate distribution is not isotropic. Specifically, working within spiked covariance models \cite{johnstone2001distribution}, we show that for covariate distributions with $s$ spikes, $s$-step self-distillation is optimal. (Theorem \ref{thm:sd_optimal} (a)).  Interestingly, this superiority no longer holds in the isotropic setting, where ridge regression is itself optimal  (see Lemma \ref{thm: main corollary} and subsequent discussion).
\item We further establish that for $s$ spikes in the underlying covariate distribution, $s-k$-step self-distillation is sub-optimal  for any $0 < k \leq s$ (Theorem \ref{thm:sd_optimal} (b))\footnote{Here, 0-step self-distillation refers to the initial shrinkage estimator}. Thus, intuitively, each step in the self-distillation process allows recovery of one spike direction. 
\item The shrinkage-based estimators introduced in Definition \ref{def:shrinkage_class} allow us to compare the performance of self-distillation against other well-known procedures such as ridge regression, minimum norm interpolation, and principal components regression. We establish that self-distillation strictly outperforms these alternatives under spiked covariance models (Theorem \ref{thm: f_ast dominates}). In fact, with an appropriately chosen number of steps, self-distillation is almost uniquely optimal (we clarify the notion of uniqueness in Theorem \ref{thm: main}).
\item In Section~\ref{sec:theory_fed}, we extend our analysis to a decentralized setting where each client computes a local spectral shrinkage estimator and transmits it to a central server that linearly aggregates the received estimators. We characterize the limiting prediction risk of this aggregated estimator and derive the corresponding optimal local shrinkage rules and aggregation weights. We find that the optimal local rule is again realized by self-distillation; however, the locally optimal shrinkage rule in the federated setting differs from that for a single client (the focus of the first part of the paper), reflecting the new bias--variance tradeoff induced by cross-client aggregation (Theorem \ref{thm: product_of_shrinkage}).

A key ingredient in our analysis---and one that may be of independent interest--- is a new random-matrix result concerning quadratic forms involving products of functions of distinct sample covariance matrices (Theorem \ref{lemma:non-free limit}).
\end{itemize}

Throughout, we operate within the proportional asymptotics regime, where the number of features and samples grows at a comparable rate. This regime has become increasingly popular in high-dimensional statistics and  machine learning theory, owing to its ability to capture new phenomena observed in high dimensions,   explain empirical behavior seen in modern machine learning systems, and produce new inference and prediction methods with impressive practical performance
\cite{amp_first_2009, LASSO_AMP_2012, AMP_2013_javanmard_montanari, el_karoui_2013, Zdeborova2016, ElKaroui2018, SurChenCandes2019, SurCandes_2019, hastie2022surprises, Fan2022, AMPSurvey_2022, Yue2023, CelentanoMontanariWei2023, MontanariSen_2024, ChenLiuMukherjee2025, Kuanhao2025, saenz2025characterizingfinitedimensionalposteriormarginals, DEPOPE2026, li2026optimal, LiSur2026}.
The framework has also yielded  powerful tools for studying the behavior of high-dimensional random matrices, which would be indispensable for our work. Before proceeding, we situate our work within the existing literature on self-distillation.

We study a setting where the covariates and responses are sampled i.i.d.~from a population. In the form of self-distillation we consider, at each stage, an estimator is learned by training on the original data while borrowing information from the estimator obtained in the previous stage. This is encapsulated by the weighted loss function \eqref{eq:SD-recursion}: at stage $t$, $\xi_t$ denotes the weight on the component that learns from the previous stage estimator, while $(1-\xi_t)$ denotes the weight on the component that learns directly from the training data. 

Prior work most closely related to ours include \cite{dang2026optimalunconstrainedselfdistillationridge,cui2026asymptotic}. The former studies the optimal choice for $\xi_t$ while constraining estimators at each stage to use the same tuning parameter as the previous stage. This significantly limits the class of estimators self-distillation can capture---indeed, we demonstrate later that the optimal self-distillation strategy uses different tuning parameters for different stages (Theorem~\ref{thm:sd_optimal}(a)). On the other hand, \cite{cui2026asymptotic} studies a two-step empirical risk minimization procedure that generalizes self-distillation and derives a precise high-dimensional characterization of the second-stage test error for linear models under Gaussian-mixture features. But, in its present form, their analysis does not extend to the multi-step self-distillation setting we consider here.

Several other lines of work study problems adjacent to, but not directly related to, our setting. For instance,  \cite{mobahi2020selfdistillationamplifiesregularizationhilbert,wu2026selftraininghelpshurtsdenoising,moniri2025mechanismsweaktostronggeneralizationtheoretical,ildiz2025highdimensionalanalysisknowledgedistillation,ain2024scalinglawslearningreal,allentowards} study pure distillation (where $\xi_t=1$).
A parallel literature considers the case of fixed design and low dimensions \cite{das2023understandingselfdistillationpresencelabel,pareek2024understandinggainsrepeatedselfdistillation, phuong2021understandingknowledgedistillation}. Among these, \cite{ildiz2025highdimensionalanalysisknowledgedistillation,das2023understandingselfdistillationpresencelabel} restrict their analyses to a single distillation step, unlike ours. A separate line of work considers regenerating synthetic labels at each stage from the model fitted in the previous stage \cite{takanami2025effect,garg2026preventingmodelcollapseoverparametrization,javanmard2025self}, studying either the impact of synthetic data on the long-run generalization error or the optimal rule for aggregating synthetic and original labels.

On a different note, \cite{dong2019distillation} studies connections between knowledge distillation and early stopping, while \cite{saglietti2020solvablemodelinheritingregularization} provides a replica analysis of knowledge distillation in Gaussian mixture models. 
Finally, \cite{boixadsera2024theorymodeldistillation} formulates distillation as a PAC-style imitation problem between two hypothesis classes; this work is closer in spirit to model compression \cite{bucilua2006model} than to distillation in the sense of \cite{hinton2015distilling}.  A related body of literature investigates semi-supervised learning and self-training \cite{azar2024semi,lee_dong,Amini_2025,carmon2022unlabeleddataimprovesadversarial,pmlr-v119-kumar20c,chen2020selftrainingavoidsusingspurious,oymak2020statistical,wei2020theoretical,zhang2022unlabeled}. As the papers surveyed in this and the preceding paragraph address problems that differ substantially  from those studied here, we refrain from discussing these references further.

The rest of the paper is organized as follows: 
Section~\ref{sec: setup + shrinkage framework} introduces our problem setup and other preliminaries, including formal definitions of self-distillation and the spectral shrinkage class.
Section~\ref{sec:theory} presents our main results with supporting experiments in Section~\ref{secL simulations}.
Section~\ref{sec: noverty + outline}  presents the main novelties in our proofs. 
Section~\ref{sec:discussion} concludes the paper with a discussion of future directions. 
All proofs 
are presented in Appendices~\ref{app: A}--\ref{app: J}.

\section{Preliminaries}\label{sec: setup + shrinkage framework}
\subsection{Problem Setup}
We observe a sequence of problem instances $\{y_i,\bx_i,\varepsilon_i, 1 \leq i \leq n, \bbeta_0(n) \}_{n \geq 1}$, where  $y_i \in \mathbb{R}, \bx_i \in \mathbb{R}^{p(n)}$ are i.i.d.~samples following the linear model
\begin{equation}
y_i = \bx_i^\top \bbeta_0(n) + \varepsilon_i, 
\qquad i = 1,\dots,n, 
\label{eq:model}
\end{equation}
and $\bbeta_0(n) \in \mathbb{R}^{p(n)}$ is the unknown regression vector. Throughout, we work under the proportional asymptotics regime where $p(n)/n \to c \in (0,\infty)$. Henceforth, we suppress the dependence on $n$ when clear from the context. Let $\bX \in \mathbb{R}^{n \times p}$ denote the design matrix with rows $\bx_i^\top$, and $\by \in \mathbb{R}^n$ the response vector. Denote $\widehat{\boldsymbol{\Sigma}} = (\bX^\top \bX)/n$ to be the sample covariance matrix. We assume the following structure for the covariates and the errors.

\begin{assumption}
\label{assm:dgp}
The covariates take the form 
\(
\bX = \bZ \boldsymbol{\Sigma}^{1/2},
\)
where $\bZ \in \mathbb{R}^{n \times p}$ has i.i.d.\ entries satisfying
\[
\mathbb{E}[Z_{ij}] = 0, \quad \mathbb{E}[Z_{ij}^2] = 1, \quad \mathbb{E}\bigl[|Z_{ij}|^{8+\eta}\bigr] < \infty
\]
for some $\eta > 0$. The regression errors $\varepsilon_1,\dots,\varepsilon_n$ are i.i.d.\ and satisfy
\[
\mathbb{E}[\varepsilon_i] = 0, \quad \mathbb{E}[\varepsilon_i^2] = \sigma_\beps^2, \quad 
\mathbb{E}\bigl[|\varepsilon_i|^{4+\eta}\bigr] < \infty
\]
for some $\eta > 0$. Moreover, $(\varepsilon_i)_{i=1}^n$ is independent of $(\bx_i)_{i=1}^n$.
The population covariance matrix $\bSigma$ follows an $\rk$-spiked covariance model for some fixed $\rk \in \mathbb{N} \cup \{0\}$, i.e.,
\[
\boldsymbol{\Sigma}
= \sx^2 \mathbf{I}_p + \sum_{j=1}^{\rk} \delta_j \bv_j \bv_j^\top,
\]
where $\sx^2 > 0$,  and $\{\bv_j\}_{j=1}^{\rk}$ is an orthonormal set of vectors in $\mathbb{R}^p$. The case $\rk = 0$ corresponds to the isotropic design $\boldsymbol{\Sigma} = \sx^2 \mathbf{I}_p$.
\end{assumption}

Spiked covariance models have been extensively studied in high-dimensional statistics since the seminal work of \cite{johnstone2001distribution} and have become a standard framework for analyzing principal component methods and related spectral procedures. Although the model represents a special case of a general anisotropic covariance matrix, it is already rich enough to capture the key phenomena governing self-distillation in anisotropic settings. In particular, as we show below, even a finite-spike perturbation away from isotropy yields qualitatively different conclusions regarding the superiority of self-distillation over classical procedures (e.g., ridge regression and principal components regression (PCR)) compared to the isotropic setting.

Under Assumption~\ref{assm:dgp}, the eigenvalues of 
$\widehat{\bSigma}$ exhibit a well-understood limiting structure. In 
the isotropic case $\bSigma = \sx^2\bI_p$, the 
Marchenko--Pastur law \cite{MarchenkoPastur1967} states that 
the empirical spectral distribution of $\widehat{\bSigma}$ converges 
almost surely to a limiting measure supported on the compact interval 
$\mathcal{S}_c = [\sx^2(1-\sqrt{c})^2,\, \sx^2(1+\sqrt{c})^2]$, 
known as the bulk, and additionally has a point mass at $0$ when $c > 1$. In the spiked model, \cite{baik2004} shows that sufficiently strong spikes generate additional outlier eigenvalues that escape the bulk, converging to deterministic locations outside $\mathcal{S}_c$. We collect this structure in the following definition.

\begin{definition}[Limiting spectral support]
\label{def:spectral-support}
Denote the bulk support of 
the Marchenko--Pastur distribution by
\[
\mathcal{S}_c 
= \bigl[\sx^2(1-\sqrt{c})^2,\;\sx^2(1+\sqrt{c})^2\bigr].
\]
Now, for each spike $\delta_j$ satisfying $\delta_j > \sx^2\sqrt{c}$, define an outlier atom $x_j^\star$ as: 
$$
x_j^\star := \frac{(\delta_j+\sx^2)(\delta_j+c\sx^2)}{\delta_j} \,.
$$
We collect these outlier atoms, 
together with $0$ when $c > 1$, in the set $\mathcal{A}_c$, and write $\mathcal{S}_c^+ := \mathcal{S}_c \cup \mathcal{A}_c$. 
\end{definition}
It is an immediate consequence of the  Baik-Ben Arous-P\'ech\'e (BBP) phase 
transition \cite{baik2004phasetransitionlargesteigenvalue} that the limiting spectral distribution of $\widehat{\bSigma}$ is supported on $\mathcal{S}_c^+$, when $\bSigma$ follows Assumption \ref{assm:dgp}.  
We work with the following  assumptions related to the spike strengths of the covariance matrix ${\boldsymbol{{\boldsymbol{\Sigma}}}}$ and the alignment between ${\boldsymbol{{\boldsymbol{\Sigma}}}}$ and $\bbeta_0$: 
\begin{assumption}
\label{assm:main}
Suppose that ${\boldsymbol{{\boldsymbol{\Sigma}}}}$ is of the above spiked form and that the following conditions hold:
\begin{enumerate}
    \item \emph{(Distinct spikes.)} The spike strengths $\delta_1,\dots,\delta_r$ are positive and distinct.
    Moreover,
    $$
    \delta_i \delta_j \neq c\sx^4,
    \qquad \text{for all } 1 \le i,j \le \rk.
    $$
\item \emph{(Signal Decomposition)} Writing 
$\bbeta_0 = \sum_{j=1}^s (\bbeta_0^\top \bv_j)\bv_j + \widetilde{\bbeta}_0$,
where $\widetilde{\bbeta}_0$ lies in the orthogonal complement of 
$\mathrm{span}\{\bv_1,\dots,\bv_\rk\}$, assume that the limits $ \lim_{n,p\to\infty}\|\widetilde{\bbeta}_0\|_2,$ and 
\[
  \alpha_j := \lim_{n,p\to\infty} \bbeta_0^\top \bv_j
  \quad (1 \le j \le \rk)
\]
all exist and are nonzero, and that
\[
  r := \lim_{n,p\to\infty}\|\bbeta_0\|_2 \in (0,\infty).
\] 
\end{enumerate}
\end{assumption}
Assumption \ref{assm:main} (1) rules out an edge case where the product of two spike strengths exactly equals $c\sigma_0^4$. 
This is a mild condition that, importantly, does \emph{not} require the spike strengths to lie either above or below the BBP threshold. Our theory allows all, some, or none of the spikes to exceed the BBP threshold. The assumption is imposed only to avoid boundary cases and to keep the presentation technically transparent. Our qualitative conclusion on the superiority of self-distillation remains valid even when this condition is violated, although the resulting risk characterization is less clean; see Section \ref{sec:discussion} for further discussion.
Assumption \ref{assm:main} (2) is a mild compatibility condition between $\bSigma$ and $\beta_0$. Specifically, it ensures that $\beta_0$ has a non-zero projection onto each spike direction, so that each spike carries information relevant for learning $\beta_0$, and that $\beta_0$ does not lie entirely within the span of the spike directions.

\subsection{Self-Distillation}
Next, we define our self-distillation procedure of interest. 
Starting from a ridge estimator $\widehat{\bbeta}^{(0)}$ with penalty parameter $\lambda_0 > 0$, 

\begin{align}\label{eq:beta0}
\widehat{\bbeta}^{(0)} \equiv \widehat{\bbeta}^{(0)}_\mathrm{SD}
=
\arg\min_{\bbeta}
\left\{\frac{1}{n}\sum_{i=1}^n (y_i - \bx_i^\top \bbeta)^2 + 
\lambda_0 \|\bbeta\|_2^2
\right\},
\end{align}
the $k$-step self-distilled estimator is constructed recursively by calculating for $\lambda_t  > 0$, 
\begin{align}\label{eq:SD-recursion}
\widehat{\bbeta}^{(t)}_\mathrm{SD}
=
\arg\min_{\bbeta}
\left\{
(1-\xi_t)\frac{1}{n}\sum_{i=1}^n (y_i - \bx_i^\top \bbeta)^2
+
\xi_t \frac{1}{n}\sum_{i=1}^n \bigl(\bx_i^\top(\widehat{\bbeta}^{(t-1)}_\mathrm{SD}-\bbeta)\bigr)^2
+
\lambda_t \|\bbeta\|_2^2
\right\},
\end{align}
for $1 \le t \le k$, where the weights $\xi_t$ and penalties $\lambda_t$ are allowed to vary across stages. 
The weight $\xi_t$ trades off borrowing information from the estimator learned in the previous stage against fitting the data.   For $\lambda_t$, we extend the aforementioned definition to also allow for negative regularization. This is motivated by the recent literature on high-dimensional ridge regression, which has shown that the optimal ridge penalty $\lambda_0$ can be negative under overparametrization, depending on the specific structure of and interactions between the feature covariance and the signal geometry \cite{kobak2020optimalridgepenaltyrealworld,wu_and_xu,richards_negative_ridge,negative_ridge_arzela_ascoli,bartlett, negative_ridge_tibshirani}. 
To this end, computing the gradient of the objective in \eqref{eq:SD-recursion} and 
setting it to zero, the first-order condition for 
$\widehat{\bbeta}^{(t)}_\mathrm{SD}$ is given by
\begin{equation*}
    \bigl(\widehat{\bSigma} + \lambda_t \bI_p\bigr)\bbeta
    =
    (1-\xi_t)\frac{\bX^\top\by}{n}
    + \xi_t\,\widehat{\bSigma}\,\widehat{\bbeta}^{(t-1)}_\mathrm{SD}.
\end{equation*}
For 
$\lambda_t > 0$, the matrix $\widehat{\bSigma} + \lambda_t\bI_p$ is 
strictly positive definite, so the objective is strongly convex and has a unique minimum given by
\begin{equation}
\label{eq:SD-closedform}
    \widehat{\bbeta}^{(t)}_\mathrm{SD}
    =
    (\widehat{\bSigma} + \lambda_t\bI_p)^{-1}
    \left[
        (1-\xi_t)\frac{\bX^\top\by}{n}
        + \xi_t\,\widehat{\bSigma}\,\widehat{\bbeta}^{(t-1)}_\mathrm{SD}
    \right].
\end{equation}
For $\lambda_t < 0$, however, this no longer holds. 
We resolve this issue by replacing the inverse by the  Moore--Penrose pseudoinverse, setting
\begin{equation}
\label{eq:SD-MP}
    \widehat{\bbeta}^{(t)}_\mathrm{SD}
    :=
    (\widehat{\bSigma} + \lambda_t\bI_p)^{\dagger}\left[
        (1-\xi_t)\frac{\bX^\top\by}{n}
        + \xi_t\,\widehat{\bSigma}\,\widehat{\bbeta}^{(t-1)}_\mathrm{SD}
    \right],
    \quad \lambda_t \in \mathbb{R}.
\end{equation}
This definition is well-posed for all $\lambda_t \in \mathbb{R}$ and 
all $n$, and reduces exactly to 
\eqref{eq:SD-closedform}  when $\lambda_t > 0$. We adopt this definition throughout the paper. 
Note that zero-step self-distillation is simply ridge regression. 

\subsection{Introducing the Spectral Shrinkage Class}
We establish in Lemma~\ref{lemma: sd is in mathcal F} that after $t$ steps of self-distillation, the estimator $\widehat{\bbeta}^{(t)}_\mathrm{SD}$ admits a representation of the 
form
\begin{equation}
\label{eq:spectral-rep}
    \widehat{\bbeta}^{(t)}_\mathrm{SD}
    = f_t(\widehat{\bSigma})\frac{\bX^\top\by}{n},
\end{equation}
for a special rational function $f_t$, where for any function 
$f:[0,+\infty)\to\mathbb{R} \cup\{\infty\}$ and any symmetric matrix 
$\bA \in \mathbb{R}^{p \times p}$ with eigendecomposition $\bA = \bU\bD\bU^\top$ we define
\begin{equation}\label{eq: f(A)}
    f(\bA) := \bU f(\bD)\bU^\top,
    \quad \text{where} \quad 
    [f(\bD)]_{jj} = 
    \begin{cases} 
        f(d_j) & \text{if } |f(d_j)| < \infty, \\ 
        0      & \text{otherwise},
    \end{cases}
\end{equation}
so that whenever $|f(d_j)| = \infty$, the corresponding direction contributes nothing to the matrix $f(\bA)$. This is
consistent with the Moore--Penrose pseudoinverse convention. This representation shows that multi-step self-distillation belongs 
to a broader family of estimators obtained by applying a shrinkage 
function $f$ to the spectrum of $\widehat{\bSigma}$. 
Our primary goal is to develop a rigorous theory for self-distillation in high-dimensional regression and to characterize its performance relative to classical statistical estimators (e.g., ridge, PCR, etc.). The aforementioned observation, therefore, leads us to consider a general class of spectral shrinkage estimators.

\begin{definition}[Spectral shrinkage class]
\label{def:shrinkage_class}
Given a collection $\mathcal{F}$ of functions, define the \emph{spectral shrinkage class}
of estimators associated to $\mathcal{F}$ by
\begin{equation}
\label{eq:def_bf}
\mathcal{B}_{\mathcal{F}}
=
\left\{
\widehat{\bbeta}_f
=
f(\widehat{\bSigma})\,\frac{\bX^\top \by}{n}
:\,
f \in \mathcal{F}
\right\},
\end{equation}
where $f(\widehat{\bSigma})$ 
is defined as in~\eqref{eq: f(A)}, i.e.,
\begin{equation}\label{eq: definition of beta_f}
\widehat{\bbeta}_f 
    := \sum_{i\,:\,|f(d_i)| < \infty} 
       f(d_i)\,\bw_i\bw_i^\top 
       \frac{\bX^\top\by}{n},
    \quad \text{where} \quad
    \widehat{\bSigma} := \sum_{i=1}^p d_i\,\bw_i\bw_i^\top.
\end{equation}
\end{definition}
The breadth of the collection $\mathcal{B}_{\mathcal{F}}$ is determined by the richness of the underlying function class $\mathcal{F}$. 
In Lemma \ref{lemma: sd is in mathcal F}, we show that the self-distilled estimator belongs to $\cB_\cF$. 
We briefly discuss other commonly used estimators that fall within this class.
\begin{enumerate}
    \item \textbf{Ridge-regularized estimator.} For any $\lambda$, the ridge estimator
    $$
    \widehat{\bbeta}_\lambda
    =
    (\widehat{\boldsymbol{\Sigma}} + \lambda \mathbf{I}_p)^{\dagger}\frac{\bX^\top \by}{n}
    $$
    belongs to $\mathcal{B}_{\mathcal{F}}$ with shrinkage function $f(x) = (x+\lambda)^{-1}$. 
    \item \textbf{Gradient descent.} Gradient descent on the least-squares loss likewise yields an estimator in $\mathcal{B}_{\mathcal{F}}$. Starting from $\widehat{\bbeta}^{(0)} = 0$ and iterating with step size $\eta > 0$,
    $$
    \widehat{\bbeta}^{(t+1)}
    =
    \widehat{\bbeta}^{(t)}
    -
    \eta \left(\widehat{\boldsymbol{\Sigma}}\widehat{\bbeta}^{(t)}- \frac{\bX^\top \by}{n}\right),
    $$
    a direct calculation gives
    \begin{align}\label{eq: gradient descent shrinkage}
    \widehat{\bbeta}^{(T)}
    =
    f_T(\widehat{\boldsymbol{\Sigma}})\frac{\bX^\top \by}{n},
    \qquad
    f_T(x) = \eta \sum_{k=0}^{T-1} (1 - \eta x)^k,   
    \end{align}
    where $f_T$ is a polynomial of degree $T-1$.
    \item \textbf{Min-norm interpolator.} The min-norm interpolator
    \begin{align}\label{eq: min_norm shrinkage}
    \widehat{\bbeta}_{\mathrm{mn}} := \argmin_\bbeta \{\|\bbeta\|_2 : \bX\bbeta = \by\}
    \end{align}
 also belongs to $\mathcal{B}_F$, since it can be expressed as 
$\widehat{\boldsymbol{\Sigma}}^{\dagger} \mathbf{X}^{\top} \boldsymbol{y} / n$. 
Therefore, this corresponds to the shrinkage function $f(x) = x^{-1}$. 
    \item \textbf{Principal components regression.} Principal components regression retains the eigenvectors of $\widehat{\boldsymbol{\Sigma}}$ corresponding to eigenvalues that exceed a threshold $\tau > 0$, and then performs least squares on the resulting subspace. This yields
    \begin{align}\label{eq: pcr shrinkage}
    \widehat{\bbeta}_{\mathrm{PCR}}(\tau)
    =
    f(\widehat{\boldsymbol{\Sigma}})\frac{\bX^\top \by}{n},
    \qquad
    f(x) = x^{-1}\mathbf{1}(x\ge \tau).
   \end{align}   
\end{enumerate}
Having defined the general spectral shrinkage class (Definition \ref{def:shrinkage_class}), we turn to specifying  functions $f$ that yield well-behaved estimators. 
Since $f$ takes the eigenvalues of $\widehat{\bSigma}$ as its argument, 
a natural requirement is that $f$ behaves well wherever the spectrum of $\widehat{\bSigma}$ 
concentrates, namely on $\mathcal{S}_c^+$. To ensure this, we impose the following assumption on $\mathcal{F}$. Throughout the paper, for each $\eta > 0$ we define the $\eta$-neighborhood 
of a point $x \geq 0$ as $(\max(0,\, x-\eta),\, x + \eta)$, 
and the $\eta$-neighborhood of a bounded interval $[a, b] \subset [0,\infty)$ 
as $(\max(0,\, a - \eta),\, b + \eta)$.

\begin{assumption}
\label{assm:F}
Recall $\mathcal{S}_c$ and $\mathcal{A}_c$ from Definition~\ref{def:spectral-support}. The class $\mathcal{F}$ consists of all functions 
$f : [0,\infty) \to \mathbb{R} \cup \{\infty\}$ satisfying the following conditions:
\begin{enumerate}
    \item there exists $\eta > 0$, independent of $n$, such
          that $f$ is continuous on an $\eta$-neighborhood of
        $\mathcal{S}_c$. 
      \item there exists $\eta > 0$, independent of $n$, such 
           that $f$ is continuous on an $\eta$-neighborhood of 
           every point in $\mathcal{A}_{c}$.
\end{enumerate}
\end{assumption}

Assumption~\ref{assm:F} guarantees that $f$ is continuous on a fixed $\eta$-neighborhood 
of every point in $\mathcal{S}_c^+$. By \cite{baik2004}, almost surely for all sufficiently 
large $n$, every eigenvalue of $\widehat{\bSigma}$ falls within such a neighborhood. 
In particular, $f$ is finite and continuous at every eigenvalue of $\widehat{\bSigma}$, 
so the decomposition of $\widehat\bbeta_f$ in~\eqref{eq: definition of beta_f} 
contains all eigenvector directions of $\widehat{\bSigma}$.

Furthermore, Assumption \ref{assm:F} covers the  common estimator examples we provided for spectral shrinkage estimators, whether directly in finite samples or asymptotically. For instance,  Ridge regression, and more generally, the 
self-distillation shrinkage function $f_t$ corresponding to 
$\widehat{\bbeta}^{(t)}_\mathrm{SD}$ in \eqref{eq:SD-MP}, belong to $\mathcal{F}$ as long as  
$-\lambda_0, -\lambda_1, \dots, -\lambda_t 
\in \mathbb{R} \setminus \mathcal{S}_c^+$. 
Since the analyst chooses these tuning parameters, we can easily ensure this condition is true. Additionally, in Theorem \ref{thm:sd_optimal}, we show that the optimal self-distillation strategy for spiked covariance matrices involves ridge parameters $\lambda_0,\hdots,\lambda_t$ that satisfy this condition.

The gradient descent shrinkage function $f_T$ from 
\eqref{eq: gradient descent shrinkage} is a polynomial, hence 
continuous everywhere, and therefore belongs to $\mathcal{F}$. For the min-norm interpolator \eqref{eq: min_norm shrinkage}, the 
shrinkage function has a pole at $0$. Since $0 \in \mathcal{S}_c^+$ 
whenever $c > 1$, this estimator does not belong to $\mathcal{F}$ in this 
regime. Similarly, for PCR \eqref{eq: pcr shrinkage}, the threshold 
$\tau$ is in general random, depending on the eigenvalues of the 
sample covariance matrix, and hence cannot belong to the 
deterministic class $\mathcal{F}$. However, asymptotically, both 
estimators behave like estimators in $\mathcal{B}_{\mathcal{F}}$ 
corresponding to smooth approximations of their respective shrinkage 
functions. We make this precise in Lemma~\ref{lemma: min-norm is worse} 
for the min-norm interpolator, and in 
Lemma~\ref{pcr_with_all_spiked} and 
Lemma~\ref{lemma: proportional regime PCR} for PCR.

\subsection{Out-of-Sample Prediction Risk}
The main message of our paper is that suitably tuned self-distillation achieves uniquely 
optimal prediction performance within the spectral shrinkage class, thereby outperforming 
the well-known estimators described above. To quantify this, we study the prediction risk 
of spectral shrinkage estimators, defined as
\[
\mathbb{E}_{\bx_{\mathrm{new}}, y_{\mathrm{new}}}\left[
    \|y_{\mathrm{new}} - \bx_{\mathrm{new}}^\top \widehat{\bbeta}_f\|_2^2 
    \mid \bX, \by
\right],
\]
where $(\bx_{\mathrm{new}}, y_{\mathrm{new}})$ is a fresh sample independent of 
$(\bX, \by)$, drawn from the same distribution as in Assumption~\ref{assm:dgp}. The prediction risk decomposes as
\[
\mathbb{E}_{\bx_{\mathrm{new}}, y_{\mathrm{new}}}\left[
    \|y_{\mathrm{new}} - \bx_{\mathrm{new}}^\top\widehat{\bbeta}_f\|_2^2 
    \mid \bX, \by
\right]
= \|\widehat{\bbeta}_f - \bbeta_0\|_{\bSigma}^2 + \sigma_{\beps}^2,
\]
where $\|v\|_{\bSigma}^2 = v^\top\bSigma\, v$. Since the noise term $\sigma_{\beps}^2$ 
is independent of the estimator, the quantity of interest is the signal component 
$\|\widehat{\bbeta}_f - \bbeta_0\|_{\bSigma}^2$. We therefore define, for each 
$f \in \mathcal{F}$,
\begin{equation}
\label{eq:limiting_pred_risk}
\mathcal{R}^{\mathrm{pred}}(f;\, c)
=
\lim_{\substack{n,p\to\infty\\ p/n\to c}}
\|\widehat{\bbeta}_f - \bbeta_0\|_{\bSigma}^2
\qquad \text{almost surely}.
\end{equation}
  The existence of this limit is established in Proposition~\ref{prop: general formula}. 
A major contribution of oour paper is to characterize the optimal shrinkage rule
\begin{equation}
\label{eq:optimal_pred_shrinkage}
f^{\rm pred}_* = \arg\min_{f \in \mathcal{F}} \mathcal{R}^{\mathrm{pred}}(f; c),
\end{equation}
by analyzing the interplay between the spectral geometry of $\boldsymbol{\Sigma}$ 
and the structure of  $\bbeta_0$.

\subsection{Extension to the Decentralized Setup}
Thus far, we have presented our framework in a centralized setting, where all samples are 
stored on a single machine, and the estimator is constructed from the full dataset. As 
discussed in Section~\ref{sec:intro}, we extend our analysis to a decentralized 
setting with $K > 1$ clients and a central server: each client constructs a local 
spectral shrinkage estimator from its own data, transmits it to the central server, 
and the server then aggregates these local estimators. Since this extension requires 
additional notation to reflect the multi-client setup, we defer these details and the 
corresponding results to Section~\ref{sec:theory_fed}.

\section{Main Results}
\label{sec:theory}
In this section, we characterize the optimal spectral shrinkage rule $f_*^{\mathrm{pred}}$ over the class $\cF$ (Assumption \ref{assm:F}), and show that self-distillation emerges naturally as the optimal procedure in our setting. To this end, we first characterize the structure of $f_*^{\mathrm{pred}}$.

\begin{theorem}
\label{thm: main}
Under Assumptions \ref{assm:dgp} and \ref{assm:main}, the following hold:
\begin{enumerate}
    \item[(a)] $f_*^{\rm pred}$ exists and is unique on $\mathcal{S}_c^+\setminus\{0\}$, as introduced in Definition~\ref{def:spectral-support}.
    \item[(b)] $f_*^{\rm pred}$ is a rational function taking the form 
\begin{equation}
\label{def:f_pred_closed_form}
f_\ast^{\mathrm{pred}}(x)
=
 \frac{b_0^{(1)}\nu(x) + \sum_{j=1}^\rk b_{j}^{(1)} \nu_{-j}(x)}{\sx^2\,r^2 x\left(\omega_0 \nu(x) + \sum_{j=1}^\rk \omega_j \nu_{-j}(x)\right) + c\sx^2\, \sigma_{\boldsymbol{\varepsilon}}^2 \nu(x)},
\qquad x \in \mathcal{S}_c^+.
\end{equation}

Here $\nu(x)$ and $\nu_{-j}(x)$ are polynomial functions that depend on the Radon-Nikodym derivatives of the Marchenko-Pastur law with respect to certain spiked probability measures introduced in Lemma \ref{lemma: F_mp << F_delta} and Lemma \ref{lemma: def mu_j mu_0}. The coefficients $b_0^{(1)}, b_1^{(1)}, \dots, b_\rk^{(1)}$ are 
defined in~\eqref{eq: linear system with b0_simple}, where the first coefficient 
admits the  closed form $b_0^{(1)} = \sx^2 r^2\omega_0$. The coefficients $\omega_0,\omega_1,\hdots\omega_s$ are defined in \eqref{eq:omega_js}. 

\item[(c)] The numerator of $f_*^\mathrm{pred}$ has degree $\rk$, while the denominator has degree $\rk + 1$.
\end{enumerate}
\end{theorem}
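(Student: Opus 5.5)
The plan is to write the limiting risk $\mathcal{R}^{\mathrm{pred}}(f;c)$ of Proposition~\ref{prop: general formula} as a quadratic functional of $f$ restricted to $\mathcal{S}_c^+$, and then minimize it pointwise. Expanding
\[
\|\widehat\bbeta_f-\bbeta_0\|_{\bSigma}^2=\bbeta_0^\top\bSigma\bbeta_0-2\,\bbeta_0^\top\bSigma f(\widehat\bSigma)\tfrac{\bX^\top\by}{n}+\tfrac{\by^\top\bX}{n}f(\widehat\bSigma)\bSigma f(\widehat\bSigma)\tfrac{\bX^\top\by}{n},
\]
and substituting $\by=\bX\bbeta_0+\beps$, every term reduces to a quadratic form of the type $\mathbf a^\top g(\widehat\bSigma)\mathbf b$ or $\mathbf a^\top g(\widehat\bSigma)\bSigma h(\widehat\bSigma)\mathbf b$, with $\mathbf a,\mathbf b\in\{\bbeta_0,\bv_1,\dots,\bv_\rk\}$. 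The noise term is handled separately; it contributes $\tfrac{\sigma_\beps^2}{n}\operatorname{tr}\bigl(f(\widehat\bSigma)^2\widehat\bSigma\bSigma\bigr)$. Using the decomposition $\bSigma=\sx^2\bI+\sum_j\delta_j\bv_j\bv_j^\top$ together with $\bbeta_0=\sum_j\alpha_j\bv_j+\widetilde\bbeta_0$, I will invoke the spiked-model limits of these generalized quadratic forms, namely the Radon--Nikodym densities of Lemmas~\ref{lemma: F_mp << F_delta} and~\ref{lemma: def mu_j mu_0}. This yields
\[
\mathcal{R}^{\mathrm{pred}}(f;c)=C-2\int f(x)\,A(x)\,d\mu(x)+\int f(x)^2\,B(x)\,d\mu(x),
\]
where $\mu$ is a reference measure on $\mathcal{S}_c^+$: the Marchenko--Pastur law plus atoms at the outliers $x_j^\star$ and at $0$. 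The weights $A$ and $B$ are explicit combinations of $\sx^2 r^2$, $\omega_j$, $\alpha_j$, $\delta_j$, $\sigma_\beps^2$ and the densities $\nu,\nu_{-j}$. In particular, $B(x)$ has the form $\sx^2r^2x(\omega_0\nu+\sum_j\omega_j\nu_{-j})+c\sx^2\sigma_\beps^2\nu$, up to a common normalizing factor, and $A$ has the form $b_0\nu+\sum_jb_j\nu_{-j}$.

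Next I will show that $B>0$ $\mu$-a.e. on $\mathcal{S}_c^+\setminus\{0\}$. This follows because the risk is a limit of a squared norm and the noise part is strictly positive there. Pointwise minimization of $-2fA+f^2B$ then gives $f_*=A/B$, which is unique $\mu$-a.e. Uniqueness at the atoms is automatic. On the bulk, uniqueness requires continuity (Assumption~\ref{assm:F}), which forces agreement on all of $\mathcal{S}_c$. At $x=0$ the factor $x$ in the signal term kills the dependence of the risk on $f(0)$, which is why $0$ is excluded in part (a). Existence within $\mathcal{F}$ requires checking that $A/B$ is continuous near each point of $\mathcal{S}_c^+$, which holds because it is a ratio of polynomials with a positive denominator.

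The main obstacle is the cross term. Here $f(\widehat\bSigma)$ is sandwiched with $\bSigma$, and the spike directions produce non-trivial interactions, so the coefficients $b_j$ are not simply read off. I expect them to be determined by a linear system, \eqref{eq: linear system with b0_simple}, obtained by resolving the spike contributions through resolvent identities of the form $(\widehat\bSigma-z)^{-1}$ and rank-$\rk$ Woodbury expansions. The $\widetilde\bbeta_0$ part, which is isotropic, gives $b_0=\sx^2r^2\omega_0$ directly. Distinctness and the condition $\delta_i\delta_j\neq c\sx^4$ should guarantee that this system is nonsingular and that no outlier atom collides with another.

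For part (c), I will count degrees. Each $\nu_{-j}$ is obtained from $\nu$ by removing one linear factor associated with spike $j$, so $\deg\nu_{-j}=\deg\nu-1$. Taking $\deg\nu=\rk$, the numerator has degree $\rk$ because $b_0\neq0$ (using $r>0$ and $\omega_0\neq0$). The denominator has degree $\rk+1$, coming from $x\cdot\omega_0\nu$, whose leading coefficient cannot be cancelled. This gives the claimed degrees.
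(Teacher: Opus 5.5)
Your proposal has a genuine gap. It occurs where you write the limiting risk as a pointwise-separable functional $C-2\int fA\,d\mu+\int f^2B\,d\mu$ and then minimize pointwise. That form is false.

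The spike part of the bias, $\sum_j\delta_j\bigl(\bbeta_0^\top(\mathbf I_p-\widehat{\boldsymbol{\Sigma}}f(\widehat{\boldsymbol{\Sigma}}))\bv_j\bigr)^2$, is the square of a linear functional of $f$. Its limit is $\sum_j\delta_j\alpha_j^2\bigl(\int(1-xf(x))\,\mathrm dF_{\delta_j}(x)\bigr)^2$ (Lemma~\ref{prop: general formula}). No resolvent identity or Woodbury expansion turns a squared integral into an integral against $f^2$. Written against $F_\alpha$ with weight $xw(x)$, the risk is
\[
C+\langle f,f\rangle_w-2\langle f,g\rangle_w+\sum_{j=1}^{s}\delta_j\alpha_j^2\langle f,h_j\rangle_w^2
= C+\langle \mathcal A f,f\rangle_w-2\langle f,g\rangle_w,
\qquad
\mathcal A=\mathrm{Id}+\sum_{j=1}^{s}\delta_j\alpha_j^2\,h_j\langle h_j,\cdot\rangle_w .
\]
This is a rank-$s$ perturbation of a separable functional. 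Your $B$ essentially describes the diagonal part. However, the linear coefficient is $g=\sigma_0^2r^2\omega_0h_0+\sum_j(\delta_j+\sigma_0^2)\alpha_j^2h_j$. Minimizing the separable part pointwise therefore gives $f=g$, whose numerator carries $(\delta_j+\sigma_0^2)\alpha_j^2$ on $\nu_{-j}$ instead of $b_j^{(1)}$. The form $A=b_0\nu+\sum_jb_j\nu_{-j}$ is the target answer inserted into the risk, not something the limit computation produces. As a result, your existence and uniqueness arguments, which rest on pointwise strict convexity, concern the wrong functional.

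The missing idea is to treat this as a quadratic minimization in a Hilbert space. The operator $\mathcal A$ is self-adjoint with $\langle\mathcal A\psi,\psi\rangle_w\ge\|\psi\|_w^2$. The minimizer solves $\mathcal A f=g$, i.e.\ the fixed-point relation $f_*=g-\sum_j\delta_j\alpha_j^2\langle f_*,h_j\rangle_w h_j$. Hence $f_*\in\mathrm{span}\{h_0,\dots,h_s\}$. Writing $f_*=\sum_jb_jh_j$ gives $(\mathbf I_{s+1}+\mathfrak D\mathbf{H})b=\gamma$, where $\mathbf{H}$ is the $w$-Gram matrix of the $h_j$ and $\mathfrak D=\mathrm{diag}(0,\delta_1\alpha_1^2,\dots,\delta_s\alpha_s^2)$. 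That is where the linear system comes from. Also, $b_0^{(1)}=\sigma_0^2r^2\omega_0$ because the first row of $\mathfrak D$ vanishes.

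Solvability follows because $\mathfrak D\mathbf{H}$ has the same eigenvalues as the PSD matrix $\mathfrak D^{1/2}\mathbf{H}\mathfrak D^{1/2}$. It does not come from the spike conditions. Those conditions ($\delta_i\neq\delta_j$, $\delta_i\delta_j\neq c\sigma_0^4$) instead make the $x_j^\star$ distinct and outside the bulk. That is what makes $\mu_0,\mu_j$ well defined and gives $\nu_{-j}(x_i^\star)=0$ iff $i\neq j$.

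Existence and uniqueness on $\mathcal S_c^+\setminus\{0\}$ then follow by completing the square:
\[
\mathcal R^{\mathrm{pred}}(f;c)-\mathcal R^{\mathrm{pred}}(f_*;c)=\langle\mathcal A(f-f_*),f-f_*\rangle_w\ge\|f-f_*\|_w^2 .
\]
This is combined with continuity on the bulk and positivity of $w$ and of the atom masses. Note also that $\nu(x_j^\star)=0$, so the noise term vanishes at the outlier atoms. Positivity of the weight there comes from the signal term $\sigma_0^2r^2x$, not from the noise. Once $f_*$ is obtained this way, your degree count for part (c) and your remark about $x=0$ go through.
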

\noindent
The proof is deferred to Appendix~\ref{app: B}. Theorem~\ref{thm: main} shows that the optimal shrinkage rule within $\cF$ has an explicit formula and depends on the spike strengths $\delta_1,\dots,\delta_\rk$, the signal alignments $\alpha_1,\dots,\alpha_\rk$, and the aspect ratio $c$. 
As is clear from Theorem~\ref{thm: main}(c), the complexity of $f_\ast^{\mathrm{pred}}$ grows with the spectral complexity of ${\boldsymbol{{\boldsymbol{\Sigma}}}}$. In particular, if $\rk = 0$, that is, if $\cov(\bX)={\boldsymbol{{\boldsymbol{\Sigma}}}}=\sx^2 \bI_p$, then $f_*^{\rm pred}$ simplifies considerably as described in the following lemma: 
\begin{lemma}
\label{thm: main corollary}
If $\rk = 0$, i.e., $\boldsymbol{\Sigma} = \sx^2 \mathbf{I}_p$, then the optimal shrinkage rule $f_\ast^{\mathrm{pred}}$ is of the form
$$
f_\ast^{\mathrm{pred}}(x) = \frac{1}{x+\lambda_\ast},
\qquad
\lambda_* = \frac{c\sigma_\beps^2 }{r^2}.
$$
\end{lemma}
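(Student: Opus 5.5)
We can either specialize Theorem~\ref{thm: main}(b) to $\rk=0$, or derive the isotropic case directly; the direct route is short and self-contained. With $\bSigma=\sx^2\bI_p$, write $\widehat\bSigma=\sum_i d_i\bw_i\bw_i^\top$. Then
\[
\widehat\bbeta_f-\bbeta_0=\bigl(f(\widehat\bSigma)\widehat\bSigma-\bI_p\bigr)\bbeta_0+f(\widehat\bSigma)\frac{\bX^\top\beps}{n},
\]
so the risk becomes
\[
\sx^2\sum_i\Bigl[(f(d_i)d_i-1)^2(\bw_i^\top\bbeta_0)^2+f(d_i)^2\,\frac{(\bw_i^\top\bX^\top\beps)^2}{n^2}\Bigr]
\]
plus a cross term.

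The first step is to pass to the limit. The cross term is linear in $\beps$ and independent of $\bX$, so it vanishes almost surely. For the noise term, $\sum_i f(d_i)^2 d_i\,\sigma_\beps^2/n$ converges to $c\sigma_\beps^2\int f(x)^2x\,dF_{\mathrm{MP}}(x)$. For the bias term, the isotropy of $\bZ$ makes the eigenvectors $\bw_i$ asymptotically Haar relative to $\bbeta_0$. The standard deterministic-equivalent result for quadratic forms $\bbeta_0^\top g(\widehat\bSigma)\bbeta_0$, which is also the $\rk=0$ case of Proposition~\ref{prop: general formula}, then gives $\frac{r^2}{p}\operatorname{tr} g(\widehat\bSigma)\to r^2\int g\,dF_{\mathrm{MP}}$. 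Continuity of $f$ on a neighborhood of $\mathcal S_c^+$ (Assumption~\ref{assm:F}) justifies applying this to $g=(fx-1)^2$ and $g=f^2x$. The limiting risk is therefore
\[
\mathcal R^{\rm pred}(f;c)=\sx^2\int\Bigl[r^2(f(x)x-1)^2+c\,\sigma_\beps^2 f(x)^2x\Bigr]dF_{\mathrm{MP}}(x).
\]
Here I normalize the noise as $\bX^\top\beps/n$ with $\sigma_\beps^2$ scaling, and the factor $c$ arises from $p/n$. The Marchenko--Pastur law is taken with variance $\sx^2$.

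The second step is a pointwise minimization of the integrand, a convex quadratic in $f(x)$. For $x>0$ the first-order condition $r^2x(fx-1)+c\sigma_\beps^2 fx=0$ gives
\[
f(x)=\frac{r^2}{r^2x+c\sigma_\beps^2}=\frac{1}{x+c\sigma_\beps^2/r^2}.
\]
At $x=0$, which carries an atom when $c>1$, the integrand equals $r^2$ regardless of $f$. This matches uniqueness holding only on $\mathcal S_c^+\setminus\{0\}$. The minimizer is continuous on $[0,\infty)$ since $\lambda_*>0$, so it lies in $\mathcal F$.

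The main obstacle is the exact constant in the noise term, specifically whether the factor is $c$ or $c\sx^2$ relative to $r^2$. If the normalization produces $\sx^2$ factors in both terms, they cancel and $\lambda_*=c\sigma_\beps^2/r^2$ follows as stated. I would check this constant against Theorem~\ref{thm: main}(b) at $\rk=0$. There the formula reduces to $b_0^{(1)}\nu/(\sx^2r^2x\omega_0\nu+c\sx^2\sigma_\beps^2\nu)$ with $b_0^{(1)}=\sx^2r^2\omega_0$, which simplifies to $1/(x+c\sigma_\beps^2/(r^2\omega_0))$. The check is complete once $\omega_0=1$ is confirmed from \eqref{eq:omega_js} at $\rk=0$, which I expect to hold.
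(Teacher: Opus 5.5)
Your proposal is correct and follows essentially the same route as the paper. The paper specializes Lemma~\ref{prop: general formula} to $F_\alpha = F_{\mathrm{MP}}$, so the spike terms vanish and the risk is $\sx^2\int [r^2(1-xf(x))^2 + c\,\sigma_\beps^2\, x f(x)^2]\,\mathrm{d}F_{\mathrm{MP}}(x)$; it then minimizes this convex quadratic in $f(x)$ pointwise, giving $f(x) = 1/(x + c\sigma_\beps^2/r^2)$, and the constant you flag as the main obstacle is already settled by your own derivation, since $\sx^2$ multiplies both the bias and variance terms and cancels.
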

The proof is deferred to Appendix \ref{sec: lemma easy}. Thus, in the isotropic case, our result recovers the well-known 
optimal tuning formula for ridge regression. Prior work has 
studied the asymptotic risk of ridge regression and its optimal 
tuning under high-dimensional asymptotics 
\cite{dicker_bernoulli_2016, dobriban2018high, hastie2022surprises}, but these differ from our result in important respects. First, an optimality result of a similar spirit was proved in
\cite{dicker_bernoulli_2016}: under Gaussian isotropic designs 
and with risk measured unconditionally over both $\bX$ and $\beps$, \cite{dicker_bernoulli_2016} proves that optimally tuned ridge regression is asymptotically  minimax over the sphere $\{\bbeta_0 : \|\bbeta_0\|_2 = r\}$. This is the closest antecedent to our result, though the assumptions are substantially stronger. Subsequently, 
\cite{dobriban2018high} places a prior distribution on $\bbeta_0$ 
and, conditionally on $\bX$, characterizes the asymptotic risk of 
ridge regression and identifies the optimal $\lambda > 0$ within the ridge family. Finally, 
\cite{hastie2022surprises} works with a fixed $\bbeta_0$, 
conditionally on $\bX$, and similarly characterizes the asymptotic 
risk of ridge regression and identifies the optimal $\lambda > 0$ 
within the ridge family. Neither \cite{dobriban2018high} nor 
\cite{hastie2022surprises} address optimality over a broader class 
of estimators.

Our result differs from the aforementioned papers in two key aspects. 
First, we work conditionally on $\bX$ and $\by$ with a fixed $\bbeta_0$. Second, Lemma \ref{thm: main corollary} provides optimality of tuned ridge over the full spectral shrinkage class, and without requiring Gaussian assumptions as in \cite{dicker_bernoulli_2016}.

The situation, however, changes fundamentally in the anisotropic case, when the covariance matrix of $\bX$ has $\rk \ge 1$ spikes. As we will establish,  $\rk$-step self-distillation yields the optimal shrinkage rule $f_\ast^{\mathrm{pred}}$ in this regime. To this end, we first prove a structural lemma showing that multi-step self-distillation remains within the spectral shrinkage family $\cB_{\cF}$. In particular, given any $k \in \bbN$ and parameters
\[
\Theta_k := (\lambda_0, \dots, \lambda_k, \xi_1, \dots, \xi_k),
\]
the $k$-step self-distilled estimator can be written in the form $\widehat\bbeta_f$ for a suitable shrinkage function $f$ depending on $\Theta_k$, as follows: 
\begin{lemma}
\label{lemma: sd is in mathcal F}
Let $\widehat{\boldsymbol{{\boldsymbol{\Sigma}}}} = \bW {\boldsymbol{\Lambda}} \bW^\top$ be the eigendecomposition of the sample covariance matrix. Then, for any integer $k \geq 0$, the $k$-step self-distilled ridge estimator with parameters $(\lambda_0, \dots, \lambda_k, \xi_1, \dots, \xi_k)$ admits the representation
$$
\widehat\beta_{\mathrm{SD}}^{(k)}
=
\bW f_k({\boldsymbol{\Lambda}}) \bW^\top \frac{\bX^\top \by}{n},
$$
where
$$
f_k(x)
=
\sum_{j=0}^{k}
\left\{
(1-\xi_j)
\left(\prod_{t=j+1}^{k} \xi_t\right)
\left(\prod_{t=j+1}^{k} \frac{x}{x+\lambda_t}\right)
\right\}
\frac{1}{x+\lambda_j},
$$
with the convention that $\xi_0 = 0$.
\end{lemma}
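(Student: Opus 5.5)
The plan is to argue by induction on $k$, working in the eigenbasis $\bW$ of $\widehat{\bSigma}$ and using the convention \eqref{eq: f(A)} for functions of matrices. The key observation is that $\widehat{\bSigma}$, $(\widehat{\bSigma}+\lambda_t\bI_p)^{\dagger}$ and $f(\widehat{\bSigma})$ for any $f$ are all simultaneously diagonalized by $\bW$. Hence products of them are again of the form $\bW g(\boldsymbol{\Lambda})\bW^\top$, where $g$ is the pointwise product of the scalar functions. Concretely, $(\widehat{\bSigma}+\lambda\bI_p)^\dagger = \bW h_\lambda(\boldsymbol{\Lambda})\bW^\top$, with $h_\lambda(x)=1/(x+\lambda)$ and the entry set to $0$ when $x=-\lambda$. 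This is exactly the Moore--Penrose convention adopted in \eqref{eq:SD-MP}.

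\emph{Base case $k=0$.} By \eqref{eq:beta0} (or \eqref{eq:SD-MP} with $\xi_0=0$), $\widehat\bbeta^{(0)}_{\mathrm{SD}}=\bW h_{\lambda_0}(\boldsymbol{\Lambda})\bW^\top \bX^\top\by/n$. The stated formula with $k=0$ consists of the single term $j=0$, which has empty products and factor $(1-\xi_0)=1$. It therefore reduces to $f_0(x)=1/(x+\lambda_0)$, as required.

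\emph{Inductive step.} Assume $\widehat\bbeta^{(k-1)}_{\mathrm{SD}}=\bW f_{k-1}(\boldsymbol{\Lambda})\bW^\top \bX^\top\by/n$. Substituting into \eqref{eq:SD-MP} and using $\widehat{\bSigma}=\bW\boldsymbol{\Lambda}\bW^\top$ gives
\[
\widehat\bbeta^{(k)}_{\mathrm{SD}} = \bW\, h_{\lambda_k}(\boldsymbol{\Lambda})\bigl[(1-\xi_k)\bI_p+\xi_k\boldsymbol{\Lambda} f_{k-1}(\boldsymbol{\Lambda})\bigr]\bW^\top\frac{\bX^\top\by}{n}.
\]
This yields the scalar recursion $f_k(x)=\frac{1-\xi_k}{x+\lambda_k}+\xi_k\frac{x}{x+\lambda_k}f_{k-1}(x)$. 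Next, plug in the claimed closed form for $f_{k-1}$. Multiplying the $j$-th term by $\xi_k\, x/(x+\lambda_k)$ extends both products so that they run up to $t=k$, giving terms $j=0,\dots,k-1$ of $f_k$. The extra summand $(1-\xi_k)/(x+\lambda_k)$ is exactly the $j=k$ term, whose products are empty.

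The only point needing care is the convention at poles, where $x=-\lambda_t$ for some $t$ is an eigenvalue. There the scalar identity should be read through the pseudoinverse: the diagonal entry of $h_{\lambda_k}$ is zero, so the entry of $f_k$ at that eigenvalue is defined by the matrix recursion rather than by the rational expression. I would remark that the representation holds entrywise with these conventions, since each factor $1/(x+\lambda_t)$ is then replaced by $0$ consistently. Under Assumption~\ref{assm:F} with $-\lambda_t\notin\mathcal{S}_c^+$, this case does not occur for large $n$. This bookkeeping is the main (mild) obstacle; the algebra itself is routine.
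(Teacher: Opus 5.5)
Your proposal is correct and follows the paper's proof. Both argue by induction in the eigenbasis of $\widehat{\bSigma}$, reduce \eqref{eq:SD-MP} to the scalar recursion $f_k(x)=\frac{1-\xi_k}{x+\lambda_k}+\xi_k\frac{x}{x+\lambda_k}f_{k-1}(x)$ (the paper writes this step as $(1-\xi_{t+1})\widehat\bbeta_{\widehat f}+\xi_{t+1}\widehat\bbeta_{\widetilde f}$), and extend each product by one factor. Your treatment of poles is, if anything, more careful than the paper's, which imposes $1/0:=0$ only at the current $-\lambda_{t+1}$: at an eigenvalue equal to an earlier $-\lambda_t$, the whole-function convention \eqref{eq: f(A)} applied to the closed form can disagree with the recursion, so your factor-wise reading (or restricting to large $n$ under $-\lambda_t\notin\mathcal{S}_c^+$) is the right way to state it.
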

The proof is deferred to Appendix \ref{proof: sd is in mathcal F}.
Having established that a multi-step self-distillation procedure produces an estimator in $\cB_\cF$, we next show that under a spiked covariance model, the optimal shrinkage rule $f_\ast^{\mathrm{pred}}$ can be realized exactly by finitely many self-distillation steps, in fact, the same number of steps as the number of spikes.
\begin{theorem}[Optimality of self-distillation]
\label{thm:sd_optimal}
Under Assumptions~\ref{assm:dgp}--\ref{assm:main}, suppose that ${\boldsymbol{{\boldsymbol{\Sigma}}}}$ has $\rk \ge 1$ non-zero spikes. Then:
\begin{enumerate}
    \item[(a)] there exists a choice of Ridge parameters $(\lambda_0,\dots,\lambda_\rk)$ and distillation weights $(\xi_1,\dots,\xi_\rk)$ such that the optimal shrinkage rule $f_*^{\rm pred}$ can be realized exactly by an $\rk$-step self-distilled estimator with parameter vector
    $$
    \Theta_\rk = (\lambda_0, \dots, \lambda_\rk, \xi_1, \dots, \xi_\rk)
    $$
Moreover, the Ridge parameters $(\lambda_0, \dots, \lambda_\rk)$ are strictly distinct, 
exactly $\rk$ of the $\rk+1$ parameters are negative, and 
$-\lambda_0, \dots, -\lambda_\rk \notin \cS_c^+$.

    \item[(b)] $\rk$ steps of self-distillation are, in general, necessary for optimality. Specifically, there exist spike strengths $\delta_1,\dots,\delta_\rk$ and $\sx^2, \sigma_\beps^2$ such that the optimal shrinkage rule $f_*^{\rm pred}$ cannot be realized by any $(\rk-k)$-step self-distillation procedure for any $1 \le k \le \rk$.
\end{enumerate}
\end{theorem}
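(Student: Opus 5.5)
The plan is to reduce both parts to a statement about partial fractions of rational functions. By Lemma~\ref{lemma: sd is in mathcal F}, a $k$-step self-distilled shrinkage function is
\[
f_k(x)=\sum_{j=0}^{k}(1-\xi_j)\Big(\prod_{t=j+1}^k\xi_t\Big)\frac{x^{k-j}}{\prod_{t=j}^{k}(x+\lambda_t)} .
\]
This is a rational function whose denominator is $\prod_{t=0}^k(x+\lambda_t)$ (degree $k+1$) and whose numerator has degree at most $k$. Conversely, when the $\lambda_t$ are distinct, the family of such functions (as the $\xi_t$ vary) should fill out the full $(k+1)$-dimensional space of proper rational functions with that denominator, at least generically. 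To see this, I would look at the map $(\xi_1,\dots,\xi_k)\mapsto$ the numerator coefficients, together with a free overall scale. The recursion $f_t(x)=\big[(1-\xi_t)+\xi_t x f_{t-1}(x)\big]/(x+\lambda_t)$ makes this transparent. Writing $f_t=P_t/Q_t$, the numerator update is $P_t=(1-\xi_t)Q_{t-1}+\xi_t xP_{t-1}$. So at each step the new numerator is any point on the affine line through $Q_{t-1}$ and $xP_{t-1}$.

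For part (a), I would take $f_*^{\rm pred}=N/D$ from Theorem~\ref{thm: main}, with $\deg N=\rk$ and $\deg D=\rk+1$. I would normalize $D$ to be monic and write $D(x)=\prod_{t=0}^{\rk}(x+\lambda_t)$.
\begin{itemize}
\item \textbf{Roots of $D$.} The main analytic step is to show that the $\rk+1$ roots $-\lambda_t$ are real, distinct, lie outside $\mathcal S_c^+$, and that exactly $\rk$ of them are positive (i.e.\ exactly $\rk$ of the $\lambda_t$ are negative). I would use the explicit form of the denominator, $\sx^2r^2x(\omega_0\nu+\sum_j\omega_j\nu_{-j})+c\sx^2\sigma_\beps^2\nu$. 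Since $f_*^{\rm pred}$ is finite and continuous on $\mathcal S_c^+$ (it lies in $\mathcal F$), $D$ has no zeros there. To locate the roots I would evaluate the sign of $D$ at $0$, at the points $\delta$-related nodes where $\nu_{-j}$ vanish (the $\nu_{-j}$ are presumably products of linear factors omitting the $j$-th one), and at $\pm\infty$. An intermediate value argument should then produce $\rk$ sign changes on $(0,\infty)\setminus\mathcal S_c^+$ interlaced with the spike-related points, plus one root on the negative axis. Assumption~\ref{assm:main}(1) (distinctness, and $\delta_i\delta_j\neq c\sx^4$) is exactly what should guarantee these test points are distinct and the roots simple.
\item \textbf{Matching the numerator.} With the $\lambda_t$ fixed, I need $\xi_1,\dots,\xi_\rk$ and an overall scaling so that $P_\rk=N$. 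The overall scale is provided by $1-\xi_0$ together with the product structure, but since $\xi_0=0$ the scale is fixed. So I would instead solve the linear-in-partial-fractions system directly. Expanding $N/D=\sum_t a_t/(x+\lambda_t)$ and expanding $f_\rk$ similarly gives $\rk+1$ equations in $\rk$ unknowns $\xi$; the remaining equation is absorbed by the leading-coefficient constraint. Equivalently, $f_\rk(x)\sim(\prod_t \xi_t+\sum\cdots)/x$ at infinity must match. I would solve this triangularly, working backwards: dividing $N$ by $(x+\lambda_\rk)$-structure peels off $\xi_\rk$, then $\xi_{\rk-1}$, and so on, via the recursion $P_t=(1-\xi_t)Q_{t-1}+\xi_txP_{t-1}$. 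Evaluating at $x=-\lambda_{t-1}$, where $Q_{t-1}$ vanishes, determines $\xi_t$ provided $P_{t-1}(-\lambda_{t-1})\neq0$. Genericity here again uses distinctness.
\end{itemize}
I expect the main obstacle to be the root-location argument for $D$ (real, simple, with the right sign count and avoiding $\mathcal S_c^+$), because it requires the explicit $\nu,\nu_{-j},\omega_j$ from the appendix lemmas.

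For part (b), I would use degree counting. For $k'<\rk$ steps, $f_{k'}$ is a rational function with denominator of degree at most $k'+1<\rk+1$. Suppose $f_{k'}=f_*^{\rm pred}$ on $\mathcal S_c^+\setminus\{0\}$. This set contains an interval (the bulk), so the identity extends to all of $\mathbb R$ as rational functions. That forces $N/D$ to reduce to lower degree, i.e.\ $N$ and $D$ share a root. So it suffices to exhibit parameters $\delta_j,\sx^2,\sigma_\beps^2$ for which $N$ and $D$ are coprime, i.e.\ their resultant is nonzero. Since the resultant is a nonzero analytic (polynomial) function of the parameters, I would check it at one convenient configuration, for instance well-separated spikes with a small-$\sigma_\beps$ or large-$\delta$ limit where $N$ and $D$ factor approximately. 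Nonvanishing at that point then persists on an open set, giving the required existence claim simultaneously for all $1\le k\le\rk$.
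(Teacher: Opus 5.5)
Your architecture matches the paper's. You locate the $s+1$ roots of the denominator $D$ of $f_*^{\rm pred}$ by sign alternation at the outliers $x_1^\star<\dots<x_s^\star$ plus the signs at $0$ and $+\infty$, and that part of your sketch is fine. You then match the numerator $N$ by a triangular recursion, and for (b) you reduce to coprimality of $N$ and $D$ by degree counting. Two steps, however, do not go through as written.

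\textbf{Numerator matching in (a).} First, you must verify, not assume, that $N$ and $D$ have equal leading coefficients. Every $f_k$ behaves like $1/x$ at infinity because its weights sum to one. For $f_*^{\rm pred}$ this holds only because $b_0^{(1)}=\sigma_0^2r^2\omega_0$.

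More importantly, evaluating $P_t=(1-\xi_t)Q_{t-1}+\xi_t xP_{t-1}$ at $x=-\lambda_{t-1}$ does not determine $\xi_t$, since $P_{t-1}$ is still unknown. The right point is $x=0$, which gives $1-\xi_t=P_t(0)/Q_{t-1}(0)$ (the roots are nonzero). Then $P_{t-1}=\bigl(P_t-(1-\xi_t)Q_{t-1}\bigr)/(\xi_t x)$ is monic of degree $t-1$ \emph{provided} $\xi_t\neq0$.

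Distinctness of the roots does not guarantee this for a fixed labelling. Take $D=(x+1)(x-2)(x-3)$, $N=x^2-2$, and peel $\lambda_2=-3$ first: you are forced to $\xi_2=0$, although $N\neq(x+1)(x-2)$.

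The missing idea is to choose the labelling adaptively. $\xi_t=0$ occurs only for the single value $\lambda_t=Q_t(0)/P_t(0)$, so at each step $t\ge2$ at least one of the $t+1\ge3$ remaining roots is admissible; at $t=1$ nothing can fail. In the example, peeling $\lambda_2=1$ first works. This is exactly what the paper's interpolation lemma does: it picks an ordering $\gamma_0,\dots,\gamma_s$ of the roots that keeps the partial sums $t_0+\dots+t_j=\prod_{i>j}\xi_i$ nonzero.

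\textbf{Coprimality in (b).} The reduction is correct, but the substance of (b) is exhibiting parameters for which $N$ and $D$ are coprime, and you only assert it. ``The resultant is a nonzero analytic function'' is precisely what must be proved. This is not routine: write $M:=\omega_0\nu+\sum_j\omega_j\nu_{-j}$ and $A_j=\langle f_*^{\rm pred},h_j\rangle_w$. Then $N=\sigma_0^2r^2M+\sum_j\delta_j\alpha_j^2(1-A_j)\nu_{-j}$, and the $A_j$ are defined only implicitly through the Gram matrix $\mathbf{H}$.

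The small-noise regime you suggest is in fact the degenerate one. For $c<1$ the risk of $1/x$ tends to $0$ as $\sigma_{\boldsymbol{\varepsilon}}\to0$, which forces $xf_*^{\rm pred}\to1$ in $L^2(F_\alpha)$. Hence $A_j\to1$, $N\to\sigma_0^2r^2M$ and $D\to\sigma_0^2r^2xM$, and these limits share all $s$ roots of $M$.

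The paper goes the other way. It sets $\sigma_0=1$, takes $\delta_j>\max\{\sqrt c,2c\}$, and sends $\sigma_{\boldsymbol{\varepsilon}}^2\to\infty$. In that limit the Gram entries reduce to their outlier-atom contributions, and the fixed-point system gives $A_j\to1-c/\delta_j^2$, so $\delta_j|1-A_j|<1/2$. This keeps the real roots of $N$ bounded and a fixed distance away from the zeros $x_j^\star$ of $\nu$. Meanwhile $D=r^2xM+c\,\sigma_{\boldsymbol{\varepsilon}}^2\nu$ cannot vanish at such points once $\sigma_{\boldsymbol{\varepsilon}}^2$ is large. Since all roots of $D$ are real, any common root would be real, so this suffices.
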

Theorem~\ref{thm:sd_optimal} shows that self-distillation is both sufficient and, in general, necessary for achieving the optimal spectral shrinkage rule $f_*^{\rm pred}$. Moreover, the optimal rule $f_*^{\rm pred}$ can be implemented through $\rk$ rounds of self-distillation where $\rk$ is the number of spikes in the underlying covariance matrix. On the other hand, any procedure with fewer than $\rk$ steps fails to attain the optimal risk.
As an immediate consequence, classical ridge regression, corresponding to zero distillation steps, is provably suboptimal as soon as $\rk \ge 1$. 
We defer the proof of part (a) to Appendix~\ref{app: D}. The proof of part~(b) is constructive and is deferred 
to Appendix~\ref{app: E}.  

We highlight that the optimal self-distillation representation involves negative ridge parameters 
$\lambda_t < 0$. To the best of our knowledge, all existing theoretical analyses of self-distillation restrict attention to positive ridge parameters (c.f.~
\cite{pareek2024understandinggainsrepeatedselfdistillation, dang2026optimalunconstrainedselfdistillationridge} and references cited therein). Notably, our results hold without this restriction. In fact, 
Theorem~\ref{thm:sd_optimal}(a) shows that among the ridge 
parameters appearing in any optimal self-distillation 
representation, only one is positive. 
This corroborates and extends earlier observations in the pure ridge literature, which show that the optimal ridge hyperparameter may be negative \cite{kobak2020optimalridgepenaltyrealworld, wu_and_xu, richards_negative_ridge, negative_ridge_arzela_ascoli, bartlett,negative_ridge_tibshirani}.

The next theorem shows that multi-step self-distillation attains a strictly smaller limiting prediction risk than a number of popular alternatives:  
\begin{theorem}
\label{thm: f_ast dominates}
Suppose Assumption~\ref{assm:main} holds and $\rk \geq 1$. Then the estimator associated with $f_\ast^{\mathrm{pred}}$ has strictly smaller asymptotic out-of-sample prediction risk than each of the following procedures:
\begin{enumerate}
    \item[(a)] the min-norm interpolator (equivalently, ordinary least squares when $c<1$);
    \item[(b)] any self-distillation procedure that uses a common ridge parameter across all iterations\footnote{This is the approach used in \cite{dang2026optimalunconstrainedselfdistillationridge}}, including ridge regression as the case of zero distillation steps; 
     \item[(c)] gradient descent with any finite number of steps;
    \item[(d)] principal components regression with any fixed number of retained components;
    \item[(e)] principal components regression with a sequence $m_n$ of retained components with $m_n/p \to \tau \in\bigl[0, \min(1,1/c)\bigr)$ as $n\to \infty$.
   
\end{enumerate}
\end{theorem}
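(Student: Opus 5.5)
The strategy is to reduce every item to a statement about uniqueness of the minimizer in Theorem~\ref{thm: main}(a). By Proposition~\ref{prop: general formula}, the limiting risk $\mathcal R^{\mathrm{pred}}(f;c)$ is a deterministic functional of the values of $f$ on $\mathcal S_c^+$. More precisely, it is a quadratic functional: an integral against the limiting (spiked) spectral measure on the bulk plus point contributions at the outlier atoms in $\mathcal A_c$. Since $f_*^{\rm pred}$ is the unique minimizer on $\mathcal S_c^+\setminus\{0\}$, any competitor $g\in\mathcal F$ (or any estimator asymptotically equivalent to some $\widehat\bbeta_g$ with $g\in\mathcal F$) satisfies $\mathcal R^{\mathrm{pred}}(g;c) > \mathcal R^{\mathrm{pred}}(f_*^{\rm pred};c)$ unless $g = f_*^{\rm pred}$ on $\mathcal S_c^+\setminus\{0\}$. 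Strict convexity of the quadratic functional, which is what underlies uniqueness, lets us upgrade ``not equal'' to ``strictly larger.'' So for each procedure (a)--(e) there are two tasks: identify its effective shrinkage function $g$ on $\mathcal S_c^+$, and show that $g\neq f_*^{\rm pred}$ on a set charged by the limiting measure. Because $\mathcal S_c$ carries an absolutely continuous bulk, it suffices to show that $g$ and $f_*^{\rm pred}$ differ at a single bulk point, since continuity then gives disagreement on an interval.

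For items (b) and (c) I would argue by rational-function degree and structure, using Theorem~\ref{thm: main}(b)--(c). In reduced form, $f_*^{\rm pred}$ has numerator of degree $s$ and denominator of degree $s+1$ with $s+1$ distinct poles. Distinctness follows from Theorem~\ref{thm:sd_optimal}(a), which realizes $f_*^{\rm pred}$ through $s+1$ strictly distinct ridge parameters, so its poles sit at the distinct points $-\lambda_j$.
\begin{itemize}
\item[(b)] With a common $\lambda$, Lemma~\ref{lemma: sd is in mathcal F} gives $f_k(x)=P(x)/(x+\lambda)^{k+1}$, which has a single (possibly repeated) pole. A rational function agreeing with $f_*^{\rm pred}$ on an interval must coincide with it identically, which is impossible once $s\ge1$ because $f_*^{\rm pred}$ has at least two distinct poles. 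The case $k=0$ is ridge regression.
\item[(c)] The gradient-descent function $f_T$ is a polynomial. It cannot equal $f_*^{\rm pred}$, which has poles, on an interval, by the identity theorem.
\end{itemize}

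For (a), I would invoke Lemma~\ref{lemma: min-norm is worse}, which identifies the min-norm interpolator asymptotically with shrinkage $1/x$ on $\mathcal S_c^+\setminus\{0\}$. That function has a single pole, so the argument of (b) applies. Note that the atom at $0$ is irrelevant because it is excluded from the uniqueness statement, and its risk contribution is handled in that lemma. For (d), by Lemma~\ref{pcr_with_all_spiked}, PCR with a fixed number of components is asymptotically equivalent to a shrinkage that vanishes on the bulk $\mathcal S_c$ (and equals $1/x$ on some atoms). Meanwhile $f_*^{\rm pred}$ is not identically zero on $\mathcal S_c$: its numerator has degree $s$ with leading data $b_0^{(1)}=\sx^2r^2\omega_0\neq0$, so it has finitely many zeros. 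Hence the two differ on a bulk interval. For (e), Lemma~\ref{lemma: proportional regime PCR} gives an effective shrinkage equal to $x^{-1}\mathbf 1(x\ge \tau')$ for a deterministic quantile $\tau'$ of the bulk. It is therefore either $1/x$ on a subinterval of the bulk or $0$ on a subinterval, and both alternatives are excluded by the arguments of (a) and (d) applied locally on that subinterval. The case $\tau=0$ reduces to (a) or (d) at the edges.

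The main obstacle I anticipate is rigor at the interface between ``asymptotically equivalent'' and $\mathcal F$, together with nondegeneracy. First, for (d)--(e) the PCR shrinkage is discontinuous, so the lemmas must deliver the risk through the same quadratic functional evaluated at a measurable $g$. I would extend strict convexity to bounded measurable $g$ by a density/limit argument on the functional. Second, in (b) and (a) I need that $f_*^{\rm pred}$, in lowest terms, genuinely has $\ge2$ distinct poles, i.e.\ that no numerator--denominator cancellation occurs. I would take this from Theorem~\ref{thm:sd_optimal}(a), whose partial-fraction realization with $s+1$ distinct $\lambda_j$ and nonzero weights forces $s+1$ distinct genuine poles. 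If the weights could vanish, I would instead verify it from the explicit form \eqref{def:f_pred_closed_form} using Assumption~\ref{assm:main}(2), namely $\alpha_j\neq0$.
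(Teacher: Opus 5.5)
\paragraph{Overall.} Your reduction matches the paper's: identify an effective shrinkage function, then use uniqueness together with $\langle \mathcal A\psi,\psi\rangle_w\ge\|\psi\|_w^2$. Items (c), the $m\le\rk^+$ case of (d), and (e) go through essentially as you describe. For (e), extending the identity $\mathcal R^{\mathrm{pred}}(g;c)-\mathcal R^{\mathrm{pred}}(f_\ast^{\mathrm{pred}};c)=\langle\mathcal A(g-f_\ast^{\mathrm{pred}}),g-f_\ast^{\mathrm{pred}}\rangle_w$ to bounded measurable $g$ is a legitimate alternative to the paper's smooth approximations with a uniform gap. Only the ``vanishes below $Q_c^{-1}(\tau)$'' half is needed there.

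\paragraph{The gap in (b), and in (a) as you argue it.} Everything rests on $f_\ast^{\mathrm{pred}}$ having at least two distinct poles in lowest terms, and Theorem~\ref{thm:sd_optimal}(a) does not give this. That theorem says the $\lambda_j$ are distinct. It does not say the partial-fraction weights $Q(\gamma_i)/P'(\gamma_i)$ are nonzero. The paper never proves $P$ and $Q$ coprime; it obtains coprimality only for specially chosen parameters, which is why Theorem~\ref{thm:sd_optimal}(b) is an existence statement. Since $Q$ is monic of degree $\rk$ and $P$ is monic of degree $\rk+1$ with simple roots, ``at most one pole after cancellation'' means exactly $f_\ast^{\mathrm{pred}}=1/(x+\lambda)$ with $-\lambda$ a root of $P$. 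So you have assumed the one nontrivial fact, namely that tuned ridge is strictly suboptimal.

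Your fallback through \eqref{def:f_pred_closed_form} and $\alpha_j\neq0$ cannot work on its own. Treat the $b_j^{(1)}$ as free, take $-\lambda$ a root of the denominator $P^0$, and set $b_j=\sx^2r^2\omega_jx_j^\star/(x_j^\star+\lambda)$. Then $P^0-(x+\lambda)Q^0$ has degree at most $\rk$ and vanishes at $x_1^\star,\dots,x_\rk^\star,-\lambda$, so it is identically zero. The obstruction therefore has to come from how the $b_j^{(1)}$ are actually determined.

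\paragraph{The missing step.} This is the paper's Lemma~\ref{lemma: better than Ridge}.
\begin{enumerate}
\item Suppose $f_\ast^{\mathrm{pred}}=1/(x+\lambda)$. Then $-\lambda$ is a root of $P$. Hence $\lambda\neq0$, and either $\lambda>0$ or $-\lambda>x_1^\star$, labelling so that $x_1^\star=\min_jx_j^\star$.
\item Use the fixed-point form $f_\ast^{\mathrm{pred}}=g-\sum_j\delta_j\alpha_j^2A_jh_j$, where $A_1=\int xf_\ast^{\mathrm{pred}}\,\mathrm dF_{\delta_1}=\int \frac{x}{x+\lambda}\,\mathrm dF_{\delta_1}$.
\item Evaluate at $x_1^\star$, where $\nu$ and $\nu_{-i}$ ($i\neq1$) vanish. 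This gives $f_\ast^{\mathrm{pred}}(x_1^\star)=(\sx^2+\delta_1(1-A_1))/(\sx^2x_1^\star)$.
\item Combining, $\delta_1\int (x+\lambda)^{-1}\,\mathrm dF_{\delta_1}=-\sx^2/(\lambda+x_1^\star)$. This has the wrong sign in both cases, because $\mathrm{supp}\,F_{\delta_1}\subseteq[0,x_1^\star]$.
\end{enumerate}
With this in hand, (b) follows from $Q(x)(x+\lambda)^{t+1}=U(x)P(x)$: since $P$ is squarefree, this forces $U=(x+\lambda)^t$.

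\paragraph{Smaller points.} For (a) you do not need any of this: $f_\ast^{\mathrm{pred}}=1/x$ would force $P(0)=0$, contradicting $P(0)>0$ from the proof of Lemma~\ref{lemma: polynomials P Q}. In (d), Lemma~\ref{pcr_with_all_spiked} covers only $m\le\rk^+$. For fixed $m>\rk^+$ the extra retained eigenvalues sit at the bulk edge, so PCR is not $\widehat\bbeta_g$ for any continuous deterministic $g$. You need the additional input that the limiting PCR risk is unchanged once $m\ge\rk^+$; the paper takes this from the PCR risk formula it cites. The same input is what makes $\tau=0$ in (e) reduce to (d).
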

The proofs of parts (a), (b), and (c) can be found in Appendix~\ref{app: F}, while the proofs of  (d) and (e) are deferred to Appendix~\ref{app: G}.
Theorem \ref{thm: f_ast dominates} emphasizes that  self-distillation outperforms not only  ridge regression but a broad range of well-known procedures. In particular, while classically one might expect PCR to perform well in the presence of a spiked covariance structure, since it explicitly exploits the dominant spectral directions, Theorem \ref{assm:main} shows that the optimal self-distilled estimator strictly outperforms PCR. Hence, under the spiked covariance models, the optimal spectral transformation is neither a simple inverse as in ridge nor a thresholding rule as in PCR, but rather a more intricate rational function that can be obtained via self-distillation.

Thus far, we characterized the optimal shrinkage rule that minimizes the limiting prediction risk. We now show that analogous results hold for the limiting estimation risk, defined as: 
\begin{equation}
\label{eq:limiting_est_risk}
\mathcal{R}^{\mathrm{est}}(f; c)
=
\lim_{\substack{n,p\to\infty\\ p/n\to c}}
\|\widehat{\bbeta}_f - \bbeta_0\|_2^2, \qquad (\text{almost surely}) \,. 
\end{equation}
The corresponding optimal shrinkage function for estimation is defined as: 
\begin{equation}
\label{eq:optimal_est_shrinkage}
f_*^{\rm est}
=
\arg\min_{f \in \mathcal{F}} \mathcal{R}^{\mathrm{est}}(f; c).
\end{equation}
The following theorem establishes that the estimation-risk-optimal shrinkage function, $f_*^{\rm est}$, is also a rational function, analogous to $f_*^{\rm pred}$, and can be achieved via $s$-step self-distillation: 
\begin{theorem}[Optimality under estimation risk]
\label{thm: main_thm est}
Under Assumptions~\ref{assm:dgp}--\ref{assm:main}, the conclusions of Theorem~\ref{thm:sd_optimal} and Theorem~\ref{thm: f_ast dominates} continue to hold for the estimation-risk-optimal shrinkage function $f_*^{\rm est}$. In particular:
\begin{enumerate}
    \item[(a)] When $\cov(\bX)= \sx^2 \bI_p$, the optimally tuned ridge estimator minimizes the limiting estimation risk over the class $\cF$. In contrast, when ${\boldsymbol{{\boldsymbol{\Sigma}}}}$ follows a spiked covariance model with $\rk \ge 1$ spikes, $f_\ast^{\mathrm{est}}$ is a rational function
    \begin{equation}
f_\ast^{\mathrm{est}}(x)
=
\frac{r^2 \left(\omega_0 \nu(x) + \sum_{i=1}^\rk \omega_i \nu_{-i}(x)\right)}
{r^2 x\left(\omega_0 \nu(x) + \sum_{i=1}^\rk \omega_i \nu_{-i}(x)\right)
+ c \sigma_\beps^2 \nu(x)},
\qquad x \in \mathcal{S}_c^+.
\end{equation}
where we use the same notations as in Theorem~\ref{thm: main}. The numerator of $f_*^\mathrm{ast}$ has degree $\rk$, while the denominator has degree $\rk + 1$.
    \item[(b)] 
    $f_*^{\rm est}$ is realized by an $\rk$-step self-distilled estimator.  Further, $\rk$ steps of self-distillation are always necessary for optimality, i.e. $f_*^{\text{est}}$ cannot be realized by any $(\rk-k)$-step self-distillation procedure for any $1 \le k \le \rk$.
    
    \item[(c)] Moreover, $f_*^{\rm est}$ achieves strictly smaller asymptotic estimation risk than the estimators mentioned in (a)-(e) of Theorem \ref{thm: f_ast dominates}.
\end{enumerate}
\end{theorem}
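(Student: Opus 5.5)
The plan is to run the prediction-risk argument again with the weight $\bSigma$ replaced by $\bI_p$, and to check that every structural property used for $f_*^{\rm pred}$ survives. First, I would use the deterministic-equivalent machinery behind Proposition~\ref{prop: general formula} to write $\mathcal{R}^{\rm est}(f;c)$ as an integral functional of $f$ against limiting spectral measures. Expanding $\|\widehat\bbeta_f-\bbeta_0\|_2^2$ gives three terms: the quadratic form $\bbeta_0^\top \widehat\bSigma f(\widehat\bSigma)^2\widehat\bSigma\bbeta_0$, the cross term $-2\bbeta_0^\top f(\widehat\bSigma)\widehat\bSigma\bbeta_0$, and the noise term $\sigma_\beps^2 \tfrac1n\mathrm{tr}(f(\widehat\bSigma)^2\widehat\bSigma)$.

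Each term converges almost surely to an integral over $\mathcal{S}_c^+$. The quadratic forms in $\bbeta_0$ are taken against the weighted spectral measure of $\widehat\bSigma$ in the direction $\bbeta_0$. Using the signal decomposition of Assumption~\ref{assm:main}, this measure splits into a bulk part carried by $\widetilde\bbeta_0$, with weight $\omega_0$, and spike parts carried by $\alpha_j\bv_j$, with weights $\omega_j$. Through Lemmas~\ref{lemma: F_mp << F_delta} and \ref{lemma: def mu_j mu_0}, these have Radon--Nikodym densities relative to a common reference measure, and those densities are exactly what produce $\nu,\nu_{-j}$. The trace term contributes $c\sigma_\beps^2$ times the Marchenko--Pastur measure.

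The resulting risk is a pointwise quadratic in $f(x)$ at each $x\in\mathcal{S}_c^+$, of the form $A(x)f(x)^2-2B(x)f(x)+\text{const}$, with
\[
A(x)=r^2x^2 w(x)+c\sigma_\beps^2 x\,\nu(x), \qquad B(x)= r^2 x\, w(x), \qquad w=\omega_0\nu+\textstyle\sum_j\omega_j\nu_{-j}.
\]
The main simplification relative to Theorem~\ref{thm: main} shows up here. Without the $\bSigma$ weight, the cross term no longer picks up the extra spike factors that generated the coefficients $b^{(1)}_j$ from the linear system. Minimising pointwise then gives $f_*^{\rm est}=B/A$, which after cancelling $x$ is exactly the displayed formula. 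Existence and uniqueness on $\mathcal{S}_c^+\setminus\{0\}$ follow because $A>0$ there. Continuity near $\mathcal{S}_c^+$ holds because the denominator has no zeros near the support, so $f_*^{\rm est}\in\mathcal{F}$. The degree count $\deg=\rk$ over $\rk+1$ follows from the degrees of $\nu,\nu_{-j}$, exactly as in Theorem~\ref{thm: main}(c). In the isotropic case $w\propto\nu$ and the formula reduces to $1/(x+c\sigma_\beps^2/r^2)$, matching Lemma~\ref{thm: main corollary}. This proves (a).

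For (b), I would repeat the argument for Theorem~\ref{thm:sd_optimal}(a). Lemma~\ref{lemma: sd is in mathcal F} shows that $\rk$-step self-distillation produces every rational function of the form $\sum_{j=0}^{\rk} c_j\prod_{t\ne j}(x+\lambda_t)^{-1}x^{\rk-j}$-type partial fractions with numerator degree at most $\rk$ over a denominator $\prod_{t=0}^{\rk}(x+\lambda_t)$. So it suffices to show two things about the denominator of $f_*^{\rm est}$. First, it has $\rk+1$ distinct real roots $-\lambda_t$ lying outside $-\mathcal{S}_c^+$. I would get this by an interlacing/sign-change argument between the poles of the Radon--Nikodym densities, which lie between the atoms and the bulk, just as in Appendix~\ref{app: D}. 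Second, the partial-fraction coefficients can be matched by solving recursively for $\xi_t$, which requires the relevant leading coefficients to be nonzero.

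Necessity here is stated as ``always'', which is stronger than the ``in general'' of Theorem~\ref{thm:sd_optimal}(b). A $(\rk-k)$-step procedure has a denominator of degree at most $\rk-k+1$. So I must show that $f_*^{\rm est}$, in lowest terms, has denominator degree exactly $\rk+1$, i.e.\ that numerator and denominator share no root. Any common root $x_0$ of $w$ and $xw+c\sigma_\beps^2\nu/r^2$ must satisfy $\nu(x_0)=0$, and hence $\sum_j\omega_j\nu_{-j}(x_0)=0$. I expect ruling this out for all parameters, using that the $\nu_{-j}$ vanish at distinct points when the spikes are distinct and that all $\omega_j\neq0$, to be the main obstacle. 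It is the step where the simpler estimation formula (no $b_j^{(1)}$) must be exploited.

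For (c), I would reuse the arguments of Appendices~\ref{app: F} and \ref{app: G} verbatim. Each competitor (min-norm, common-$\lambda$ SD, finite-step GD, and both PCR regimes) has a limiting shrinkage function that differs from $f_*^{\rm est}$ on a set of positive limiting measure. GD gives a polynomial. Common-$\lambda$ SD gives a single repeated pole, which is impossible here because the poles of $f_*^{\rm est}$ are distinct. PCR and min-norm give functions that are non-rational or of the wrong form. Strict pointwise convexity of the quadratic risk then gives strict suboptimality.
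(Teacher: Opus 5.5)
Your overall route is the paper's: the limiting formula of Lemma~\ref{lemma: general formula for f_est} and pointwise minimization of the risk. The denominator of $f_*^{\rm est}$ is $\sigma_0^{-2}$ times that of $f_*^{\rm pred}$, so Lemmas~\ref{lemma: polynomials P Q} and \ref{lemma: lagrange interpolation} apply without redoing any interlacing. Necessity comes from coprimality plus Lemma~\ref{lemma: coprime}, and (c) reuses Appendices~\ref{app: F}--\ref{app: G}.

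The step you call the main obstacle is a one-liner that you leave unfinished. If $\nu(x_0)=0$, then $x_0=x_k^\star$ for some $k$. By Remark~\ref{rmk: lagrange}, $\sum_j\omega_j\nu_{-j}(x_k^\star)=\omega_k\nu_{-k}(x_k^\star)\neq 0$, since $\omega_k\neq 0$ and the $x_j^\star$ are distinct. Equivalently, the denominator equals $r^2x_k^\star\omega_k\nu_{-k}(x_k^\star)\neq0$ at every outlier. This is the same computation you already need to show $f_*^{\rm est}\in\mathcal F$. The paper phrases it as: the roots of the denominator avoid $\overline{\mathcal S_c}\ni x_k^\star$ by Lemma~\ref{lemma: polynomials P Q}.

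Your justification of existence and uniqueness is wrong as written, because $A$ is not positive on $\mathcal S_c^+\setminus\{0\}$. At an outlier atom, $A(x_k^\star)=r^2(x_k^\star)^2\omega_k\nu_{-k}(x_k^\star)$, and $\nu_{-k}(x_k^\star)$ has sign $(-1)^{\#\{i:\,x_i^\star<x_k^\star\}}$. For instance, with $\rk=2$ and both spikes supercritical, $A<0$ at the larger outlier.

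The cause is that your $A,B$ are densities against $F_\alpha/w$, with your $w=\omega_0\nu+\sum_j\omega_j\nu_{-j}$. This is a signed measure, since $w(x_k^\star)=\omega_k\nu_{-k}(x_k^\star)$ alternates in sign. The minimizer $B/A$ is still correct, but the convexity argument must be run against $F_\alpha$ itself, as the paper does. There the coefficient of $f(x)^2$ is $r^2x^2+c\sigma_{\boldsymbol{\varepsilon}}^2x\mu_0(x)$. It is strictly positive on $\mathcal S_c^+\setminus\{0\}$ (it equals $r^2(x_k^\star)^2$ at the atoms because $\mu_0(x_k^\star)=0$), which gives uniqueness there.

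Finally, $\rk$-step self-distillation does not produce every rational function with numerator degree at most $\rk$ over $\prod_t(x+\lambda_t)$. The weights $(1-\xi_j)\prod_{t>j}\xi_t$ in Lemma~\ref{lemma: sd is in mathcal F} telescope to $1$, which forces $xf(x)\to 1$. So the numerator and denominator must have the same leading coefficient. This does hold for $f_*^{\rm est}$: both equal $r^2\omega_0$ times the leading coefficient of $\nu$. The nonvanishing of the partial sums that define the $\xi_t$ is supplied by the root ordering in Lemma~\ref{lemma: lagrange interpolation}. Also, the roots $-\lambda_t$ must avoid $\mathcal S_c^+$, not $-\mathcal S_c^+$.
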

The proof is deferred to Appendix~\ref{app: H}.

\begin{remark}[Relation between optimal self-distillation and weighted ridge regression]
As mentioned in Theorem~\ref{thm: main}(c) and Theorem~\ref{thm: main_thm est}(a), 
both $f_\ast^{\mathrm{pred}}$ and $f_\ast^{\mathrm{est}}$ are rational functions 
with numerator degree $\rk$ and denominator degree $\rk+1$. Notably, the two shrinkage functions share the same denominator. A key ingredient in the proof of 
Theorem~\ref{thm:sd_optimal} is Lemma~\ref{lemma: polynomials P Q}, which 
guarantees that this common denominator has exactly $\rk+1$ distinct real roots, 
say $\tilde{\lambda}_0,\tilde{\lambda}_1 ,\dots ,\tilde{\lambda}_{\rk}$. A partial fraction 
decomposition then yields
\begin{equation*}
    f_\ast^{\mathrm{pred}}(x) = \sum_{j=0}^{\rk} \frac{w_j^{\mathrm{pred}}}{x + \tilde{\lambda}_j}, 
    \qquad
    f_\ast^{\mathrm{est}}(x) = \sum_{j=0}^{\rk} \frac{w_j^{\mathrm{est}}}{x + \tilde{\lambda}_j},
\end{equation*}
for real coefficients $w_j^{\mathrm{pred}}, w_j^{\mathrm{est}} \in \mathbb{R}$. 
Since the shrinkage function of a Ridge estimator with a regularization parameter 
$\tilde{\lambda}_j$ takes the form $x \mapsto x/(x + \tilde{\lambda}_j)$, each term 
$w_j/(x + \tilde{\lambda}_j)$ corresponds, up to scaling, to such a shrinkage function. 
Thus, it follows that $\widehat{\bbeta}_{f_\ast^{\mathrm{pred}}}$ and 
$\widehat{\bbeta}_{f_\ast^{\mathrm{est}}}$ can each be expressed as a weighted 
combination of Ridge estimators, one for each root $\tilde{\lambda}_j$.
\end{remark}
\subsection{Theory for Decentralized Learning}
\label{sec:theory_fed}
In this section, we extend our results to the decentralized setup. We consider a setting with $K$ clients. For $1 \le \ell \le K$, the $\ell^{th}$ client observes $n_\ell$ samples generated from the linear model
$$
y_{i\ell} = \bx_{i\ell}^\top \bbeta_0 + \varepsilon_{i\ell}, \quad \bx_{i\ell} = {\boldsymbol{{\boldsymbol{\Sigma}}}}^{1/2} \bz_{i\ell}, \quad 1\leq i \leq n_\ell,$$
where $\bz_{i\ell} \in \mathbb{R}^p$ has i.i.d.\ coordinates with mean zero and variance one, and the errors satisfy the same assumptions as in Assumption \ref{assm:dgp}. Thus, all clients share the same population covariance ${\boldsymbol{{\boldsymbol{\Sigma}}}}$ and regression vector $\bbeta_0$, but each client only has access to its own local sample. As in the previous sections, we work under the proportional regime, where $p/n_\ell \to c_\ell$ for $1 \le \ell \le K$.

To estimate $\bbeta_0$ locally, each client applies a spectral shrinkage rule $f_\ell \in \mathcal{F}$ to its local sample covariance matrix and obtains the local shrinkage-based estimator: 
\[
\widehat \bbeta_\ell
= f_\ell(\widehat{\boldsymbol{\Sigma}}_\ell)\frac{\bX_\ell^\top \by_\ell}{n_\ell},
\qquad
\widehat{\boldsymbol{\Sigma}}_\ell = \frac{\bX_\ell^\top \bX_\ell}{n_\ell}.
\]
The clients then transmit $\{\widehat \bbeta_\ell\}_{\ell=1}^K$ to a central server, which aggregates them linearly as follows to obtain the final estimator: 
\begin{equation}
\label{eq:hat_beta_agg}
\widehat\bbeta_{\mathrm{agg}}
=
\sum_{\ell=1}^K \rho_\ell \widehat\bbeta_\ell,
\qquad
\rho_\ell \in \mathbb{R}, \quad 1 \le \ell \le K.
\end{equation}

The full procedure is summarized in Algorithm \ref{alg:federated-learning}. 
 It is immediate that the prediction risk of $\widehat\bbeta_{\mathrm{agg}}$ depends on $2K$ parameters: the local shrinkage functions $\{f_j\}_{1 \le j \le K}$ and the weights $\{\rho_j\}_{1 \le j \le K}$. For simplicity, we will work with $c_\ell \equiv c.$ We define the limiting prediction risk of $\widehat\bbeta_{\mathrm{agg}}$ as below: 
\begin{align}
\label{eq: limit of beta_agg}
\mathcal{R}_K^{\mathrm{pred}}(f_1, \rho_1, \dots, f_K, \rho_K; c)
&:=
\lim_{p \to \infty} \|\widehat\bbeta_{\mathrm{agg}} - \bbeta_0\|_{\boldsymbol{{\boldsymbol{\Sigma}}}}^2
\qquad (\text{almost surely}).
\end{align}
The existence of the limit is established in Lemma~\ref{prop: general formula for agg}. Similar to the previous section, we define the optimal local shrinkage rules and weights as the one that minimizes the limiting prediction risk: 
\begin{equation}
\label{eq: opt agg}
(f_{\ast, 1}^{\mathrm{pred}}, \dots, f_{\ast, K}^{\mathrm{pred}},\, \rho_{\ast, 1}, \dots, \rho_{\ast, K})
:=
\underset{\substack{f_1,\dots,f_K \in \mathcal{F}\\ \rho_1,\dots,\rho_K}}{\argmin}
\mathcal{R}_K^{\mathrm{pred}}(f_1, \rho_1, \dots, f_K, \rho_K; c).
\end{equation}

\begin{algorithm}[t]
\caption{Federated learning with local spectral shrinkage}
\label{alg:federated-learning}
\begin{algorithmic}[1]
\Require Local datasets \(\{(\bX_\ell,\by_\ell)\}_{\ell=1}^K\), shrinkage rules \(\{f_\ell\}_{\ell=1}^K\), aggregation weights \(\{\rho_{\ell}\}_{\ell=1}^K\)
\Ensure Aggregated estimator \(\widehat\bbeta_{\mathrm{agg}}\)

\For{each client \(\ell\in[K]\)}
    \State Compute the covariance
    \(
    \widehat{\boldsymbol{\Sigma}}_\ell=\bX_\ell^\top \bX_\ell/{n_\ell}
    \)
    \State Form the local shrinkage estimator
    \[
    \widehat\bbeta_\ell
    =
    f_\ell(\widehat{\boldsymbol{\Sigma}}_\ell)\frac{\bX_\ell^\top \by_\ell}{n_\ell}
    \]
    and send \(\widehat\bbeta_\ell\) to the central server
\EndFor

\State Central server aggregates:
$\widehat\bbeta_{\mathrm{agg}}
=
\sum_{\ell=1}^K \rho_{\ell} \widehat\bbeta_\ell
$

\State \Return \(\widehat\bbeta_{\mathrm{agg}}\)
\end{algorithmic}
\end{algorithm}

Analogous to Theorem~\ref{thm: main}, we seek to characterize 
$\{(f_{*,j}^{\mathrm{pred}}, \rho_{*,j})\}_{1 \le j \le K}$. 
Recall the coefficient $b_0^{(1)}$ defined in Theorem~\ref{thm: main} (b). In case of a single client $K=1$, 
it admits the closed form $b_0^{(1)} = 
\sx^2 r^2\omega_0$. We will require analogues of this quantity for the $K$-client case.  Here, $b_0^{(K)}$ no longer 
admits a closed form  but can be defined via the 
linear system \eqref{eq: linear system with b0}. Throughout this 
section, we work under the following assumption.

\begin{assumption}
\label{assm: b0^K neq 0}
Let $b_0^{(K)} \neq 0$, where $b_0^{(K)}$ is defined in 
\eqref{eq: linear system with b0}.
\end{assumption}

Appendix \ref{sec: assm b0} discusses extensively when Assumption~\ref{assm: b0^K neq 0} holds in our setting.  
We now state our main federated optimality result.

\begin{theorem}
\label{thm: product_of_shrinkage}
Let $K \ge 1$, and suppose Assumptions~\ref{assm:main}, \ref{assm: b0^K neq 0} hold. 
Then the solution to \eqref{eq: opt agg} satisfies the following:
\begin{enumerate}
\item[(a)] The optimal aggregation 
    weights are all equal, i.e.\
    \begin{equation}\label{eq:aggweights}
    \rho_{\ast, 1} = \cdots = \rho_{\ast, K} = \frac{b^{(K)}_0}{\sx^2 r^2\omega_0}.
    \end{equation}
    \item[(b)] All clients use the same 
    optimal shrinkage function, i.e.\
    \[
    f_{\ast, 1}^{\mathrm{pred}} = \cdots = f_{\ast, K}^{\mathrm{pred}} = \frac{\sx^2 r^2\omega_0f^\ast_K}{b^{(K)}_0}
    \]
    where 
     \begin{equation}\label{eq:feddistopt}
    f_K^\ast(x) =  \frac{ b_{0}^{(K)}\nu(x) + \sum_{j=1}^\rk b_{j}^{(K)} \nu_{-j}(x)}{\sx^2\,r^2 x\left(\omega_0 \nu(x) + \sum_{j=1}^\rk \omega_j \nu_{-j}(x)\right) + c \sx^2\,\sigma_\beps^2 \nu(x)}.
    \end{equation}
    The coefficients $b^{(K)}_0, b^{(K)}_1, \dots, b^{(K)}_\rk$ are defined in~\eqref{eq: linear system with b0} and the coefficients $\omega_0,\omega_1,\hdots\omega_s$ are defined in \eqref{eq:omega_js}.
    \item[(c)]  The common shrinkage 
    functions $f_{\ast, 1}^{\mathrm{pred}}, \dots,  f_{\ast, \rk}^{\mathrm{pred}}$ can be realized by an $\rk$-step 
    self-distillation procedure.
    
\end{enumerate}
\end{theorem}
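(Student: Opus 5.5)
Our approach is to write the limiting aggregated risk as a quadratic functional in the pairs $(\rho_\ell f_\ell)$ and to minimise it pointwise in the style of Theorem~\ref{thm: main}. Absorb the weights into the shrinkage functions by setting $g_\ell := \rho_\ell f_\ell \in \mathcal{F}$. Then $\widehat\bbeta_{\mathrm{agg}} = \sum_\ell g_\ell(\widehat\bSigma_\ell)\bX_\ell^\top\by_\ell/n_\ell$, and the risk \eqref{eq: limit of beta_agg} depends on $(f_\ell,\rho_\ell)$ only through $(g_1,\dots,g_K)$. The problem \eqref{eq: opt agg} therefore reduces to minimising over $g_1,\dots,g_K\in\mathcal{F}$, followed by any factorisation $g_\ell=\rho_\ell f_\ell$. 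The equal weights in (a) are the normalisation that makes $f_{*,\ell}^{\mathrm{pred}}$ coincide with the stated multiple of $f_K^\ast$.

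First, we would use Lemma~\ref{prop: general formula for agg} to expand $\|\widehat\bbeta_{\mathrm{agg}}-\bbeta_0\|_{\bSigma}^2$ into three kinds of terms. The diagonal terms involve a single client and have exactly the single-client limit of Proposition~\ref{prop: general formula}, namely integrals of $g_\ell^2$ and $g_\ell$ against the Marchenko--Pastur law and the spiked measures of Lemmas~\ref{lemma: F_mp << F_delta} and~\ref{lemma: def mu_j mu_0}. The linear terms are $-2\bbeta_0^\top\bSigma\widehat\bbeta_\ell$. The cross terms $\widehat\bbeta_\ell^\top\bSigma\widehat\bbeta_m$ with $\ell\neq m$ involve two independent sample covariances. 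Because the noises are independent, these cross terms reduce to $\bbeta_0^\top \widehat\bSigma_\ell g_\ell(\widehat\bSigma_\ell)\bSigma g_m(\widehat\bSigma_m)\widehat\bSigma_m\bbeta_0$. By Theorem~\ref{lemma:non-free limit}, this quadratic form converges to a finite sum over the directions $\bv_1,\dots,\bv_s$ and the bulk. Each summand is a product of two linear functionals of the form $L_j(g)=\int x g(x)\,d\mu_j(x)$, with the $j=0$ functional carrying $\|\widetilde\bbeta_0\|^2$.

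The resulting limit functional has the form $\sum_\ell Q(g_\ell)+\sum_{\ell\neq m}\sum_j \kappa_j L_j(g_\ell)L_j(g_m)-2\sum_\ell\sum_j \kappa_j L_j(g_\ell)$. It is strictly convex on the function class restricted to $\mathcal{S}_c^+\setminus\{0\}$ and symmetric under permuting the clients. Symmetry together with strict convexity forces a unique minimiser with $g_1=\dots=g_K=:g$. For this common $g$ we take the first-order condition pointwise in $x$. This condition has the same structure as in the proof of Theorem~\ref{thm: main}, but the multipliers on the spike functionals change. As in Theorem~\ref{thm: main}, $g$ equals a ratio with the same denominator $\sx^2 r^2 x(\omega_0\nu+\sum_j\omega_j\nu_{-j})+c\sx^2\sigma_\beps^2\nu$. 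Its numerator is $b_0\nu+\sum_j b_j\nu_{-j}$, and the coefficients $b_j$ must satisfy self-consistency equations $b_j = \text{const}\cdot(1-(K-1)L_j(g))$. Substituting the form of $g$ turns these into a linear system, which is \eqref{eq: linear system with b0}; its solution is $b^{(K)}$, giving $g=f_K^\ast$. Assumption~\ref{assm: b0^K neq 0} makes the normalisation by $b_0^{(K)}$ legitimate, and this yields (a) and (b).

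For (c), the denominator of $f_{*,\ell}^{\mathrm{pred}}$ is identical to that of $f_*^{\rm pred}$. Lemma~\ref{lemma: polynomials P Q} therefore gives $s+1$ distinct real roots outside $-\mathcal{S}_c^+$, and the numerator has degree at most $s$. We would then rerun the argument of Appendix~\ref{app: D}. We match the partial-fraction form $\sum_j w_j/(x+\tilde\lambda_j)$ against the recursion of Lemma~\ref{lemma: sd is in mathcal F}, choosing $\lambda_j=\tilde\lambda_j$ and solving triangularly for the $\xi_t$. The main obstacle is this last matching. It requires the relevant nondegeneracy, for example that the $\xi$-system admits a solution with the leading coefficient nonzero, and $b_0^{(K)}\neq0$ is exactly what supplies it. We also expect the derivation of the linear system from the cross terms to be delicate, and would spend most effort there.
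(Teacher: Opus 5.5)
Your outline follows the paper's route: absorb the weights, compute the limit with Theorem~\ref{lemma:non-free limit} (after splitting $\Sigma=\sigma_0^2 I+\sum_j\delta_j v_jv_j^\top$ in the middle of the cross term, which that theorem does not cover directly), force the local rules to coincide, solve a finite linear system via the ansatz $\sum_j b_jh_j$, and reuse Lemmas~\ref{lemma: polynomials P Q} and~\ref{lemma: lagrange interpolation} with $b_0^{(K)}\neq 0$ fixing the normalisation $xf(x)\to 1$. The genuine gap is the step you treat as evident: that the limit functional is strictly convex, so symmetry forces $g_1=\dots=g_K$. The cross-client part $\sum_{\ell\neq m}\sum_j\kappa_jL_j(g_\ell)L_j(g_m)$ is an indefinite quadratic form, so convexity cannot be read off the expression you wrote; it is exactly the nontrivial content here. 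Write $g_\ell=\bar g+\vartheta_\ell$ with $\sum_\ell\vartheta_\ell=0$. The quadratic part then splits into a manifestly positive form in $\bar g$ plus $\sum_\ell\bigl[Q(\vartheta_\ell)-\sum_j\kappa_jL_j(\vartheta_\ell)^2\bigr]$, and strict convexity amounts to
\[
\|\phi\|_w^2-\sigma_0^2r^2\omega_0\langle h_0,\phi\rangle_w^2-\sum_{j=1}^{s}\sigma_0^2\alpha_j^2\langle h_j,\phi\rangle_w^2>0
\qquad\text{unless } \phi\equiv 0 \text{ on } \mathcal S_c^+\setminus\{0\},
\]
which is Lemma~\ref{all f' need to be the same}. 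The paper proves it in four steps:
\begin{enumerate}
\item write $\langle h_0,\phi\rangle_w=\int x\phi\,dF_{\mathrm{MP}}$ and $\langle h_j,\phi\rangle_w=\int x\phi\,dF_{\delta_j}$;
\item apply Cauchy--Schwarz in $L^2(F_{\mathrm{MP}})$ and in each $L^2(F_{\delta_j})$;
\item recombine through $F_\alpha=\omega_0F_{\mathrm{MP}}+\sum_j\omega_jF_{\delta_j}$ and $\alpha_j^2=r^2\omega_j$ to get $\sigma_0^2r^2\int x^2\phi^2\,dF_\alpha$;
\item bound this by $\|\phi\|_w^2$ using $w(x)\ge\sigma_0^2r^2x$.
\end{enumerate}
Strictness on the bulk comes from the noise part $c\sigma_0^2\sigma_\varepsilon^2\mu_0$ of $w$. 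At the outlier atoms, where $\mu_0$ vanishes, it comes from strict Cauchy--Schwarz, since $F_{\delta_j}(\{x_j^\star\})<1$. Without this inequality the symmetry argument has nothing to stand on. Plain convexity is in fact free, because the risk is an a.s.\ limit of quadratics convex in $(g_1,\dots,g_K)$. Averaging over permutations then shows that \emph{some} optimum has equal local rules, but the uniqueness asserted in (a)--(b) needs the strict inequality.

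There is also a secondary error. Your self-consistency relation $b_j=\mathrm{const}\cdot(1-(K-1)L_j(g))$ is correct only for $j=0$. For $j\ge 1$ the spike-bias term $\delta_j\alpha_j^2\bigl(\sum_\ell L_j(g_\ell)\bigr)^2$ contributes even at $K=1$, and the correct relation is $b_j=(\delta_j+\sigma_0^2)\alpha_j^2-\bigl((K-1)\sigma_0^2+K\delta_j\bigr)\alpha_j^2\langle h_j,f_K^\ast\rangle_w$. Your version would make $b_j$ constant at $K=1$, contradicting Theorem~\ref{thm: main}, and it would not produce \eqref{eq: linear system with b0}. Two smaller points. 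The roots of the common denominator lie outside $\mathcal S_c^+$, not $-\mathcal S_c^+$; the ridge parameters are their negatives. In (c), $b_0^{(K)}\neq 0$ only makes the rescaled numerator monic. The nonvanishing partial sums that define the $\xi_t$ come from the choice of root ordering in Lemma~\ref{lemma: lagrange interpolation}, not from a naive triangular solve.
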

The proof is deferred to Appendix \ref{app: I}.
Theorem~\ref{thm: product_of_shrinkage} shows that the federated problem retains the same fundamental feature as the centralized one: the optimal local rule is again a rational shrinkage function, and can be implemented by finitely many steps of self-distillation. 
 In the special case $K=1$, we recover the optimal shrinkage rule from Theorem \ref{thm: main}. In the isotropic case $\bSigma = \bI_p$, we recover the optimal aggregation of ridge estimators studied in the distributed ridge  setup \cite{deterministic_calculus}. 
 
 Moreover, Theorem \ref{thm: product_of_shrinkage} reveals a genuinely new phenomenon created by decentralization in the anisotropic case with $K>1$ clients:  the optimal local shrinkage rule $f_K^\ast$ for each client differs from the single-client optimum $f_\ast^{\mathrm{pred}}$ obtained in Theorem \ref{thm: main}. This distinction is noteworthy from a methodological perspective. In a centralized environment (single-client), the client chooses a shrinkage rule to optimize their own bias-variance tradeoff based on their sample covariance matrix. In the federated setting, by contrast, the local estimators are computed from independent sample covariance matrices and are then combined at the server. The interaction between local shrinkage and cross-client aggregation, therefore, creates a new optimization problem, and the final solution differs from that of the single-client problem. 
 
We further illustrate this phenomenon using Figure~\ref{fig:b_entries_vs_K}, which plots the values of $\{b_j^{(K)}\}_{0 \le j \le s}$ for varying $K$,  with $s = 2$. Note that the optimal shrinkage function in both the single client ($K=1$) and multi-client cases ($K > 1$), \eqref{def:f_pred_closed_form} and \eqref{eq:feddistopt}, depend on these coefficients. Thus, they vary significantly in the number of clients, as demonstrated by the strict monotonicity of these coefficients as a function of $K$. 

The key technical ingredient underlying Theorem \ref{thm: product_of_shrinkage} is the following result, which extends our spectral shrinkage machinery from one sample covariance matrix to products involving two independent sample covariance matrices. For the following result, we extend Definition~\ref{def:shrinkage_class} 
as follows. Let $\mathcal{F}_{c_\ell}$ and $\mathcal{F}_{c_k}$ 
denote the function class introduced in Assumption~\ref{assm:F}, 
corresponding to limiting ratios $c_\ell$ and $c_k$ respectively.
\begin{theorem}
\label{lemma:non-free limit} 
Let $\widehat{\boldsymbol{\Sigma}}_\ell$ and $\widehat{\boldsymbol{\Sigma}}_k$ denote the sample covariance matrices for the $\ell^{th}$ and $k^{th}$ client respectively. Suppose $p/n_\ell \to c_\ell$ and $p/n_k \to c_k$ where $c_\ell, c_k \in (0, \infty)$. 
Then, for any $\phi \in\mathcal{F}_{c_\ell}$ and $\psi \in \mathcal{F}_{c_k}$, we have: 
\[
\frac{\bbeta_0^\top \phi(\widehat{\boldsymbol{\Sigma}}_\ell)\psi(\widehat{\boldsymbol{\Sigma}}_k)\bbeta_0}
{\|\bbeta_0\|_2^2}
\xrightarrow{\mathrm{a.s.}}
\omega_0 
\left(\int \phi \,\mathrm{d}F_{\mathrm{MP},\, c_\ell}\right)
\left(\int \psi \,\mathrm{d}F_{\mathrm{MP},\,c_k}\right)
+
\sum_{j=1}^{\rk}\omega_j
\left(\int \phi \,\mathrm{d}F_{\delta_j,\,c_\ell}\right)
\left(\int \psi \,\mathrm{d}F_{\delta_j,\,c_k}\right),
\]
where $F_{\mathrm{MP},\,c}$ and $F_{\delta_j, \,c}$ denote the Marchenko-Pastur law and the spiked probability measures introduced in~\eqref{F_delta} with limiting aspect ratio $c$. 
\end{theorem}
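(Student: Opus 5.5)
We will prove the statement by conditioning on one client's data and reducing to single-matrix quadratic forms in the other. Decompose $\bbeta_0 = \sum_{j=1}^{\rk}\alpha_j^{(n)}\bv_j + \widetilde{\bbeta}_0$ with $\alpha_j^{(n)} = \bbeta_0^\top\bv_j$. Expanding the bilinear form gives
\[
\bbeta_0^\top \phi(\widehat{\bSigma}_\ell)\psi(\widehat{\bSigma}_k)\bbeta_0 = \sum_{a,b\in\{0,1,\dots,\rk\}} \bu_a^\top \phi(\widehat{\bSigma}_\ell)\psi(\widehat{\bSigma}_k)\bu_b,
\]
where $\bu_0=\widetilde{\bbeta}_0$ and $\bu_j=\alpha_j^{(n)}\bv_j$. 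The plan is to show that, after the rotational structure of $\widehat{\bSigma}_\ell,\widehat{\bSigma}_k$ is taken into account, only the diagonal terms survive. The terms $j\ge1$ should produce the products $\int\phi\,\mathrm{d}F_{\delta_j,c_\ell}\int\psi\,\mathrm{d}F_{\delta_j,c_k}$, and the bulk term should produce the Marchenko--Pastur product. The weights $\omega_j$ should then match those of the single-sample quadratic-form limit, which the paper uses (through Proposition~\ref{prop: general formula} and the definitions of $F_{\delta_j,c}$ and $\omega_j$) to show $\bbeta_0^\top\phi(\widehat{\bSigma})\bbeta_0/\|\bbeta_0\|^2 \to \omega_0\int\phi\,\mathrm{d}F_{\mathrm{MP}}+\sum_j\omega_j\int\phi\,\mathrm{d}F_{\delta_j}$.

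\textbf{Step 1: reduction to deterministic-equivalent vectors.} Condition on $\bX_k$ and set $\bw := \psi(\widehat{\bSigma}_k)\bbeta_0$. This vector is independent of $\widehat{\bSigma}_\ell$. Apply the single-client generalized quadratic-form result, in its polarized (bilinear) version $\bu^\top\phi(\widehat{\bSigma}_\ell)\bw$, for deterministic $\bu,\bw$, each decomposed along $\bv_1,\dots,\bv_\rk$ and the orthogonal complement. For spiked $\bSigma$ this limit has the form
\[
\sum_{j}(\bu^\top\bv_j)(\bv_j^\top\bw)\int\phi\,\mathrm{d}F_{\delta_j,c_\ell} + \bigl(\bu^\top P_\perp\bw\bigr)\int\phi\,\mathrm{d}F_{\mathrm{MP},c_\ell}.
\]
The reason is that $\phi(\widehat{\bSigma}_\ell)$ is asymptotically equivalent (in the deterministic-equivalent sense, on bounded vectors) to a function of $\bSigma$, namely $\sum_j \bigl(\int\phi\,\mathrm{d}F_{\delta_j,c_\ell}\bigr)\bv_j\bv_j^\top + \bigl(\int\phi\,\mathrm{d}F_{\mathrm{MP},c_\ell}\bigr)P_\perp$. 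Establishing this deterministic equivalent will be done via Helffer--Sj\"ostrand or contour representations of $\phi$ on a neighbourhood of $\mathcal{S}^+_{c_\ell}$, using Assumption~\ref{assm:F} and polynomial approximation, together with anisotropic local laws for resolvents $\bu^\top(\widehat{\bSigma}_\ell-z)^{-1}\bw$. The atoms in $\mathcal{A}_c$ are handled by isolating outlier contours, as in the single-sample proof.

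\textbf{Step 2: apply the same equivalent to $\psi(\widehat{\bSigma}_k)\bbeta_0$.} We have $\bv_j^\top\bw \to \alpha_j\int\psi\,\mathrm{d}F_{\delta_j,c_k}$, and $\|P_\perp\bw\|$, $\widetilde{\bbeta}_0^\top\bw$ behave like $\|\widetilde{\bbeta}_0\|^2\int\psi\,\mathrm{d}F_{\mathrm{MP},c_k}$. The bulk term needs care, because $P_\perp\bw$ is not deterministic. However, Step 1 only needs $\bu^\top P_\perp\bw$ with $\bu=\bbeta_0$, which equals $\widetilde{\bbeta}_0^\top\psi(\widehat{\bSigma}_k)\bbeta_0$. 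This is again a single-matrix quadratic form, and it converges to $\|\widetilde{\bbeta}_0\|^2\int\psi\,\mathrm{d}F_{\mathrm{MP},c_k}$ because cross terms $\bv_j^\top\psi(\widehat{\bSigma}_k)\widetilde{\bbeta}_0$ vanish. Combining the limits and dividing by $\|\bbeta_0\|^2\to r^2$ should give exactly $\omega_0=\lim\|\widetilde{\bbeta}_0\|^2/r^2$ and $\omega_j=\alpha_j^2/r^2$, matching the displayed formula. Almost-sure convergence follows because the Step~1 limit holds almost surely conditionally on $\bX_k$ for a.e.\ realization, and Fubini applies to the product probability space.

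\textbf{Main obstacle.} The delicate point is Step 1 applied to a \emph{random} vector $\bw$ independent of $\widehat{\bSigma}_\ell$, whose norm is bounded but whose direction within the bulk is essentially generic. The single-sample theory typically gives convergence for deterministic unit vectors. We need an isotropic statement uniform enough to condition on $\bX_k$, i.e.\ a bound like $|\bu^\top(\phi(\widehat{\bSigma}_\ell)-\Phi_\ell(\bSigma))\bw| \le \epsilon_n\|\bu\|\|\bw\|$ almost surely for each fixed pair. For this I would first try moment bounds from the $8+\eta$ moment assumption, giving summable fourth-moment error bounds so that Borel--Cantelli yields almost-sure convergence for each fixed pair.
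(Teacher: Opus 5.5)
Your argument is correct, but it is organized differently from the paper's. The paper works at the resolvent level for \emph{both} matrices at once. It cites a deterministic equivalent for products of independent resolvents (Lemma D.8.4 of \cite{patil2023baggingoverparameterizedlearningrisk}) and evaluates $\bbeta_0^\top(a_{n_\ell}\bSigma-z_\ell\bI_p)^{-1}(a_{n_k}\bSigma-z_k\bI_p)^{-1}\bbeta_0$ explicitly with Proposition~\ref{lem:resolvent-spiked}. It then makes the convergence simultaneous in $(z_\ell,z_k)$ via Vitali--Porter and passes to higher resolvent powers by differentiation. Only at the end does it approximate $\phi$ and $\psi$ jointly by polynomials in $(x+1)^{-1}$ on $[0,M]$. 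You instead extend to general functions on a single matrix first, obtaining $\phi(\widehat{\bSigma}_\ell)\asymp\Phi_\ell(\bSigma)$. This is exactly what the resolvent equivalent yields, since $(\sx^2a-z)^{-1}=m(z)$ and $(a(\sx^2+\delta_j)-z)^{-1}=m_{\delta_j}(z)$. You then absorb the independence by conditioning on $\bX_k$ and applying Tonelli, and finish with single-matrix limits for $\psi(\widehat{\bSigma}_k)$ that the paper already proves. Your route is more modular. Only one function is approximated at a time, no external product lemma or two-variable analytic argument is needed, and the product form of the limit is explained by $\Phi_\ell(\bSigma)$ and $\Phi_k(\bSigma)$ being diagonal in the same spike basis. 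The paper's route, in exchange, stays entirely within resolvent calculus and delegates the independence to one cited product equivalent.

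Three small corrections. First, your ``main obstacle'' is not one. The Rubio--Mestre equivalence holds almost surely for \emph{every} deterministic test sequence of bounded trace norm. For almost every realization of $\bX_k$ we have $\sup_n\|\bw\|<\infty$, so the equivalence applies to $\bw\bbeta_0^\top$ for that fixed realization, and Tonelli on the product space finishes. No uniformity in $\bw$ and no new moment or Borel--Cantelli bounds are needed.

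Second, Step~1 needs neither local laws nor Helffer--Sj\"ostrand; the latter would in any case require smoothness that $\phi\in\mathcal{F}$ lacks. The paper's chain of resolvent, then powers, then Weierstrass approximation suffices. Run it for $\widehat{\bSigma}_\ell$ alone against a fixed pair of vectors.

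Third, your aside that $\|P_\perp\bw\|$ behaves like $\|\widetilde{\bbeta}_0\|^2\int\psi\,\mathrm{d}F_{\mathrm{MP},c_k}$ is false, though it is never used. The limit you actually need follows from Lemmas~\ref{lemma: f_alpha}, \ref{lemma: hat C nj} and~\ref{lem:int-conv} via $\widetilde{\bbeta}_0^\top\bw=\bbeta_0^\top\bw-\sum_j(\bbeta_0^\top\bv_j)\,\bv_j^\top\bw$.
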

The proof is deferred to Appendix \ref{app: J}.
Taken together, Theorems~\ref{thm: product_of_shrinkage} and~\ref{lemma:non-free limit} show that the role of self-distillation extends beyond the single-client setting. Even in this simplified federated model, the optimal distributed strategy is obtained by combining local self-distilled estimators through an explicit aggregation rule. We view this as a first step toward a more general statistical theory of federated learning with distillation, allowing heterogeneous client distributions, iterative communication, or more general covariance profiles.

\begin{figure}[htbp]
    \centering
    \includegraphics[width=0.5\linewidth]{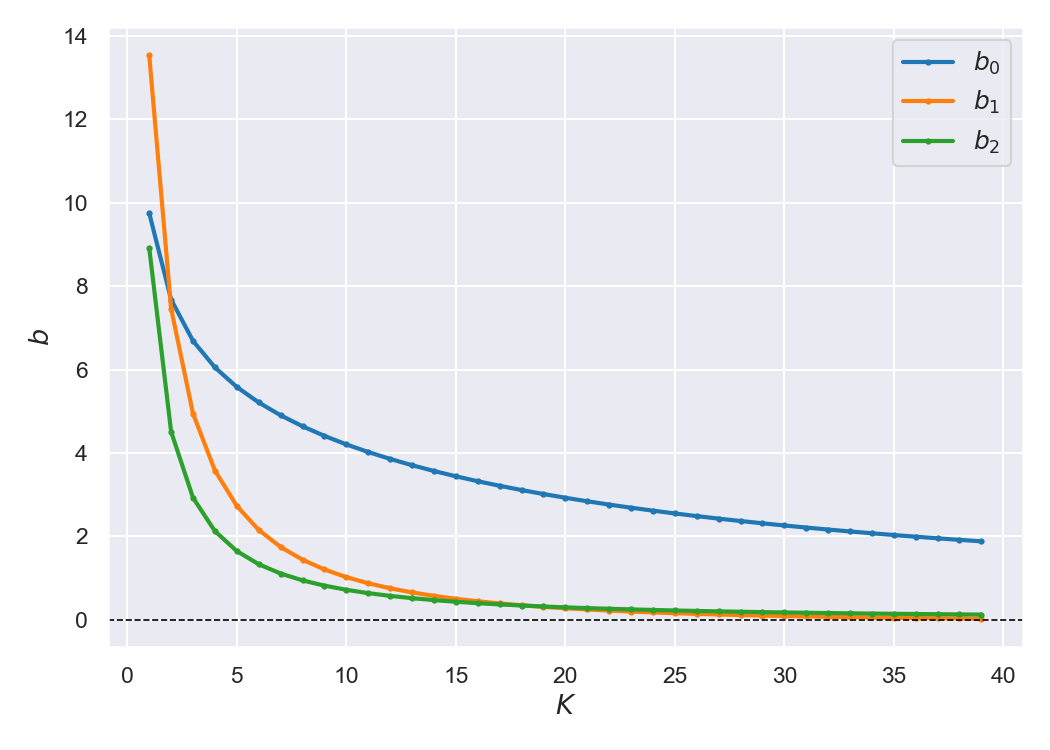}
    \caption{Entries of the vector $(b^{(K)}_0, b^{(K)}_1, b^{(K)}_2)$ 
    as a function of the number of local servers $K \in \{1, \ldots, 40\}$, 
    for a two-spike covariance with $\delta_1 = 2$, $\delta_2 = 3$, 
    $\alpha_1 = 3$, $\alpha_2 = 2.5$, and parameters $c = 3$, $r = 5$, 
    $\sigma_\varepsilon= 2, \sx = 1$. Note the variation in $K$, demonstrating the difference in the optimal local rule \eqref{eq:feddistopt} and aggregation weights \eqref{eq:aggweights} as a function of the number of clients.}
    \label{fig:b_entries_vs_K}
\end{figure}

\section{Simulation Studies}\label{secL simulations}

\subsection{Isotropic Design}
We first demonstrate our results for the case of isotropic design, where Lemma~\ref{thm: main corollary} shows that an optimally tuned ridge is optimal in our spectral shrinkage class. We generate synthetic data as follows. We sample $\bX \in \mathbb{R}^{n \times p}$ whose rows are i.i.d.\ draws from $\mathcal{N}(\mathbf{0}, \mathbf{I}_p)$, and draw $\boldsymbol{\varepsilon} \sim \mathcal{N}(\mathbf{0}, \sigma_\varepsilon^2\mathbf{I}_n)$ independently of $\bX$. Both $\bX$ and $\boldsymbol{\varepsilon}$ are drawn once and held fixed throughout. The response is then $\by = \bX\bbeta_0 + \boldsymbol{\varepsilon}$, where $\bbeta_0 \in \mathbb{R}^p$ is the true parameter vector, obtained by drawing a vector uniformly at random from the unit sphere in $\mathbb{R}^p$ and rescaling to have norm $r = \|\bbeta_0\|_2$. We subsequently hold the signal fixed. We compare the performance of Ridge regression for $\lambda > 0$ and one step of self-distillation. For self-distillation, for each fixed $\lambda$, we perform a grid search on $\xi_1, \lambda_1$ (see equation~\ref{eq:SD-recursion}). We also analyze principal components regression: we compare four cases where the number of retained components is 
$m_n\in\{1, 50, 100, 200\}$ to highlight both the finite and proportional number of components 
as proved in Theorem~\ref{thm: f_ast dominates}(c) and (d).
The prediction risk for each model is estimated using $200$ i.i.d.~test samples drawn from the same distribution. 
\begin{figure}[H]
    \centering
    \includegraphics[width=1\linewidth]{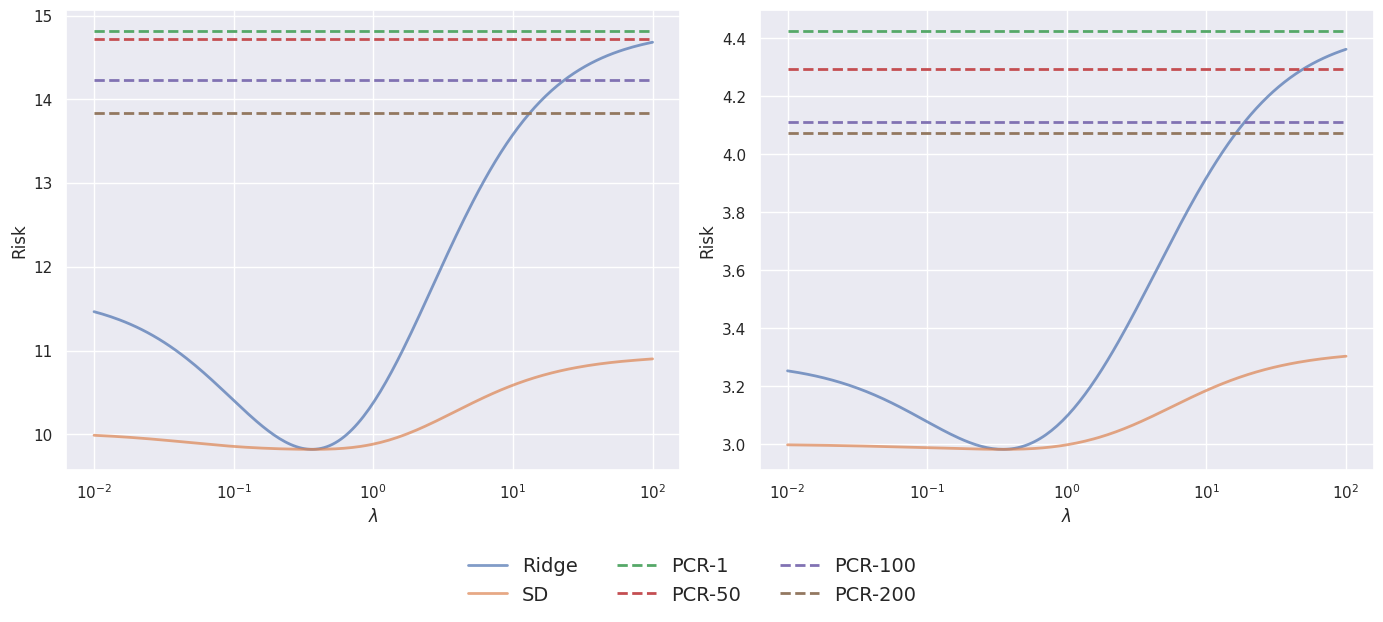}
     \caption{Sub-optimality of self-distillation and principal components regression in the isotropic setting. Left graph:  $n=1500,p=600$,  $\|\bbeta_0\|=3$, ${\sigma_\beps}=2.5, \sx = 1$.  Right graph: $n=1000,p=2000$, $\|\bbeta_0\|=2$, ${\sigma_\beps}=1, \sx = 1$.}
    \label{fig:isotropic_comparison_1}
\end{figure}

As proved in \cite{dang2026optimalunconstrainedselfdistillationridge}, for each $\lambda$ that is not a critical point of the asymptotic Ridge curve, self-distillation with $\lambda_1 = \lambda$ achieves strictly lower prediction risk than the original ridge estimator. We note that this is still the case without the restriction that $\lambda_1 = \lambda$, as clearly illustrated in Figure~\ref{fig:isotropic_comparison_1}: the orange curve (self-distillation) and the blue curve (ridge) meet only at the local minimum of Ridge, with strict improvement everywhere else. Moreover, in both 
the underparametrized regime (left panel) and the overparametrized regime (right panel), 
no estimator achieves lower risk than the optimally tuned ridge. PCR falls well short of 
this benchmark, while self-distillation at $\lambda \neq \lambda^\ast$ does not 
improve upon ridge at $\lambda^\ast$. Nevertheless, from a practical standpoint, 
self-distillation with suitable $\xi_1$ is appealing precisely because it uniformly 
dominates the ridge risk curve across all values of the tuning parameter; In particular, this implies that self-distillation can be advantageous in finite samples even in the isotropic case, where the population-optimal tuning parameter $\lambda^*$ is unknown and must be estimated, thereby incurring additional estimation error.

\subsection{Anisotropic Design}
\subsubsection{Closed-Form Limits} We next conduct experiments in the one-spike model ${\boldsymbol{\Sigma}} = \mathbf{I}_p + \delta \bv \bv^\top$ 
and the two-spike model ${\boldsymbol{\Sigma}} = \mathbf{I}_p + \delta_1 \bv_1 \bv_1^\top + \delta_2\bv_2 \bv_2^\top$. 
Lemma~\ref{prop: general formula} allows us to compute the limiting prediction 
risk for any $\widehat\bbeta_f$ with $f \in \mathcal{F}$. We demonstrate the results 
of Theorem~\ref{thm:sd_optimal} by numerically computing the limiting prediction risk 
of optimally tuned ridge regression and optimally tuned self-distillation $\widehat\bbeta_{f}$ for $f = f^\mathrm{pred}_\ast$. 
The results are displayed in Figure~\ref{fig:anisotropic_comparison}.
In panels~(a) and~(b), for small values of $\delta$, the two curves nearly 
coincide, which is consistent with our theory: as $\delta \to 0$ the covariance becomes isotropic, and an optimally tuned ridge is known to be optimal in that 
regime. As $\delta$ increases, we notice the advantages of self-distillation. On the other hand, in panels~(c) and~(d), even as 
$\delta_2 \to 0$, the two curves differ significantly. This is expected: as 
$\delta_2 \to 0$, the model reduces to a one-spike model with the fixed spike 
$\delta_1$, for which an optimally tuned ridge is already known to be strictly 
suboptimal. 

Regarding the improvements, the maximum percentage gain of 
self-distillation over optimally tuned ridge is $44.55\%$ in panel~(a),
$3.78\%$ in panel~(b), $29.53\%$ in panel~(c) and $12.31\%$ in panel~(d). 
The improvements are considerably more pronounced in the overparametrized  regime, suggesting that the benefits of self-distillation are amplified when 
$p \gg n$.

\begin{figure}[htbp]
\captionsetup[subfigure]{margin={0.8cm, 0cm}}
    \centering
    \begin{subfigure}[t]{0.48\linewidth}
        \centering
        \includegraphics[width=1.1\linewidth]{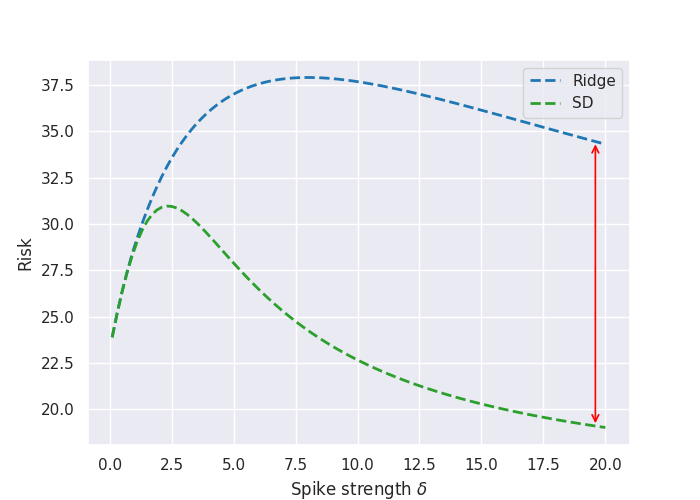}
        \caption{}
        \label{fig:aniso_plot5}
    \end{subfigure}
    \hfill
    \begin{subfigure}[t]{0.48\linewidth}
        \centering
        \includegraphics[width=1.1\linewidth]{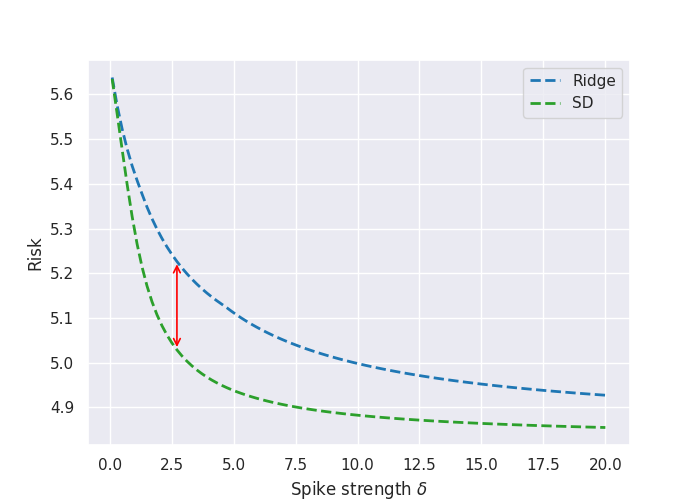}
        \caption{}
        \label{fig:aniso_plot6}
    \end{subfigure}

    \vspace{-1em}

    \begin{subfigure}[t]{0.48\linewidth}
        \centering
        \includegraphics[width=1.1\linewidth]{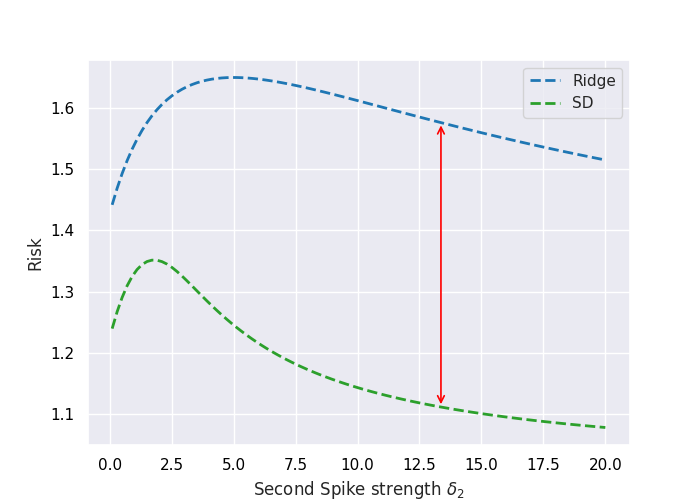}
                 \caption{}
        \label{fig:aniso_plot7}
    \end{subfigure}
    \hfill
    \begin{subfigure}[t]{0.48\linewidth}
        \centering
        \includegraphics[width=1.1\linewidth]{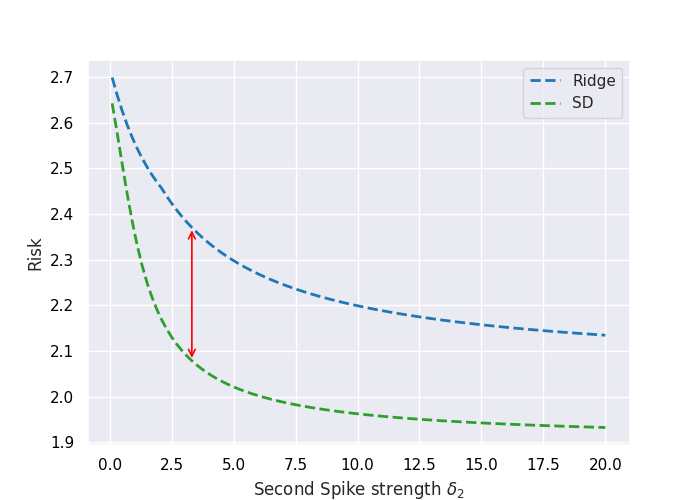}
        \caption{}
        \label{fig:aniso_plot8}
    \end{subfigure}

    \caption{Limiting prediction risk of optimally tuned 
    self-distillation versus optimally tuned ridge regression, as a 
    function of the spike strength $\delta$. The red arrow marks the 
    value of $\delta$ where the largest percentage improvement of 
    self-distillation over ridge is attained. Panels~(a) and~(b) 
    correspond to the one-spike model; panels~(c) and~(d) correspond 
    to the two-spike model with the first spike $\delta_1$ fixed and 
    the second spike $\delta_2$ varying along the horizontal axis. 
    Panels~(a) and~(c) are in the overparametrized regime ($c>1$); 
    panels~(b) and~(d) are in the underparametrized regime ($c<1$). 
    The parameter settings are as follows:
    \textbf{(a)}~$\|\bbeta_0\|_2=5$, $\bbeta_0^\top\bv=3$, 
    $\sigma_{\beps}=3$, $\sx=1$;\quad
    \textbf{(b)}~$\|\bbeta_0\|_2=5$, $\bbeta_0^\top\bv=3$, 
    $\sigma_{\beps}=3$, $\sx=1$;\quad
    \textbf{(c)}~$\|\bbeta_0\|_2=1$, $\bbeta_0^\top\bv_1=0.7$, 
    $\bbeta_0^\top\bv_2=0.5$, $\sigma_{\beps}=1$, $\sx=1$;\quad
    \textbf{(d)}~$\|\bbeta_0\|_2=3$, $\bbeta_0^\top\bv_1=1.5$, 
    $\bbeta_0^\top\bv_2=2$, $\sigma_{\beps}=2$, $\sx=1$.}
    \label{fig:anisotropic_comparison}
\end{figure}

\subsubsection{Optimal Self-Distillation Parameters}
Using the construction from Theorem~\ref{thm:sd_optimal}, we identify the optimal 
parameters $(\lambda_0^\ast, \lambda_1^\ast, \xi^\ast)$  in the one-spike model ${\boldsymbol{\Sigma}} =\bI_p + \delta \bv \bv^\top$ such that $f_{\ast}^\mathrm{pred}$ 
is the shrinkage function corresponding to a self-distillation estimator. We plot these parameters 
as functions of the spike strength $\delta$ over a grid of values in $(0, 20)$.

\begin{figure}[htbp]
    \centering
\captionsetup[subfigure]{margin={0.8cm, 0cm}}
    \begin{subfigure}[t]{0.49\linewidth}
        \centering
        \includegraphics[width=1\linewidth]{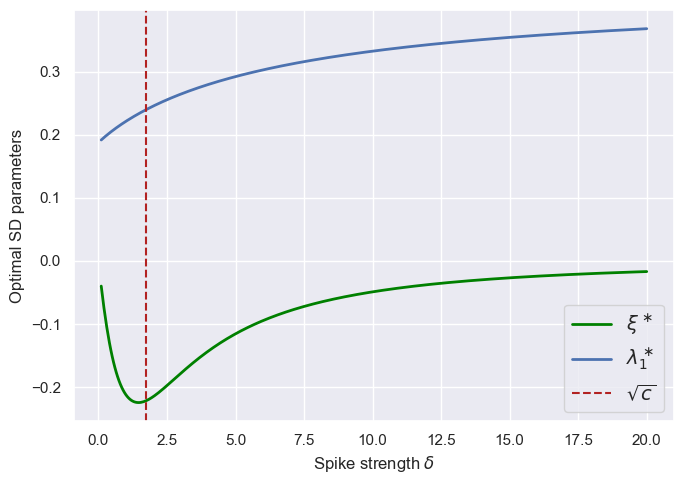}
        \caption{}
        \label{fig:lambda1-xi}
    \end{subfigure}
    \hfill
    \begin{subfigure}[t]{0.49\linewidth}
        \centering
        \includegraphics[width=1\linewidth]{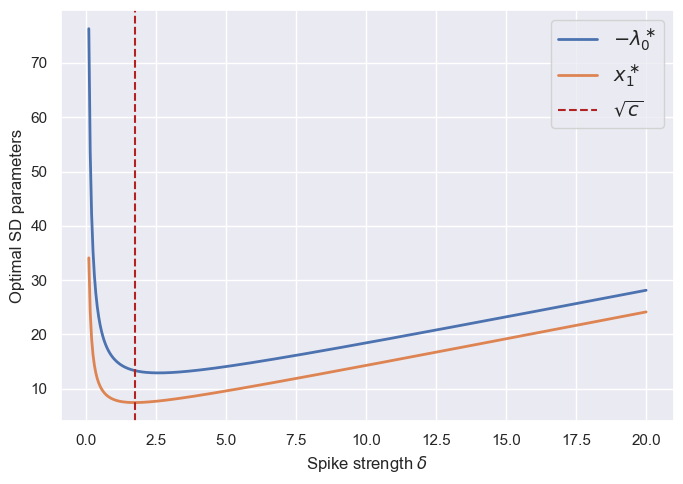}
        \caption{}
        \label{fig:lambda0-atom}
    \end{subfigure}

    \caption{Optimal self-distillation parameters as functions of 
    $\delta$. Problem parameters: $\|\bbeta_0\|_2 = 8$, 
    $\bbeta_0^\top\bv = 6$, $c = 3$, $\sigma_{\beps} = 4$, 
    $\sx = 1$.
    \textbf{(a)}~Optimal teacher Ridge $\lambda_1^\ast$ and 
    weight $\xi^\ast$.
    \textbf{(b)}~Negative of optimal student Ridge $-\lambda_0^\ast$ 
    and outlier $x_1^\star$}
    \label{fig:optimal-params}
\end{figure}

We note from Figure~\ref{fig:lambda1-xi} that $\xi^\ast$ starts near zero when 
$\delta \approx 0$, which is expected: setting the weight parameter to zero 
reduces our estimator to a ridge estimator tuned at the second step, which is 
known to be optimal under isotropic designs. As $\delta$ increases, $\xi^\ast$ 
becomes negative and remains so before returning asymptotically to zero as 
$\delta$ grows further. Thus, the optimal weight for self-distillation in this case is actually negative.

Turning to Figure~\ref{fig:lambda0-atom}, we observe that $\lambda_0^\ast$ is 
always negative, and its absolute value grows at the same rate as the outlier 
$x_1^\star$ from Definition~\ref{def:spectral-support}. 
We know that the largest eigenvalue of the sample covariance converges to $x_1^\star$ if and only if $\delta > \sqrt{c}$, which is precisely the BBP phase transition threshold 
\cite{baik2004}. The value $\delta = \sqrt{c}$ is marked on 
the horizontal axis of Figure~\ref{fig:lambda0-atom} for reference.
Interestingly, 
the growth rates of $-\lambda_0^\ast$ and $x_1^\star$ coincide even 
for values of $\delta$ below this BBP phase transition threshold, though the outlier cannot be estimated from $\widehat{{\boldsymbol{\Sigma}}}$ in that regime. 

\subsubsection{Synthetic Data Experiments}

Figure~\ref{fig:synth_data_one_spike} reports the empirical 
prediction risk of Ridge, optimal self-distillation, and different 
PCR estimators. The design matrix 
$\bX \in \mathbb{R}^{n \times p}$ has i.i.d.\ rows drawn from 
$\mathcal{N}(\mathbf{0}, {\boldsymbol{\Sigma}})$, where 
${\boldsymbol{\Sigma}} = \mathbf{I}_p + \delta \bv \bv^\top$ is the 
one-spike covariance model with spike direction $\bv$ drawn 
uniformly on the unit sphere. The response is generated as 
$\by = \bX\bbeta_0 + \beps$, where 
$\beps \sim \mathcal{N}(\mathbf{0}, \sigma_\beps^2\mathbf{I}_n)$ 
independently of $\bX$. We let the spike strength $\delta$ vary 
over a grid in $(0.1, 10)$. The signal $\bbeta_0$ is fixed 
throughout and constructed to satisfy $\|\bbeta_0\|_2 = 2$ and 
$\bbeta_0^\top\bv = 1.7$. For each $\delta$, we keep $\bX, \beps$ fixed and estimate the prediction risk for each model using $500$ i.i.d.~test samples drawn from the same distribution. Finally, for each $\delta$, we report results averaged over $40$ independent runs of our full experimental pipeline.

For Ridge, the reported risk is the 
minimum over a grid of regularization parameters. 
For SD, we use the oracle optimal parameters $(\lambda_0^\ast, \lambda_1^\ast, 
\xi_1^\ast)$ from Theorem~\ref{thm: main}. For PCR, we report four choices of 
retained components $m \in \{1, 400, 700\}$. When $m = n = 700$, PCR 
retains all singular vectors and reduces to the minimum $\ell_2$-norm 
interpolator, since $\bX \in 
\mathbb{R}^{n \times p}$ with $p > n$ has at most $n$ nonzero singular values.

Note the following observations. First, as the spike strength 
increases, both $m = 1$ and  
$m = 400$ eventually outperform the tuned ridge. Notably, for 
$\delta \gtrsim 9$, tuned ridge is worse than PCR with a single component. 
Second, the min-norm interpolator ($m = 700$) performs poorly and does not improve much across the entire range of spike strengths. Finally, optimal self-distillation, with parameters from Theorem~\ref{thm: main}, uniformly dominates other estimators across all values of $\delta$. 

\begin{figure}[htbp]
    \centering
    \includegraphics[width=0.7\linewidth]{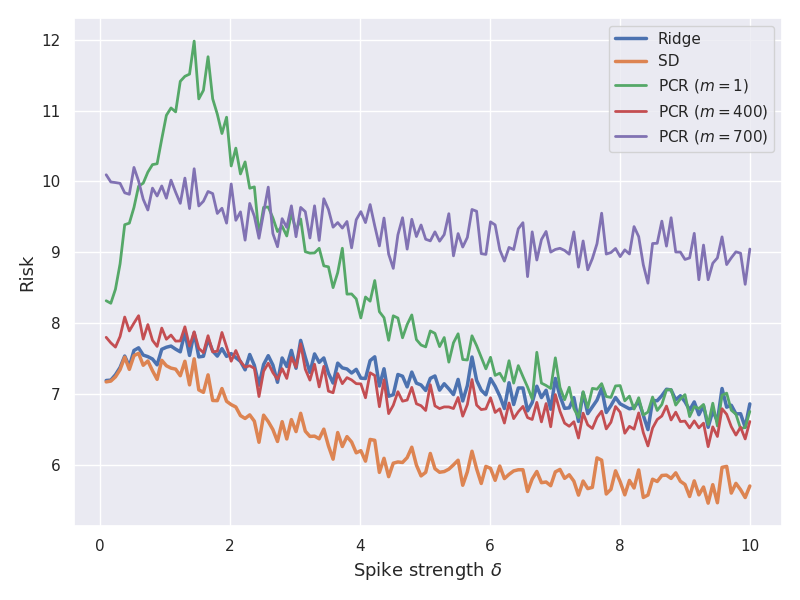}
    \caption{Prediction risk of Ridge, optimal self-distillation (SD), and PCR  as a function of the spike strength $\delta \in (0.1, 10)$, 
    estimated via simulation. We set $n = 700$, $p = 1400$, 
    $\|\bbeta_0\| = 2$, $\bbeta_0^\top \bv = 1.7$, ${\sigma_\beps} = 2$ and $\sx = 1$. 
    }
    \label{fig:synth_data_one_spike}
\end{figure}

While our theoretical results are established for spectral shrinkage estimators and do not cover the Lasso, it is natural to ask whether the general phenomenon that self-distillation strictly improves over the initial estimator continues to hold beyond this class. To investigate this, we consider the following one-step Lasso self-distillation procedure. Given a Lasso teacher $\widehat{\bbeta}_{\mathrm{Lasso}}(\lambda)$ with penalty parameter $\lambda > 0$, we define the one-step Lasso-SD estimator by replacing the $\ell_2$-squared loss with the $\ell_1$ loss in equation~\eqref{eq:SD-recursion} of Ridge-SD. The design matrix $\bX \in \mathbb{R}^{n \times p}$ has i.i.d.\ rows drawn from 
$\mathcal{N}(\mathbf{0}, {\boldsymbol{\Sigma}})$, where 
${\boldsymbol{\Sigma}} = \mathbf{I}_p + \delta \bv \bv^\top$ is a
one-spike covariance model with spike direction $\bv$ drawn uniformly on the unit 
sphere. The response is generated as $\by = \bX\bbeta_0 + \beps$, where 
$\beps \sim \mathcal{N}(\mathbf{0}, \sigma_\beps^2 \mathbf{I}_n)$ independently 
of $\bX$. The signal $\bbeta_0$ is sparse with $30\%$ nonzero entries, fixed throughout, and constructed to satisfy $\|\bbeta_0\|_2 = 2$ and $\bbeta_0^\top \bv = 1.7$. For Ridge-SD and Lasso-SD, we report the minimum over a joint 
grid of $(\lambda_1, \xi_1)$ as a function of the initial regularization parameter $\lambda$. The prediction risk for each estimator is 
estimated on $200$ i.i.d.\ test samples drawn from the same distribution.

Figure~\ref{fig:placeholder} provides numerical evidence that 
self-distillation improves the Lasso in the one-spike case. 
Specifically, we observe that optimally tuned SD-Lasso consistently 
outperforms  Lasso across all values of $\lambda_0$, and 
moreover, the optimal teacher parameter $\lambda_0$ for 
SD-Lasso differs from the optimal parameter for the plain Lasso, 
indicating that self-distillation benefits from a deliberately 
mistuned teacher. These observations suggest 
that the optimality of self-distillation is a robust phenomenon 
that extends beyond the spectral shrinkage settings covered 
by our theory.

\begin{figure}[htbp]
    \centering
    \includegraphics[width=0.7\linewidth]{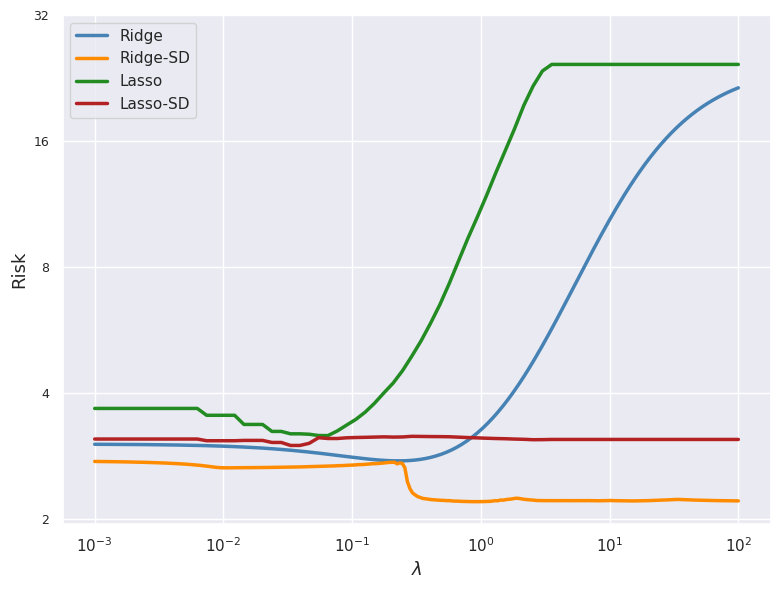}
    \caption{Prediction risk of Ridge, one-step Ridge self-distillation, Lasso, and one-step Lasso self-distillation as a function of $\lambda$. Parameters:  $n = 400$, $p = 800$, 
    $\|\bbeta_0\| = 2$, $\bbeta_0^\top \bv = 1.7$, $\delta = 7$, ${\sigma_\beps} = 1$ and $\sx = 1$.}
    \label{fig:placeholder}
\end{figure}

\section{Proof Outline}\label{sec: noverty + outline}
\subsection{Proof Outline for Theorem \ref{thm: main}}
\label{sec:outline_main}
In this section, we outline the proofs of our main theorems, emphasizing the technical novelties. For brevity, we focus on the argument for the limiting prediction error (Theorem \ref{thm: main}). The proof for the estimation error is analogous and deferred to Appendix~\ref{app: H}. 
\\\\
\noindent
{\bf Step I: }Recall that, for any $f \in \mathcal{F}$, the estimator $\widehat{\bbeta}_f$ is defined by
\[
\widehat \bbeta_f = f(\widehat {\boldsymbol{\Sigma}}) \frac{\mathbf{X}^\top \mathbf{y}}{n}
=
\bW f({\boldsymbol{\Lambda}}) \bW^\top \frac{\mathbf{X}^\top \mathbf{y}}{n},
\qquad
\text{where }
\widehat{{\boldsymbol{\Sigma}}} = \bW {\boldsymbol{\Lambda}} \bW^\top.
\]
The conditional prediction error of $\widehat \bbeta_f$, admits the following decomposition:
\[
\|\widehat{\bbeta}_f - \bbeta_0\|_{\boldsymbol{\Sigma}}^2
=
\mathcal{B}_{f}(\bX, \by) + \mathcal{V}_{f}(\bX, \by) + \mathcal{E}_{f}(\bX, \by),
\]
where
\allowdisplaybreaks
\begin{align*}
\mathcal{B}_{f}(\bX,\by)
&:=
\sx^2\cdot\bbeta_0^\top
\bW \bigl(\mathbf{I}_p - {\boldsymbol{\Lambda}} f({\boldsymbol{\Lambda}})\bigr)^2
\bW^\top \bbeta_0
+
\sum_{j=1}^{\rk} \delta_j
\left(
\bbeta_0^\top
\bW \bigl(\mathbf{I}_p - {\boldsymbol{\Lambda}} f({\boldsymbol{\Lambda}})\bigr)
\bW^\top \bv_j
\right)^2, \\
\mathcal{V}_{f}(\bX, \by)
&:=
\frac{\sx^2\,\sigma_\beps^2}{n}
\operatorname{tr}\left(
\bW {\boldsymbol{\Lambda}} f^2({\boldsymbol{\Lambda}}) \bW^\top
\right)
+
\sum_{j=1}^{\rk} \delta_j
\frac{\sigma_\beps^2}{n}
\bv_j^\top \bW {\boldsymbol{\Lambda}} f^2({\boldsymbol{\Lambda}}) \bW^\top \bv_j, \\
\mathcal{E}_{f
}(\bX, \by)&:= - 2\,\bbeta_0^\top\bW\bigl(\mathbf{I}_p - \boldsymbol{\Lambda} f(\boldsymbol{\Lambda})\bigr)
\bW^\top\boldsymbol{\Sigma}\bW f(\boldsymbol{\Lambda})\bW^\top\frac{\bX^\top\boldsymbol{\varepsilon}}{n} + \left[\frac{\beps^\top \bX f(\widehat{\boldsymbol{\Sigma}}) {\boldsymbol{\Sigma}} f(\widehat{\boldsymbol{\Sigma}}) \bX^\top \beps}{n^2} -\mathcal{V}_{f}(\bX, \by) \right].
\end{align*}
The first step is to derive the almost-sure limits of $\mathcal{B}_{f}(\bX, \by), \mathcal{V}_{f}(\bX, \by)$ and $\mathcal{E}_{f}(\bX, \by)$ as $n, p \uparrow \infty$ and $p/n \to c$. To the best of our knowledge, this is the first closed-form expression for the limit of the conditional prediction risk of $\widehat{\bbeta}_f$ for a general function $f$ when ${\boldsymbol{\Sigma}}$ is a spiked covariance matrix. In Lemma~\ref{prop: general formula}, we show that
\begin{align*}
\mathcal{B}_{f}(\bX, \by)
&\xrightarrow[\ ]{\mathrm{a.s.}} \cB_f^* := 
r^2\sx^2
\int (1 - x f(x))^2 \, \mathrm{d}F_\alpha(x)
+
\sum_{j=1}^{\rk}
\delta_j \alpha_j^2
\left(
\int (1 - x f(x)) \, \mathrm{d}F_{\delta_j}(x)
\right)^2, \\
\mathcal{V}_{f}(\bX, \by)
&\xrightarrow[\ ]{\mathrm{a.s.}} \cV_f^* := 
 c\sx^2\, \sigma_\beps^2 
\int x f^2(x)\, \mathrm{d}F_{\mathrm{MP}}(x), \\
\mathcal{E}_{f}(\bX, \by)&\xrightarrow[\ ]{\mathrm{a.s.}} 0,
\end{align*}
with $(r, \alpha_j)$ defined as in Assumption \ref{assm:main}. Here $F_{\alpha}$ is a mixture distribution defined as: 
\begin{equation}
\label{eq:omega_js}
F_\alpha = \omega_0 F_{\mathrm{MP}} + \sum_{j=1}^{\rk} \omega_j F_{\delta_j}, \qquad \omega_0 = 1- \frac{\sum_{j=1}^{\rk}\alpha_j^2}{r^2}, \qquad \omega_j = \frac{\alpha_j^2}{r^2} \,,
\end{equation}
where $F_\delta$ is the distribution whose Stieltjes transform $m_\delta(z)$ is given by
\[
m_\delta(z)
=
\frac{\sx^2m(z)}{\sx^2+\delta+\delta z\,m(z)},
\qquad
z \in \mathbb{C}^+ \,.
\]
Here $m(z)$ is the Stieltjes transform of the Marchenko--Pastur law. In particular, $m_\delta(z)$ denotes the Stieltjes transform of the limiting spectral distribution of the sample covariance matrix when the observations are generated according to the rank-one spiked model, i.e., ${\boldsymbol{\Sigma}} = \sx^2\mathbf{I}_p + \delta \bv \bv^\top$. 

Upon establishing the preceding limit, we derive \(f_*^{\rm pred}\) by minimizing the limiting risk with respect to \(f\). As is evident from the expression above, the limiting prediction risk depends on \(f\) in a nonlinear manner, and therefore minimizing it with respect to \(f\) is non-trivial. Our key novelty lies in  introducing the Radon--Nikodym derivatives \(\nu_j\) of \(F_{\mathrm{MP}}\) with respect to the spiked probability measures \(F_{\delta_j}\):
\[
\nu_j(x):=\frac{\mathrm d F_{\mathrm{MP}}(x)}{\mathrm d F_{\delta_j}(x)}
=
\frac{(\delta_j+c\sx^2)(\delta_j+\sx^2) - \delta_j x}{c\sx^2\,(\delta_j + \sx^2\,)}.
\]
As a consequence, we have
\[
\frac{\mathrm d F_\alpha(x)}{\mathrm d F_{\rm MP}(x)}
=
\frac{\omega_0\,\mathrm d F_{\mathrm{MP}}+\sum_{j=1}^{\rk}\omega_j\,\mathrm d F_{\delta_j}}{\mathrm d F_{\rm MP}(x)}
=
\omega_0+\sum_{j=1}^{\rk}\omega_j\frac{\mathrm d F_{\delta_j}(x)}{\mathrm d F_{\rm MP}(x)}
=
\omega_0+\sum_{j=1}^{\rk}\frac{\omega_j}{\nu_j(x)},
\]
which is well defined whenever \(\delta_j\neq -1\). A crucial observation is that \(\nu_j(x)\) is linear in \(x\); this structural property plays a central role in our proof. Using \(\nu_j(x)\), we can further derive the Radon--Nikodym derivatives of \(\mathrm d F_{\delta_j}(x)\) and \(\mathrm d F_{\rm MP}(x)\) with respect to their mixture \(\mathrm d F_\alpha\), denoted by \(\mu_j(x)\) and \(\mu_0(x)\), respectively, in Appendix~\ref{section: RN derivs}.
This construction allows us to rewrite the limiting prediction risk in terms of integrals with respect to the common measure \(F_\alpha\), through the use of \(\mu_j(x)\) and \(\mu_0(x)\). Building on this representation, and after further algebraic manipulations, we are able to express the limiting risk of \(\widehat\bbeta_f\) as a quadratic functional of \(f\):
\begin{align}\label{eq: formula for pred risk in proof outline section}
\cB_f^*+\cV_f^*
=
\sx^2\,r^2\, + \sum_{j=1}^\rk \delta_j\alpha_j^2
+ \langle f,f\rangle_w
- 2\,\langle f,\,g\rangle_w
+ \sum_{j=1}^\rk\delta_j\alpha_j^2\,\langle f, h_j\rangle_w^2,
\end{align}
where \(w(x)\), $g(x)$ and \(\{h_j(x)\}_{1\le j\le \rk}\) are fixed functions depending only on \(\{\mu_j(x)\}_{1\le j\le \rk}\) and \(\mu_0(x)\), and $\langle, \rangle_w$ denotes the weighted $\ell_2$ inner product with the weight function being $x\mapsto xw(x)$.  Minimizing this quadratic form yields the closed-form expression of the minimizer \(f_*^{\rm pred}\), as defined in \eqref{def:f_pred_closed_form}. The details are deferred to Appendix~\ref{app: B}. Finally, in Lemma~\ref{lemma: f_ast is unique} we prove that \(f_*^{\rm pred}\) is not only an optimizer of the limiting risk over the function class \(\cF\), but in fact unique on $\mathcal{S}_c^+\setminus\{0\}$. 
\\\\
{\bf Step II: (Optimality of self-distillation)} Our next contribution is to show that \(f_*^{\rm pred}\) can be attained by an \(\rk\)-step self-distillation procedure. Moreover, we show that there exists a choice of \(\{(\delta_j,\bv_j)\}_{1\le j\le \rk}\) for which \(\rk\) steps of self-distillation are genuinely necessary to attain \(f_*^{\rm pred}\), in the sense that no procedure with fewer than \(\rk\) steps can recover \(\widehat\bbeta_{f_*^{\rm pred}}\), regardless of the choice of tuning parameters.
To this end, we first show in Lemma~\ref{lemma: polynomials P Q} that \(f_*^{\rm pred}\) is a rational function whose numerator has degree \(\rk\) and whose denominator has degree \(\rk+1\). We then use the general Lemma~\ref{lemma: lagrange interpolation} to prove that \(\rk\)-step self-distillation yields the shrinkage-based estimator \(\widehat\bbeta_{f_*^{\rm pred}}\) for a suitable choice of regularization and distillation weight parameters.

Combining these results, we conclude that $\rk$-step self-distillation yields 
a shrinkage estimator that uniquely minimizes the limiting prediction risk over 
our class of spectral shrinkage estimators. Furthermore, we prove that it 
strictly outperforms known estimators, including ridge regression, principal 
components regression, minimum-norm interpolation, and gradient descent with 
early stopping. The proofs of this strict 
dominance over the known estimators are deferred to Appendix~\ref{app: F} 
and~\ref{app: G}, respectively.

\subsection{Proof Outline of Theorem \ref{thm: product_of_shrinkage}}\label{proof outline federated}
In this subsection, we present a roadmap for the proof of Theorem \ref{thm: product_of_shrinkage}. The main challenge, and hence the primary technical novelty, lies in analyzing the limiting aggregated estimator $\widehat\bbeta_{\mathrm{agg}} = \sum_{\ell = 1}^K \rho_{\ell} \widehat\bbeta_\ell$, where \(\widehat\bbeta_\ell \equiv \widehat\bbeta_{f_\ell}\) is a shrinkage-based estimator constructed locally on server \(\ell\) using the shrinkage function \(f_\ell \in \mathcal F\).

\medskip
\noindent 
{\bf Step I: }As before, we begin with the decomposition of \(\widehat\bbeta_{\mathrm{agg}}\):

\begin{align*}
\|\widehat\bbeta_{\mathrm{agg}} - \bbeta_0\|_{\boldsymbol{\Sigma}}^2
&=\mathbb{E}\left[
\|\widehat\bbeta_{\mathrm{agg}} - \bbeta_0\|_{\boldsymbol{\Sigma}}^2
\;\middle|\; \mathbf{X}_{1:K}
\right] + \mathcal{E}^{\mathrm{agg}}
\end{align*}
where $\mathcal{E}^{\mathrm{agg}} \xrightarrow[]{\text{a.s.}} 0$ and 
\begin{align*}
\mathbb{E}\left[
\|\widehat\bbeta_{\mathrm{agg}} - \bbeta_0\|_{\boldsymbol{\Sigma}}^2
\;\middle|\; \mathbf{X}_{1:K}
\right]
&=
\frac{\sx^2}{K^2}\bbeta_0^\top \left(\sum_{\ell=1}^K g_\ell(\widehat{\boldsymbol{\Sigma}}_{\ell})\right)^2 \bbeta_0
+
\sum_{j=1}^{\rk}\frac{\delta_j}{K^2}\,
\left(\bbeta_0^\top \sum_{\ell=1}^K g_\ell(\widehat{\boldsymbol{\Sigma}}_{\ell}) \bv_j\right)^2 \\
&\hspace{15em}
+ \sum_{\ell = 1}^{K} \frac{\sigma_\beps^2}{K^2 n_\ell}
\operatorname{tr}\left({\boldsymbol{\Sigma}} \widehat{\boldsymbol{\Sigma}}_{\ell} \tilde f_\ell(\widehat{\boldsymbol{\Sigma}}_{\ell})^2\right) \\
&= \mathcal{B}_{0}^{\mathrm{agg}} + \sum_{j=1}^{\rk} \mathcal{B}_{j}^{\mathrm{agg}} + \sum_{\ell=1}^K \mathcal{V}_{\ell}^{\mathrm{agg}},
\end{align*}
where $g_\ell(x) = K \rho_{\ell} x f_\ell(x) - 1$. Therefore, in order to derive the limit of the prediction error of \(\widehat\bbeta_{\mathrm{agg}}\), it suffices to derive the limits of \(\mathcal{B}_{0}^{\mathrm{agg}}\), \(\mathcal{B}_{j}^{\mathrm{agg}}\), and \(\mathcal{V}_{\ell}^{\mathrm{agg}}\).

\medskip

\noindent
{\bf Step II:}
The error term $\mathcal{E}^\mathrm{agg}$ is negligible by the same concentration arguments used in the previous section. The derivations of the limits of \(\mathcal{B}_{j}^{\mathrm{agg}}\) and \(\mathcal{V}_{\ell}^{\mathrm{agg}}\) are similar to those in the proof of Proposition~\ref{prop: general formula}. However, an additional technical difficulty, and consequently the main novelty of our argument, arises in deriving the limit of \(\mathcal{B}_{0}^{\mathrm{agg}}\). Indeed, from its definition, \(\mathcal{B}_{0}^{\mathrm{agg}}\) contains cross terms of the form $\bbeta_0^\top g_\ell(\widehat{\boldsymbol{\Sigma}}_{\ell}) g_{\ell'}(\widehat{\boldsymbol{\Sigma}}_{\ell'}) \bbeta_0$ for $\ell \neq \ell' \in [K]$. 
As a result, novel techniques are required to establish their limit. This is precisely the role of Theorem~\ref{lemma:non-free limit}. The final step is analogous to Step~II of Section \ref{sec:outline_main}. Namely, we minimize the limiting prediction risk with respect to
$\{(f_1,\rho_1),\dots,(f_K,\rho_K)\}$. This optimization problem can be thought of as a multivariate generalization of the optimization problem elaborated in Step II of Section \ref{sec:outline_main} and consequently is significantly more involved to establish.

\section{Discussion}\label{sec:discussion}
We establish for spiked covariance matrices that self-distillation achieves optimal prediction error among spectral shrinkage estimators. Notably, for $s$ spikes, $s$ steps of self-distillation are necessary and sufficient, suggesting that each distillation step captures or corrects for a distinct spike direction in the covariance. 
The main theorems are stated under mild nondegeneracy assumptions (Assumption \ref{assm:main}), but several boundary regimes, when the assumption is violated, are also worth understanding. For example, when $\omega_0=0$, then $\beta_0$  belongs to the span of the spike directions $\{\bv_1, \cdots, \bv_{\rk}\}$. In this case, or if two spike strengths $(\delta_i, \delta_j)$ satisfy $\delta_i \delta_j = c\sigma_0^4$, self-distillation still yields the optimal shrinkage, but the number of distillation steps required can be strictly smaller than $\rk$ and/or we possibly need to start from the $0$ estimator instead of a ridge regularized estimator. Similarly, if multiple spikes have the same magnitude, then the optimal number of steps would be $\ell$, the number of distinct spike strengths. Thus, the relevant complexity is not always the raw number of spikes, but rather an effective number of spectral degrees of freedom. Our arguments extend to these settings, but the bookkeeping becomes substantially more involved without introducing new intuition, thus we refrain from presenting these here.

Several outstanding questions remain following our paper.  
A natural direction is to determine whether the optimal shrinkage rule admits a Bayesian interpretation (e.g., whether it is Bayes-optimal under a suitable prior) and can be recovered through an iterated Bayesian updating procedure. While our theory establishes optimality within the spectral shrinkage class, it remains important to clarify what makes the self-distillation parametrization statistically or algorithmically preferable to directly fitting a general rational shrinkage function. Finally, extending our framework beyond the current $\ell_2$-regularized linear model to more general settings --- such as $\ell_1$- or $\ell_p$-regularized models, more general convex penalized linear or generalized linear models, random-feature models, or simple neural networks --- would help determine how universal the optimality of the self-distillation phenomenon is. Our simulations for $\ell_1$-penalized regression already suggest that self-distillation can improve upon the corresponding standard estimator, but developing a complete theoretical understanding in this setting remains an interesting open problem for future work.

\section{Acknowledgements}
The authors would like to thank Qiyang Han, Lucas Janson, Tracy Ke, Giovanni Parmigiani, and Souhardya Sengupta for helpful discussions. P.S.~gratefully acknowledges support from the National Science Foundation (DMS CAREER 2440824) and the Office of Naval Research (N00014-26-1-2144). D.M. gratefully acknowledges support from the National Science Foundation (NSF DMS 2515787). 
\bibliographystyle{unsrtnat}
\bibliography{Refs}

\newpage
\begin{center}
{\Large \textbf{Appendix}}
\end{center}
We include a proof dependency chart for convenience. Section~\ref{app: A} 
sets up the preliminaries needed throughout the rest of the appendix.  Proofs of the main results are presented in Section~\ref{app: B} and onward.  We do not include Theorem~\ref{thm: main_thm est} in the chart, as its proof 
follows by similar methods to that of Theorem~\ref{thm: main}. 
We refer the reader to Appendix~\ref{app: H} for details.
\begin{figure}[H]
\centering

\resizebox{\textwidth}{!}{%
\begin{forest}
  for tree={
    draw,
    rounded corners   = 3pt,
    align             = center,
    font              = \small\sffamily,
    minimum width     = 1.5cm,
    minimum height    = 0.65cm,
    inner sep         = 3pt,
    l sep             = 1.5cm,
    s sep             = 0.4cm,
    edge              = {stealth-, thick},
    tier/.wrap pgfmath arg={tier #1}{level()},
  }
  [, phantom, s sep = 1.5cm
    [3.4, fill=thmcolor
      [{3.4 (a)}, fill=thmcolor, name=thm34a
        [D.2, fill=lemcolor]
        [{3.3}, fill=mainlemcolor]
      ]
      [{3.4 (b)}, fill=thmcolor
        [{E.1--E.5}, fill=lemcolor]
      ]
    ]
    [3.1, fill=thmcolor
      [{3.1 (c)}, fill=thmcolor
        [D.1, fill=lemcolor, name=lemC1]
      ]
      [{3.1 (b)}, fill=thmcolor
        [B.3, fill=lemcolor, name=lemB3, calign=first
  [B.1, fill=lemcolor, name=lemB1]
  [{3.2}, fill=mainlemcolor, name=lem32, edge={draw=none},  xshift=3cm]
]
      ]
      [{3.1 (a)}, fill=thmcolor, name=thm31a
        [B.4, fill=lemcolor]
      ]
    ]
    [3.5, fill=thmcolor
      [{3.5 (a)}, fill=thmcolor, name=thm35a
        [F.1, fill=lemcolor]
      ]
      [{3.5 (b)}, fill=thmcolor, name=thm35b
        [F.2, fill=lemcolor]
      ]
      [{3.5 (c)}, fill=thmcolor, name=thm35c]
      [{3.5 (d)}, fill=thmcolor
        [G.1, fill=lemcolor, name=lemF1]
        [G.2, fill=lemcolor]
      ]
      [{3.5 (e)}, fill=thmcolor, name=thm35e
        [G.4, fill=lemcolor]
        [G.5, fill=lemcolor]
      ]
    ]
  ]
  \draw[-stealth, thick] (lemC1.north west) to[out=130, in=310] (thm34a.south east);
  \draw[-stealth, thick] (lemB3.north east) -- ([xshift=-5pt]thm31a.south);
  \draw[-stealth, thick] (lemB1.east) -- (lem32.west);
  \draw[-stealth, thick] (thm31a.north) 
      to[out=85, in=95, looseness=1.2] (thm35a.north);
  \draw[-stealth, thick] (thm31a.north) 
      to[out=87, in=93, looseness=1.2] (thm35b.north);
  \draw[-stealth, thick] (thm31a.north) 
      to[out=89, in=91, looseness=1.2] (thm35c.north);
  \draw[-stealth, thick] (lemF1.north east) 
      to[out=50, in=230] (thm35e.south west);
\end{forest}
}

\vspace{3em}


\resizebox{0.25\textwidth}{!}{%
\begin{forest}
  for tree={
    draw,
    rounded corners   = 3pt,
    align             = center,
    font              = \small\sffamily,
    minimum width     = 1.5cm,
    minimum height    = 0.65cm,
    inner sep         = 3pt,
    l sep             = 1.5cm,
    s sep             = 0.4cm,
    edge              = {stealth-, thick},
    tier/.wrap pgfmath arg={tier #1}{level()},
  }
  [3.8, fill=thmcolor
    [{I.5}, fill=lemcolor
      [I.4, fill=lemcolor
        [{I.1--I.3}, fill=lemcolor]
        [{3.9}, fill=thmcolor]
      ]
    ]
    [I.6, fill=lemcolor]
  ]
\end{forest}
}
\vspace{2em}
 \newcommand{\legendbox}[2]{\tikz\node[draw, rounded corners=3pt, fill=#1,
   minimum width=1.4cm, minimum height=0.5cm, font=\small\sffamily]{#2};}
\legendbox{thmcolor}{Theorem}\quad
\legendbox{lemcolor}{Appendix Lemma}\quad
\legendbox{mainlemcolor}{Main Lemma}

\caption{Proof dependency. Arrows point from results to the results they prove.}
\label{fig:proof-tree-all}
\end{figure}

\newpage
\appendix
\section{Preliminaries}\label{app: A}
\subsection{Stieltjes Transforms}\label{section: stieltjes}
In this section, we collect the random matrix tools used in the proofs. For any finite signed measure \(\mu\) on \(\mathbb R_+\), define its Stieltjes transform by
\[
m_\mu(z):=\int \frac{1}{x-z}\,d\mu(x),\qquad z\in\mathbb C^+.
\]
In particular, for Marchenko--Pastur (MP) law \(F_{\mathrm{MP}}\) with aspect ratio \(c\) and scale $\sx$ \cite{silverstein1995analysis}, we use the notation $m_{\rm MP}(z)$ to denote its Stieltjes transform:
\[
m_{\mathrm{MP}}(z):=m(z)=\int \frac{1}{x-z}\,\mathrm dF _{\mathrm{MP}}(x). 
\]
It is well known that \(m\) extends analytically to \(\mathbb C\setminus\mathbb R_+\) and admits the representation
\begin{equation}\label{eq:square root rep of m(z)}
    m(z)=\frac{\sx^2(1-c)-z+\sqrt{(z-a)(z-b)}}{2cz\sx^2},
\end{equation}
where $a:=\sx^2(1-\sqrt c)^2$, $b:=\sx^2(1+\sqrt c)^2$, and the branch of the square root is chosen so that \(\Im\sqrt{(z-a)(z-b)}>0\) for \(\Im z>0\). Furthermore, we use the notation $\cS_c:=[a,b]$ to denote the bulk support of the MP law. For \(x\in(a,b)\), the boundary values \(m(x):=\lim_{\eta\downarrow 0}m(x+i\eta)\) recover the Marchenko--Pastur density through
\[
f_{\mathrm{MP}}(x)=\frac{1}{\pi}\Im m(x)
=\frac{1}{2\pi\sx^2}\frac{\sqrt{(b-x)(x-a)}}{ c x}
\]
and when \(c>1\), \(F_{\mathrm{MP}}\) has an additional atom at \(0\) of mass \(1-\frac{1}{c}\). 

\medskip
\noindent
We will also use the companion transform
\begin{equation}
\label{eq:comp_st_1}
\underline m(z):=-\frac{1-c}{z}+c\,m(z),
\end{equation}
which is the Stieltjes transform of the companion Marchenko--Pastur law. It is characterized as the unique solution in \(\mathbb C^+\) of
\[
z=-\frac{1}{\underline m(z)}+\frac{c\sx^2}{1+\sx^2\, \underline m(z)},
\]
or equivalently,
\begin{equation}\label{eq: fixed point of companion}
    z\sx^2\,\underline m(z)^2+\bigl(z-\sx^2(c-1)\bigr)\underline m(z)+1=0.
\end{equation}
The following identity will be used in our proofs: 
\begin{lemma}
\label{lemma: companion bash}
Let \(\underline m\) be the companion Stieltjes transform of the Marchenko--Pastur law with aspect ratio \(c\) and scale $\sx$. For \(x\in\mathrm{int}(\cS_c)\), define the boundary value
\[
\underline m(x):=\lim_{\eta\downarrow 0}\underline m(x+i\eta),
\]
so that \(\Im \underline m(x)>0\). Then $|\underline m(x)|^2 = 1/(\sx^2 x)$. 
\end{lemma}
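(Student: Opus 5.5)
We need to prove that $|\underline m(x)|^2 = 1/(\sx^2 x)$ for $x$ in the interior of the bulk. The plan is to use the quadratic fixed-point equation \eqref{eq: fixed point of companion}, which has real coefficients.

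First we pass to the boundary. The equation
\[
z\sx^2\underline m(z)^2+(z-\sx^2(c-1))\underline m(z)+1=0
\]
holds for every $z\in\mathbb C^+$. Both sides are continuous up to the boundary, since $\underline m$ has continuous boundary values on $\mathrm{int}(\cS_c)$ by the explicit square-root representation \eqref{eq:square root rep of m(z)} together with \eqref{eq:comp_st_1}. Letting $\eta\downarrow 0$ therefore shows that $\underline m(x)$ solves the real-coefficient quadratic
\[
x\sx^2 u^2+(x-\sx^2(c-1))u+1=0 .
\]

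Next we use the fact that the coefficients are real. Because $x>0$ and $\Im\underline m(x)>0$, the root $\underline m(x)$ is non-real. A real-coefficient quadratic with a non-real root has the complex conjugate $\overline{\underline m(x)}$ as its other root. By Vieta's formula, the product of the two roots equals the constant term over the leading coefficient:
\[
\underline m(x)\,\overline{\underline m(x)} = \frac{1}{\sx^2 x}.
\]
This is exactly $|\underline m(x)|^2 = 1/(\sx^2 x)$.

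The only point needing care is the justification that $\Im\underline m(x)>0$ on the open bulk. This follows from $\underline m(x) = -(1-c)/x + c\,m(x)$, since the first term is real and $\Im m(x)=\pi f_{\mathrm{MP}}(x)>0$ for $x\in(a,b)$. Equivalently, the discriminant $(x-\sx^2(c-1))^2-4\sx^2x = (x-a)(x-b)$ is negative there, which independently confirms that both roots are non-real and conjugate. No step is a genuine obstacle beyond this boundary-continuity remark.
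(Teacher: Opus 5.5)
Your proof is correct and is essentially the paper's argument: pass the real-coefficient quadratic to the boundary point, use $\Im \underline m(x) > 0$ to identify $\overline{\underline m(x)}$ as the other root, and apply Vieta to get $|\underline m(x)|^2 = 1/(\sx^2 x)$. Your boundary-continuity remark and the discriminant identity $(x-\sx^2(c-1))^2 - 4\sx^2 x = (x-a)(x-b) < 0$ are valid justifications that the paper leaves implicit.
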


\begin{proof}
Let \(u:=\underline m(x)\). Evaluating \eqref{eq: fixed point of companion} at \(z=x\) gives
\[
x\sx^2\,u^2+\bigl(x-\sx^2(c-1)\bigr)u+1 = 0.
\]
Since \(x\in(\sx^2(1-\sqrt c)^2,\sx^2(1+\sqrt c)^2)\), we have \(\Im \underline m(x) > 0\), so by equation~\eqref{eq:comp_st_1} we have \(\Im u > 0\).
We deduce that \(u\notin\mathbb R\), and hence \(\bar u\) is the second root of the same quadratic. By Vieta's formula, $u\bar u = 1/(\sx^2 x)$, which proves the claim.
\end{proof}

\subsection{Limiting Spectral Distribution of the Spiked Covariance} 
\label{section: spectral_dis_sigma}
Recall that (Assumption \ref{assm:dgp}) we assume that $\cov(X) = \bSigma$ is a spiked covariance matrix of $\rk$ many spikes: 
$$
{\boldsymbol{\Sigma}} = \sx^2\mathbf{I}_p + \sum_{j=1}^\rk \delta_j \bv_j \bv_j^\top \,.
$$
The following proposition establishes that the limit of the spectral distribution of $\bSigma$, as $p \uparrow \infty$, and the limiting spectral distribution (LSD) of the sample covariance matrix $\hat \bSigma = \bX^\top \bX/n$ (as $n, p \uparrow \infty$ and $p/n \to c$) does not depend on the number and the direction of the spikes:  
\begin{proposition}\label{prop: esd MP}
Let ${\boldsymbol{\Sigma}} = \sx^2\mathbf{I}_p + \sum_{j=1}^\rk \delta_j \bv_j \bv_j^\top$ be a rank $\rk$ perturbation of the isotropic model. Let $\mathfrak d_1,\dots,\mathfrak d_p$ be the eigenvalues of ${\boldsymbol{\Sigma}}$ and $d_1, \dots, d_p$ be the eigenvalues of $\hat \bSigma$. Define the empirical spectral distribution (ESD) $\widehat H_n(x)$ of $\bSigma$ and ESD $\widehat F_{\widehat{\boldsymbol{\Sigma}}}$ of $\hat \bSigma$ as: 
\[
\widehat H_n(x):=\frac{1}{p}\sum_{i=1}^p \mathbf{1}\{\mathfrak d_i\le x\}, \qquad \widehat F_{\widehat{\boldsymbol{\Sigma}}} :=\frac{1}{p}\sum_{i=1}^p \mathbf{1}\{d_i\le x\}.
\]
Then $\widehat H_n \xrightarrow{d} \mathbf{1}\{\sx^2\leq x\}$, and $F_{\widehat {\boldsymbol{\Sigma}}} \xrightarrow[]{d} F_{\mathrm{MP}}$ almost surely in the weak topology. Equivalently, for every bounded continuous function $f$,
\[
\int f\,d\widehat H_n \ \xrightarrow{n \to \infty}\ f(\sx^2) \qquad \text{a.s.}
\]
\[
\int f\,d \widehat F_{\widehat{\boldsymbol{\Sigma}}} \ \xrightarrow{n \to \infty}\int f\,d F_{\mathrm{MP}} \qquad \text{a.s.}
\]
\end{proposition}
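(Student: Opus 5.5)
The plan has two parts, one for each assertion of Proposition~\ref{prop: esd MP}: first the population ESD, then the sample ESD.

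\textbf{Population ESD.} Since $\bv_1,\dots,\bv_\rk$ are orthonormal, $\bSigma$ has eigenvalue $\sx^2$ with multiplicity $p-\rk$ and eigenvalues $\sx^2+\delta_j$ for $j\le \rk$. Hence for any bounded continuous $f$,
\[
\Bigl|\int f\,d\widehat H_n - f(\sx^2)\Bigr| = \frac{1}{p}\Bigl|\sum_{j=1}^{\rk}\bigl(f(\sx^2+\delta_j)-f(\sx^2)\bigr)\Bigr| \le \frac{2\rk\|f\|_\infty}{p}\to 0,
\]
because $\rk$ is fixed. This is deterministic, so the almost-sure qualifier is trivial.

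\textbf{Sample ESD: strategy.} I will compare $\widehat\bSigma$ with the isotropic matrix $\widehat\bSigma_0 := \sx^2 \bZ^\top\bZ/n$. The Marchenko--Pastur theorem (in the version of \cite{silverstein1995analysis}, requiring only i.i.d.\ entries with mean zero and unit variance, which Assumption~\ref{assm:dgp} covers) gives $F_{\widehat\bSigma_0}\to F_{\mathrm{MP}}$ weakly almost surely.

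\textbf{Sample ESD: low-rank comparison.} Write $\bSigma^{1/2} = \sx\bI_p + \bP$, where $\bP=\sum_j(\sqrt{\sx^2+\delta_j}-\sx)\bv_j\bv_j^\top$ has rank at most $\rk$. The sample covariance matrix is
\[
\widehat\bSigma = \bSigma^{1/2}\frac{\bZ^\top\bZ}{n}\bSigma^{1/2}.
\]
Its nonzero eigenvalues coincide with those of $\bZ\bSigma\bZ^\top/n$. Moreover,
\[
\bZ\bSigma\bZ^\top/n = \sx^2\bZ\bZ^\top/n + \bZ\Bigl(\textstyle\sum_j\delta_j\bv_j\bv_j^\top\Bigr)\bZ^\top/n,
\]
and the second term has rank at most $\rk$. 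Alternatively, one can work directly at the $p\times p$ level, where $\widehat\bSigma-\widehat\bSigma_0$ has rank at most $2\rk$.

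\textbf{Sample ESD: rank inequality.} The standard rank inequality (e.g.\ Bai--Silverstein, Theorem A.43) states that
\[
\|F_{\bA}-F_{\bB}\|_\infty\le \operatorname{rank}(\bA-\bB)/p
\]
for Hermitian $\bA,\bB$. Applied here it gives $\sup_x|F_{\widehat\bSigma}(x)-F_{\widehat\bSigma_0}(x)|\le 2\rk/p\to 0$ surely. Uniform convergence of distribution functions to zero, combined with the almost-sure weak convergence of $F_{\widehat\bSigma_0}$, yields $F_{\widehat\bSigma}\to F_{\mathrm{MP}}$ weakly almost surely. The stated equivalence with convergence of integrals of bounded continuous $f$ is then just the definition of weak convergence, via Portmanteau.

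\textbf{Main obstacle.} There is essentially none. The only care needed is to invoke the rank inequality correctly, using the $p\times p$ identity $\widehat\bSigma-\widehat\bSigma_0 = \bP\bM+\bM\bP+\bP\bM\bP$ with $\bM=\bZ^\top\bZ/n$, which has rank at most $2\rk$. One should also confirm that the cited MP theorem needs no more than finite second moments, which holds.
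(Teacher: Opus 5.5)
Your argument is correct. The population-ESD part matches the paper's computation, but for the sample ESD you take a different route.

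\textbf{The paper's route.} It first shows $\widehat H_n \to \mathbf{1}\{\sx^2\le x\}$. It then uses this as the input hypothesis of the general Marchenko--Pastur/Silverstein theorem for $\boldsymbol{\Sigma}^{1/2}\bZ^\top\bZ\boldsymbol{\Sigma}^{1/2}/n$, which allows any population covariance whose ESD converges to some $H$. Taking $H=\mathbf{1}\{\sx^2\le x\}$ gives the standard MP law, so the population computation is exactly the hypothesis check for that theorem.

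\textbf{Your route.} You make the two claims logically independent. Set $\boldsymbol{\Sigma}^{1/2}=\sx\bI_p+\mathbf{P}$ with $\operatorname{rank}\mathbf{P}\le\rk$, and $\mathbf{M}=\bZ^\top\bZ/n$. Then $\widehat{\boldsymbol{\Sigma}}-\sx^2\mathbf{M}=\sx\mathbf{M}\mathbf{P}+\mathbf{P}\mathbf{M}\boldsymbol{\Sigma}^{1/2}$, which has rank at most $2\rk$. The rank inequality then reduces everything to the isotropic i.i.d.\ MP theorem.

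There is a minor slip in your displayed identity: the cross terms should carry a factor $\sx$, giving $\sx(\mathbf{P}\mathbf{M}+\mathbf{M}\mathbf{P})+\mathbf{P}\mathbf{M}\mathbf{P}$. The rank bound is unaffected.

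\textbf{Comparison.} Your approach needs only the classical isotropic theorem plus a deterministic linear-algebra inequality. It also gives an explicit bound, holding surely, $\sup_x|F_{\widehat{\boldsymbol{\Sigma}}}(x)-F_{\sx^2\mathbf{M}}(x)|\le 2\rk/p$. This makes transparent why finitely many, or indeed $o(p)$, spikes of arbitrary strength leave the limiting spectrum unchanged. The paper's approach is a one-line appeal to a heavier theorem, and it fits the paper's later use of deterministic equivalents for general covariance.
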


\begin{proof}
By definition, the eigenvalues of ${\boldsymbol{\Sigma}}$ are
\[
\mathfrak d_j = \sx^2+\delta_j,\quad j=1,\dots,\rk,
\qquad
\mathfrak d_{\rk+1}=\cdots=\mathfrak d_p=\sx^2.
\]
For any bounded continuous \(f\),
$$
\int f(x)\,d\widehat H_n(x)
= \frac1p\sum_{i=1}^p f(\mathfrak{d}_i)
= \frac1p\sum_{j=1}^{\rk} f(\sx^2+\delta_j) + \left(1-\frac{\rk}{p}\right)f(\sx^2).
$$
Thus equivalently, $$
\int f(x)\,d\widehat H_n(x) - f(\sx^2)
=  \frac1p\sum_{j=1}^{\rk} f(\sx^2+\delta_j) - \frac{\rk}{p}f(\sx^2).
$$
Since \(\rk/p\to 0\), and $f$ is bounded, then
\(\int f\,d\widehat H_n \to f(\sx^2)\) i.e., the ESD is asymptotically the same as that of \(\sx^2\mathbf{I}_p\).  In particular, we can use the Marchenko-Pastur theorem \cite{MarchenkoPastur1967, Silverstein1995} to conclude that 
$
\widehat F_{\widehat {\boldsymbol{\Sigma}}} \xrightarrow[]{d} F_{\mathrm{MP}}.
$
\end{proof}

\subsection{Deterministic Equivalents}
\label{section: det equivalents}
In this section, we recall the notion of 
\emph{deterministic equivalents}, where we approximate a random matrix by a deterministic one, which is asymptotically close to the random matrix when tested against a broad class of matrices (see Chapter 2 of \cite{couillet2022random} for a detailed discussion). This perspective is very common in modern random matrix theory and its applications to statistical problems; see, e.g., \cite{RubioMestre2011SpectralConvergence, knowles2017anisotropic, deterministic_calculus, patil2023baggingoverparameterizedlearningrisk, hastie2022surprises}. 

\begin{definition}[Deterministic equivalence]
\label{def:DE}
Let $\bA_n$ and $\bB_n$ be (possibly non-symmetric and random) matrix sequences of growing dimension. We write $\bA_n \asymp \bB_n$ if for every deterministic sequence of test matrices $\boldsymbol{\Theta}_n$ with uniformly bounded trace norm $\|\boldsymbol{\bTheta}_n\|_\ast := \tr(\sqrt{\bTheta^\top\bTheta})$  ,
\[
\tr\big(\boldsymbol{\Theta}_n(\bA_n-\bB_n)\big)\longrightarrow 0
\qquad\text{(almost surely, as }n\to\infty\text{)}.
\]
\end{definition}
\noindent
Under Assumption~\ref{assm:dgp}, we can use Theorem 1 in \cite{RubioMestre2011SpectralConvergence}, to infer that
\begin{equation}\label{eq:determinsitic_equivalent formula}
\big(\widehat{{\boldsymbol{\Sigma}}}-zI\big)^{-1} \asymp \big(a_n {\boldsymbol{\Sigma}} - z\mathbf{I}_p\big)^{-1},
\qquad \text{for } z \in \mathbb{C}\setminus \mathbb{R}_+,
\end{equation}
where $a_n := a_n(z)$ is the unique solution of the fixed-point equation:
\begin{equation}\label{eq:fixed_point a}
\frac{n}{p}\left(\frac{1}{a_n}-1\right)
= \frac{1}{p}\,\tr\left[{\boldsymbol{\Sigma}}\big(a_n{\boldsymbol{\Sigma}} - z\mathbf{I}_p\big)^{-1}\right].
\end{equation}
The following Lemma characterizes the limit of $a_n$ when ${\boldsymbol{\Sigma}}$ is an $\rk$-spiked model. This will be very useful when analyzing deterministic equivalents. 

\begin{lemma}\label{lemma: a(z) = z + 1/m(z)}
Consider ${\boldsymbol{\Sigma}} = \sx^2\mathbf{I}_p + \sum_{j=1}^\rk \delta_j \bv_j \bv_j^\top$ to be an $\rk$-spiked covariance. For all $z \in \mathbb{C} \setminus\mathbb{R}_+$ let $a_n(z)$ be the unique solution of equation~\ref{eq:fixed_point a}. Then, there is a finite limit $a(z):= \lim_{n} a_n(z)$ and $a(z)$ satisfies $$a(z) =  \frac{z}{\sx^2} + \frac{1}{\sx^2\,m(z)} = -z\underline m(z).$$
\end{lemma}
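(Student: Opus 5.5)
We take the limit in the fixed-point equation \eqref{eq:fixed_point a}, identify the limiting scalar equation, and then check that $-z\underline m(z)$ is its correct root.

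First we compute the trace on the right-hand side explicitly using the spectrum of ${\boldsymbol{\Sigma}}$ from Proposition~\ref{prop: esd MP}. Since ${\boldsymbol{\Sigma}}$ has eigenvalue $\sx^2$ with multiplicity $p-\rk$ and eigenvalues $\sx^2+\delta_j$ otherwise,
\[
\frac1p\tr\bigl[{\boldsymbol{\Sigma}}(a{\boldsymbol{\Sigma}}-z\mathbf{I}_p)^{-1}\bigr]
=\Bigl(1-\frac{\rk}{p}\Bigr)\frac{\sx^2}{a\sx^2-z}+\frac1p\sum_{j=1}^{\rk}\frac{\sx^2+\delta_j}{a(\sx^2+\delta_j)-z}.
\]
For $z\in\mathbb{C}\setminus\mathbb{R}_+$ the denominators stay bounded away from zero. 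Hence the spike contributions are $O(1/p)$, and with $n/p\to 1/c$ the fixed-point equation converges, uniformly on compact sets of $a$ in the admissible region, to
\[
\frac1c\Bigl(\frac1a-1\Bigr)=\frac{\sx^2}{a\sx^2-z},
\qquad\text{equivalently}\qquad (a\sx^2-z)(1-a)=c\,a\sx^2 .
\]

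Next we verify that $a=-z\underline m(z)$ solves this quadratic. Substituting $a=-zu$ and dividing by $-z$ gives $(u\sx^2+1)(1+zu)=cu\sx^2$. Expanding yields $z\sx^2u^2+(z-\sx^2(c-1))u+1=0$, which is exactly the companion equation \eqref{eq: fixed point of companion}. The second form of the claim follows from the definition \eqref{eq:comp_st_1}, namely $-z\underline m=(1-c)-czm$. We then show $(1-c)-czm=z/\sx^2+1/(\sx^2 m)$ by multiplying through by $\sx^2 m$ and using the Marchenko--Pastur quadratic $cz\sx^2m^2-(\sx^2(1-c)-z)m+1=0$, which is implied by \eqref{eq:square root rep of m(z)}. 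This is routine algebra.

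The main obstacle is the convergence of $a_n$ itself and the choice of branch, since the limiting quadratic has two roots. The first step is boundedness. We use the standard representation of the Rubio--Mestre solution, $a_n(z)=1/\bigl(1+\tfrac1n\tr{\boldsymbol{\Sigma}}(a_n{\boldsymbol{\Sigma}}-z\mathbf{I}_p)^{-1}\bigr)$, which holds by rearranging \eqref{eq:fixed_point a}. Up to the factor $-z$, $a_n(z)$ is then a finite-$n$ companion Stieltjes-type function. From this we obtain a bound $|a_n(z)|\le C(z)$ together with a sign constraint; we would use $\Im(a_n(z))\,\Im(z)\ge 0$, or the analogous Nevanlinna property of $-a_n/z$, inherited from \cite{RubioMestre2011SpectralConvergence}.

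With boundedness in hand, we argue by compactness. Every subsequential limit of $a_n(z)$ solves the limiting quadratic and satisfies the same sign constraint. This constraint selects exactly one root, namely the one for which $\underline m=-a/z$ is a Stieltjes transform, i.e.\ $\Im\underline m>0$ on $\mathbb{C}^+$. Since $\underline m$ is characterized as the unique solution of \eqref{eq: fixed point of companion} in $\mathbb{C}^+$, every subsequential limit equals $-z\underline m(z)$. Therefore the full sequence converges to $a(z)=-z\underline m(z)$.

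For $z$ on the negative real axis we use either continuity in $z$ or the normalization $a(z)\to 1$ as $z\to-\infty$ to fix the root. If the sign argument is delicate there, we can instead use pointwise convergence on $\mathbb{C}^+$ together with uniform boundedness and Vitali's theorem to extend the limit analytically to all of $\mathbb{C}\setminus\mathbb{R}_+$.
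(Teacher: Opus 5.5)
Your proposal is correct in substance, but it argues differently from the paper.

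\textbf{How the paper argues.} The paper never takes limits in the fixed-point equation \eqref{eq:fixed_point a} itself. It applies the deterministic equivalent with test matrix $\mathbf{I}_p/p$, so $\frac1p\operatorname{tr}(a_n\boldsymbol{\Sigma}-z\mathbf{I}_p)^{-1}$ must converge to the already known Marchenko--Pastur transform $m(z)$ (Proposition~\ref{prop: esd MP}). The spike terms are $O(1/p)$, and boundedness of $a_n$ follows from $m(z)\neq 0$. Every subsequential limit then satisfies $m(z)=1/(\sigma_0^2a-z)$. This is a M\"obius relation with exactly one solution $a$, so no root has to be selected: the branch is already encoded in $m$. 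All of $\mathbb{C}\setminus\mathbb{R}_+$ is handled at once, and the identity $a=-z\underline m$ is then pure algebra.

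\textbf{How your route compares.} Your route is purely deterministic and does not need the convergence of the spectral distribution of $\widehat{\boldsymbol{\Sigma}}$. The price is a limiting quadratic with two roots. That forces you to import three things: the Stieltjes structure of the finite-$n$ solution $a_n=-z\underline m_n$, the uniqueness characterization of $\underline m$, and a separate treatment of $z<0$.

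\textbf{A sign error to fix.} The first sign constraint you list is backwards. If $\underline m_n$ is the Stieltjes transform of a probability measure $\mu_n$ on $\mathbb{R}_+$, then for $\Im z>0$ we have $\Im(z\underline m_n)=\int x\,\Im z\,|x-z|^{-2}\,\mathrm{d}\mu_n(x)\ge 0$. Hence $\Im a_n(z)\le 0$, not $\ge 0$. The Rubio--Mestre condition $\Im e\,\Im z>0$ concerns $e_n=\frac1p\operatorname{tr}\boldsymbol{\Sigma}(a_n\boldsymbol{\Sigma}-z\mathbf{I}_p)^{-1}$, and $a_n=1/(1+\frac pn e_n)$ reverses the sign. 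Used as you wrote it, your constraint would select the wrong root $-1/(\sigma_0^2\underline m)$, whose imaginary part is strictly positive.

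With the correct sign, or equivalently with your alternative that $-a_n/z$ is a Stieltjes transform of a measure on $\mathbb{R}_+$, everything closes:
\begin{itemize}
\item For every $t>0$ you get $|a_nt-z|\ge\Im z$. This is what actually justifies your claim that the denominators in the trace stay away from zero.
\item The closed condition survives passage to subsequential limits.
\item The other root is excluded strictly, since $\Im\bigl(1/(z\sigma_0^2\underline m)\bigr)<0$ follows from $\Im(z\underline m)>0$.
\end{itemize}

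On the negative real axis the quickest selector is positivity. There $a_n(z)=-z\underline m_n(z)>0$, while the two real roots of the limiting quadratic have product $z/\sigma_0^2<0$, so exactly one is positive.
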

Finally, the following property will be useful in the computation of the deterministic equivalent of the spiked covariance matrix: 
\begin{proposition}
\label{lem:resolvent-spiked}
Let ${\boldsymbol{\Sigma}} = \sx^2\mathbf{I}_p + \sum_{j=1}^\rk \delta_j \bv_j \bv_j^\top$ be an $\rk$-spiked covariance.
\[
(a_n{\boldsymbol{\Sigma}} - z\mathbf{I}_p)^{-1}
=
\frac{1}{\sx^2\,a_n - z}\left[\mathbf{I}_p - \sum_{j=1}^{\rk} \frac{\delta_j\,a_n}{\sx^2\,a_n - z + \delta_j\,a_n}\, \bv_j \bv_j^\top\right].
\]
\end{proposition}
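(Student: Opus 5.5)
The plan is to compute the inverse directly from the spectral decomposition of $\bSigma$. Orthonormality of $\{\bv_j\}_{j=1}^{\rk}$ makes $a_n\bSigma - z\bI_p$ diagonal in a fixed basis, so no Woodbury-type identity is needed. Write $\bP := \sum_{j=1}^{\rk}\bv_j\bv_j^\top$ for the orthogonal projection onto $\mathrm{span}\{\bv_1,\dots,\bv_\rk\}$. Then
\[
a_n\bSigma - z\bI_p = (\sx^2 a_n - z)(\bI_p - \bP) + \sum_{j=1}^{\rk}(\sx^2 a_n - z + \delta_j a_n)\,\bv_j\bv_j^\top .
\]
This is an eigendecomposition with mutually orthogonal spectral projections $\bI_p-\bP,\ \bv_1\bv_1^\top,\dots,\bv_\rk\bv_\rk^\top$, which sum to $\bI_p$.

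First, I would check that all the scalar eigenvalues are nonzero for $z\in\mathbb{C}\setminus\mathbb{R}_+$. Invertibility of $a_n\bSigma - z\bI_p$ is already implicit in the fixed-point equation \eqref{eq:fixed_point a} and in the deterministic equivalent \eqref{eq:determinsitic_equivalent formula}, taken from \cite{RubioMestre2011SpectralConvergence}. Concretely, the standard properties of that fixed point give $a_n(z)\in\mathbb{C}\setminus(-\infty,0]$ with $\Im\bigl(a_n(z)\,\mathfrak d - z\bigr)\neq 0$ (or $a_n\mathfrak d - z\notin[0,\infty)$) for every eigenvalue $\mathfrak d>0$ of $\bSigma$. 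Therefore $\sx^2 a_n - z\neq 0$ and $\sx^2a_n - z + \delta_j a_n = (\sx^2+\delta_j)a_n - z\neq 0$. This is the only step needing any care, and I expect to simply cite the properties of $a_n$ from the same reference.

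Next, I would invert eigenvalue by eigenvalue:
\[
(a_n\bSigma - z\bI_p)^{-1} = \frac{\bI_p-\bP}{\sx^2a_n - z} + \sum_{j=1}^{\rk}\frac{\bv_j\bv_j^\top}{\sx^2a_n - z+\delta_j a_n}.
\]
Finally, I would apply the scalar identity
\[
\frac{1}{A+B} = \frac1A - \frac{B}{A(A+B)},
\]
with $A=\sx^2a_n-z$ and $B=\delta_j a_n$, to each spike term. Combining the $\bv_j\bv_j^\top/A$ pieces with $-\bP/A$ cancels the projection, and factoring out $1/(\sx^2a_n-z)$ gives exactly the stated formula.
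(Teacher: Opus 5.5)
Your argument is correct and is essentially the paper's proof: the paper likewise extends $\{\bv_j\}$ to an orthonormal basis (your $\mathbf{I}_p - \mathbf{P}$ plays the role of $\sum_i \mathfrak u_i\mathfrak u_i^\top$), inverts $a_n\boldsymbol{\Sigma} - z\mathbf{I}_p$ eigenvalue by eigenvalue, and rewrites each spike coefficient $\frac{1}{(\sx^2+\delta_j)a_n - z} - \frac{1}{\sx^2 a_n - z}$ into the stated form, exactly as you do. Your added check that $\sx^2 a_n - z$ and $(\sx^2+\delta_j)a_n - z$ are nonzero, which the paper takes for granted, is a welcome addition, but the parenthetical should simply read $a_n\mathfrak d - z \neq 0$: for real $z<0$ the fixed point has $a_n>0$, so $a_n\mathfrak d - z>0$, and the condition $a_n\mathfrak d - z\notin[0,\infty)$ as you wrote it is false there.
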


\subsubsection{Proof of Lemma~\ref{lemma: a(z) = z + 1/m(z)}}

\begin{proof}
In the following, we write $x_n\approx y_n$ if $x_n-y_n\to 0$ almost surely. Using the deterministic equivalent relation~\ref{eq:determinsitic_equivalent formula} for $\boldsymbol{\Theta}_n = \mathbf{I}_p/p$ we deduce that $$\frac{1}{p}\tr(\big(\widehat{{\boldsymbol{\Sigma}}}-z\mathbf{I}_p\big)^{-1}) \approx \frac{1}{p}\tr\big(a_n {\boldsymbol{\Sigma}} - z\mathbf{I}_p\big)^{-1}$$
    Note that the left hand side is $$m_{\widehat H_n}(z) \xrightarrow[]{a.s.} m(z)$$ since $\widehat H_{n}\xrightarrow[]{d} F_{\mathrm{MP}}$ as proved in Proposition~\ref{prop: esd MP}.
    Thus we deduce that $$m(z) = \lim_{n}\frac{1}{p}\tr\big(a_n {\boldsymbol{\Sigma}} - z\mathbf{I}_p\big)^{-1} = \lim_{n} \left[\sum_{j=1}^\rk \frac{1}{p} \cdot \frac{1}{a_n(\delta_j+\sx^2) - z} + \frac{p-\rk}{p}\cdot \frac{1}{\sx^2\,a_n - z}\right]$$
Suppose for the sake of contradiction that $\limsup_{n\to\infty}|a_n|=+\infty$ and choose a subsequence along which
$|a_n|\to\infty$. Then each summand in the right hand side converges to 0
hence the entire right-hand side converges to $0$ along that subsequence.
This contradicts the equality since $m(z)$ is finite and nonzero for $z\in\mathbb C\setminus\mathbb R_+$. Therefore $\{a_n(z)\}_n$ must be bounded. By Bolzano-Weierstrass, any subsequence has a further subsequence $a_{n_k}\to a$.
Passing to the limit in the equality below, the first $\rk$ terms are negligible and thus
\[
m(z)=\frac{1}{\sx^2\, a-z}.
\]
Consequently, $a$ must equal the unique solution of $m(z)=1/(\sx^2\,a-z)$, namely
\[
a(z)=\frac{z}{\sx^2}+\frac{1}{\sx^2\,m(z)}.
\]
Hence, every convergent subsequence of $\{a_n(z)\}$ has the same limit $a(z)$, and therefore we can conclude that
$a_n(z)\to a(z)$. 
The second equality follows from the relation between the Stieltjes transform and the companion Stieltjes transform (Equation \eqref{eq:comp_st_1}): 
$$
{z\underline{m}(z) + 1} = z\left(\underline{m}(z) + \frac{1}{z}\right) = z \left(cm(z) + \frac{c}{z}\right) = zcm(z) + c = 1 - \frac{z}{\sx^2} - \frac{1}{\sx^2\,m(z)} \,,
$$
which immediately implies: 
$$
-z \underline{m}(z) = \frac{z}{\sx^2} + \frac{1}{\sx^2\,m(z)} = a(z) \,.
$$
This completes the proof. 
\end{proof}

\subsubsection{Proof of Proposition~\ref{lem:resolvent-spiked} }
\begin{proof}
Since $\{\bv_1,\dots,\bv_{\rk}\}$ is orthonormal, we can extend this set by $p - \rk$ new vectors $\{\mathfrak u_1, \dots, \mathfrak u_{p-\rk}\}$ to an orthonormal basis of $\R^p$. In particular, we have 
\begin{equation}\label{eq: I_p = sum of spikes + rest}
    \mathbf{I}_p = \sum_{j=1}^\rk \bv_j \bv_j^\top + \sum_{i=1}^{p-\rk} \mathfrak{u}_i \mathfrak u_i^\top
    \end{equation}
We deduce that ${\boldsymbol{\Sigma}}$ is diagonal in this basis $${\boldsymbol{\Sigma}} = \sx^2\mathbf{I}_p + \sum_{j=1}^\rk \delta_j \bv_j \bv_j^\top = \left(\sx^2\mathbf{I}_p - \sx^2\sum_{j=1}^\rk \bv_j \bv_j^\top\right) + \sum_{j=1}^\rk (\delta_j + \sx^2)\bv_j \bv_j^\top =  \sx^2\sum_{i=1}^{p-\rk} \mathfrak{u}_i \mathfrak u_i^\top  + \sum_{j=1}^\rk (\delta_j + \sx^2)\bv_j \bv_j^\top $$
and thus \begin{align*}(a_n{\boldsymbol{\Sigma}} - z\mathbf{I}_p)^{-1} &= \sum_{i=1}^{p - \rk} \frac{1}{\sx^2\,a_n - z}\mathfrak{u}_i \mathfrak u_i^\top + \sum_{j=1}^{\rk}\frac{1}{a_n(\delta_j + \sx^2) - z}\bv_j \bv_j^\top \\
&=\frac{1}{\sx^2\,a_n-z}\mathbf{I}_p
+\sum_{j=1}^{\rk}\left(\frac{1}{a_n(\sx^2+\delta_j)-z}-\frac{1}{\sx^2\,a_n-z}\right)\bv_j \bv_j^\top\\
&=\frac{1}{\sx^2\,a_n-z}\mathbf{I}_p
-\sum_{j=1}^{\rk}\frac{\delta_j\,a_n}{(\sx^2\,a_n-z)\bigl(\sx^2\,a_n-z+\delta_j\,a_n\bigr)}\,\bv_j \bv_j^\top\\
&=\frac{1}{\sx^2\,a_n-z}\left[\mathbf{I}_p-\sum_{j=1}^{\rk}\frac{\delta_j\,a_n}{\sx^2\,a_n-z+\delta_j\,a_n}\,\bv_j \bv_j^\top\right]
\end{align*}
This completes the proof. 
\end{proof}

\subsection{Limiting Probability Measures for One-Spike Covariance Matrix}
\label{F_delta}
In this section, we consider a simple one-spike covariance matrix, as this will lay the foundation for analyzing general $\rk$-spike covariance matrices.
We will establish properties of the limiting spectral distribution for one non-trivial empirical measure. First, let us introduce the notations.
Consider the one spike covariance
\[{\boldsymbol{\Sigma}}'=\sx^2\mathbf{I}_p+\delta\,\mathfrak v_1\mathfrak v_1^\top,
\]
where $\mathfrak v_1 \in \mathbb{R}^p$ is a unit vector. Its eigenvalues are $\mathfrak d_1'=\sx^2+\delta$ and
$\mathfrak d_2'=\cdots=\mathfrak d_p'=\sx^2$ and we can extend $\mathfrak v_1$ to an orthonormal basis $\{\mathfrak v_1,\dots,\mathfrak v_p\}$ of $\R^p$.
Next, draw $n$ i.i.d.\ vectors $\bx_1',\dots, \bx_n'$ with covariance ${\boldsymbol{\Sigma}}'$, and let
\[
\widehat{\boldsymbol{\Sigma}}'=\frac{{\bX'}^\top\bX'}{n} = \sum_{j=1}^p d_j' \, \bw'_j \bw_j'^\top
\]
be the sample covariance matrix and its spectral decomposition, where $\{d_1', \dots, d_p'\}$ and $\{\bw'_1, \dots, \bw'_p\}$ are its eigenvalues and its corresponding eigenvectors. In this setup, we prove the following lemma.

\begin{lemma}
\label{lemma:f_delta}
Let $\bSigma'=\sx^2\mathbf{I}_p + \delta\,\mathfrak v_1\mathfrak v_1^{\top}$ with $\delta > 0$ be a one-spike covariance, where $\mathfrak v_1 \in \mathbb{R}^p$ is a unit vector. Using the notation introduced above, define the
population ESD of ${\boldsymbol{\Sigma}}'$ by
\[
\widehat H_n'(x)
:=
\frac{1}{p}\sum_{i=1}^p \mathbf 1\{\mathfrak d_i' \le x\}, \qquad \widehat F_{\widehat{\boldsymbol{\Sigma}}'} = \frac{1}{p}\sum_{i=1}^p \mathbf{1}\{d_i'
\le x\}
\]
and the two weighted empirical distribution functions
\[
\widehat G_n'(x)
:=
\sum_{i=1}^p \bigl\langle \mathfrak v_1,\mathfrak v_i \bigr\rangle^2\,
\mathbf 1\{\mathfrak d_i' \le x\},
\qquad
\widehat B_n'(x)
:=
\sum_{i=1}^p \bigl\langle \mathfrak v_1,\bw'_i \bigr\rangle^2\,
\mathbf 1\{d_i' \le x\}.
\]
Then the following statements hold.

\begin{enumerate}
\item[(a)]
$\widehat H_n' \xrightarrow[]{d} H' :=\mathbf 1\{\sx^2\le x\},$ $\widehat F_{\widehat{\boldsymbol{\Sigma}}'} \xrightarrow[]{d} F_{\mathrm{MP}}$ and $
\widehat G_n' \xrightarrow[]{d} G' = \mathbf 1\{\sx^2+\delta \le x\}.
$
\item[(b)]
\(
\widehat B_n' \xrightarrow[]{d} F_\delta,
\)
where $F_\delta$ is a probability measure with support contained in
\[
\operatorname{supp}(F_\delta)\subseteq \mathcal{S}_c\cup\{0\}\cup\{x_\delta^\star\},
\qquad
x_\delta^\star=\frac{(\delta+\sx^2)(\delta+c\sx^2)}{\delta},
\]
and where $\mathcal{S}_c=\bigl[\sx^2(1-\sqrt c)^2,\sx^2(1+\sqrt c)^2\bigr]$ denotes the Marchenko--Pastur bulk.

\item[(c)]The Stieltjes transform of $F_{\delta}$ is $m_\delta$ which satisfies 
\begin{equation}\label{eq: m delta definition}
m_{\delta}(z)
=\frac{\sx^2m(z)}{\sx^2+\delta+\delta z\,m(z)} \qquad z\in\mathbb{C}^+.
\end{equation}
\item[(d)]
\begin{itemize}
    \item[(i)]
The measure $F_\delta$ is absolutely continuous on $\mathrm{int}(\mathcal{S}_c)$ with density
\[
f_\delta(x)
= \frac{c\sx^2(\delta + \sx^2)\,f_{\mathrm{MP}}(x)}
{c\sx^4 + (c+1)\delta\sx^2 + \delta^2 - \delta x},
\qquad x \in \mathrm{int}(\mathcal{S}_c).
\]
\item[(ii)] If $c>1$, then $F_\delta$ has an atom at 0 with mass
\[
F_\delta(\{0\})
=
\frac{\sx^2(c-1)}{c\sx^2+\delta}.
\]
\item[(iii)] If $\delta>\sx^2\sqrt{c}$, then $F_\delta$  has an atom at $x_\delta^\star$ with mass
\begin{equation}\label{atom at x star}
F_\delta\bigl(\{x_\delta^\star\}\bigr)=
\frac{\delta^2-c\sx^4}{\delta(\delta+c\sx^2)}.
\end{equation}
In particular, if $c \leq 1$ and $\delta \leq \sx^2\sqrt{c}$ then $F_{\delta}$ has no atoms. 
\end{itemize}
\end{enumerate}
\end{lemma}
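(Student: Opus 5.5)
Part (a) is handled first, since it is essentially Proposition~\ref{prop: esd MP} specialized to $\rk=1$. The population eigenvalues are $\sx^2+\delta$ (once) and $\sx^2$ ($p-1$ times), so $\widehat H_n'\to\mathbf 1\{\sx^2\le x\}$ and, by Marchenko--Pastur, $\widehat F_{\widehat{\bSigma}'}\to F_{\mathrm{MP}}$ almost surely. For $\widehat G_n'$, orthonormality gives $\langle\mathfrak v_1,\mathfrak v_i\rangle^2=\mathbf 1\{i=1\}$. Hence $\widehat G_n'$ is exactly the point mass at $\sx^2+\delta$ for every $n$.

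For (b) and (c) the plan works through Stieltjes transforms. The measure $\widehat B_n'$ is a probability measure, since $\sum_i\langle\mathfrak v_1,\bw_i'\rangle^2=1$. Its Stieltjes transform is the quadratic form
\[
\mathfrak v_1^\top(\widehat{\bSigma}'-z\bI_p)^{-1}\mathfrak v_1 .
\]
I would apply the deterministic equivalent \eqref{eq:determinsitic_equivalent formula} with the rank-one, trace-norm-one test matrix $\bTheta_n=\mathfrak v_1\mathfrak v_1^\top$. Combined with Proposition~\ref{lem:resolvent-spiked}, this gives, almost surely for each fixed $z\in\mathbb C^+$,
\[
\mathfrak v_1^\top(\widehat{\bSigma}'-z\bI_p)^{-1}\mathfrak v_1 \approx \frac{1}{a_n(\sx^2+\delta)-z}.
\]
Lemma~\ref{lemma: a(z) = z + 1/m(z)} then gives $a_n\to a(z)=z/\sx^2+1/(\sx^2 m(z))$. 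Substituting yields
\[
\frac{1}{(\sx^2+\delta)\bigl(z+1/m(z)\bigr)/\sx^2-z}
=\frac{\sx^2 m(z)}{\sx^2+\delta+\delta z m(z)},
\]
which is \eqref{eq: m delta definition}. To upgrade pointwise convergence to a.s.\ convergence on a countable dense subset of $\mathbb C^+$, I would use equicontinuity of Stieltjes transforms of probability measures on compact subsets of $\mathbb C^+$. Tightness of $\widehat B_n'$ holds because the support lies in $[0,\|\widehat{\bSigma}'\|]$, which is a.s.\ bounded. These two facts together give $\widehat B_n'\to F_\delta$ weakly, with $F_\delta$ the probability measure whose transform is $m_\delta$; that $F_\delta$ is a probability measure follows from $-zm_\delta(z)\to1$ as $z\to i\infty$. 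The support claim in (b) then follows from (d): $m_\delta$ is analytic off $\mathcal S_c\cup\{0\}$, except where the denominator $\sx^2+\delta+\delta z m(z)$ vanishes.

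For (d) I would invert the Stieltjes transform. On $\mathrm{int}(\mathcal S_c)$, write $m=m(x)$ for the boundary value; the density is $f_\delta(x)=\pi^{-1}\Im m_\delta(x)$. Rationalizing gives
\[
\Im m_\delta(x)=\frac{\sx^2(\sx^2+\delta)\,\Im m}{|\sx^2+\delta+\delta x m|^2}.
\]
The denominator I would evaluate through the companion transform. Since $xm=(x\underline m+1-c)/c$, the identity $|\underline m(x)|^2=1/(\sx^2 x)$ from Lemma~\ref{lemma: companion bash} gives $|xm|^2$ and $\Re(xm)$ explicitly. The quadratic \eqref{eq: fixed point of companion} gives
\[
\Re\underline m=-\frac{x-\sx^2(c-1)}{2x\sx^2}.
\]
Expanding $|\sx^2+\delta+\delta xm|^2$ should then collapse to
\[
\frac{\sx^2}{c}\bigl(c\sx^4+(c+1)\delta\sx^2+\delta^2-\delta x\bigr),
\]
which yields the stated density. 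This algebraic collapse is the step I expect to be the main obstacle, mostly bookkeeping, and I would verify it at a single test value.

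The atoms come from residues. The atom at $0$ is $-\lim_{z\to0}zm_\delta(z)$. For $c>1$, $zm(z)\to-(1-1/c)$ while $m(z)$ blows up, so
\[
zm_\delta=\frac{\sx^2 zm}{\sx^2+\delta+\delta zm}\to\frac{-\sx^2(c-1)/c}{\sx^2+\delta-\delta(c-1)/c},
\]
which simplifies to minus the stated mass $\sx^2(c-1)/(c\sx^2+\delta)$. For the outlier, I would solve $\sx^2+\delta+\delta x m(x)=0$ for real $x>b$, i.e.\ $m(x)=-(\sx^2+\delta)/(\delta x)$. Plugging into the MP self-consistent equation (equivalently the companion quadratic) gives $x=x_\delta^\star$, and the branch condition on $m$ holds exactly when $\delta>\sx^2\sqrt c$. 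The mass is then $-\mathrm{Res}_{x_\delta^\star}m_\delta=\sx^2 m(x^\star)/(\delta\,(xm)'(x^\star))$. I would compute $(xm)'$ by implicit differentiation of the quadratic, which should produce $(\delta^2-c\sx^4)/(\delta(\delta+c\sx^2))$. Finally, for $c\le1$ and $\delta\le\sx^2\sqrt c$, neither pole exists, so $F_\delta$ has no atoms.
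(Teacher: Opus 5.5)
Your proposal is correct. For parts (b)--(d) it takes a more self-contained route than the paper.

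The paper obtains (b)--(c) by quoting Lemma~1 of \cite{Green_2025} for the limiting transform $-z^{-1}\int(1+\tau\underline m(z))^{-1}\,\mathrm dG'(\tau)$ and rewriting it through $a(z)$. It gets both the support statement in (b) and the outlier mass in (d)(iii) by quoting Lemma~3 of \cite{Green_2025} together with the BBP threshold equation.

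You instead derive (b)--(c) from the deterministic equivalent with $\boldsymbol{\Theta}_n=\mathfrak v_1\mathfrak v_1^\top$ and Proposition~\ref{lem:resolvent-spiked}. This is exactly how the paper itself proves Lemma~\ref{lemma: hat D nj}. You then read off the support and the outlier from the analytic continuation of $m_\delta$:
\begin{itemize}
\item the pole is located via the companion quadratic;
\item the BBP condition appears as the branch condition $\underline m(x_\delta^\star)=-1/(\sx^2+\delta)>\underline m(b)=-1/(\sx^2(1+\sqrt c))$;
\item the mass is a residue.
\end{itemize}
This avoids the external lemmas and keeps $\sx$ general throughout; the paper's (d)(iii) argument tacitly sets $\sx=1$ in places. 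The cost is a few explicit computations. Your (d)(i), done via Lemma~\ref{lemma: companion bash} instead of the explicit square-root formula, and your (d)(ii) are essentially the paper's argument.

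Two of your predicted intermediate formulas need fixing, and one check is missing.

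First, the correct collapse is
\[
\bigl|\sx^2+\delta+\delta x m(x)\bigr|^2=\frac{1}{c}\bigl(c\sx^4+(c+1)\delta\sx^2+\delta^2-\delta x\bigr),
\]
without your prefactor $\sx^2$. With that prefactor you would obtain $c(\sx^2+\delta)f_{\mathrm{MP}}/(\cdots)$, which is off by a factor $\sx^2$ from the stated density. A test value with $\sx=1$ cannot detect this. For example, $\sx^2=4$, $c=1/4$, $\delta=1$, $x=4$ gives left side $24$, not $96$.

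Second, there is a sign slip in the residue. An atom of mass $w$ at $x_0$ contributes $w/(x_0-z)$, so the mass is
\[
-\mathrm{Res}_{x_\delta^\star}m_\delta=-\frac{\sx^2 m(x_\delta^\star)}{\delta\,(xm)'(x_\delta^\star)}.
\]
With $m(x_\delta^\star)=-1/(c\sx^2+\delta)$ and $(xm)'(x_\delta^\star)=\sx^2/(\delta^2-c\sx^4)$, this gives exactly $(\delta^2-c\sx^4)/(\delta(\delta+c\sx^2))$.

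Third, for the support claim you must also rule out zeros of $\sx^2+\delta+\delta xm(x)$ on $(-\infty,\sx^2(1-\sqrt c)^2)\setminus\{0\}$, not only on $(\sx^2(1+\sqrt c)^2,\infty)$. On that set, $xm(x)=\int x(t-x)^{-1}\,\mathrm dF_{\mathrm{MP}}(t)>-1$, so the denominator exceeds $\sx^2$ and cannot vanish.
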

\begin{proof}[Proof of Lemma~\ref{lemma:f_delta}(a)]
Note that the 1 spike covariance structure is a special case of the $\rk$-spike covariance. The first two limits follow now by
Proposition~\ref{prop: esd MP}. Next, recall that $\{\mathfrak v_1,\dots,\mathfrak v_p\}$ is an orthonormal basis, so $\langle \mathfrak v_1, \mathfrak \bv_j\rangle = 0$ for $ j\neq 1$ so
\[
\widehat G_n'(x)
= \mathbf 1\{\sx^2 + \delta \le x\},
\]
so the sequence does not depend on $n$. The limit now follows.
\end{proof}
The proofs of (b), (c), and (d) are more involved. We group (b) and (c) together since they follow by adapting certain arguments from Lemma 1 and Lemma 3 in \cite{Green_2025}.
\subsubsection{Proof of Theorem~\ref{lemma:f_delta}(b)  and (c)}\begin{proof}
Using Lemma~1 in \cite{Green_2025} for $\widehat B_n'$ and $\bbeta^\star = \mathfrak v_1$
we deduce that for any $z\in \mathbb{C}^+$ we have $$\lim_{n\to \infty} m_{\widehat B_n' }(z) = -\frac{1}{z}\int \frac{1}{1+\tau\,\underline{m}(z)}\,dG'(\tau)$$
Note that \begin{align}
-\frac{1}{z}\int \frac{1}{1+\tau\,\underline{m}(z)}\,dG'(\tau) &= -\frac{1}{z}\,\frac{1}{1+(1+\delta)\underline{m}(z)} =\frac{1}{-z\underline{m}(z)(1+\delta)-z} \notag \\
&=\frac{1}{\sx^2\,a(z)-z+\delta\,a(z)} := m_{\delta}(z) \label{eq: m_delta in terms of a(z)}
\end{align}
where the last equality is true by Lemma~\ref{lemma: a(z) = z + 1/m(z)}. Using Lemma~3 in
\cite{Green_2025}, $m_\delta$ is the Stieltjes transform of a probability measure $F_\delta$ whose support satisfies
\[
\mathrm{supp}(F_\delta)\subseteq \mathcal{S}_c\cup\{0\}\cup\{x_\delta^\star\},
\qquad
x_\delta^\star=\frac{(\delta+\sx^2)(\delta+c\sx^2)}{\delta}\]
\end{proof}

\subsubsection{Proof of Theorem~\ref{lemma:f_delta}(d)(i)}
\begin{proof}
\label{density of F_delta}
 For all $E\in \mathcal{S}_c $ and $z = E + i\eta$ we have by inversion of the Stieltjes transform $$ f_\delta(E) = \frac{1}{\pi}\lim_{\eta \to 0^+} \Im(m_\delta(E + i \eta)) = \frac{1}{\pi}\lim_{\eta \to 0^+} \Im \frac{\sx^2m(E + i \eta)}{\sx^2 + \delta + \delta z m(E + i \eta)}$$
Note that $\Im(x/y) = \Im(x \overline{y}/|y|^2)$:
\begin{align*}
f_\delta(E) &= \frac{\sx^2}{\pi}\lim_{\eta \to 0^+}
\frac{\Im\bigl(m(z)(\delta + \sx^2) + \delta \bar{z} |m(z)|^2\bigr)}
{|\sx^2+\delta + \delta z m(z)|^2} \\
&= \frac{\sx^2}{\pi}\lim_{\eta \to 0^+}
\frac{(\delta + \sx^2)\Im(m(z)) - \eta \delta |m(z)|^2}
{|\sx^2+\delta + \delta z m(z)|^2} \\
&= \frac{\sx^2}{\pi}
\frac{(\delta + \sx^2)\,\Im(m(E + i0))}
{|\sx^2 + \delta + \delta E\,m(E + i0)|^2}
\end{align*}
Recall from Equation~\eqref{eq:square root rep of m(z)} that
\[
m(z) = \frac{\sx^2(1 - c) - z + \sqrt{(z-a)(z-b)}}{2cz\sx^2},
\]
where we choose the branch of the square root with a positive imaginary part; thus
\[
m(E+i0)
= \frac{\sx^2(1-c)-E}{2cE\sx^2}
+ i\,\frac{\sqrt{(b-E)(E-a)}}{2cE\sx^2}.
\]
We conclude that for $E$ in the bulk,
\begin{align*}
f_{\delta}(E)
&= \frac{\sx^2}{\pi}
\cdot\frac{(\sx^2+\delta)\,
\dfrac{\sqrt{(b-E)(E-a)}}{2cE\sx^2}}
{\left(\sx^2+\delta
+\dfrac{\delta\bigl(\sx^2(1-c)-E\bigr)}{2c\sx^2}
\right)^{2}
+\left(\dfrac{\delta}{2c\sx^2}\right)^{2}(b-E)(E-a)}
\\[6pt]
&= \frac{c\sx^2(\delta + \sx^2)\,f_{\mathrm{MP}}(E)}
{(\sx^2+\delta)(c\sx^2+\delta) - \delta E}
= \frac{c\sx^2(\delta + \sx^2)\,f_{\mathrm{MP}}(E)}
{c\sx^4 + (1+c)\delta\sx^2 + \delta^2 - \delta E}.
\end{align*}
\end{proof}
\subsubsection{Proof of Theorem~\ref{lemma:f_delta}(d)(ii)}\label{atom at 0 of F delta}
\begin{proof}
We compute the mass of the atoms. For $c < 1$ the MP law has no atom at 0, hence by inversion $$\lim_{\eta \to 0^+} - i\eta \, m(i\eta)= 0$$ and thus the same holds for $m_{\delta}$ using the definition of $m_{\delta}$ \eqref{eq: m delta definition}.

\noindent For $c > 1$ the MP law has an atom at 0, and we prove that $F_{\delta}$ have the same property. $$F_{\delta}(\{0\}) = \lim_{\eta \to 0^+} - i\eta \, m_\delta(i\eta) = \lim_{\eta \to 0^+} - i\eta \, \frac{\sx^2m(i\eta)}{\sx^2 + \delta + \delta\,i\eta\,m(i\eta)}$$
Note that by the same token $F_{\text{MP}}(\{0\}) = 1 - \frac{1}{c} = \lim_{\eta \to 0^+} - i\eta \, m(i\eta) $ thus $$F_{\delta}(\{0\}) = \frac{1 - \frac{1}{c}}{\sx^2 + \delta - \delta (1 - \frac{1}{c})} = \frac{\sx^2(c-1)}{c\sx^2+\delta}.$$
\end{proof}
\subsubsection{Proof of Theorem~\ref{lemma:f_delta}(d)(iii)}\label{atom at xstar of F_delta}
\begin{proof}
 As stated in Lemma 3 of \cite{Green_2025}, the mass of the atom of $F_{\delta}$ at $x_j^\star$ is nonzero if and only if the eigenvalue $\delta + \sx^2$ is above the BBP phase transition $\tau_{\mathrm{BBP}}$ \cite{baik2004phasetransitionlargesteigenvalue}. It is well known that $\tau_{\mathrm{BBP}}$ is defined as the fixed-point solution to the equation
$$\left(1 + c \int \frac{\tau'}{\tau_{\mathrm{BBP}}-\tau'}\,dH(\tau') \right)\tau_{\mathrm{BBP}} = \sx^2(1 + \sqrt{c})^2.$$
See for example Theorem 1.1 in \cite{baik2004phasetransitionlargesteigenvalue} or Equation 19 in \cite{Green_2025}. Recall from Theorem~\ref{lemma:f_delta}(a) that $H'(x)= \mathbf{1}\{\sx^2\leq x\}$, so the fixed point equation can be simplified to
\[
\tau_{\mathrm{BBP}}= \sx^2(1+\sqrt c).
\]
Using Lemma 3 in \cite{Green_2025} we can conclude that that if $\delta + 1 > \tau_{\mathrm{BBP}}$, or equivalently $\delta > \sqrt{c}$, the mass of the atom is
\begin{equation*}
F_\delta(\{x_j^\star\})
=
\frac{
1-c \displaystyle\int \left(\frac{\tau'}{\delta + 1-\tau'}\right)^{2}\, \mathrm{d}H'(\tau')
}{
1+c \displaystyle\int \frac{\tau'}{\delta + \sx^2-\tau'}\, \mathrm{d}H'(\tau')
}.
\end{equation*}
Since $H'(x)= \mathbf{1}\{\sx^2\leq x\}$. 
\[
\int \frac{\tau'}{\delta + \sx^2-\tau'}\,dH(\tau')
=\frac{\sx^2}{\delta},
\qquad
\int \left(\frac{\tau'}{\delta + \sx^2-\tau'}\right)^2 dH(\tau')
=\frac{\sx^4}{\delta^2},
\]
and substituting into the display above yields
\begin{align*}
F_\delta(\{x_j^\star\})
&=
\frac{1-\frac{c\sx^4}{\delta^2}}{1+\frac{c\sx^2}{\delta}}
=
\frac{\delta^2-c\sx^4}{\delta(\delta+c\sx^2)}.
\end{align*}
On the other hand, if $\delta^2\le c\sx^2$ (equivalently,  $\delta + \sx^2\le\tau_{\mathrm{BBP}}$), there is no outlier atom. This ends the proof.
\end{proof}

\subsection{Auxiliary Empirical Measures and Weak Convergence via Stieltjes Transforms}
\label{section: limits of empirical measures}
In this section, we introduce three empirical measures that will be useful for deriving limits of general $f$-estimators. We assume throughout this section that ${\boldsymbol{\Sigma}} = \sx^2\mathbf{I}_p + \sum_{j=1}^\rk \delta_j \bv_j \bv_j^\top$ is an $\rk$-spike covariance. Consider the spectral decomposition of the sample covariance
$$
\widehat{\boldsymbol{\Sigma}}=\sum_{i=1}^p d_i\, \bw_i \bw_i^\top,
$$
where $d_1,\dots,d_p$ are the eigenvalues and $\bw_1,\dots,\bw_p$ are the corresponding eigenvectors. Define
\begin{align}
\widehat{B}_n(x)
&:=\frac{1}{\|\bbeta_0\|_2^2}\sum_{i=1}^p 
\langle \bbeta_0,\bw_i\rangle^2\,\mathbf{1}\{d_i\le x\},
\label{eq: hat B_n}\\[6pt]
\widehat{C}_n^{\,j}(x)
&:= \frac{1}{\bbeta_0^\top \bv_j}\sum_{i=1}^p 
\langle \bbeta_0,\bw_i\rangle\,\langle \bv_j,\bw_i\rangle\,
\mathbf{1}\{d_i\le x\},
\label{eq: hat C_n^j}\\[6pt]
\widehat{D}_n^{\,j}(x)
&:=\sum_{i=1}^p \langle \bv_j,\bw_i\rangle^2\,
\mathbf{1}\{d_i\le x\},
\label{eq: hat D_n^j}
\end{align}
for each spike direction $\bv_j$, $j \in \{1,\dots,\rk\}$.
The following three lemmas allow us to represent these empirical measures in terms of specific spiked measures $F_{\delta}$ that we have introduced in Section~\ref{lemma:f_delta}.
\begin{lemma}
\label{lemma: f_alpha}
If ${\boldsymbol{\Sigma}}$ is an $\rk$-spike covariance and $\alpha_j \neq 0$ for all $j =1,2\dots, \rk$ then $$\widehat B_n \xrightarrow[]{d} F_\alpha := \omega_0 F_{\mathrm{MP}} + \sum_{j=1}^\rk \omega_jF_{\delta_j}$$
where $$\omega_0 = \frac{r^2 - \sum_{j=1}^{\rk}\alpha_j^2}{r^2} \qquad \omega_j = \frac{\alpha_j^2}{r^2}$$
are positive weights that sum to 1. 
\end{lemma}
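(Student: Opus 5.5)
The plan is to identify the weak limit of $\widehat B_n$ through its Stieltjes transform, using the deterministic equivalent \eqref{eq:determinsitic_equivalent formula} together with Proposition~\ref{lem:resolvent-spiked}. For $z\in\mathbb{C}^+$ the Stieltjes transform of $\widehat B_n$ is
\[
m_{\widehat B_n}(z)=\frac{1}{\|\bbeta_0\|_2^2}\,\bbeta_0^\top(\widehat{\bSigma}-z\mathbf{I}_p)^{-1}\bbeta_0=\tr\bigl(\boldsymbol{\Theta}_n(\widehat{\bSigma}-z\mathbf{I}_p)^{-1}\bigr),\qquad \boldsymbol{\Theta}_n:=\frac{\bbeta_0\bbeta_0^\top}{\|\bbeta_0\|_2^2}.
\]
The test matrix $\boldsymbol{\Theta}_n$ has trace norm one. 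By Definition~\ref{def:DE}, $m_{\widehat B_n}(z)$ is therefore asymptotically equal, almost surely, to $\|\bbeta_0\|_2^{-2}\bbeta_0^\top(a_n\bSigma-z\mathbf{I}_p)^{-1}\bbeta_0$.

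Next I substitute the explicit resolvent from Proposition~\ref{lem:resolvent-spiked}. This gives
\[
\frac{1}{\sx^2a_n-z}\left[1-\sum_{j=1}^\rk\frac{\delta_ja_n}{\sx^2a_n-z+\delta_ja_n}\,\frac{(\bbeta_0^\top\bv_j)^2}{\|\bbeta_0\|_2^2}\right].
\]
I then pass to the limit using $a_n\to a(z)$ (Lemma~\ref{lemma: a(z) = z + 1/m(z)}), $\bbeta_0^\top\bv_j\to\alpha_j$ and $\|\bbeta_0\|_2\to r$ (Assumption~\ref{assm:main}). The relevant identifications are $1/(\sx^2a-z)=m(z)$ and, from \eqref{eq: m_delta in terms of a(z)}, $m_{\delta_j}(z)=1/(\sx^2a-z+\delta_ja)$. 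The key algebraic step is then the identity
\[
\frac{1}{\sx^2a-z}\cdot\frac{\delta_ja}{\sx^2a-z+\delta_ja}=\frac{1}{\sx^2a-z}-\frac{1}{\sx^2a-z+\delta_ja}=m(z)-m_{\delta_j}(z).
\]
Using it, the limit becomes $m(z)-\sum_j\omega_j\bigl(m(z)-m_{\delta_j}(z)\bigr)=\omega_0m(z)+\sum_j\omega_jm_{\delta_j}(z)$, which is exactly the Stieltjes transform of $F_\alpha$.

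The main obstacle is upgrading this pointwise statement to weak convergence almost surely.
\begin{itemize}
\item The deterministic equivalent holds almost surely for each fixed $z$. I would first take a countable dense set of $z\in\mathbb{C}^+$ and intersect the corresponding probability-one events.
\item The transforms $m_{\widehat B_n}$ are uniformly Lipschitz on compacts of $\mathbb{C}^+$, since $|m_\mu'(z)|\le(\Im z)^{-2}$ for a probability measure $\mu$. This extends the convergence to all of $\mathbb{C}^+$.
\item The $\widehat B_n$ are probability measures, because $\sum_i\langle\bbeta_0,\bw_i\rangle^2=\|\bbeta_0\|_2^2$.
\item They are tight, since $\|\widehat{\bSigma}\|_{\mathrm{op}}$ is almost surely bounded under the moment assumption, as in \cite{baik2004}.
\item $F_\alpha$ is a probability measure, since the $F_{\delta_j}$ are by Lemma~\ref{lemma:f_delta}.
\end{itemize}
With these facts, the standard Stieltjes continuity theorem yields $\widehat B_n\xrightarrow{d}F_\alpha$ almost surely.

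Finally, the weights: $\omega_j=\alpha_j^2/r^2>0$ holds because $\alpha_j\neq0$. Also $\omega_0=\lim\|\widetilde{\bbeta}_0\|_2^2/r^2$, which is positive by Assumption~\ref{assm:main}(2). The weights sum to one by construction.
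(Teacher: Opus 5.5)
Your proposal is correct and follows the paper's proof essentially step for step. Both write the Stieltjes transform of $\widehat B_n$ as a resolvent quadratic form, apply the deterministic equivalent with a rank-one test matrix, use the explicit spiked resolvent together with the partial-fraction identity, and get positivity of $\omega_0$ from the orthogonal decomposition of $\bbeta_0$. Your countable-dense-set and uniform-Lipschitz argument makes explicit the passage to a single almost-sure event, which the paper leaves implicit. The tightness bullet is harmless but unnecessary, since the limit $F_\alpha$ already has total mass one.
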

The other two measures that depend on $j$ converge weakly to the same probability measure, as described in the two lemmas below.
\begin{lemma}\label{lemma: hat C nj}
  Fix $j \in \{1, 2\dots, \rk\}$. If ${\boldsymbol{\Sigma}}$ is an $\rk$-spike covariance and $ \alpha_j \neq 0$ then $\widehat C_n^j(x) \xrightarrow[]{} F_{\delta_j}$ vaguely.
\end{lemma}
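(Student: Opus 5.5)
The plan is to identify the limit of $\widehat C_n^j$ through its Stieltjes transform, using the deterministic equivalent \eqref{eq:determinsitic_equivalent formula} together with Proposition~\ref{lem:resolvent-spiked}. For $z\in\mathbb C^+$ the transform is
\[
m_{\widehat C_n^j}(z)=\frac{1}{\bbeta_0^\top\bv_j}\,\bbeta_0^\top(\widehat{\boldsymbol{\Sigma}}-z\mathbf I_p)^{-1}\bv_j .
\]
This is a bilinear form, so it equals $\frac{1}{\bbeta_0^\top\bv_j}\tr\bigl(\boldsymbol\Theta_n(\widehat{\boldsymbol{\Sigma}}-z\mathbf I_p)^{-1}\bigr)$ with the rank-one test matrix $\boldsymbol\Theta_n=\bv_j\bbeta_0^\top$. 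Its trace norm is $\|\bv_j\|_2\|\bbeta_0\|_2\to r<\infty$, so Definition~\ref{def:DE} applies. Since $\bbeta_0^\top\bv_j\to\alpha_j\neq 0$, the prefactor is eventually bounded. Hence, almost surely for each fixed $z$,
\[
m_{\widehat C_n^j}(z)-\frac{1}{\bbeta_0^\top\bv_j}\,\bbeta_0^\top(a_n\boldsymbol\Sigma-z\mathbf I_p)^{-1}\bv_j\to 0 .
\]

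Next I would evaluate the deterministic term with Proposition~\ref{lem:resolvent-spiked}. Orthonormality gives $\bv_k^\top\bv_j=\mathbf 1\{k=j\}$, so only the $j$-th spike contributes, and
\[
\frac{\bbeta_0^\top(a_n\boldsymbol\Sigma-z\mathbf I_p)^{-1}\bv_j}{\bbeta_0^\top\bv_j}
=\frac{1}{\sx^2a_n-z}\Bigl(1-\frac{\delta_ja_n}{\sx^2a_n-z+\delta_ja_n}\Bigr)
=\frac{1}{\sx^2a_n-z+\delta_ja_n}.
\]
The factor $\bbeta_0^\top\bv_j$ cancels exactly, which is why the normalization in \eqref{eq: hat C_n^j} is the right one. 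By Lemma~\ref{lemma: a(z) = z + 1/m(z)}, $a_n(z)\to a(z)$, so this tends to $1/(\sx^2a(z)-z+\delta_ja(z))$. By \eqref{eq: m_delta in terms of a(z)} this is exactly $m_{\delta_j}(z)$, the Stieltjes transform of $F_{\delta_j}$ from Lemma~\ref{lemma:f_delta}(c). For this limit to be well defined, the denominator $\sx^2a(z)-z+\delta_ja(z)$ must not vanish on $\mathbb C^+$. That holds because $m_{\delta_j}$ is the Stieltjes transform of a probability measure and is therefore finite there.

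Finally I would pass from pointwise convergence of the transforms to vague convergence. Here $\widehat C_n^j$ is a \emph{signed} measure, because the weights $\langle\bbeta_0,\bw_i\rangle\langle\bv_j,\bw_i\rangle$ can be negative. By Cauchy--Schwarz its total variation is at most $\|\bbeta_0\|_2/|\bbeta_0^\top\bv_j|$, which is eventually bounded by $2r/|\alpha_j|$. Uniformly bounded total variation gives vague relative compactness of $\{\widehat C_n^j\}$ by Banach--Alaoglu. Any vague subsequential limit $\mu$ has $m_\mu(z)=\lim m_{\widehat C_n^j}(z)$, because $x\mapsto 1/(x-z)$ is continuous and vanishes at infinity, which is exactly what vague convergence allows. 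So $m_\mu=m_{\delta_j}$ on $\mathbb C^+$, and Stieltjes inversion for finite signed measures gives $\mu=F_{\delta_j}$. Every subsequential limit is then $F_{\delta_j}$, so the whole sequence converges vaguely.

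To get almost-sure convergence I would intersect the almost-sure events over a countable dense set of $z\in\mathbb C^+$. Pointwise limits on that set determine the limit, since the transforms are analytic and locally bounded by $\mathrm{TV}/\Im z$, and Vitali's theorem extends convergence to all of $\mathbb C^+$. I expect this last step to be the main obstacle, because of the signed-measure subtlety: the argument relies only on vague compactness and uniqueness of the Stieltjes transform, and makes no tightness or positivity claim. That is consistent with the lemma asserting vague rather than weak convergence.
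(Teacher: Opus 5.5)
Your proposal is correct and follows the paper's proof essentially step for step. It uses the same rank-one test matrix $\bv_j\bbeta_0^\top$ in the deterministic equivalent, the same cancellation via Proposition~\ref{lem:resolvent-spiked} giving $1/(\sx^2a_n-z+\delta_ja_n)\to m_{\delta_j}(z)$, and the same Cauchy--Schwarz total-variation bound. The one difference is the passage from pointwise Stieltjes convergence of TV-bounded signed measures to vague convergence: the paper simply cites Theorem~B.9 of \cite{lifesaver}, whereas you prove it via Banach--Alaoglu, uniqueness of the Stieltjes transform, and a countable-$z$ intersection for the almost-sure statement. That is a valid, slightly more self-contained version of the same argument.
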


\begin{lemma}\label{lemma: hat D nj}
   Fix $j \in \{1, 2\dots, \rk\}$. If ${\boldsymbol{\Sigma}}$ is an $\rk$-spike covariance then $\widehat D_n^j(x) \xrightarrow[]{d} F_{\delta_j}$.
   \end{lemma}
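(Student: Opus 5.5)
The statement to prove is Lemma~\ref{lemma: hat D nj}: $\widehat D_n^j \xrightarrow{d} F_{\delta_j}$. The plan is to prove convergence of Stieltjes transforms pointwise on $\mathbb{C}^+$ and then pass to weak convergence. Since $\widehat D_n^j$ has total mass $\sum_i\langle \bv_j,\bw_i\rangle^2=\|\bv_j\|^2=1$, it is a probability measure, and so is $F_{\delta_j}$. Hence pointwise almost sure convergence of $m_{\widehat D_n^j}(z)\to m_{\delta_j}(z)$ on a countable dense subset of $\mathbb{C}^+$ gives weak convergence, with no loss of mass to check.

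\textbf{Step 1 (resolvent form).} Write
\[
m_{\widehat D_n^j}(z)=\sum_i \frac{\langle \bv_j,\bw_i\rangle^2}{d_i-z}=\bv_j^\top(\widehat{\bSigma}-z\bI_p)^{-1}\bv_j .
\]
Apply the deterministic equivalent \eqref{eq:determinsitic_equivalent formula} with test matrix $\boldsymbol\Theta_n=\bv_j\bv_j^\top$, which has trace norm $1$. This gives
\[
m_{\widehat D_n^j}(z)-\bv_j^\top(a_n\bSigma-z\bI_p)^{-1}\bv_j\to 0 \quad\text{a.s.}
\]

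\textbf{Step 2 (explicit evaluation).} By Proposition~\ref{lem:resolvent-spiked} and orthonormality of the $\bv_k$,
\[
\bv_j^\top(a_n\bSigma-z\bI_p)^{-1}\bv_j=\frac{1}{\sx^2 a_n-z}\Bigl(1-\frac{\delta_j a_n}{\sx^2 a_n-z+\delta_j a_n}\Bigr)=\frac{1}{\sx^2a_n-z+\delta_ja_n}.
\]
By Lemma~\ref{lemma: a(z) = z + 1/m(z)}, $a_n(z)\to a(z)$. Provided the denominator limit is nonzero, the right side therefore converges to $1/(\sx^2 a(z)-z+\delta_j a(z))$, which is exactly $m_{\delta_j}(z)$ by \eqref{eq: m_delta in terms of a(z)}. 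Lemma~\ref{lemma:f_delta}(c) identifies this as the Stieltjes transform of $F_{\delta_j}$.

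\textbf{Step 3 (conclusion and main obstacle).} The denominator limit is nonzero because $m_{\delta_j}(z)$ is the Stieltjes transform of a probability measure, hence finite and nonzero on $\mathbb{C}^+$. So the limit is legitimate, and weak convergence follows by the Stieltjes continuity theorem applied along a countable dense set, intersecting the probability-one events.

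The main obstacle is justifying almost sure applicability of \eqref{eq:determinsitic_equivalent formula} for this rank-one test matrix uniformly enough in $z$. The Rubio--Mestre theorem is stated pointwise in $z$, which is why I use a countable dense set and rely on the equicontinuity of Stieltjes transforms, since $|\partial_z m|\le (\Im z)^{-2}$. This step also requires the moment conditions of Assumption~\ref{assm:dgp}, which are assumed.
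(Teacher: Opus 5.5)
Your proposal is correct and follows the paper's proof essentially step for step. The paper also writes the Stieltjes transform as $\bv_j^\top(\widehat{\bSigma}-z\bI_p)^{-1}\bv_j$, applies the deterministic equivalent with $\boldsymbol\Theta_n=\bv_j\bv_j^\top$, evaluates via Proposition~\ref{lem:resolvent-spiked} to get $1/(\sx^2 a_n - z+\delta_j a_n)\to m_{\delta_j}(z)$, and uses unit total mass to pass to weak convergence; your extra care (intersecting the almost-sure events over a countable set of $z$, and checking that the limiting denominator is nonzero) fills in details the paper leaves implicit without changing the argument.
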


The proofs for Lemma~\ref{lemma: f_alpha} and Lemma~\ref{lemma: hat D nj} are standard and use the fact that for any sequence of random probability measures $\mu_n$ whenever $m_{\mu_n}(z)\to m(z)$ point-wise on $\mathbb C^+$ and $m$ is the Stieltjes transform of a probability measure $\mu$ we have $\mu_n \xrightarrow d\mu$ in weak topology. For Lemma~\ref{lemma: hat C nj}, we will use a more general result that vague convergence holds for signed measures that are uniformly bounded in total variation, using Theorem~B.9 in \cite{lifesaver}.

\subsubsection{Proof of Lemma~\ref{lemma: f_alpha}}\label{proof of quadratic f_alpha}
Note that $\widehat B_n$ is a probability distribution since the total mass is 1 and all weights are non-negative. By definition, we have
\begin{align*}
m_{\widehat B_n}(z)
&=\int \frac{1}{x-z}\,d\widehat B_n(x)
 =\frac{1}{\|\bbeta_0\|_2^2}\sum_{i=1}^p \frac{\langle \bbeta_0,\bw_i\rangle^2}{d_i-z}
=\frac{\bbeta_0^\top(\widehat{\boldsymbol{\Sigma}}-z\mathbf{I}_p)^{-1}\bbeta_0}{\|\bbeta_0\|_2^2}.
\end{align*}
To justify the deterministic equivalent for this quadratic form, we apply the deterministic equivalent property with
$
\boldsymbol{\Theta}_n=\bbeta_0\bbeta_0^\top
$, which is bounded in trace norm since $\|\boldsymbol{\Theta}_n\|_1=\tr(\boldsymbol{\Theta}_n)=\|\bbeta_0\|_2^2$.
As before, we write $x_n\approx y_n$ if $x_n-y_n\to 0$ almost surely. Using the deterministic equivalent \eqref{eq:determinsitic_equivalent formula} and Proposition~\ref{lem:resolvent-spiked}  we deduce that
\begin{align*}
\frac{\bbeta_0^\top(\widehat{\boldsymbol{\Sigma}}-z\mathbf{I}_p)^{-1}\bbeta_0}{\|\bbeta_0\|_2^2}
\;\approx\;
\frac{1}{\|\bbeta_0\|_2^2}\,
\bbeta_0^\top
\frac{1}{\sx^2\,a_n-z}
\left[
\mathbf{I}_p-\sum_{j=1}^{\rk}\frac{\delta_j\,a_n}{\sx^2\,a_n-z+\delta_j\,a_n}\,\bv_j \bv_j^\top
\right]\bbeta_0,
\end{align*}
where the spike eigenvectors satisfy $\|\bv_j\|_2=1$. Expanding the quadratic form gives
\begin{align*}
&\frac{1}{\|\bbeta_0\|_2^2}\,
\bbeta_0^\top
\frac{1}{\sx^2a_n-z}
\left[
\mathbf{I}_p-\sum_{j=1}^{\rk}\frac{\delta_j\,a_n}{\sx^2\,a_n-z+\delta_j\,a_n}\,\bv_j \bv_j^\top
\right]\bbeta_0 \\
&\qquad=
\frac{1}{\sx^2\,a_n-z}\cdot\frac{1}{\|\bbeta_0\|_2^2}
\left[
\|\bbeta_0\|_2^2
-\sum_{j=1}^{\rk}\frac{\delta_j\,a_n}{\sx^2\,a_n-z+\delta_j\,a_n}\,(\bbeta_0^\top \bv_j)^2
\right].
\end{align*}
Using
\[
\frac{1}{\sx^2\,a_n-z}\cdot\frac{\delta_j\,a_n}{\sx^2\,a_n-z+\delta_j\,a_n}
=
\frac{1}{\sx^2\,a_n-z}-\frac{1}{\sx^2\,a_n-z+\delta_j\,a_n},
\]
we obtain
\begin{align*}
m_{\widehat B_n}(z)
\;\approx\;
\left(1-\sum_{j=1}^{\rk}\frac{(\bbeta_0^\top \bv_j)^2}{\|\bbeta_0\|_2^2}\right)\frac{1}{\sx^2\,a_n-z}
+\sum_{j=1}^{\rk}\frac{(\bbeta_0^\top \bv_j)^2}{\|\bbeta_0\|_2^2}\cdot\frac{1}{\sx^2\,a_n-z+\delta_j\,a_n}.
\end{align*}
Since $a_n(z)\to a(z)$ pointwise, we get the pointwise limit
\begin{align*}
m_{\widehat B_n}(z)\ \xrightarrow{\mathrm{a.s.}}\
\left(1-\sum_{j=1}^{\rk}\frac{\alpha_j^2}{r^2}\right)m(z)
+\sum_{j=1}^{\rk}\frac{\alpha_j^2}{r^2}\,m_{\delta_j}(z).
\end{align*}
Since $\widehat{B}_n$ is a probability measure, we deduce that $\widehat B_n$ converges weakly to the (unique) probability measure with the above Stieltjes transform. Note that this is nothing else but a finite mixture of 
$F_{\mathrm{MP}}$, and the $F_\delta$'s: $$F_\alpha := \omega_0 F_{\mathrm{MP}} + \sum_{j=1}^\rk \omega_jF_{\delta_j},$$
where $$\omega_0 = 1-\sum_{j=1}^{\rk}\frac{\alpha_j^2}{r^2} \qquad \omega_j = \frac{\alpha_j^2}{r^2}$$
are weights that sum to 1. By definition $\omega_j > 0$ for all $j\geq 1$. To show $\omega_0 > 0$ recall equation~\eqref{eq: I_p = sum of spikes + rest}, $$\mathbf{I}_p = \sum_{j=1}^\rk \bv_j \bv_j^\top + \sum_{i=1}^{p-\rk} \mathfrak{u}_i \mathfrak u_i^\top,$$
and note that if we multiply both sides by $\bbeta_0^\top$ to the left and $\bbeta_0$ to the right we have $$\|\bbeta_0\|^2_2 = \sum_{j=1}^\rk (\bbeta_0^\top \bv_j)^2 + \|\widetilde\bbeta_0\|_2^2,$$
which proves that $\omega_0 > 0$ after sending $n\to \infty$. We conclude that $\widehat B_n \xrightarrow[]{d} F_{\alpha}$ and $F_{\alpha}$ is a probability measure.

\subsubsection{Proof of Lemma~\ref{lemma: hat C nj}}
Fix $1\leq j \leq \rk$. This measure has total mass $1$ but is not necessarily nonnegative, so convergence requires additional care.
We start with the following uniform bound on the total variation norm.

\begin{lemma}\label{lem:tv-bound}
Fix $j$ such that $\bbeta_0^\top \bv_j\neq 0$. Then
\[
\|\widehat C_n^{\,j}\|_{\mathrm{TV}}
\;\le\;
\frac{\|\bbeta_0\|_2}{|\bbeta_0^\top \bv_j|}.
\]
In particular, $\{\widehat C_n^{\,j}\}_n$ is uniformly bounded in total variation.
\end{lemma}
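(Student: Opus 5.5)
The bound follows from writing the total variation of the discrete signed measure $\widehat C_n^{\,j}$ as a sum of absolute atom masses and applying Cauchy--Schwarz. By definition \eqref{eq: hat C_n^j}, $\widehat C_n^{\,j}$ is a discrete signed measure. It places mass
\[
c_i := \frac{\langle \bbeta_0,\bw_i\rangle\langle \bv_j,\bw_i\rangle}{\bbeta_0^\top\bv_j}
\]
at each eigenvalue $d_i$. Eigenvalues of $\widehat{\bSigma}$ may coincide, so distinct indices can place atoms at the same point. The total variation norm of such a measure is therefore at most the sum of the absolute atom masses:
\[
\|\widehat C_n^{\,j}\|_{\mathrm{TV}}\le \sum_{i=1}^p |c_i|,
\]
with equality when the $d_i$ are distinct. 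Merging atoms at a common point can only decrease total variation, by the triangle inequality, so the inequality holds in general.

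Next I apply the Cauchy--Schwarz inequality to the sum:
\[
\sum_{i=1}^p |\langle \bbeta_0,\bw_i\rangle|\,|\langle \bv_j,\bw_i\rangle|
\le \Bigl(\sum_{i=1}^p\langle \bbeta_0,\bw_i\rangle^2\Bigr)^{1/2}\Bigl(\sum_{i=1}^p\langle \bv_j,\bw_i\rangle^2\Bigr)^{1/2}.
\]
Since $\{\bw_i\}_{i=1}^p$ is an orthonormal basis of $\mathbb R^p$, Parseval's identity gives $\sum_i\langle\bbeta_0,\bw_i\rangle^2=\|\bbeta_0\|_2^2$ and $\sum_i\langle\bv_j,\bw_i\rangle^2=\|\bv_j\|_2^2=1$. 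Dividing by $|\bbeta_0^\top\bv_j|$, which is nonzero by hypothesis, yields
\[
\|\widehat C_n^{\,j}\|_{\mathrm{TV}}\le \frac{\|\bbeta_0\|_2}{|\bbeta_0^\top\bv_j|}.
\]

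For the uniform boundedness claim, Assumption~\ref{assm:main} gives $\|\bbeta_0\|_2\to r\in(0,\infty)$ and $\bbeta_0^\top\bv_j\to\alpha_j\neq0$. Hence the ratio converges to $r/|\alpha_j|<\infty$. A convergent sequence is bounded, and the finitely many early $n$ (those with $\bbeta_0^\top\bv_j\neq 0$) each give a finite value, so the sequence is bounded for all $n$. There is no genuine obstacle in this argument; the only point needing care is the possible coincidence of eigenvalues, which the triangle inequality handles.
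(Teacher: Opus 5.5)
Your proof is correct and is the paper's argument: bound the total variation by the sum of absolute atom masses, apply Cauchy--Schwarz, and use the orthonormality of the eigenvectors $\{\bw_i\}$. You also handle two points the paper leaves implicit, namely coinciding eigenvalues and deducing uniform boundedness from the limits in Assumption~\ref{assm:main}.
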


\begin{proof}
For any $x$,
\begin{align*}
\|\widehat C_n^{\,j}\|_{\mathrm{TV}}
&\le \frac{1}{|\bbeta_0^\top \bv_j|}\sum_{i=1}^p |\langle \bbeta_0,\bw_i\rangle|\,|\langle \bv_j,\bw_i\rangle|
\;\le\;
\frac{1}{|\bbeta_0^\top \bv_j|}\,
\sqrt{\sum_{i=1}^p \langle \bbeta_0,\bw_i\rangle^2}\,
\sqrt{\sum_{i=1}^p \langle \bv_j,\bw_i\rangle^2} \\
&=
\frac{1}{|\bbeta_0^\top \bv_j|}\,\|\bbeta_0\|_2\,\|\bv_j\|_2
=
\frac{\|\bbeta_0\|_2}{|\bbeta_0^\top \bv_j|},
\end{align*}
using Cauchy--Schwarz and the orthonormality of $\{\bw_i\}_{i=1}^p$.
\end{proof}
We return now to the main proof. As above, we apply the deterministic equivalent \eqref{eq:determinsitic_equivalent formula} property and Proposition~\ref{lem:resolvent-spiked} 
$$
\bbeta_0^\top(\widehat{\boldsymbol{\Sigma}}-z\mathbf{I}_p)^{-1}v
=\tr((\widehat{\boldsymbol{\Sigma}}-z\mathbf{I}_p)^{-1}\boldsymbol{\Theta}_n)
$$
with $\boldsymbol{\Theta}_n=\bv_j\bbeta_0^\top$, which is bounded in trace norm since $\|\bv_j\|_2 = 1$ and $\|\bbeta_0\|_2$ are bounded.
Using the equivalent from \ref{section: det equivalents},
\begin{align*}
m_{\widehat C_n^j}(z) = \frac{\bbeta_0^\top(\widehat{\boldsymbol{\Sigma}}-z\mathbf{I}_p)^{-1}\bv_j}{\bbeta_0^\top \bv_j}
\;\approx\;
\frac{1}{\bbeta_0^\top \bv_j}\,
\bbeta_0^\top
\frac{1}{a_n-z}
\left[
\mathbf{I}_p-\sum_{i=1}^{\rk}\frac{\delta_i\,a_n}{\sx^2\,a_n-z+\delta_i\,a_n}\,v_iv_i^\top
\right]\bv_j.
\end{align*}
Expanding using $\|\bv_j\|_2=1$ and $\bv_j \perp v_i$ for all $i \neq j$ yields
\[
m_{\widehat C_n^j}(z)\ \approx\ \frac{1}{\sx^2\,a_n-z+\delta\,a_n}
\quad\Longrightarrow\quad
m_{\widehat C_n^j}(z)\ \xrightarrow{\mathrm{a.s.}}\ \frac{1}{\sx^2\,a-z+\delta\,a} = m_\delta(z).
\]
Hence, by Theorem~B.9 in \cite{lifesaver}, $\widehat C_n^j$ converges vaguely to $F_{\delta_j}$, the (unique) probability measure with Stieltjes transform $m_{\delta_j}$.

\subsubsection{Proof of Lemma~\ref{lemma: hat D nj}}
Note that $\widehat D_n^{\,j}$ is a probability measure since
$\widehat D_n^{\,j}(\infty)=\sum_{i=1}^p \langle \bv_j,\bw_i\rangle^2=\|\bv_j\|_2^2=1$. Its Stieltjes transform is
\begin{align*}
m_{\widehat D_n^{\,j}}(z)
&:=\int \frac{1}{x-z}\,d\widehat D_n^{\,j}(x)
=\sum_{i=1}^p \frac{\langle \bv_j,\bw_i\rangle^2}{d_i-z}
= \bv_j^\top(\widehat{\boldsymbol{\Sigma}}-z\mathbf{I}_p)^{-1}\bv_j.
\end{align*}
Mutatis mutandis, we use  the deterministic equivalent property with
$
\boldsymbol{\Theta}_n=\bv_j \bv_j^\top,
$
which is bounded in trace norm since
$\|\boldsymbol{\Theta}_n\|_1=\tr(\boldsymbol{\Theta}_n)=\|\bv_j\|_2^2=1$.
Using the deterministic equivalent \eqref{eq:determinsitic_equivalent formula} property and Proposition~\ref{lem:resolvent-spiked} 
\begin{align*}
\bv_j^\top(\widehat{\boldsymbol{\Sigma}}-z\mathbf{I}_p)^{-1}\bv_j
\;\approx\;
\bv_j^\top
\frac{1}{a_n-z}
\left[
\mathbf{I}_p-\sum_{\ell=1}^{\rk}\frac{\delta_\ell a_n}{a_n-z+\delta_\ell a_n}\,\bv_\ell \bv_\ell^\top
\right]\bv_j.
\end{align*}
Expanding the quadratic form and using orthonormality $\bv_\ell^\top \bv_j=\mathbf{1}\{\ell=j\}$ gives
\begin{align*}
\bv_j^\top
\frac{1}{\sx^2\,a_n-z}
\left[
\mathbf{I}_p-\sum_{\ell=1}^{\rk}\frac{\delta_\ell \, a_n}{\sx^2\,a_n-z+\delta_\ell\,a_n}\,\bv_\ell \bv_\ell^\top
\right]\bv_j
&=
\frac{1}{\sx^2\,a_n-z}\left[
1-\frac{\delta_j\,a_n}{\sx^2\,a_n-z+\delta_j\,a_n}
\right]\\
&=
\frac{1}{\sx^2\,a_n-z+\delta_j\,a_n}\\& = m_{\delta_j}(z).\end{align*}
Hence,
\(
m_{\widehat D_n^{\,j}}(z)\ \xrightarrow{\mathrm{a.s.}}\ m_{\delta_j}(z)
\)
and because $\widehat D_n^{\,j}$ is a probability measure, we can deduce weak convergence of the underlying measures $
\widehat D_n^{\,j}\ \xrightarrow{d}\ F_{\delta_j}$.

\subsection{Radon-Nikodym Derivatives} \label{section: RN derivs}
In this section, we relate all the (limiting) probability measures we defined in the previous sections: $F_{\mathrm{MP}}, F_{\delta}$ via their Radon-Nikodym derivative w.r.t. $F_\alpha$.  
For all $j \in \{1,2,\dots \rk\}$ let $$x^\star_j = \frac{(\delta_j + \sx^2\,)(\delta_j + c\sx^2\,) }{\delta_j}$$ be the possible atom of $F_{\delta_j}$ if $\delta_j^2 > c\sx^2\,$ (see equation \eqref{atom at x star}). 
Recall that $\mathcal{S}_c = [\sx^2\,(1 - \sqrt{c})^2, \sx^2\,(1 + \sqrt{c})^2]$ is the bulk of the $F_{\mathrm{MP}}$ distribution. By Lemma~\ref{lemma:f_delta}(d)(i) we can deduce that this is also the bulk of $F_{\delta_j}$ for all $j$. Since $F_{\alpha}$ is just a mixture of the previous distributions, it must have the same bulk. We start by proving a basic lemma that relates $F_\delta$ and $F_{\mathrm{MP}}$.

\begin{lemma}\label{lemma: F_mp << F_delta}
    Let $\delta > 0$. Then $ F_{\mathrm{MP}}\ll F_{\delta}$ and moreover $$\nu_{\delta}(x):= \frac{\mathrm dF _{\text{MP}}(x)}{\mathrm dF _{\delta}(x)} = \frac{(\delta+c\sx^2)(\delta+\sx^2) - \delta x}{c\sx^2\,(\delta + \sx^2\,)} = \frac{\delta(x_\delta^\star - x)}{c\sx^2\,(\delta + \sx^2\,)},$$
    for $x \in \mathcal{S}_c \cup \{0\} \cup \{x_\delta^\star\}$ is a Radon-Nikodym derivative of $ F_{\text{MP}}$ w.r.t. $F_{\delta}$. 
    Equivalently, we have \begin{equation}\label{eq: rn deriv eq}\int \phi \, \mathrm{d}F_\mathrm{MP} = \int \phi\, \nu_\delta \, \mathrm{d}F_{\delta}
    \end{equation}
    for all test functions $\phi$ for which both integrals are finite.
\end{lemma}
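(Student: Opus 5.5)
The plan is to verify the identity $\mathrm dF_{\mathrm{MP}} = \nu_\delta\,\mathrm dF_\delta$ separately on each piece of the common support $\mathcal S_c\cup\{0\}\cup\{x_\delta^\star\}$. The inputs are the explicit description of $F_\delta$ in Lemma~\ref{lemma:f_delta}(b)--(d) and the known structure of $F_{\mathrm{MP}}$: density $f_{\mathrm{MP}}$ on $\mathrm{int}(\mathcal S_c)$, plus an atom of mass $1-1/c$ at $0$ when $c>1$. Since $F_{\mathrm{MP}}$ is supported in $\mathcal S_c\cup\{0\}$, the inclusion $\mathrm{supp}(F_\delta)\subseteq \mathcal S_c\cup\{0\}\cup\{x_\delta^\star\}$ reduces absolute continuity and the formula for the derivative to three local checks. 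It is also convenient to record first the two forms of $\nu_\delta$ in the statement. They agree because
\[
(\delta+c\sx^2)(\delta+\sx^2)=\delta^2+(c+1)\delta\sx^2+c\sx^4=\delta x_\delta^\star .
\]

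\textbf{Bulk.} On $\mathrm{int}(\mathcal S_c)$, Lemma~\ref{lemma:f_delta}(d)(i) gives
\[
f_\delta(x)=\frac{c\sx^2(\delta+\sx^2) f_{\mathrm{MP}}(x)}{\delta(x_\delta^\star-x)}
\]
by the identity above. I first show $x_\delta^\star>\sx^2(1+\sqrt c)^2$. By AM--GM,
\[
x_\delta^\star=\delta+(1+c)\sx^2+\frac{c\sx^4}{\delta}\ \ge\ (1+c)\sx^2+2\sqrt c\,\sx^2,
\]
with equality only at $\delta=\sx^2\sqrt c$; at that value $x^\star_\delta$ sits at the right edge, which is harmless because it is a Lebesgue-null endpoint. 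Hence the denominator $\delta(x_\delta^\star-x)$ is strictly positive on the open bulk, and $f_{\mathrm{MP}}=\nu_\delta f_\delta$ there. The two edge points carry no mass under either measure: $F_{\mathrm{MP}}$ has no edge atoms, and for $F_\delta$ the only possible atoms are at $0$ and $x_\delta^\star$ by Lemma~\ref{lemma:f_delta}(d). In the boundary case $x_\delta^\star=b$ with $\delta=\sx^2\sqrt c$, the atom at $x_\delta^\star$ vanishes by (d)(iii).

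\textbf{Atom at $0$ (when $c>1$).} Using Lemma~\ref{lemma:f_delta}(d)(ii), the ratio of the atoms is
\[
\frac{F_{\mathrm{MP}}(\{0\})}{F_\delta(\{0\})}=\frac{(c-1)/c}{\sx^2(c-1)/(c\sx^2+\delta)}=\frac{c\sx^2+\delta}{c\sx^2}.
\]
This matches
\[
\nu_\delta(0)=\frac{(\delta+c\sx^2)(\delta+\sx^2)}{c\sx^2(\delta+\sx^2)}=\frac{\delta+c\sx^2}{c\sx^2}.
\]
When $c\le 1$, neither measure has an atom at $0$.

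\textbf{Atom at $x_\delta^\star$ (when $\delta>\sx^2\sqrt c$).} Here $F_{\mathrm{MP}}$ puts no mass, since $x_\delta^\star\notin\mathcal S_c$, and $\nu_\delta(x_\delta^\star)=0$, so the identity holds trivially. Summing the three pieces yields \eqref{eq: rn deriv eq} for all $\phi$ with finite integrals; absolute continuity follows because every $F_\delta$-null set is $F_{\mathrm{MP}}$-null on each piece.

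The only step requiring real care is the bulk step. There one must confirm that $F_\delta$ has no singular component on $\mathcal S_c$ beyond what (d) lists, which follows from the support statement (b) together with the density formula (d)(i), and that $x_\delta^\star$ never lies in the open bulk, which is the AM--GM inequality above.
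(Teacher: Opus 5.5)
Your proposal is correct and follows the paper's proof: you match the bulk densities via Lemma~\ref{lemma:f_delta}(d)(i), match the atom masses at $0$ via (d)(ii), and handle the outlier atom using $\nu_\delta(x_\delta^\star)=0$. Your AM--GM check that $x_\delta^\star \ge \sx^2(1+\sqrt{c})^2$ (so $\nu_\delta>0$ on the open bulk) and your treatment of the threshold case $\delta=\sx^2\sqrt{c}$ are small refinements that the paper leaves implicit.
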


Next, we assume a minimal condition on the spike strengths to avoid possible degeneracy. All results proved in this paper generalize to these edge cases via minimal changes. 
\begin{lemma}\label{lemma: distinct x_jstar}
If Assumption~\ref{assm:main}(1) holds, then the points $\{x_j^\star\}_{j=1}^\rk$ are distinct and at positive distance to the right of the bulk $\mathcal{S}_c$.
\end{lemma}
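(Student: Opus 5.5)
The statement to prove is Lemma~\ref{lemma: distinct x_jstar}. We treat the two claims separately, both by elementary analysis of the map $\delta \mapsto x^\star(\delta) := (\delta+\sx^2)(\delta+c\sx^2)/\delta$ on $(0,\infty)$.

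\textbf{Distinctness.} For $\delta,\delta'>0$ we expand
\[
x^\star(\delta) = \delta + (1+c)\sx^2 + \frac{c\sx^4}{\delta}.
\]
Then, for $\delta\neq\delta'$,
\[
x^\star(\delta)-x^\star(\delta') = (\delta-\delta')\Bigl(1 - \frac{c\sx^4}{\delta\delta'}\Bigr).
\]
This difference vanishes if and only if $\delta=\delta'$ or $\delta\delta' = c\sx^4$. Assumption~\ref{assm:main}(1) gives $\delta_i\neq\delta_j$ for $i\neq j$ and rules out $\delta_i\delta_j = c\sx^4$. Hence $x_i^\star\neq x_j^\star$ for all $i\neq j$.

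Note that this holds for all spikes, including sub-BBP ones, if $x_j^\star$ is read as the formal point in Section~\ref{section: RN derivs}. Restricted to the supercritical spikes, where $x_j^\star\in\mathcal{A}_c$, the same argument applies; in fact only the condition $\delta_i\neq\delta_j$ is needed there, since two supercritical spikes have $\delta_i\delta_j > c\sx^4$ automatically.

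\textbf{Location to the right of the bulk.} We write $b=\sx^2(1+\sqrt c)^2$. By AM--GM,
\[
\delta + \frac{c\sx^4}{\delta} \ge 2\sqrt{c}\,\sx^2,
\]
with equality exactly at $\delta=\sx^2\sqrt c$. Therefore
\[
x^\star(\delta) \ge (1+c)\sx^2 + 2\sqrt c\,\sx^2 = b,
\]
with equality only when $\delta^2 = c\sx^4$. Assumption~\ref{assm:main}(1) with $i=j$ excludes $\delta_j^2 = c\sx^4$, so $x_j^\star > b$ strictly. Since there are finitely many spikes, $\min_j (x_j^\star - b) > 0$, which is the required positive distance.

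The positive-distance claim is the one place that needs care: without excluding $\delta_j^2 = c\sx^4$, the atom would sit at the bulk edge. The case $i=j$ of the assumption handles exactly this, so no real obstacle remains.
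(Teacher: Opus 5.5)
Your proof is correct and follows the paper's argument. For distinctness you use the same factorization $x^\star(\delta)-x^\star(\delta')=(\delta-\delta')\bigl(1-c\sigma_0^4/(\delta\delta')\bigr)$. For the location you minimize $\delta\mapsto\delta+c\sigma_0^4/\delta$ with equality only at $\delta^2=c\sigma_0^4$, using AM--GM where the paper checks the first and second derivatives of $\varphi$.
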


Next, we introduce some notations. Recall from Lemma~\ref{lemma: F_mp << F_delta} that for all $\delta_j > 0$ we can define $$\nu_j(x): = \frac{\mathrm dF _{\text{MP}}(x)}{\mathrm dF _{\delta_j}(x)} =\frac{\delta_j(x_j^\star - x)}{c\sx^2(\delta_j + \sx^2\,)}.$$
Further, define \begin{align}\label{eq: def nu_j}
\nu_{-i}(x) := \prod_{j\neq i}\nu_j(x) \qquad \qquad \nu(x) := 
\prod_{j}\nu_j(x).
\end{align}
\begin{remark}\label{rmk: lagrange}
The polynomials $\nu_{-i}(x)$ act as Lagrange interpolation polynomials since $\nu_j(x_i^\star) = 0$ if and only if $i= j$. It will become clear why this is instrumental in Appendix~\ref{app: D}.    
\end{remark}
Finally, we combine Lemma~\ref{lemma: F_mp << F_delta} with the assumptions of Lemma~\ref{lemma: distinct x_jstar} to find the Radon-Nikodym derivatives of $F_{\mathrm{MP}}$ and the $F_{\delta_j}$'s w.r.t. $F_{\alpha}$.
\begin{lemma}\label{lemma: def mu_j mu_0}
    Let ${\boldsymbol{\Sigma}} = \sx^2\,\mathbf{I}_p + \sum_{j=1}^\rk \delta_j \bv_j \bv_j^\top$ and assume that the conditions from Lemma~\ref{lemma: distinct x_jstar} hold. Also, assume that $\omega_j \neq 0$ for all $j \in \{1, 2,\dots, \rk\}$. Then $F_{\mathrm{MP}}\ll {F_\alpha}$ and for all $j\in\{1,2,\dots, \rk\}$ we have $F_{{\delta_j}}\ll {F_\alpha}.$ Moreover, 
    $$\frac{\mathrm{d}F_{\delta_j}}{\mathrm{d}F_{\alpha}} := \mu_j(x) := \frac{\nu_{-j}(x)}{\omega_0 \nu(x) + \sum_{i=1}^\rk \omega_i \nu_{-i}(x)}$$ and 
$$\frac{\mathrm{d}F_{\mathrm{MP}}}{\mathrm{d}F_{\alpha}} := \mu_0(x) := \frac{\nu(x)}{\omega_0 \nu(x) + \sum_{i=1}^\rk \omega_i \nu_{-i}(x)},$$
for all $x \in \mathcal{S}_c^+$.   
\end{lemma}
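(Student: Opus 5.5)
The plan is to derive the density of $F_{\alpha}$ with respect to $F_{\mathrm{MP}}$ first, and then invert it. Lemma~\ref{lemma: F_mp << F_delta} gives $\mathrm dF_{\mathrm{MP}} = \nu_j\,\mathrm dF_{\delta_j}$ on $\mathcal S_c\cup\{0\}\cup\{x_j^\star\}$ for each $j$. Since the measures are supported on $\mathcal S_c^+$, the argument splits into three regions: the bulk $\mathcal S_c$ (including $0$ when $c>1$), and each outlier atom $x_j^\star$.

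\emph{On the bulk and at $0$.} For $x\in\mathcal S_c$, $\nu_j(x)\neq 0$, because by Lemma~\ref{lemma: distinct x_jstar} each $x_j^\star$ lies at positive distance to the right of $\mathcal S_c$. The same holds at $x=0$, since $x_j^\star>0$. Hence on these sets $F_{\delta_j}$ and $F_{\mathrm{MP}}$ are mutually absolutely continuous, with $\mathrm dF_{\delta_j}=\nu_j^{-1}\,\mathrm dF_{\mathrm{MP}}$. Note that $\nu_j>0$ on $\mathcal S_c$ and at $0$, because it is linear with a root to the right. Summing gives
\[
\mathrm dF_\alpha=\Big(\omega_0+\sum_{i}\omega_i\nu_i^{-1}\Big)\mathrm dF_{\mathrm{MP}}=\frac{\omega_0\nu+\sum_i\omega_i\nu_{-i}}{\nu}\,\mathrm dF_{\mathrm{MP}}.
\]
The factor here is strictly positive, since all weights are positive by Lemma~\ref{lemma: f_alpha} and all $\nu_i>0$. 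Inverting gives $\mathrm dF_{\mathrm{MP}}=\mu_0\,\mathrm dF_\alpha$. Then $\mathrm dF_{\delta_j}=\nu_j^{-1}\mu_0\,\mathrm dF_\alpha=\mu_j\,\mathrm dF_\alpha$, using $\nu/\nu_j=\nu_{-j}$.

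\emph{At an outlier $x_j^\star$ with $\delta_j>\sx^2\sqrt c$.} By Lemma~\ref{lemma: distinct x_jstar} the $x_i^\star$ are distinct. Only $F_{\delta_j}$ charges $x_j^\star$: $F_{\mathrm{MP}}$ has no mass there, and $F_{\delta_i}$ for $i\ne j$ is supported on $\mathcal S_c\cup\{0\}\cup\{x_i^\star\}$ by Lemma~\ref{lemma:f_delta}(b). Therefore $F_\alpha(\{x_j^\star\})=\omega_jF_{\delta_j}(\{x_j^\star\})>0$, which gives the point-mass densities $\mathrm dF_{\delta_j}/\mathrm dF_\alpha=1/\omega_j$ and $\mathrm dF_{\mathrm{MP}}/\mathrm dF_\alpha=0$ there. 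It remains to check that the claimed formulas reproduce these values.
\begin{itemize}
\item $\nu(x_j^\star)=0$, so $\mu_0(x_j^\star)=0$.
\item $\nu_{-i}(x_j^\star)=0$ for $i\neq j$, since each such $\nu_{-i}$ contains the factor $\nu_j$. This is exactly the Lagrange-type property of Remark~\ref{rmk: lagrange}.
\item $\nu_{-j}(x_j^\star)\ne0$, by distinctness of the outliers.
\end{itemize}
So the denominator reduces to $\omega_j\nu_{-j}(x_j^\star)\ne0$, and $\mu_j(x_j^\star)=1/\omega_j$, while $\mu_i(x_j^\star)=0$ for $i\neq j$. These match the point masses, because $F_{\delta_i}(\{x_j^\star\})=0$ for $i\ne j$.

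Combining the regions proves both absolute continuity claims and the stated formulas on all of $\mathcal S_c^+$.

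The main obstacle is making sure the denominator $\omega_0\nu+\sum_i\omega_i\nu_{-i}$ never vanishes on $\mathcal S_c^+$. On the bulk this follows from the positivity of the $\nu_i$ there. At the atoms it follows from the interpolation structure together with the distinctness guaranteed by Assumption~\ref{assm:main}(1). A secondary subtlety is that $F_{\delta_i}$ carries no mass at $x_j^\star$ for $i\ne j$, which again rests on the distinctness of the atoms.
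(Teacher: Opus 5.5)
Your proposal is correct and follows essentially the same route as the paper. Both arguments match densities on the bulk and the mass at $0$ using $\nu_j = \mathrm dF_{\mathrm{MP}}/\mathrm dF_{\delta_j}$ and the positivity of $\nu_j$ left of $x_j^\star$, then verify the outlier atoms through the Lagrange property $\nu_{-i}(x_j^\star)=0$ for $i\neq j$. The only cosmetic difference is that you first form $\mathrm dF_\alpha/\mathrm dF_{\mathrm{MP}}$ and invert it to get $\mu_0$, then set $\mu_j=\mu_0/\nu_j$. The paper instead solves directly for $f_{\delta_j}/f_\alpha$ and treats $\mu_0$ ``mutatis mutandis''.
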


\subsubsection{Proof of Lemma~\ref{lemma: F_mp << F_delta}}
\begin{proof}
Note that by equation~\eqref{eq: rn deriv eq}, it is enough to define the Radon--Nikodym derivative only on 
$\operatorname{supp}(F_\delta) \cup \operatorname{supp}(F_\mathrm{MP})$. By Lemma~\ref{lemma:f_delta}(b), this is $\operatorname{supp}(F_\delta)\subseteq \mathcal{S}_c\cup\{0\}\cup\{x_\delta^\star\}$. 
We want to show that
$\nu_\delta$ satisfies
\[
\int \phi \, \mathrm{d}F_{\mathrm{MP}}=\int \phi\,\nu_\delta\, \mathrm{d}F_\delta
\]
for all test functions $\phi$ for which both integrals are finite.
Write the two integrals by separating the absolutely continuous and atomic parts:
\[
\int \phi \, \mathrm{d}F_{\mathrm{MP}}
=
\int_{\mathcal S_c}\phi(x)\,f_{\mathrm{MP}}(x)\,dx
+\phi(0)\,F_{\mathrm{MP}}(\{0\}),
\]
and
\[
\int \phi\,\nu_\delta\, \mathrm{d}F_\delta
=
\int_{\mathcal S_c}\phi(x)\,\nu_\delta(x)\,f_\delta(x)\,dx
+\phi(0)\,\nu_\delta(0)\,F_\delta(\{0\})
+\phi(x_\delta^\star)\,\nu_\delta(x_\delta^\star)\,F_\delta(\{x_\delta^\star\}),
\]
On $\operatorname{int}(\mathcal S_c)$, Lemma~\ref{lemma:f_delta}(d)(i) gives
\[
f_\delta(x)
= \frac{c\sx^2(\delta + \sx^2)\,f_{\mathrm{MP}}(x)}
{c\sx^4 + (c+1)\delta\sx^2 + \delta^2 - \delta x},
\qquad x\in \operatorname{int}(\mathcal S_c),
\]
hence
\[
\frac{f_{\mathrm{MP}}(x)}{f_\delta(x)}
=
\frac{(\delta+c\sx^2)(\delta+\sx^2) - \delta x}{c\sx^2\,(\delta + \sx^2\,)}=:\nu_\delta(x),
\qquad x\in \operatorname{int}(\mathcal S_c).
\]
Therefore, for this choice,
\[
\int_{\mathcal S_c}\phi(x)\,\nu_\delta(x)\,f_\delta(x)\,dx
=
\int_{\mathcal S_c}\phi(x)\,f_{\mathrm{MP}}(x)\,dx.
\]
It remains to match the contributions from the atoms.  If $c\le 1$, then $F_\delta(\{0\})= F_\mathrm{MP}(\{0\})= 0$ (Lemma~\ref{lemma:f_delta}(d)(ii)), so this holds regardless of the value of $\nu_\delta(0)$. 
If $c>1$, Lemma~\ref{lemma:f_delta}(d)(ii) yields
$F_\delta(\{0\})=\sx^2(c-1)/(c\sx^2+\delta)>0$, and $F_{\mathrm{MP}}(\{0\}) = (c-1)/c$ so we need $$\frac{F_{\mathrm{MP}}(\{0\})}{F_\delta(\{0\})} = \frac{(c-1)/c}{\sx^2(c-1)/(c\sx^2+\delta)} = \frac{c\sx^2+\delta}{c\sx^2}$$
which matches $\nu_\delta(0)$ as defined in the statement.
Finally, $F_{\mathrm{MP}}$ has no atom at $x_\delta^\star$, so we must have
$\nu_\delta(x_\delta^\star) \,F_\delta(\{x_\delta^\star\})=0$. By the formula for $\nu_\delta$,
\[
\nu_\delta(x_\delta^\star) = 
\frac{\delta(x_\delta^\star-x_\delta^\star)}{c\sx^2(\delta+\sx^2)}=0,
\]
so this holds whether or not $F_\delta$ has an atom at $x_\delta^\star$.
Putting the continuous and atomic parts together, we obtain
\[
\int \phi \, \mathrm{d}F_{\mathrm{MP}}
=
\int \phi\,\nu_\delta\, \mathrm{d}F_\delta
\]
for all test functions $\phi$ for which both integrals are finite. Hence $F_{\mathrm{MP}}\ll F_\delta$ and
$\nu_\delta=\mathrm dF _{\mathrm{MP}}/\mathrm dF _\delta$ on $\operatorname{supp}(F_\delta)$.
\end{proof}

\subsubsection{Proof of Lemma~\ref{lemma: distinct x_jstar}}
\begin{proof}
The proof is purely algebraic. Define $\varphi:(0,\infty)\to(0,\infty)$ by
\[
\varphi(x):=\frac{(x+\sx^2)(x+c\sx^2)}{x}=x+\frac{c\sx^4}{x}+\sx^2+c\sx^2.
\]
Then
\[
\varphi'(x)=1-\frac{c\sx^4}{x^2}, 
\qquad
\varphi''(x)=\frac{2c\sx^4}{x^3}>0.
\]
Hence $\varphi'$ vanishes only at $x=\sqrt c\sx^2\,$, and since $\varphi''(\sqrt c\,\sx^2)>0$, the point $x=\sqrt c\,\sx^2\,$ is the unique global minimizer of $\varphi$ on $(0,\infty)$. In particular, $\varphi$ is strictly decreasing on $(0,\sqrt c\,\sx^2)$ and strictly increasing on $(\sqrt c\,\sx^2,\infty)$, and
\[
\min_{x>0}\varphi(x)=\varphi(\sqrt c\,\sx^2\,)
=\sx^2\,(1+\sqrt c)^2.
\]
Note that by definition $x_j^\star=\varphi(\delta_j)$, so we obtain
\(
x_j^\star \ge \sx^2\,(1+\sqrt c)^2,
\)
with strict inequality whenever $\delta_j\neq \sqrt c\sx^2$. By assumption, $\delta_j^2\neq c$ for all $j$, hence $\delta_j\neq \sqrt c$ and therefore
\[
x_j^\star > \sx^2\,(1+\sqrt c)^2,
\]
i.e.\ each $x_j^\star$ lies strictly to the right of the bulk $\mathcal S_c=[\sx^2\,(1-\sqrt c)^2,\sx^2\,(1+\sqrt c)^2]$.

Next we show that the points $\{x_j^\star\}_{j=1}^\rk$ are distinct. Suppose $\varphi(\delta_i)=\varphi(\delta_j)$. Then
\[
\delta_i+\frac{c\sx^4}{\delta_i}=\delta_j+\frac{c\sx^4}{\delta_j}
\iff
(\delta_i-\delta_j)\left(1-\frac{c\sx^4}{\delta_i\delta_j}\right)=0.
\]
Thus either $\delta_i=\delta_j$ or $\delta_i\delta_j=c\sx^4\,$. The assumptions $\delta_i\neq \delta_j$ for $i\neq j$ and $\delta_i\delta_j\neq c
\sx^4$ for all $i,j$ rule out both possibilities unless $i=j$. Hence the values $x_j^\star=\varphi(\delta_j)$ are pairwise distinct.
\end{proof}

\subsubsection{Proof of Lemma~\ref{lemma: def mu_j mu_0}}
\label{nu_j(x) > 0}
\begin{proof} 
The proof is very similar to that of Lemma~\ref{lemma: F_mp << F_delta}, i.e., it is enough to show that in the bulk $\mu_j$ is the ratio of the densities of $F_{\delta_j}$ and $F_\alpha$, and the ratio of the masses of the atoms also matches the Radon-Nikodym derivative. 
Recall from Lemma~\ref{lemma: distinct x_jstar} that all $x_j^\star$ are distinct and $x_j^\star > \sx^2\,(1 + \sqrt{c})^2$. In particular, for all $x \in \mathcal S_c$ we have $x_j^\star > \sx^2\,(1 + \sqrt{c})^2 \geq x$.
Furthermore, from Lemma~\ref{lemma: F_mp << F_delta} we know that 
$$\nu_j(x) = \frac{\delta(x_j^\star - x)}{c\sx^2\,(\delta + \sx^2\,)},$$
so $\nu_j(x) > 0$ for all $x\in \mathcal{S}_c$. As a consequence $\mu_j$ and $\mu_0$ are well-defined and positive for all $x\in \mathcal{S}_c$. 
Moreover, note that by definition $$\nu_j(0) = \frac{\delta x_j^\star}{c\sx^2\,(\delta+\sx^2\,)} > 0,$$ so $\mu_j$ and $\mu_0$ are also well-defined at $x = 0$.

Fix now $j \in \{1, 2, \dots \rk\}$. We prove that $\mu_j$ is a valid Radon-Nikodym derivative for $F_{\delta_j}$ w.r.t. $F_{\alpha}$. 
Recall that
\begin{equation}
\label{eq: F_alpha = sum}
    F_{\alpha} = \omega_0 F_{\mathrm{MP}} + \sum_{i=1}^\rk \omega_i F_{\delta_i}.
\end{equation}
We start with the density for $x \in \mathcal{S}_c$. By the formula above, \begin{equation}\label{eq: f_alpha = sum}
    f_\alpha(x) = \omega_0 f_{\mathrm{MP}}(x) + \sum_{i=1}^\rk \omega_i f_{\delta_i}(x).
\end{equation}
By Lemma~\ref{lemma: F_mp << F_delta}, we know that $f_{\mathrm{MP}}(x) = \nu_i(x)f_{\delta_i}(x)$ for all $i$ so in particular for all $i$ we have $$f_{\delta_i}(x) = \frac{f_{\mathrm{MP}}(x)}{\nu_i(x)}  = \frac{\nu_j(x)f_{\delta_j}(x)}{\nu_i(x)},$$ so plugging this back in equation~\eqref{eq: f_alpha = sum} gives $$f_\alpha(x) = \omega_0 \nu_j(x)f_{\delta_j}(x) + \sum_{i =1 }^\rk \omega_i \frac{\nu_j(x)f_{\delta_j}(x)}{\nu_i(x)}.$$
Rearranging this yields that
$$\frac{f_{\delta_j}(x)}{f_\alpha(x)} = \frac{1}{\omega_0 \nu_j(x) + \sum_{i=1}^\rk \omega_i \frac{\nu_j(x)}{\nu_i(x)}} =  \mu_j(x),$$
where the last equality follows by multiplying both the denominator and numerator by $\nu_{-j}(x)$. Thus, $\mu_j(x)$ is matches the ratio in the bulk $\mathcal{S}_c$.

Next, we check the ratio at the possible atom at $x = 0$. By equation~\eqref{eq: F_alpha = sum} we deduce that $$F_{\alpha}(\{0\}) = \omega_0 F_{\mathrm{MP}}(\{0\}) + \sum_{i=1}^\rk \omega_j F_{\delta_j}(\{0\}).$$ By Lemma~\ref{lemma: F_mp << F_delta}, we know that $F_{\mathrm{MP}}(\{0\}) = \nu_i(0)F_{\delta_i}(\{0\})$ for all $i$ so in particular for all $i$ we have $$F_{\delta_i}(\{0\}) = \frac{F_{\mathrm{MP}}(\{0\})}{\nu_i(0)}  = \frac{\nu_j(0)F_{\delta_j}(\{0\})}{\nu_i(0)},$$ so plugging this back in the equation above gives $$F_{\alpha}(\{0\}) = \omega_0 \nu_j(0)F_{\delta_j}(\{0\}) + \sum_{i =1 }^\rk \omega_i \frac{\nu_j(0)F_{\delta_i}(\{0\})}{\nu_i(0)}.$$
Rearranging this yields that $$\frac{F_{\delta_j}(\{0\})}{F_{\alpha}(\{0\})} = \frac{1}{\omega_0 \nu_j(0) + \sum_{i=1}^\rk \omega_i \frac{\nu_j(0)}{\nu_i(0)}} =  \mu_j(0),$$
where the last equality follows by multiplying both the denominator and numerator by $\nu_{-j}(0)$. Thus, $\mu_j(0)$ is matches the ratio in the bulk $\mathcal{S}_c$.

Next, we check the ratio at the other possible atoms of $F_{\alpha}$, i.e. $\{x_1^\star, \dots, x_\rk^\star\}$. For $i\neq j$, $F_{\delta_j}$ has no atom at $x_i^\star$ so we need to check that $\mu_j(x_i^\star) = 0$. This is true since $\nu_{-j}(x_i^\star) = 0$ if and only if $i \neq j$.

Finally, we need to check the atom at $x_j^\star$. Using equation~\eqref{eq: F_alpha = sum} gives $$F_\alpha(\{x_j^\star\}) = \omega_j F_{\delta_j}(\{x_j^\star\}).$$
If the mass is non-zero, then the ratio is also given by $\mu_j(x_j^\star)$
as $$\mu_j(x_j^\star) = \frac{\nu_{-j}(x_j^\star)}{\omega_0 \nu(x_j^\star) + \sum_{i=1}^\rk \omega_i \nu_{-i}(x_j^\star)} = \frac{\nu_{-j}(x_j^\star)}{\omega_j \nu_{-j}(x_j^\star)} = \frac{1}{\omega_j} = \frac{F_{\delta_j}(\{x_j^\star\})}{F_\alpha(\{x_j^\star\})},$$
where we used $\omega_j \neq 0$ and $\nu_{-j}(x_j^\star) \neq 0$. We conclude that $\mu_j$ is a valid Radon-Nikodym derivative for $F_{\delta_j}$ w.r.t. $F_{\alpha}$. Mutatis mutandis,  $\mu_0$ is a valid Radon-Nikodym derivative for $F_{\mathrm{MP}}$ w.r.t. $F_{\alpha}$. 
\end{proof}

\subsection{Convergence under Weak/Vague Topology}
In this section, we prove the following technical Lemma that allows us to compute limits of general test functions $\phi \in \mathcal{F}$ using only weak/vague convergence. 
\begin{lemma}\label{lem:int-conv}
Let ${\phi} \in \mathcal F$ be a test function. Then
\[
\int \phi \, \mathrm{d}\widehat B_n \ \xrightarrow[]{a.s.}\ \int \phi \, \mathrm{d}F_{\alpha}.
\]
Similar results hold for $F_{\widehat{\boldsymbol{\Sigma}}}$, $\widehat C_n^{\,j}$ and $\widehat D_n^{\,j}$.
\end{lemma}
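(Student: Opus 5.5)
The plan is to reduce to weak (or vague) convergence tested against a bounded continuous function. The reduction uses two facts: $\phi$ is continuous near $\mathcal{S}_c^+$, and almost surely the eigenvalues of $\widehat{\bSigma}$ eventually lie in a small neighborhood of $\mathcal{S}_c^+$. Fix $\phi\in\mathcal F$ and let $\eta>0$ be as in Assumption~\ref{assm:F}, so $\phi$ is continuous on $U_\eta$, the union of the $\eta$-neighborhoods of $\mathcal{S}_c$ and of each atom in $\mathcal{A}_c$. Shrink to $\eta/2$ and take the closed set $K:=\overline{U_{\eta/2}}\subset U_\eta$, which is compact since $\mathcal{S}_c^+$ is bounded. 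On $K$, $\phi$ is continuous and hence bounded.

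By Tietze's extension theorem, there is a bounded continuous $\tilde\phi:[0,\infty)\to\mathbb R$ with $\tilde\phi=\phi$ on $K$ and compact support (multiply by a cutoff equal to $1$ on $K$). Next, invoke the exact separation / no-outliers results for spiked sample covariances \cite{baik2004}, as recalled after Assumption~\ref{assm:F}. They give that almost surely, for all large $n$, every eigenvalue $d_i$ of $\widehat{\bSigma}$ lies in $U_{\eta/2}$. The exceptional directions are the $\rk$ outlier eigenvalues, which converge to their $x_j^\star$ when supercritical and stick to the bulk edge otherwise. The zero eigenvalues when $c>1$ are exactly $0\in\mathcal{A}_c$. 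On that event $\phi(d_i)=\tilde\phi(d_i)$ for all $i$, hence $\int\phi\,\mathrm d\widehat B_n=\int\tilde\phi\,\mathrm d\widehat B_n$ eventually. Moreover $\phi=\tilde\phi$ on $\operatorname{supp}F_\alpha\subseteq\mathcal{S}_c^+\subset K$, using Lemma~\ref{lemma:f_delta}(b) and the mixture form in Lemma~\ref{lemma: f_alpha}. The same identity therefore holds for the limit.

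It then suffices to show $\int\tilde\phi\,\mathrm d\widehat B_n\to\int\tilde\phi\,\mathrm dF_\alpha$ almost surely. This is immediate from the almost sure weak convergence $\widehat B_n\xrightarrow{d}F_\alpha$ (Lemma~\ref{lemma: f_alpha}), since $\tilde\phi$ is bounded continuous. The same argument gives the result for $F_{\widehat{\bSigma}}$ via Proposition~\ref{prop: esd MP} and for $\widehat D_n^{\,j}$ via Lemma~\ref{lemma: hat D nj}. One technicality is that the weak convergence statements hold on a probability-one event for all bounded continuous functions simultaneously. This is fine via a countable convergence-determining family, or because Stieltjes transform convergence on a countable dense set of $z$ suffices.

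The main obstacle is the signed measure $\widehat C_n^{\,j}$, for which Lemma~\ref{lemma: hat C nj} gives only vague convergence. Because $\tilde\phi$ was chosen compactly supported and continuous, vague convergence applies to it directly, so no tightness argument is needed. The uniform total variation bound of Lemma~\ref{lem:tv-bound} guarantees that the integrals are well defined and uniformly bounded. It also ensures that vague convergence tested against $C_c$ functions is exactly what Theorem~B.9 of \cite{lifesaver} delivers. A secondary delicate point is verifying that subcritical spikes produce no eigenvalue outside $U_{\eta/2}$, which is why the localization result is invoked rather than derived.
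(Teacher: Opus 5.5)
Your proof is correct and follows the paper's argument. Both localize the spectrum of $\widehat{\boldsymbol{\Sigma}}$, almost surely for all large $n$, in a small closed neighborhood of $\mathcal{S}_c^+$ using the eigenvalue limits of \cite{baik2004}, replace $\phi$ by a compactly supported continuous function that agrees with it there and on the support of the limit measure, and conclude by weak convergence for $\widehat B_n$, $F_{\widehat{\boldsymbol{\Sigma}}}$, $\widehat D_n^{\,j}$ and vague convergence for the signed measures $\widehat C_n^{\,j}$. Your explicit Tietze extension on $\overline{U_{\eta/2}}$ and your remark about working on a single probability-one event are slightly more careful versions of the same steps.
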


\begin{proof}
Let $\rk^+$ be the number of spikes 
above the BBP threshold.  Using the results in~\cite{baik2004}, we infer that
\begin{align*}d_j \xrightarrow[]{a.s.} x_j^\star,
\qquad
d_{\rk^+ + 1}\xrightarrow[]{a.s.}\sx^2\,(1 + \sqrt{c})^2,
\qquad
d_{\min\{n, p\}} \xrightarrow[]{a.s.}
\sx^2\,(1 - \sqrt{c})^2.
\end{align*}
Since $\rk^+ \leq \rk$ is finite, there exists an event $\Omega_0$ with $\mathbb P(\Omega_0)=1$ on which all the above convergences hold simultaneously. Fix $\eta$ small enough so that $\phi$ is continuous on $S_\eta:=\{x:\mathrm{dist}(x,\mathcal{S}^{+}_c)\le \eta\}$. By the convergences above, there exists $N(\omega)$ such that for all $n\ge N(\omega)$,
\[
d_j(n,\omega)\in (x_j^\star-\eta,\ x_j^\star+\eta)\quad \text{for } j\le \rk^+,
\]
and
\[
d_{\rk^++1}(n,\omega)\le \sx^2\,(1+\sqrt c)^2+\eta,\qquad
d_p(n,\omega)\ge \sx^2\,(1-\sqrt c)^2-\eta \ \ \text{if } c<1,
\]
while if $c\ge 1$ then the smallest eigenvalues lie in $[0,\eta]$ and the nonzero eigenvalues lie above $\sx^2\,(1-\sqrt c)^2-\eta$.
In particular, for all $n\ge N(\omega)$,
\[
\{d_1(n,\omega),\dots,d_p(n,\omega)\}\ \subset\ S_\eta.
\]
Since the restriction of $\phi$ to $S_\eta$ is bounded and continuous, we can choose a compactly supported continuous function $g\in C_c(\mathbb R)$ such that
\[
\psi(x)=\phi(x)\quad \text{for all }x\in S_\eta.
\]
For $n\ge N(\omega)$ we then have, because $\widehat B_n$ is supported on the eigenvalues $\{d_i\}_{i=1}^p\subset S_\eta$,
\[
\int \phi\, \mathrm{d}\widehat B_n(\omega)=\int \psi\, \mathrm{d}\widehat B_n(\omega).
\]
Moreover, the support of $F_\alpha$ is contained in $\mathcal{S}^{+}_c$, hence also in $S_\eta$, so
\[
\int \phi\, \mathrm{d}F_\alpha=\int \psi\, \mathrm{d}F_\alpha.
\]
Finally, we already know that $\widehat B_n\xrightarrow{d}F_\alpha$ almost surely, on a set $\Omega_0'$ of probability 1. Since $\psi\in C_c(\mathbb R)$, weak/vague convergence implies
\[
\int \psi\, \mathrm{d}\widehat B_n(\omega) \ \xrightarrow[]{}\ \int \psi\, \mathrm{d}F_\alpha(\omega).
\]
Combining the equalities with $\psi$ yields
\[
\int \phi\, \mathrm{d}\widehat B_n \ \xrightarrow[]{a.s.}\ \int \phi\, \mathrm{d}F_\alpha
\]
on $\Omega_0\cap \Omega_0'$, which happens with probability 1, as claimed. The same argument applies to $F_{\widehat{\boldsymbol{\Sigma}}}$, $\widehat C_n^{\,j}$ and $\widehat D_n^{\,j}$ since they are measures supported on $\{d_i\}_{i=1}^p$.
\end{proof}

\subsection{Concentration Properties}
We collect here two standard concentration results used throughout the proofs. Both are adapted from~\cite{negative_ridge_arzela_ascoli}.

\begin{lemma}
\label{lem:linear_concentration}
\textup{(\cite{negative_ridge_arzela_ascoli}, Lemma~S.4.1)}
Suppose $\beps \in \mathbb{R}^n$ satisfies Assumption~\ref{assm:dgp}, and let 
$\{\ba_n\} \subset \mathbb{R}^n$ be a sequence of random vectors independent 
of $\beps$ with $\limsup_n \|\ba_n\|^2/n < \infty$ almost surely. Then, as $n\to\infty$
\[
\frac{\ba_n^\top \beps}{n} \xrightarrow{\text{a.s.}} 0.
\]
\end{lemma}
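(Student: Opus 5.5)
The plan is a conditional fourth-moment bound combined with Borel--Cantelli. The only real subtlety is that the bound $\limsup_n \|\ba_n\|^2/n<\infty$ holds with a \emph{random} constant, and I would handle this by truncation.

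\textbf{Step 1 (conditional moment bound).} Fix a deterministic vector $\ba\in\mathbb R^n$. Since the $\varepsilon_i$ are i.i.d.\ and centered, every cross term in which some index appears exactly once vanishes. Hence
\[
\mathbb E\bigl[(\ba^\top\beps)^4\bigr]=\mathbb E[\varepsilon_1^4]\sum_{i}a_i^4+3\sigma_\beps^4\sum_{i\neq j}a_i^2a_j^2\le C\,\|\ba\|_2^4 ,
\]
with $C:=\mathbb E[\varepsilon_1^4]+3\sigma_\beps^4<\infty$. This constant is finite by the $(4+\eta)$-moment condition of Assumption~\ref{assm:dgp}. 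Because $\ba_n$ is independent of $\beps$, Fubini gives the same bound conditionally: $\mathbb E[(\ba_n^\top\beps)^4\mid \ba_n]\le C\|\ba_n\|_2^4$.

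\textbf{Step 2 (truncation and Borel--Cantelli).} Fix an integer $M\ge 1$ and set $\ba_n^{(M)}:=\ba_n\mathbf 1\{\|\ba_n\|_2^2\le Mn\}$. This is a measurable function of $\ba_n$, so it is still independent of $\beps$, and it satisfies $\|\ba_n^{(M)}\|_2^4\le M^2n^2$ deterministically. For any $t>0$, Markov's inequality applied conditionally yields
\[
\mathbb P\Bigl(\bigl|\ba_n^{(M)\top}\beps\bigr|>tn\Bigr)\le \frac{C M^2 n^2}{t^4n^4}=\frac{CM^2}{t^4 n^2},
\]
which is summable in $n$. Borel--Cantelli then gives $\ba_n^{(M)\top}\beps/n\to0$ almost surely; taking $t=1/k$ and intersecting over $k\in\mathbb N$ upgrades this to almost sure convergence. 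Borel--Cantelli needs no independence across $n$, so it does not matter how $\beps$ and $\ba_n$ are coupled along the sequence.

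\textbf{Step 3 (removing the truncation).} Let $\Omega_M$ be the probability-one event on which $\ba_n^{(M)\top}\beps/n\to0$, and let $\Omega'$ be the probability-one event on which $L:=\limsup_n\|\ba_n\|_2^2/n<\infty$. Fix $\omega\in\Omega'\cap\bigcap_M\Omega_M$, which still has probability one. Choose an integer $M>L(\omega)$. Then $\|\ba_n\|_2^2\le Mn$ for all large $n$, so $\ba_n=\ba_n^{(M)}$ eventually, and therefore $\ba_n^\top\beps/n\to0$ at $\omega$.

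The main obstacle is exactly this random-constant issue. One cannot simply condition on the event $\{\limsup_n\|\ba_n\|_2^2/n<M\}$, because that event depends on the whole sequence and not just on $\ba_n$. The countable family of truncations in Steps 2--3 sidesteps this without any further conditions; everything else is routine.
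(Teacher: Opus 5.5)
Your proof is correct and complete. The paper gives no argument of its own for this lemma; it imports it from \cite{negative_ridge_arzela_ascoli}. Your route is the standard one for statements of this type: a conditional moment bound for the linear form, Markov's inequality, and the first Borel--Cantelli lemma.

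Two features of your write-up are worth keeping.
\begin{itemize}
\item The countable family of truncations $\mathbf a_n\mathbf 1\{\|\mathbf a_n\|_2^2\le Mn\}$ in Steps~2--3 rigorously handles the fact that $\limsup_n\|\mathbf a_n\|_2^2/n$ is bounded only almost surely, with a random constant. This is the one place where care is needed. Intersecting the events $\Omega_M$ over $M\in\mathbb N$ disposes of it cleanly.
\item Your tail bound $\mathbb P(|\cdot|>tn)=O(n^{-2})$ is already summable, so you use only $\mathbb E[\varepsilon_1^4]<\infty$. The extra $\eta$ in Assumption~\ref{assm:dgp} is therefore not needed for this lemma.
\end{itemize}
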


\begin{lemma}
\label{lem:quadratic_concentration}
\textup{(\cite{negative_ridge_arzela_ascoli}, Lemma~S.4.2)}
Suppose $\beps \in \mathbb{R}^n$ satisfies Assumption~\ref{assm:dgp}, and let 
$\{\bA_n\} \subset \mathbb{R}^{n \times n}$ be a sequence of random matrices 
independent of $\beps$ with $\limsup_n \|\bA_n\|_{\mathrm{op}} < \infty$. 
Then, as $n\to\infty$
\[
\frac{\beps^\top \bA_n \beps}{n} - \frac{\sigma_{\boldsymbol{\varepsilon}}^2 \tr(\bA_n)}{n}
\xrightarrow{a.s.} 0.
\]
\end{lemma}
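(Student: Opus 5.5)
The plan is to work conditionally on the sequence $\{\bA_n\}$, which is legitimate because it is independent of $\beps$. The quadratic form is then a centered quadratic form in i.i.d.\ variables with a fixed matrix. I will bound a moment of order strictly larger than $2$ and conclude with Borel--Cantelli.

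\textbf{Step 1 (reduction to uniformly bounded matrices).} By hypothesis, almost surely there are integers $M,N$ with $\|\bA_n\|_{\mathrm{op}}\le M$ for all $n\ge N$. Fix $M\in\mathbb N$ and set $\bA_n^{(M)}:=\bA_n\mathbf 1\{\|\bA_n\|_{\mathrm{op}}\le M\}$. This matrix is still independent of $\beps$, and $\|\bA_n^{(M)}\|_{\mathrm{op}}\le M$ deterministically. On the event $\{\|\bA_n\|_{\mathrm{op}}\le M \ \forall n\ge N\}$ the truncated and original statistics coincide for $n\ge N$. The union of these events over $(M,N)\in\mathbb N^2$ has probability one. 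It therefore suffices to prove the claim for $\bA_n^{(M)}$ with $M$ fixed, and a countable intersection of full-probability events finishes the reduction.

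\textbf{Step 2 (moment bound).} Write $\beps=\sigma_\beps\,\mathbf e$, where $\mathbf e$ has i.i.d.\ entries with mean zero and unit variance. Take $q:=2+\eta/2$, so that $\mathbb E|e_1|^{2q}=\mathbb E|e_1|^{4+\eta}<\infty$. Conditionally on $\bA:=\bA_n^{(M)}$, the standard moment inequality for quadratic forms (Bai--Silverstein, Lemma~B.26) gives
\[
\mathbb E\bigl[|\mathbf e^\top\bA\mathbf e-\operatorname{tr}\bA|^q \,\big|\, \bA\bigr]\le C_q\Bigl[\bigl(\nu_4\operatorname{tr}(\bA\bA^\top)\bigr)^{q/2}+\nu_{2q}\operatorname{tr}\bigl((\bA\bA^\top)^{q/2}\bigr)\Bigr].
\]
Using $\operatorname{tr}(\bA\bA^\top)\le nM^2$ and $\operatorname{tr}((\bA\bA^\top)^{q/2})\le nM^q$, dividing by $n^q$ and taking expectations over $\bA$ yields
\[
\mathbb E\Bigl|\tfrac{1}{n}\beps^\top\bA\beps-\tfrac{\sigma_\beps^2}{n}\operatorname{tr}\bA\Bigr|^q\le C\bigl(n^{-q/2}+n^{1-q}\bigr).
\]
If one prefers to avoid citing Lemma~B.26, the same bound follows by splitting into a diagonal part $\sum_i A_{ii}(e_i^2-1)$, handled by Rosenthal's inequality, and an off-diagonal part $\sum_{i\ne j}A_{ij}e_ie_j$, handled by decoupling followed by Rosenthal.

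\textbf{Step 3 (almost sure convergence).} Since $q>2$, both exponents satisfy $q/2>1$ and $q-1>1$, so the bound in Step~2 is summable in $n$. Markov's inequality together with Borel--Cantelli then gives almost sure convergence to zero for each fixed $M$, and Step~1 concludes.

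The main obstacle is Step~2: obtaining a moment bound that is summable using only $4+\eta$ moments of $\varepsilon_i$. A bound based on fourth moments alone gives only $O(n^{-2})$ for the second moment of the centered statistic $\tfrac1n(\beps^\top\bA\beps-\sigma_\beps^2\operatorname{tr}\bA)$, which is too crude for the off-diagonal fluctuations. The extra $\eta$ is exactly what allows $q>2$.
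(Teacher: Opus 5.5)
Your argument is correct and follows the standard proof behind the cited Lemma~S.4.2, which the paper imports without proof: condition on $\bA_n$, apply Bai--Silverstein's Lemma~B.26 with $q=2+\eta/2$ to get an $O(n^{-1-\eta/4})$ bound, and conclude via Markov and Borel--Cantelli; your truncation in Step~1 cleanly handles matrices that are only eventually bounded. There is one slip, confined to your closing remark: with fourth moments alone the second moment of the normalized statistic is $O(n^{-1})$, not $O(n^{-2})$ (an $O(n^{-2})$ bound would already be summable), and it is the diagonal part $\sum_i A_{ii}(e_i^2-1)$, not the off-diagonal part, that uses up the $4+\eta$ moments.
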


\section{Characterization of $f^\mathrm{pred}_\ast$ - Proof of Theorem~\ref{thm: main}}\label{app: B}
\subsection{Proof of Theorem~\ref{thm: main}} 
Using the tools introduced in Appendix~\ref{app: A}, we can provide an exact formula for $\rout$ for all functions $f\in \mathcal{F}$.
\begin{lemma}\label{prop: general formula}
For any $f \in \mathcal{F}$ we have
\begin{align*}
\rout \, \, \overset{a.s.}{=} \, \cB_f^* +\cV_f^* ,
\end{align*}
where 
\begin{align*}
\cB_f^* &:= 
\sx^2\,r^2
\int (1 - x f(x))^2 \, \mathrm{d}F_\alpha(x)
+
\sum_{j=1}^{\rk}
\delta_j \alpha_j^2
\left(
\int (1 - x f(x)) \, \mathrm{d}F_{\delta_j}(x)
\right)^2, \\
\cV_f^* &:= 
c\sx^2\,\sigma_{\boldsymbol{\varepsilon}}^2 \int x f^2(x)\, \mathrm{d}F_{\mathrm{MP}}(x). \\
\end{align*}
\end{lemma}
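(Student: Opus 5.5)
We prove Lemma~\ref{prop: general formula} through the decomposition from Section~\ref{sec:outline_main}. Write $\by=\bX\bbeta_0+\beps$. Then
\[
\widehat\bbeta_f-\bbeta_0=-(\bI_p-\widehat\bSigma f(\widehat\bSigma))\bbeta_0+f(\widehat\bSigma)\bX^\top\beps/n.
\]
Expanding $\|\cdot\|_\bSigma^2$ with $\bSigma=\sx^2\bI_p+\sum_j\delta_j\bv_j\bv_j^\top$ gives three pieces: the bias $\mathcal B_f$, the noise quadratic form, and the cross term. We treat each one in turn.

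\textbf{Bias.} Both bias terms are spectral integrals against the auxiliary measures of Section~\ref{section: limits of empirical measures}. The first term satisfies
\[
\sx^2\bbeta_0^\top\bW(\bI-\bLambda f(\bLambda))^2\bW^\top\bbeta_0=\sx^2\|\bbeta_0\|_2^2\int(1-xf(x))^2\,\mathrm d\widehat B_n .
\]
The second term satisfies
\[
\bbeta_0^\top\bW(\bI-\bLambda f(\bLambda))\bW^\top\bv_j=(\bbeta_0^\top\bv_j)\int(1-xf(x))\,\mathrm d\widehat C_n^j .
\]
Since $f\in\mathcal F$, the functions $(1-xf)^2$ and $1-xf$ are continuous on a neighborhood of $\mathcal S_c^+$. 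Lemma~\ref{lem:int-conv}, combined with Lemma~\ref{lemma: f_alpha} and Lemma~\ref{lemma: hat C nj}, then gives the limits $\int(1-xf)^2\,\mathrm dF_\alpha$ and $\int(1-xf)\,\mathrm dF_{\delta_j}$. Multiplying by $\|\bbeta_0\|_2^2\to r^2$ and $(\bbeta_0^\top\bv_j)^2\to\alpha_j^2$ yields $\cB_f^*$.

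One point needs care here. $\widehat C_n^j$ is a signed measure that converges only vaguely, so the extension in Lemma~\ref{lem:int-conv} must be used. This works because, eventually, all eigenvalues lie in a compact neighborhood $S_\eta$ of $\mathcal S_c^+$, and $f$ can be replaced there by a compactly supported continuous $\psi$. The total-variation bound in Lemma~\ref{lem:tv-bound} together with vague convergence then suffices.

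\textbf{Variance.} Conditionally on $\bX$, the noise term is $\beps^\top\bA_n\beps/n$ with
\[
\bA_n=\bX f(\widehat\bSigma)\bSigma f(\widehat\bSigma)\bX^\top/n .
\]
The operator norm of $\bA_n$ is eventually bounded: $f$ is bounded on $S_\eta$, the eigenvalues of $\widehat\bSigma$ lie in $S_\eta$, and $\|\bSigma\|_{\mathrm{op}}$ is bounded. Lemma~\ref{lem:quadratic_concentration} therefore replaces the noise term by
\[
\sigma_\beps^2\operatorname{tr}(\bA_n)/n=\sigma_\beps^2\operatorname{tr}\bigl(\bSigma\widehat\bSigma f^2(\widehat\bSigma)\bigr)/n .
\]
Split $\bSigma$ into its two parts. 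The $\sx^2\bI_p$ part contributes
\[
\sx^2\sigma_\beps^2\,\frac pn\int xf^2(x)\,\mathrm dF_{\widehat\bSigma}\to c\sx^2\sigma_\beps^2\int xf^2\,\mathrm dF_{\mathrm{MP}},
\]
by Lemma~\ref{lem:int-conv} applied to the empirical spectral distribution. Each spike part is $(\delta_j\sigma_\beps^2/n)\int xf^2\,\mathrm d\widehat D_n^j$. This is $O(1/n)\to0$, because $\widehat D_n^j$ is a probability measure and $xf^2$ is bounded on $S_\eta$. The sum is $\cV_f^*$.

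\textbf{Cross term.} The cross term is $-2\ba_n^\top\beps/n$ with
\[
\ba_n=\bX f(\widehat\bSigma)\bSigma(\bI-\widehat\bSigma f(\widehat\bSigma))\bbeta_0 .
\]
Its norm satisfies
\[
\|\ba_n\|^2/n=\bbeta_0^\top(\cdots)\,\widehat\bSigma\,(\cdots)\bbeta_0\le C\|\bbeta_0\|^2,
\]
with $C$ eventually bounded on $S_\eta$. Since $\ba_n$ is independent of $\beps$, Lemma~\ref{lem:linear_concentration} gives $\ba_n^\top\beps/n\to0$ almost surely.

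Each step holds on a probability-one event: the eigenvalue localization of \cite{baik2004} and the three convergence statements. Their intersection gives the almost-sure limit. The main obstacle is making the boundedness and continuity arguments uniform. This requires that all eigenvalues eventually enter $S_\eta$, including the atom at $0$ when $c>1$ and the outliers near the $x_j^\star$. It also requires the vague-convergence limit for the signed measures $\widehat C_n^j$. Both are handled by the compactly supported surrogate $\psi$ already constructed in Lemma~\ref{lem:int-conv}.
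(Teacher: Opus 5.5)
Your proposal is correct and follows the paper's proof essentially step for step. You use the same bias/variance/error decomposition, rewrite the bias through $\widehat B_n$ and $\widehat C_n^j$ with Lemma~\ref{lem:int-conv}, and handle the variance through $F_{\widehat\bSigma}$ and $\widehat D_n^j$, with the spike parts killed by the $1/n$ factor; you also use the same $\ba_n$ and $\bA_n$ (eventually bounded) in Lemmas~\ref{lem:linear_concentration} and~\ref{lem:quadratic_concentration}.
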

The goal now is to optimize $\rout$ with respect to $f$. 
Optimizing $\rout$ in the anisotropic case requires careful analysis, as it is a nonlinear functional of $f$ and the integrals are with respect to different measures.  
The key idea is to rewrite this as an optimization problem in a specific Hilbert Space. Let us introduce the following notations 
\begin{equation}\label{eq: define w ,g , h_j}
    w(x):=\begin{cases}
\sx^2 r^2x + c\sx^2\,\sigma_{\boldsymbol{\varepsilon}}^2\,\mu_0(x) & \text{for } x \in \mathcal{S}^{+}_c \\
    0, &  \text{otherwise}
\end{cases}, \quad g(x):=\begin{cases}\dfrac{\sx^2\,r^2 + \sum_{j=1}^\rk \delta_j \alpha_j^2 \,\mu_j(x)}{w(x)}, & \text{for } x \in \mathcal{S}^{+}_c \\
    0, & \text{otherwise}
    \end{cases},
\end{equation}
\[h_j(x):=\begin{cases}\dfrac{\mu_j(x)}{w(x)},& \text{for } x \in \mathcal{S}^{+}_c \\
    0, & \text{otherwise}
    \end{cases} \qquad\qquad \text{for } j\in \{0, 1, \dots, \rk\} \,,
\]
where we recall the Radon-Nikodym derivatives $\mu_j$ from Lemma~\ref{lemma: def mu_j mu_0}. Note that $w(x) > 0$ for all $x\in \mathcal{S}_c^+$, because at the non-zero atoms $\mu_0(x_j^\star) = \omega_j^{-1} > 0$ and $\mu_0(0) > 0$. The bulk positivity is trivial. In the rest of the analysis, we use the notation $\langle \cdot, \cdot \rangle_w$ to denote the $xw(x)$-weighted $L_2(F_\alpha)$ inner product, i.e. 
\begin{equation}\label{eq: define inner}
    \langle \phi, \psi \rangle_w = \int \phi(x) \psi(x) xw(x) \ \mathrm dF _{\alpha}(x).
\end{equation}
Using these notations, the following two lemmas characterize the optimizer of $\rout$: 
\begin{lemma}\label{lemma: define A and solve f_ast}
{Recall the definition of $\cF$ from Assumption~\ref{assm:F}}. 
Define the linear operator $\mathcal{A} : \mathcal{F} \to \mathcal{F}$ by
$$\mathcal{A}f = f + \sum_{j=1}^\rk \delta_j \alpha_j^2\langle f, h_j\rangle_w\, h_j.$$
Then there exist real coefficients $b^{(1)}_0, b^{(1)}_1, \dots, b_{\rk}^{(1)}$, 
with $b_0^{(1)} = \sx^2 r^2 \omega_0$, such that 
\begin{align}\label{eq: rep of f*}
f^\ast(x) = \sum_{j=0}^\rk b^{(1)}_j h_j(x)\end{align}
satisfies $[\mathcal{A}f^\ast](x) = g(x)$ for all $x \in \mathcal{S}_c^+$. 
Moreover, $f^\ast$ minimizes $\rout$ over all $f \in \mathcal{F}$. 
\end{lemma}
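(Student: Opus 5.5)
The plan is to turn the limiting risk of Lemma~\ref{prop: general formula} into a quadratic functional on a weighted $L^2$ space. Its minimizer is then characterized by the normal equation $\mathcal{A}f=g$. Since $g$ lies in a finite-dimensional span preserved by $\mathcal{A}$, this equation reduces to an $(\rk+1)$-dimensional linear system.

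\emph{Step 1: rewrite the risk.} Starting from Lemma~\ref{prop: general formula}, I move every integral onto the common measure $F_\alpha$ using Lemma~\ref{lemma: def mu_j mu_0}, via $\mathrm dF_{\mathrm{MP}}=\mu_0\,\mathrm dF_\alpha$ and $\mathrm dF_{\delta_j}=\mu_j\,\mathrm dF_\alpha$. Expanding $(1-xf)^2$, the variance term adds $c\sx^2\sigma_\beps^2\int xf^2\mu_0\,\mathrm dF_\alpha$ to the quadratic part. So all terms in $f^2$ combine into $\int f^2\,x\,(\sx^2r^2x+c\sx^2\sigma_\beps^2\mu_0)\,\mathrm dF_\alpha=\langle f,f\rangle_w$. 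For each spike I write $\int(1-xf)\,\mathrm dF_{\delta_j}=1-\int xf\mu_j\,\mathrm dF_\alpha=1-\langle f,h_j\rangle_w$. Squaring and collecting the linear pieces gives $-2\int xf(\sx^2r^2+\sum_j\delta_j\alpha_j^2\mu_j)\,\mathrm dF_\alpha=-2\langle f,g\rangle_w$. This recovers \eqref{eq: formula for pred risk in proof outline section}:
\[
\rout=\mathrm{const}+\langle f,\mathcal{A}f\rangle_w-2\langle f,g\rangle_w,
\qquad
\langle f,\mathcal{A}f\rangle_w=\langle f,f\rangle_w+\sum_{j=1}^{\rk}\delta_j\alpha_j^2\langle f,h_j\rangle_w^2 .
\]
Here $\mathcal{A}$ is self-adjoint for $\langle\cdot,\cdot\rangle_w$ and satisfies $\langle f,\mathcal{A}f\rangle_w\ge\langle f,f\rangle_w$. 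Every $f\in\mathcal F$ restricted to $\mathcal S_c^+$ is bounded, since $\mathcal S_c^+$ is compact and $f$ is continuous near it. Hence all inner products are finite.

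\emph{Step 2: solve $\mathcal{A}f=g$ in a finite span.} The key observation is that $\omega_0\mu_0+\sum_j\omega_j\mu_j=\mathrm dF_\alpha/\mathrm dF_\alpha=1$ on $\mathcal S_c^+$. Therefore $1/w=\sum_{j=0}^{\rk}\omega_jh_j$, and
\[
g=\sx^2r^2\sum_{j=0}^{\rk}\omega_jh_j+\sum_{j=1}^{\rk}\delta_j\alpha_j^2h_j\in\mathrm{span}\{h_0,\dots,h_\rk\}.
\]
The operator $\mathcal{A}$ maps this span into itself. I therefore make the ansatz $f^\ast=\sum_{j=0}^{\rk}b_jh_j$ and match coefficients, using the Gram matrix $G_{kj}=\langle h_k,h_j\rangle_w$.
\begin{itemize}
\item The $h_0$ coefficient receives no spike contribution, which forces $b_0^{(1)}=\sx^2r^2\omega_0$.
\item For $j\ge1$ the conditions read
\[
b_j+\delta_j\alpha_j^2\sum_{k=0}^{\rk}G_{kj}b_k=\sx^2r^2\omega_j+\delta_j\alpha_j^2 .
\]
With $b_0$ fixed, this is a $\rk\times\rk$ system with matrix $I+D\widetilde G$, where $D=\mathrm{diag}(\delta_j\alpha_j^2)$ and $\widetilde G$ is the positive semidefinite Gram block on $h_1,\dots,h_\rk$.
\end{itemize}
This matrix is similar to $I+D^{1/2}\widetilde GD^{1/2}$, which is positive definite, so the system has a unique solution. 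These coefficients define \eqref{eq: linear system with b0_simple}.

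Matching coefficients is only a sufficient condition, so I do not need linear independence of the $h_j$. I also need $f^\ast\in\mathcal F$. Each $h_j$ is a rational function whose denominator is a multiple of $w\cdot(\omega_0\nu+\sum_i\omega_i\nu_{-i})$. This denominator is positive on $\mathcal S_c^+$ by the positivity noted after \eqref{eq: define w ,g , h_j}, hence positive on an $\eta$-neighborhood by continuity. This verifies Assumption~\ref{assm:F}.

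\emph{Step 3: optimality.} For any $f\in\mathcal F$, completing the square with $\mathcal{A}f^\ast=g$ gives
\[
\rout-\mathcal R^{\mathrm{pred}}(f^\ast;c)=\langle f-f^\ast,\mathcal{A}(f-f^\ast)\rangle_w\ge\|f-f^\ast\|_w^2\ge0 .
\]
This identity also gives uniqueness on $\mathcal S_c^+$ away from $0$, where the weight $xw(x)$ degenerates.

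I expect the main obstacle to be the bookkeeping in Step 1, specifically at the atoms. Mixing measures requires that the Radon--Nikodym identities hold at $0$ and at each $x_j^\star$, including the cases where an atom is absent. Lemma~\ref{lemma: def mu_j mu_0} supplies exactly these identities. Once they are in hand, Steps 2 and 3 are routine finite-dimensional linear algebra.
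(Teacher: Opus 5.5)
Your proposal is correct and follows the paper's proof essentially step for step. You rewrite the risk over $F_\alpha$ as $\langle f,\mathcal{A}f\rangle_w-2\langle f,g\rangle_w$ plus a constant, use $\omega_0\mu_0+\sum_j\omega_j\mu_j=1$ to place $g$ in $\mathrm{span}\{h_0,\dots,h_\rk\}$, solve \eqref{eq: linear system with b0_simple} by coefficient matching, and complete the square. Your reduction to the block $I+D\widetilde G$ after fixing $b_0$ is actually a cleaner route to invertibility than the paper's conjugation by $\mathfrak D^{-1/2}$, which is not defined because $\mathfrak D$ has a zero entry.

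One small correction in your check that $f^\ast\in\mathcal F$. The common denominator $\sx^2r^2x(\omega_0\nu+\sum_i\omega_i\nu_{-i})+c\sx^2\sigma_\beps^2\nu$ is nonvanishing on $\mathcal S_c^+$ but need not be positive there. At $x_k^\star$ it equals $\sx^2r^2x_k^\star\,\omega_k\nu_{-k}(x_k^\star)$, whose sign alternates with the position of $x_k^\star$ among the outliers. Nonvanishing is all your continuity argument needs, so the conclusion stands.
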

\begin{lemma}\label{lemma: f_ast is unique}
The minimizer $f_\ast^{\mathrm{pred}}$ over $\mathcal{F}$ is uniquely determined 
on $\mathcal{S}_c^+\setminus \{0\}$.
\end{lemma}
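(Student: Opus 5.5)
The plan is to use the quadratic representation \eqref{eq: formula for pred risk in proof outline section} of the limiting risk as a functional on the weighted space $L^2(x w(x)\,\mathrm dF_\alpha)$, and to show it is strictly convex there. First I will rewrite $\cB_f^*+\cV_f^*$ from Lemma~\ref{prop: general formula} using the Radon--Nikodym derivatives $\mu_0,\mu_j$ of Lemma~\ref{lemma: def mu_j mu_0}, so that every integral is against $F_\alpha$. This gives
\[
\mathcal R^{\rm pred}(f;c)=C_0+\langle f,f\rangle_w-2\langle f,g\rangle_w+\sum_{j=1}^\rk\delta_j\alpha_j^2\langle f,h_j\rangle_w^2 ,
\]
with $C_0$ independent of $f$. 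The functional depends on $f$ only through its restriction to $\mathcal S_c^+$, and only up to $x w(x)\,\mathrm dF_\alpha$-null sets. The point $0$ is exactly where the weight $x w(x)$ vanishes, so the value $f(0)$ never enters the risk. This is why uniqueness can only hold on $\mathcal S_c^+\setminus\{0\}$.

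Next, suppose $f_1,f_2\in\mathcal F$ are both minimizers, and set $u=f_1-f_2$. Expanding the quadratic functional at the midpoint gives
\[
\tfrac12 R(f_1)+\tfrac12 R(f_2)-R\bigl(\tfrac{f_1+f_2}{2}\bigr)=\tfrac14\Bigl(\langle u,u\rangle_w+\sum_j\delta_j\alpha_j^2\langle u,h_j\rangle_w^2\Bigr).
\]
Since $\mathcal F$ is convex (continuity near $\mathcal S_c^+$ is preserved under convex combinations), the midpoint is admissible. Minimality then forces the right-hand side to be $\le 0$. Each term is nonnegative, because $\delta_j\alpha_j^2>0$, so $\langle u,u\rangle_w=0$. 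That is,
\[
\int u^2(x)\,x\,w(x)\,\mathrm dF_\alpha(x)=0 .
\]

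Finally, I convert this integral identity into pointwise equality on $\mathcal S_c^+\setminus\{0\}$. On the bulk $\operatorname{int}(\mathcal S_c)$, $F_\alpha$ has a strictly positive density: it is a mixture containing $\omega_0 f_{\rm MP}$ with $\omega_0>0$. Also $x w(x)>0$ there, since $w>0$ on $\mathcal S_c^+$. Hence $u=0$ Lebesgue-a.e. on the bulk. Because $u$ is continuous on a neighborhood of $\mathcal S_c$ by Assumption~\ref{assm:F}, $u\equiv 0$ on all of $\mathcal S_c$, including the endpoints; away from the origin, the left endpoint when $c\neq1$ is fine. At each outlier atom $x_j^\star\in\mathcal A_c$ with $\delta_j>\sx^2\sqrt c$, the measure $F_\alpha$ has mass $\omega_jF_{\delta_j}(\{x_j^\star\})>0$ by \eqref{atom at x star}, and $x_j^\star w(x_j^\star)>0$, so $u(x_j^\star)=0$.

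The main obstacle is the bookkeeping that $x w(x)\,\mathrm dF_\alpha$ charges every point of $\mathcal S_c^+\setminus\{0\}$. This needs the positivity of $w$ at the atoms, and it needs the atoms in $\mathcal A_c$ to carry mass exactly when they belong to $\mathcal S_c^+$. A second, minor subtlety is the case $c=1$, where the bulk touches $0$; there continuity handles every point except the origin itself. I would also make sure the risk expansion is valid for all of $\mathcal F$, i.e.\ finite on $\mathcal S_c^+$, which follows from continuity on the compact neighborhood.
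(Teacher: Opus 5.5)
Your proof is correct and matches the paper's argument: the paper also reduces uniqueness to the bound $\langle \mathcal{A}u,u\rangle_w \ge \|u\|_w^2$, and then shows that $\|u\|_w=0$ forces $u\equiv 0$ on $\mathcal{S}_c^+\setminus\{0\}$, using continuity on the bulk and positive $F_\alpha$-mass (with $w>0$) at the supercritical atoms $x_j^\star$. The only difference is that you use the midpoint identity for the quadratic functional, whereas the paper completes the square around $f_\ast^{\mathrm{pred}}$, which relies on $\mathcal{A}f_\ast^{\mathrm{pred}}=g$ and self-adjointness of $\mathcal{A}$. Your version is slightly more self-contained, while the paper's additionally yields the excess-risk bound $\mathcal{R}^{\mathrm{pred}}(f;c)-\mathcal{R}^{\mathrm{pred}}(f_\ast^{\mathrm{pred}};c)\ge\|f-f_\ast^{\mathrm{pred}}\|_w^2$, which it reuses later in the PCR comparison.
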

With these lemmas in hand, we can now prove Theorem~\ref{thm: main}. 
Substituting the definitions of $\mu_0, \{\mu_j\}_j$ into 
equation~\eqref{eq: rep of f*}, we obtain
$$
f_\ast^{\mathrm{pred}}(x) = \sum_{j=0}^\rk b^{(1)}_j h_j(x) 
= \frac{\sum_{j=0}^\rk b^{(1)}_j\mu_j(x)}{w(x)} 
= \frac{b_0^{(1)}\mu_0(x) + \sum_{j = 1}^\rk b^{(1)}_j\mu_j(x)}
{\sx^2 r^2x + c\sx^2\,\sigma_{\boldsymbol{\varepsilon}}^2\,\mu_0(x)}.
$$
Recall the explicit Radon-Nikodym derivatives $\mu_0$ and $\{\mu_j\}_j$ 
from Lemma~\ref{lemma: def mu_j mu_0}, which share the common denominator 
$\omega_0 \nu(x) + \sum_{i=1}^\rk \omega_i \nu_{-i}(x)$. Canceling this 
common denominator from the numerator and the denominator yields
$$
f_\ast^{\mathrm{pred}}(x)=
\frac{b_0^{(1)}\nu(x) + \sum_{j=1}^\rk b_{j}^{(1)} \nu_{-j}(x)}
{\sx^2 r^2x\!\left(\omega_0 \nu(x) + \sum_{j=1}^\rk \omega_j 
\nu_{-j}(x)\right) + c\sx^2\, \sigma_{\boldsymbol{\varepsilon}}^2 \nu(x)},
\qquad x \in \mathcal{S}_c^+,
$$
which is exactly the form stated in Theorem~\ref{thm: main}(b). The only thing left to check is that $f_\ast^{\mathrm{pred}}\in \mathcal{F}$. Note that $f_\ast^{\mathrm{pred}}$ has no poles in $\mathcal{S}_c$ because $\nu_j(x) > 0$ for all $x\in\mathcal{S}_c$ hence $\nu(x), \nu_{-j}(x) > 0$ for all $j$ (see \ref{nu_j(x) > 0}). On $\mathcal{A}_c$, 
evaluating at $x = x_j^\star$ and using Remark~\ref{rmk: lagrange} the denominator reduces to $\sx^2 r^2 x_j^\star \cdot \omega_j\,\nu_{-j}(x_j^\star) \neq 0$,
since $\omega_j \neq 0$ and $\nu_{-j}(x_j^\star) \neq 0$ by 
Remark~\ref{rmk: lagrange}. We deduce that the denominator of 
$f_\ast^{\mathrm{pred}}$ does not vanish on $\mathcal{S}_c^+$. Finally, since 
the Radon-Nikodym derivatives $\nu_j(x)$ are affine functions, both numerator 
and denominator are polynomials in $x$, hence continuous on a neighborhood of 
$\mathcal{S}_c^+$, confirming $f_\ast^{\mathrm{pred}} \in \mathcal{F}$. This 
completes the proof of Theorem~\ref{thm: main}(a) and~(b). For a proof of Theorem~\ref{thm: main}(c), we refer the reader to Lemma~\ref{lemma: polynomials P Q}, which proves a more general result.
\subsection{Proof of Lemma~\ref{prop: general formula}}\label{proof: general formula}

\begin{proof}
Using the model \(\by=\bX\bbeta_0+\boldsymbol{\varepsilon}\), it follows immediately from the definition of \(\widehat\bbeta_f\) that
\[
\widehat \bbeta_f
=
\bW f(\boldsymbol{\Lambda})\bW^\top \frac{\bX^\top \by}{n}
=
\bW f(\boldsymbol{\Lambda})\bW^\top \frac{\bX^\top \bX}{n}\bbeta_0
+
\bW f(\boldsymbol{\Lambda})\bW^\top\frac{\bX^\top \boldsymbol{\varepsilon}}{n},
\]
or equivalently \[
\widehat \bbeta_f
=
\bW f(\boldsymbol{\Lambda})\boldsymbol{\Lambda} \bW^\top\bbeta_0
+
\bW f(\boldsymbol{\Lambda})\bW^\top\frac{\bX^\top \boldsymbol{\varepsilon}}{n}.
\]
Since \(f(\boldsymbol{\Lambda})\) and \(\boldsymbol{\Lambda}\) are diagonal 
matrices, they commute, and therefore
\begin{align}\label{formula for beta_f}
\widehat{\bbeta}_f - \bbeta_0 
= -\bW\bigl(\mathbf{I}_p - \boldsymbol{\Lambda} f(\boldsymbol{\Lambda})\bigr)\bW^\top\bbeta_0 
+ \bW f(\boldsymbol{\Lambda})\bW^\top\frac{\bX^\top \boldsymbol{\varepsilon}}{n}.
\end{align}
Expanding the squared $\boldsymbol{\Sigma}$ norm as a quadratic gives
\begin{align*}
\|\widehat\bbeta_f - \bbeta_0\|_{\boldsymbol{\Sigma}}^2
&=\bbeta_0^\top \bW\bigl(\mathbf{I}_p - \boldsymbol{\Lambda} f(\boldsymbol{\Lambda})\bigr)
\bW^\top\boldsymbol{\Sigma}\bW\bigl(\mathbf{I}_p - \boldsymbol{\Lambda} f(\boldsymbol{\Lambda})\bigr)
\bW^\top\bbeta_0\\
&\quad - 2\,\bbeta_0^\top\bW\bigl(\mathbf{I}_p - \boldsymbol{\Lambda} f(\boldsymbol{\Lambda})\bigr)
\bW^\top\boldsymbol{\Sigma}\bW f(\boldsymbol{\Lambda})\bW^\top\frac{\bX^\top\boldsymbol{\varepsilon}}{n}\\
&\quad + \frac{\boldsymbol{\varepsilon}^\top\bX}{n}\bW f(\boldsymbol{\Lambda})\bW^\top
\boldsymbol{\Sigma}\bW f(\boldsymbol{\Lambda})\bW^\top\frac{\bX^\top\boldsymbol{\varepsilon}}{n}.
\end{align*}
Alternatively,
\[
\|\widehat{\bbeta}_f - \bbeta_0\|_{\boldsymbol{\Sigma}}^2
=
\mathcal{B}_{f}(\bX, \by) + \mathcal{V}_{f}(\bX, \by)+ \mathcal{E}_{f}(\bX, \by),
\]
where
\begin{align*}
\mathcal{B}_{f}(\bX,\by)
&:=
\bbeta_0^\top \bW\bigl(\mathbf{I}_p - \boldsymbol{\Lambda} f(\boldsymbol{\Lambda})\bigr)
\bW^\top\boldsymbol{\Sigma}\bW\bigl(\mathbf{I}_p - \boldsymbol{\Lambda} f(\boldsymbol{\Lambda})\bigr)
\bW^\top\bbeta_0, \notag  \\
\mathcal{V}_{f}(\bX, \by)
&:=
\frac{\sigma_\beps^2}{n^2}
\tr\left(\bX f(\widehat{\boldsymbol{\Sigma}}) {\boldsymbol{\Sigma}} f(\widehat{\boldsymbol{\Sigma}}) \bX^\top \right) = 
\frac{\sigma_\beps^2}{n}
\tr\left(\widehat{\boldsymbol{\Sigma}} f(\widehat{\boldsymbol{\Sigma}}) {\boldsymbol{\Sigma}} f(\widehat{\boldsymbol{\Sigma}}) \right), \notag \\
\mathcal{E}_{f}(\bX, \by)
&:= - 2\,\bbeta_0^\top\bW\bigl(\mathbf{I}_p - \boldsymbol{\Lambda} f(\boldsymbol{\Lambda})\bigr)
\bW^\top\boldsymbol{\Sigma}\bW f(\boldsymbol{\Lambda})\bW^\top\frac{\bX^\top\boldsymbol{\varepsilon}}{n} 
+ \left[\frac{\beps^\top \bX f(\widehat{\boldsymbol{\Sigma}}) {\boldsymbol{\Sigma}} 
f(\widehat{\boldsymbol{\Sigma}}) \bX^\top \beps}{n^2} - \mathcal{V}_{f}(\bX, \by)\right].
\end{align*}
Using the spike decomposition
\(
{\boldsymbol{\Sigma}} = \sx^2\mathbf{I}_p + \sum_{j=1}^\rk \delta_j \bv_j \bv_j^\top,
\)
and cyclicity of trace, the bias and variance terms become
\begin{align}\label{eq: bias formula}
\mathcal{B}_{f}(\bX,\by) 
&= \sx^2\cdot\bbeta_0^\top \bW \bigl(\mathbf{I}_p - \boldsymbol{\Lambda} f(\boldsymbol{\Lambda})\bigr)^2 
\bW^\top \bbeta_0 +
\sum_{j=1}^\rk \delta_j
\left(
\bbeta_0^\top \bW \bigl(\mathbf{I}_p - \boldsymbol{\Lambda} f(\boldsymbol{\Lambda})\bigr) 
\bW^\top \bv_j
\right)^2,
\end{align}
\begin{align}\label{eq: variance formula}
    \mathcal{V}_{f}(\bX, \by)
&= \frac{\sx^2\,\sigma_{\beps}^2}{n}\tr\left( \bW \boldsymbol{\Lambda} 
f^2(\boldsymbol{\Lambda})\bW^\top\right) + \sum_{j=1}^\rk\delta_j  
\frac{\sigma_{\beps}^2}{n}\bv_j^\top \bW\boldsymbol{\Lambda} 
f^2(\boldsymbol{\Lambda})\bW^\top \bv_j.
\end{align}
We start by computing the limit of the bias term $\mathcal{B}_{f}(\bX, \by)$. Note that we can rewrite equation~\eqref{eq: bias formula} using the empirical 
measures~\eqref{eq: hat B_n} and~\eqref{eq: hat C_n^j} defined in 
Appendix~\ref{section: limits of empirical measures} as
\begin{equation*}
    \mathcal{B}_{f}(\bX, \by) 
    = \|\bbeta_0\|_2^2 \, \sx^2\,
      \int \left(1 - xf(x)\right)^2 \,\mathrm{d}\widehat{B}_n(x) 
      + \sum_{j=1}^\rk \delta_j (\bbeta_0^\top \bv_j)^2 
      \left(\int \left(1 - xf(x)\right) \mathrm{d}\widehat{C}_n^{\,j}(x)
      \right)^2
\end{equation*}
for all  sufficiently large $n$, since Assumption~\ref{assm:F} guarantees 
that $|f(d_i)| < \infty$ for all eigenvalues $d_i$ of 
$\widehat{\bSigma}$ eventually almost surely.

Using Lemma~\ref{lemma: f_alpha} and Lemma~\ref{lemma: hat C nj} we know that $\widehat B_n \xrightarrow[]{d} F_{\alpha}$ and $\widehat C_n^j \xrightarrow{} F_{\delta_j}$ vaguely. Note that since we do not impose that the functions in $\mathcal{F}$ be compactly supported, extra care is needed to justify the convergence using the weak/vague topology, which is elaborated in Lemma~\ref{lem:int-conv}. Therefore, we deduce that
\begin{align}\label{limit of bias term}
\mathcal{B}_{f}(\bX, \by)  \xrightarrow[]{a.s.} \sx^2\,r^2\int (1 - xf(x))^2 \ \mathrm dF _\alpha(x) \ + \sum_{j=1}^\rk\delta_j \alpha_j^2\left(\int (1 - xf(x)) \ \mathrm dF _{\delta_j}(x)\right)^2.
\end{align}
Similarly, we can rewrite equation~\eqref{eq: variance formula} using the empirical measure~\eqref{eq: hat D_n^j} as
\begin{equation}\label{eq: variance formula empirical}
\mathcal{V}_{f}(\bX, \by) = \frac{\sx^2\,{\sigma_{\beps}^2}\, p }{n} 
\int xf^2(x) \, \mathrm{d}F_{\widehat{\boldsymbol{\Sigma}}}(x) 
+ \sum_{j=1}^\rk\delta_j \frac{{\sigma_{\beps}^2}}{n} 
\int xf^2(x)\, \mathrm{d}\widehat{D}_n^{\,j}(x)
\end{equation}
for all 
sufficiently large $n$, as in the bias term. Using Lemma~\ref{lemma:f_delta} and Lemma~\ref{lemma: hat D nj}, we know that $F_{\widehat{\boldsymbol{\Sigma}}} \xrightarrow[]{d} F_{\mathrm{MP}}$ and $\widehat D_n^j \xrightarrow{d} F_{\delta_j}$. 
Using Lemma~\ref{lem:int-conv} again we know that both integrals have finite limit as $n, p \to \infty$. Note that because of the $n$ in the denominator, all spike terms in the sum converge to 0, hence 
\begin{align}\label{limit of variance}
    \mathcal{V}_{f}(\bX, \by) \xrightarrow{\text{a.s.}} \sx^2\,{\sigma_{\beps}^2} c \int xf^2(x)\, \mathrm{d}F_{\mathrm{MP}}.
\end{align}
Finally, we prove that $$\mathcal{E}_{f}(\bX, \by)  \xrightarrow{\text{a.s.}} 0.$$
We show that each term in $\mathcal{E}_{f}(\bX, \by)$ converges to zero almost surely. Define
\[
\ba_n := 2\bX\bW f(\boldsymbol{\Lambda})\bW^\top\boldsymbol{\Sigma}
\bW\bigl(\mathbf{I}_p - \boldsymbol{\Lambda} f(\boldsymbol{\Lambda})\bigr)\bW^\top\bbeta_0 
\in \mathbb{R}^n,
\]
so that the cross term equals $-\ba_n^\top\beps/n$. Its squared Euclidean norm is
\begin{align*}
\|\ba_n\|_2^2 
&= 4\,\bbeta_0^\top\bW\bigl(\mathbf{I}_p - \boldsymbol{\Lambda} f(\boldsymbol{\Lambda})\bigr)
\bW^\top\boldsymbol{\Sigma}\bW f(\boldsymbol{\Lambda})
\underbrace{\bW^\top\bX^\top\bX\bW}_{=\, n\boldsymbol{\Lambda}}
f(\boldsymbol{\Lambda})\bW^\top\boldsymbol{\Sigma}
\bW\bigl(\mathbf{I}_p - \boldsymbol{\Lambda} f(\boldsymbol{\Lambda})\bigr)\bW^\top\bbeta_0\\
&= 4n\,\bbeta_0^\top\bW\bigl(\mathbf{I}_p - \boldsymbol{\Lambda} f(\boldsymbol{\Lambda})\bigr)
\bW^\top\boldsymbol{\Sigma}\bW\boldsymbol{\Lambda} f^2(\boldsymbol{\Lambda})
\bW^\top\boldsymbol{\Sigma}\bW\bigl(\mathbf{I}_p - \boldsymbol{\Lambda} f(\boldsymbol{\Lambda})\bigr)
\bW^\top\bbeta_0.
\end{align*}
Since
\[
\|\mathbf{I}_p - \boldsymbol{\Lambda} f(\boldsymbol{\Lambda})\|_{\mathrm{op}} 
= \max_{i\in [p]} |1 - d_i f(d_i)| \quad \text{and} \quad 
\|f(\boldsymbol{\Lambda})\|_{\mathrm{op}} = \max_{i\in [p]} |f(d_i)|,
\]
and since $f \in \mathcal{F}$ is continuous on a fixed neighborhood of the bulk $\mathcal{S}_c$ and 
in fixed neighborhoods around the outliers $x_j^\star$ and around 0
(see Assumption~\ref{assm:F}), it follows 
that almost surely, for all large $n$, both quantities are bounded by constants 
depending only on $f$. Since $\|\bW\|_{\mathrm{op}} = 1$ and 
$\|\boldsymbol{\Sigma}\|_{\mathrm{op}} = \sx^2\, + \max_{j\in[r]}\delta_j$, we 
conclude that, almost surely,
\[
\limsup_{n\to\infty} \frac{\|\ba_n\|_2^2}{n} < \infty,
\]
and hence Lemma~\ref{lem:linear_concentration} gives 
$\ba_n^\top\beps/n \xrightarrow{a.s.} 0$.
Similarly, define
\[
\bA_n := \frac{1}{n}\bX f(\widehat{\boldsymbol{\Sigma}}){\boldsymbol{\Sigma}}
f(\widehat{\boldsymbol{\Sigma}})\bX^\top \in \mathbb{R}^{n\times n},
\]
so that the bracketed term in $\mathcal{E}_{f,n}$ equals 
$\beps^\top \bA_n \beps / n - \sigma_\beps^2\tr(\bA_n)/n$,
since
\[
\frac{\sigma_\beps^2}{n}\tr(\bA_n) 
= \frac{\sigma_\beps^2}{n}
\tr\left(\widehat{\boldsymbol{\Sigma}}f(\widehat{\boldsymbol{\Sigma}})
{\boldsymbol{\Sigma}}f(\widehat{\boldsymbol{\Sigma}})\right) = \mathcal{V}_{f,n}(\bX).
\]
Its operator norm satisfies
\[
\|\bA_n\|_{\mathrm{op}} 
\leq \|\widehat{\boldsymbol{\Sigma}}\|_{\mathrm{op}}\,
\|f(\widehat{\boldsymbol{\Sigma}})\|_{\mathrm{op}}^2\,
\|{\boldsymbol{\Sigma}}\|_{\mathrm{op}},
\]
where we used $\|\bX\|_{\mathrm{op}}^2/n = \|\widehat{\boldsymbol{\Sigma}}\|_{\mathrm{op}}$.
By the same argument as above, $\limsup_n\|\bA_n\|_{\mathrm{op}}$ is bounded 
by a finite constant almost surely, and hence 
Lemma~\ref{lem:quadratic_concentration} gives
\[
\frac{\beps^\top \bX f(\widehat{\boldsymbol{\Sigma}}){\boldsymbol{\Sigma}}
f(\widehat{\boldsymbol{\Sigma}})\bX^\top\beps}{n^2} - \mathcal{V}_{f}(\bX, \by)
\xrightarrow{a.s.} 0.
\]
This completes the proof.
\end{proof}
\subsection{Proof of Lemma~\ref{lemma: define A and solve f_ast}}\label{HD is positive-definite}
    
\begin{proof}
First, we need to see how $\rout$ relates to operator $A$. Expanding everything in $\rout$ in the formula from Lemma~\ref{prop: general formula}we have:
\begin{align*}
\rout
&= \sx^2\,r^2 + \sum_{j=1}^\rk \delta_j \alpha_j^2
- 2 \sx^2\,r^2 \int x\,f(x)\, \mathrm{d}F_{\alpha}(x) + \sx^2\,r^2 \int x^2f(x)^2 \mathrm dF _{\alpha} - 2\sum_{j=1}^\rk \delta_j \alpha_j^2 \int x f(x) \, \mathrm{d}F_{\delta_j}\\
&+ \sum_{j=1}^\rk \delta_j\alpha_j^2 \left(\int x f(x) \, \mathrm{d}F_{\delta_j}\right)^2 + c\sx^2\,{\sigma_{\beps}^2} \int xf^2(x) \ \, \mathrm d F_{\rm MP}(x) \\
&= \sx^2\,r^2 + \sum_{j=1}^\rk \delta_j \alpha_j^2 + \int \Bigl(\sx^2\,r^2x^2 + c\sx^2\,\sigma_{\beps}^{2}\,x\,\mu_0(x)\Bigr)\,f(x)^2\,\mathrm d F_\alpha(x)\\
&\quad
- 2 \int \Bigl(\sx^2\,r^2x + \sum_{j=1}^\rk \delta_j\,\alpha_j^2\, x \,\mu_j(x)\Bigr)\,f(x)\,\mathrm dF_\alpha(x) + \sum_{j=1}^\rk \delta_j\alpha_j^2 \left( \int x\, f(x)\mu_j(x)\, \mathrm{d}F_{\alpha} \right)^{2}.
\end{align*}
Using the notations we introduced in equation~\eqref{eq: define w ,g , h_j} we can rewrite the risk as
\begin{align}
\label{eq: risk f in inner form}
\rout & = \sx^2\,r^2 + \sum_{j=1}^\rk \delta_j\alpha_j^2 
+ \int x w(x)f(x)^2\,\mathrm dF _\alpha(x) \notag \\
& \qquad -  \int 2 x\, g(x)w(x) \, f(x)\,\mathrm dF _\alpha(x)
+ \sum_{j=1}^\rk\delta_j\alpha_j^2 \left( \int xw(x)h_j(x) f(x)\, \mathrm{d}F_{\alpha}(x) \right)^{2} \notag \\
&= \sx^2\,r^2 + \sum_{j=1}^\rk \delta_j\alpha_j^2
+ \langle f,f\rangle_w
- 2\,\langle f,\,g\rangle_w
+ \sum_{j=1}^\rk\delta_j\alpha_j^2\,\langle f, h_j\rangle_w^2,
\end{align}
where in the last line we use the $xw(x)$-weighted inner product, as defined in \eqref{eq: define inner}. 
Recall the definition of $A: \mathcal{F} \to \mathcal{F}$ 
$$\mathcal{A}f = f + \sum_{j=1}^\rk \delta_j \alpha_j^2\langle f, h_j\rangle_w\, h_j.$$
Thus, the function~\eqref{eq: risk f in inner form} we want to minimize is \begin{equation*}
    \|f\|_w^2 - 2\langle f,g\rangle_w + \sum_{j=1}^\rk \delta_j\alpha_j^2 \, \langle f, h_j\rangle_w^2 = \langle \mathcal{A}f, f\rangle_w - 2\langle f,g\rangle_w.
\end{equation*}
Note that $\mathcal{A}$ is self-adjoint with respect to $\langle\cdot, \cdot\rangle_w$ because $$\langle \phi, \mathcal{A}\psi \rangle_w = \langle \mathcal{A}\phi, \psi \rangle_w = \langle \phi, \psi \rangle_w + \sum_{j=1}^\rk \delta_j\alpha_j^2 \langle \phi, h_j \rangle_w\langle \psi, h_j\rangle_w,$$
and positive-semidefinite, as 
\begin{equation}\label{eq: Apsi psi}
\langle \mathcal{A}\psi , \psi\rangle_w = \|\psi\|_w^2 + \sum_{j=1}^\rk \delta_j\alpha_j^2 \langle \psi, h_j\rangle_w^2 \geq \|\psi\|_w^2.
\end{equation}
First, let us show $f^\ast$ exists.  Recall from Lemma~\ref{lemma: def mu_j mu_0} that
$$\omega_0 \mu_0(x) + \sum_{j=1}^\rk \omega_j \mu_j(x) = 1,$$ so from Proposition~\ref{eq: define w ,g , h_j} we can rewrite \begin{align}\label{eq: g is linear in h_j}
    g(x) &= \frac{\sx^2\,r^2 + \sum_{j=1}^\rk \delta_j \alpha_j^2 \mu_j(x)}{w(x)} = \frac{\sx^2\,r^2(\omega_0 \mu_0(x) + \sum_{j=1}^\rk \omega_j \mu_j(x)) + \sum_{j=1}^\rk \delta_j \alpha_j^2 \mu_j(x)}{w(x)} \notag \\
    &= \frac{\sx^2\,r^2\omega_0 \mu_0(x) + \sum_{j=1}^\rk (\delta_j + \sx^2) \alpha_j^2 \mu_j(x)}{w(x)}
    = \sx^2\,r^2\omega_0 h_0(x) + \sum_{j=1}^\rk (\delta_j + \sx^2)\alpha_j^2 h_j(x).
    \end{align}
We consider a candidate of the form $$f^\ast(x) = \sum_{j=0}^\rk b^{(1)}_j h_j(x).$$ We wish to find real coefficients $\{b^{(1)}_0, b^{(1)}_1,  \dots, b^{(1)}_\rk\}$ for which \[
\mathcal{A} f^\ast
=
\sum_{i=0}^{\rk} b^{(1)}_i h_i
+
\sum_{j=1}^{\rk}
\delta_j \alpha_j^2
\left(\sum_{i=0}^{\rk} b^{(1)}_i \langle h_i,h_j\rangle_w\right) h_j
\]
is exactly equal to $g(x)$.  Equivalently, let 
\(
\bH
\)
be the (0-indexed) $(\rk + 1) \times (\rk + 1)$ Gram matrix with $H_{ij} = \langle h_i, h_j\rangle_w$, and define
\[
\mathfrak D
:=
\mathrm{diag}\Bigl(0, \delta_1\alpha_1^2,\dots,
\delta_\rk \alpha_\rk^2\Bigr).
\]
Thus, by~\ref{eq: g is linear in h_j}, the equality $[\mathcal{A}f^\ast](x) = g(x)$ on $x\in \mathcal{S}_c^+$ is equivalent to solving 
\begin{align}\label{eq: linear system with b0_simple}
    \left(\mathbf I_{\rk+ 1} + \mathfrak D \bH\right) \begin{pmatrix}
    b^{(1)}_0\\ b^{(1)}_1 \\\vdots\\ b^{(1)}_{\rk} 
\end{pmatrix} = \begin{pmatrix}
    \sx^2\,r^2\omega_0\\ (\delta_1 + \sx^2)\alpha_1^2 \\\vdots\\ (\delta_\rk + \sx^2)\alpha_\rk^2 
\end{pmatrix}.
\end{align}
Since $\bH$ is a Gram matrix, it is positive-semidefinite, hence so is $\mathfrak D^{1/2}\bH\mathfrak D^{1/2}$ and finally matrix 
\begin{align}\label{eq: HD is invertible}
    \mathfrak D \bH = \mathfrak D^{1/2} ({\mathfrak D}^{1/2}\bH{\mathfrak D}^{1/2}) {\mathfrak D}^{-1/2}
    \end{align}
has all eigenvalues non-negative. We conclude that $\mathbf{I}_{\rk+1} + {\mathfrak D}\bH$ is invertible, so we can find the coefficients $\{b_j^{(1)}\}_{j=0}^\rk$ that satisfy~\eqref{eq: linear system with b0_simple}. Since $\mathfrak{D}$ has 0 on the first row and column, it is straightforward to see that $b_0^{(1)} = \sx^2\,r^2\omega_0$.
\\\\
\noindent
We are now ready to prove that $f_\ast$ is the minimizer. The objective can be rewritten as \begin{equation}\label{eq: Risk out f up to prop}
    \langle \mathcal{A}f, f\rangle_w - 2\langle f,g\rangle_w = \langle \mathcal{A}f, f\rangle_w - 2\langle f, \mathcal{A}f^\ast\rangle_w = \left\langle  \mathcal{A}(f-f^\ast),  f-f^\ast \right\rangle_w  - \langle \mathcal{A}f^\ast, f^\ast\rangle_w
\end{equation}
where we used the fact that $\mathcal A$ is self-adjoint.
Since $f^\ast$ is fixed and $\mathcal{A}$ is positive-semidefinite, we conclude that the minimizer is $f_\ast^{\mathrm{pred}}= f^\ast$. The proof of Lemma~\ref{lemma: define A and solve f_ast} now follows.
\end{proof} 

\subsection{Proof of Lemma~\ref{lemma: f_ast is unique}}
To prove uniqueness, first of all observe that for any $\psi \in \cF$, $\|\psi\|_w = 0$ if and only if $\psi \equiv 0$ on $\mathcal{S}_c^+\setminus \{0\}$. This essentially follows from the definition of $\|\cdot\|_w$; note that, 
$$
\|\psi\|_w^2 = \int_{\mathcal{S}_c} \psi(x)^2 xw(x) f_\alpha(x) \, \mathrm{d}x  +  \sum_{j=1}^\rk \psi(x_j^\star)^2 w(x_j^\star) F_\alpha(\{x_j^\star\}) \,.
$$
Hence, $\|\psi\|_w = 0$ if and only if both the terms above are 0. Since $w(x) > 0$ for all $x \in \mathcal{S}_c^+$ and since $\psi$ is continuous on $\mathcal{S}_c$ we must have $\psi \equiv 0$ on $\mathcal{S}_c$. Finally, $\psi$ must also vanish at the atoms of $F_{\alpha}$, i.e. $\psi(x) = 0$ for all $x\in \mathcal{S}_c^+\setminus \{0\}$. 
Recall from equation~\eqref{eq: Apsi psi} that $\langle \mathcal{A}\psi, \psi\rangle_w \geq \|\psi\|_w^2$, so $\langle \mathcal{A}\psi, \psi\rangle_w = 0$ if and only if $\psi(x) = 0$ for all $x\in \mathcal{S}_c^+\setminus \{0\}$.
Next, recall from the proof of Lemma~\ref{lemma: define A and solve f_ast}, equation~\eqref{eq: Risk out f up to prop}, that 
$$
\rout = \left\langle  \mathcal{A}(f-f^\ast),  f-f^\ast \right\rangle_w  - \langle \mathcal{A}f^\ast, f
^\ast\rangle_w + C\,.
$$
where $C$ is some constant independent of $f$. 
Thus, any minimizer $\widetilde f$ of $\rout$ should satisfy $\langle  \mathcal{A}(\widetilde f-f^\ast),  \widetilde f-f^\ast\rangle_w  = 0$. Since $\langle \mathcal{A}\psi , \psi\rangle_w  \geq \|\psi\|_w^2$ we must have $\|\widetilde f - f^\ast\|_w^2 = 0$ and hence $\widetilde f= f^\ast$ for all points in $\mathcal{S}_c^+$.
\section{Proof of Lemma~\ref{thm: main corollary} and Lemma~\ref{lemma: sd is in mathcal F}}\label{app: C}
\subsection{Proof of Lemma~\ref{thm: main corollary}}\label{sec: lemma easy}
\begin{proof}
    Note that in the extreme case $\delta = 0$, the spiked measure $F_{\delta}$ is just the MP law $F_{\mathrm{MP}}$ (which essentially follows from the observation that $m_\delta(z) = m(z)$ at $\delta = 0$). 
Hence, when $\delta_1 = \dots = \delta_\rk = 0$, we have $F_{\delta_1} = \dots = F_{\delta_\rk} = F_{\mathrm{MP}}$ and consequently $F_{\alpha} = F_{\mathrm{MP}}$. Thus, in the isotropic case: 
\begin{align*}
\rout & \overset{a.s.}{=}  \sx^2\,r^2\int (1 - xf(x))^2 \ \mathrm dF _\mathrm{MP}(x) + c\,\sx^2\, {\sigma_{\beps}^2} \int xf^2(x) \ \mathrm dF _{\rm MP}(x)\\
&= \int \left[\sx^2\,r^2(1 - xf(x))^2 + c\sx^2\, {\sigma_{\beps}^2}  xf^2(x)\right] \ \mathrm dF _{\rm MP}\\
&= \sx^2\,r^2 + \sx^2\,\int x   \left[(r^2x + c\, {\sigma_{\beps}^2} )f(x)^2 - 2r^2 f(x) \right]\ \mathrm{d}F_{\rm MP}.
\end{align*}
Note that the term that depends on $f(x)$ is a convex quadratic uniquely minimized at $$f_\ast^{\mathrm{pred}}(x) = \frac{2r^2}{2(r^2x + c\sigma_{\beps}^2)} = \frac{1}{x + c\sigma_{\boldsymbol{\varepsilon}}^2/r^2} = \frac{1}{x + \lambda^*}.$$
Since this is a continuous function on the positive real line, $f_\ast^{\mathrm{pred}}\in \mathcal{F}$, and therefore is the minimizer of $\rout$. This is precisely the optimal tuning parameter $\lambda^*$ obtained in \cite{dicker_bernoulli_2016,dobriban2018high}.
\end{proof}

\subsection{Proof of Lemma~\ref{lemma: sd is in mathcal F}}\label{proof: sd is in mathcal F}
\begin{proof}
We prove this inductively in $t$. Note that for $t = 0$ the 
representation follows directly from the definition of the Ridge estimator with the Moore--Penrose 
pseudoinverse. Assume the equality holds for $t$, so that
\[
\widehat{\bbeta}^{(t)}_{\mathrm{SD}} 
= f_t(\widehat{\bSigma})\frac{\bX^\top\by}{n}.
\]
Let ($\lambda_{t+1}, \xi_{t+1}$) be the new hyperparameters and let 
$\widehat{f}:\mathbb{R}\to\mathbb{R}\cup\{\infty\}$ be given by
\[
\widehat{f}(x) = \frac{1}{x+\lambda_{t+1}},
\]
where we use the convention $1/0 := 0$ when $x = -\lambda_{t+1}$, 
consistent with \eqref{eq: definition of beta_f}. 
Using the 
closed-form expression \eqref{eq:SD-MP},
\begin{align*}
\widehat{\bbeta}^{(t+1)}
&= \left(\widehat{\bSigma} + \lambda_{t+1}\bI_p\right)^{\dagger}
   \frac{\bX^\top\!\left(
       (1-\xi_{t+1})\by + \xi_{t+1}\,\bX\widehat{\bbeta}^{(t)}
   \right)}{n} \\
&= (1-\xi_{t+1})
   \left(\widehat{\bSigma}+\lambda_{t+1}\bI_p\right)^{\dagger}
   \frac{\bX^\top\by}{n}
   +\xi_{t+1}
   \left(\widehat{\bSigma}+\lambda_{t+1}\bI_p\right)^{\dagger}
   \widehat{\bSigma}\,\widehat{\bbeta}^{(t)} \\
&= (1-\xi_{t+1})\,\widehat{\bbeta}_{\widehat{f}}
   \;+\;
   \xi_{t+1}\,\widehat{\bbeta}_{\widetilde{f}},
\end{align*}
where we define $\widetilde{f}$ by
\[
\widetilde{f}(x) := \frac{x\,f_t(x)}{x+\lambda_{t+1}},
\]
again with the convention $\widetilde{f}(-\lambda_{t+1}) = 0$. Recall from the induction hypothesis that
\begin{align}
\label{eq:part 1 of induction k-step}
f_t(x)
&= \frac{1-\xi_t}{x+\lambda_t}
+ \frac{\xi_t(1-\xi_{t-1})\,x}{(x+\lambda_t)(x+\lambda_{t-1})}
+ \cdots
+ \frac{\xi_t\xi_{t-1}\cdots\xi_2(1-\xi_1)\,x^{t-1}}
       {(x+\lambda_t)(x+\lambda_{t-1})\cdots(x+\lambda_1)} 
       \notag\\
&\quad
+ \frac{\xi_t\xi_{t-1}\cdots\xi_1\,x^t}
       {(x+\lambda_t)(x+\lambda_{t-1})\cdots(x+\lambda_0)}.
\end{align}
Therefore,
\begin{align}
\label{eq:part 2 of induction k-step}
& (1-\xi_{t+1})\widehat{f}(x)+\xi_{t+1}\widetilde{f}(x) \\
&= \frac{1-\xi_{t+1}}{x+\lambda_{t+1}}
  +\frac{\xi_{t+1}\,x\,f_t(x)}{x+\lambda_{t+1}} \notag\\
&= \frac{1-\xi_{t+1}}{x+\lambda_{t+1}}
+ \frac{\xi_{t+1}(1-\xi_t)\,x}
       {(x+\lambda_{t+1})(x+\lambda_t)}
+ \frac{\xi_{t+1}\xi_t(1-\xi_{t-1})\,x^{2}}
       {(x+\lambda_{t+1})(x+\lambda_t)(x+\lambda_{t-1})}
       \notag \\
&\quad + \cdots
+ \frac{\xi_{t+1}\xi_t\cdots\xi_2(1-\xi_1)\,x^{t}}
       {(x+\lambda_{t+1})(x+\lambda_t)\cdots(x+\lambda_1)}
+ \frac{\xi_{t+1}\xi_t\cdots\xi_1\,x^{t+1}}
       {(x+\lambda_{t+1})(x+\lambda_t)\cdots(x+\lambda_0)}
       \notag\\
&= f_{t+1}(x).
\end{align}
Combining \eqref{eq:part 1 of induction k-step} and 
\eqref{eq:part 2 of induction k-step}, the induction step is 
complete.
\end{proof}

\section{$\rk$-step Self Distillation - Proof of Theorem~\ref{thm:sd_optimal}(a)}\label{app: D}
\subsection{Proof of Theorem~\ref{thm:sd_optimal}(a)}
A direct implication of Lemma~\ref{lemma: sd is in mathcal F} is that any shrinkage function achieved by self-distillation is a rational function. Thus, the first step to prove that $f_\ast^{\mathrm{pred}}$ is achieved by self-distillation is to prove that it is a rational function, which is established in the following lemma: 

\begin{lemma}
\label{lemma: polynomials P Q}
There exist monic real-coefficient polynomials \(P\) and \(Q\) with \(\deg P=\rk+1\) and \(\deg Q=\rk\) such that
\[
f_\ast^{\mathrm{pred}}(x)=\frac{Q(x)}{P(x)}
\]
for all $x$. Moreover, \(P\) has \(\rk+1\) distinct nonzero real roots, exactly one of which is negative. Finally, none of the roots of \(P\) lies in \(\overline{\mathcal S_c}:= \mathcal{S}_c \cup \{0,x_1^\star, \dots, x_\rk^\star\}\).
\end{lemma}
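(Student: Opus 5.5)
The plan is to work directly with the closed form of Theorem~\ref{thm: main}(b):
\[
f_\ast^{\mathrm{pred}}=\frac{N(x)}{D(x)},\qquad N=b_0^{(1)}\nu+\sum_{j=1}^\rk b_j^{(1)}\nu_{-j},\qquad D=\sx^2r^2x\,E(x)+c\sx^2\sigma_\beps^2\nu(x),
\]
where $E:=\omega_0\nu+\sum_j\omega_j\nu_{-j}$. Since $f_\ast^{\mathrm{pred}}$ is only pinned down on $\mathcal S_c^+$, we take the rational function $N/D$ as its definition on all of $\mathbb R$.

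\textbf{Degrees and monicity.} Write $\nu_j(x)=\kappa_j(x_j^\star-x)$ with $\kappa_j=\delta_j/(c\sx^2(\delta_j+\sx^2))>0$. Then $\nu$ has degree $\rk$ with leading coefficient $L:=\prod_j(-\kappa_j)\neq 0$, and each $\nu_{-j}$ has degree $\rk-1$.
\begin{itemize}
\item $D$ has degree $\rk+1$. Its leading coefficient comes only from $\sx^2r^2x\cdot\omega_0\nu$ and equals $\sx^2r^2\omega_0L$, which is nonzero because $\omega_0>0$ (Lemma~\ref{lemma: f_alpha}).
\item $N$ has degree $\rk$, with leading coefficient $b_0^{(1)}L=\sx^2r^2\omega_0L$, using $b_0^{(1)}=\sx^2r^2\omega_0$ from Lemma~\ref{lemma: define A and solve f_ast}.
\item The two leading coefficients coincide. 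So $P:=D/(\sx^2r^2\omega_0L)$ and $Q:=N/(\sx^2r^2\omega_0L)$ are both monic and $f_\ast^{\mathrm{pred}}=Q/P$. This also gives Theorem~\ref{thm: main}(c).
\end{itemize}

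\textbf{Locating the roots of $P$.} Away from the $x_j^\star$, factor $D(x)=\nu(x)\,\varphi(x)$ with
\[
\varphi(x):=\sx^2r^2x\Bigl(\omega_0+\sum_{j=1}^\rk\frac{\omega_j/\kappa_j}{x_j^\star-x}\Bigr)+c\sx^2\sigma_\beps^2 .
\]
At the points $x_j^\star$ themselves, Remark~\ref{rmk: lagrange} gives $D(x_j^\star)=\sx^2r^2x_j^\star\omega_j\nu_{-j}(x_j^\star)\neq0$. Hence the zeros of $D$ are exactly the zeros of $\varphi$ on $\mathbb R\setminus\{x_j^\star\}$.

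By Lemma~\ref{lemma: distinct x_jstar}, order the atoms as $b<x^\star_{(1)}<\dots<x^\star_{(\rk)}$, where $b$ is the right edge of $\mathcal S_c$. We now trace the sign of $\varphi$ across the line; all weights $\omega_0,\omega_j,\kappa_j$ are strictly positive.
\begin{itemize}
\item On $[0,x^\star_{(1)})$, every term of $\varphi$ is nonnegative and $c\sx^2\sigma_\beps^2>0$. So $\varphi>0$ there, and in particular $\varphi$ has no zero on $\mathcal S_c\cup\{0\}$.
\item Just to the right of each $x^\star_{(k)}$, the term $x/(x^\star_{(k)}-x)$ tends to $-\infty$ (with $x>0$), so $\varphi\to-\infty$. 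Just to the left of $x^\star_{(k+1)}$, $\varphi\to+\infty$. The intermediate value theorem gives a root in each of the $\rk-1$ gaps $(x^\star_{(k)},x^\star_{(k+1)})$.
\item On $(x^\star_{(\rk)},\infty)$, $\varphi$ starts at $-\infty$ and behaves like $\sx^2r^2\omega_0x\to+\infty$ at infinity. This gives one more root.
\item On $(-\infty,0)$, we have $\varphi(0)=c\sx^2\sigma_\beps^2>0$ and $\varphi\sim\sx^2r^2\omega_0x\to-\infty$ as $x\to-\infty$. This gives one negative root.
\end{itemize}
That is $\rk+1$ roots lying in pairwise disjoint open intervals. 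Since $\deg P=\rk+1$, these are all of the roots, each is simple, and they are therefore distinct and real. Exactly one is negative, none equals $0$, and none lies in $\overline{\mathcal S_c}$, since $\varphi>0$ on $\mathcal S_c\cup\{0\}$ and $D(x_j^\star)\neq0$.

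The only delicate step is the sign bookkeeping near each pole $x_j^\star$. It relies on strict positivity of $\omega_j$ and $\kappa_j$, and on $x_j^\star>0$ (so that $x>0$ near the pole). It also relies on the distinctness of the atoms from Lemma~\ref{lemma: distinct x_jstar}, so that each pole is a simple pole of $\varphi$ and the gaps are nonempty.
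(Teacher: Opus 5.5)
Your proposal is correct and is essentially the paper's proof. It uses the same leading-coefficient computation, with $b_0^{(1)}=\sigma_0^2r^2\omega_0$ forcing numerator and denominator to share the leading coefficient, and the same intermediate-value count of one root between consecutive outliers plus one beyond the largest. The differences are cosmetic: you track signs through the simple poles of $\varphi=D/\nu$ rather than evaluating $D(x_i^\star)=\sigma_0^2r^2x_i^\star\omega_i\nu_{-i}(x_i^\star)$ directly, and you find the negative root by the intermediate value theorem on $(-\infty,0)$ instead of the paper's Vieta/product-of-roots argument at $x=0$, which is slightly more direct and makes $\varphi>0$ on $\mathcal S_c\cup\{0\}$ explicit.
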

\noindent
The first claim in the above Lemma is exactly the statement of Theorem~\ref{thm: main}(c). The next Lemma will be used to represent the ratio $f_\ast^{\mathrm{pred}} = Q/P$ in the form of Lemma~\ref{lemma: sd is in mathcal F}: 
\begin{lemma}\label{lemma: lagrange interpolation}
Let $d \geq 0$ be an integer and let $\mathfrak g$ be a real polynomial such that $\deg \mathfrak g \leq d$. Let $\mathfrak q$ be the coefficient of $x^d$ in $\mathfrak g$ (if $\deg \mathfrak g < d$ then $\mathfrak q = 0$).
In addition, let $M$ be a set of $d+1$ distinct real numbers.  
Then, there exists an ordering $(\gamma_0, \dots, \gamma_d)$ of  $M$ and $d+1$ real numbers $t_0, \dots, t_{d}$ such that
\begin{enumerate}
    \item The following holds for all $x \in \mathbb R$ \begin{equation}\label{coeff_match}
\mathfrak g (x)
= t_d (x - \gamma_{d-1})\cdots (x-\gamma_{0})
+ t_{d-1} x (x - \gamma_{d-2})\cdots (x-\gamma_{0})
+ \cdots
+ t_0 x^{d}.
\end{equation}
\item For all $j \geq 0$ we have \begin{equation}\label{coeff_non_zero}
t_0 + t_{1} + \dots + t_{j} \neq \mathfrak q - 1.
\end{equation}
\end{enumerate}
\end{lemma}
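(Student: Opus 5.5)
The plan is to build the ordering and the coefficients together, peeling off one factor at a time starting from the top-index term; the main difficulty is the case $0\in M$ (see the last paragraph). Write $p_k(x):=x^{d-k}\prod_{i=0}^{k-1}(x-\gamma_i)$ for $0\le k\le d$. The right-hand side of \eqref{coeff_match} is then $\sum_k t_k p_k$, and the tail sums $T_k:=t_k+\cdots+t_d$ are the relevant quantities. Every $p_k$ is monic of degree $d$, so comparing leading coefficients gives $T_0=t_0+\cdots+t_d=\mathfrak q$. Hence condition \eqref{coeff_non_zero} for index $j$ reads $T_{j+1}\neq 1$. For $j=d$ the tail is empty and the condition is automatic, so it suffices to arrange $T_1,\dots,T_d\neq 1$.

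First assume $0\notin M$. All $p_k$ with $k<d$ are divisible by $x$, while $p_d(0)=\prod_{i<d}(-\gamma_i)\neq 0$. Evaluating \eqref{coeff_match} at $0$ therefore forces
\[
t_d=\frac{\mathfrak g(0)}{\prod_{\gamma\in M\setminus\{\gamma_d\}}(-\gamma)} .
\]
This depends only on which element $\gamma_d$ of $M$ is left out. Then
\[
\mathfrak g_1:=\frac{\mathfrak g-t_d\prod_{i<d}(x-\gamma_i)}{x}
\]
is a polynomial of degree $\le d-1$, with leading coefficient $\mathfrak q-t_d$. The remaining terms $\sum_{k<d}t_kp_k$ divided by $x$ have exactly the shape \eqref{coeff_match} with $d-1$ in place of $d$ and with the set $M_1:=M\setminus\{\gamma_d\}$. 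Iterating this produces the whole ordering and every $t_k$. The same evaluation argument also shows that the $p_k$ form a basis of polynomials of degree $\le d$, so the representation exists and is unique once the ordering is fixed.

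The freedom is in which element is excluded at each stage. At the stage where $\gamma_k$ is chosen, the current set $M_{d-k}$ has $k+1$ elements, and $t_k=\mathfrak g_{d-k}(0)\cdot\bigl(\prod_{M_{d-k}}(-\gamma)\bigr)^{-1}\cdot(-\gamma_k)$. There are two cases.
\begin{itemize}
\item If $\mathfrak g_{d-k}(0)\neq 0$, the $k+1$ possible values of $t_k$ are pairwise distinct, since the $\gamma$'s are distinct. Hence at most one choice makes $T_k=t_k+T_{k+1}$ equal to $1$. Whenever $k\ge 1$ there are at least two choices, so we can avoid it.
\item If $\mathfrak g_{d-k}(0)=0$, then $t_k=0$ and $T_k=T_{k+1}$, which is already $\neq 1$ by the previous stage. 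At the very first stage $T_{d+1}=0\neq 1$.
\end{itemize}
The stages with $k=d,d-1,\dots,1$ all offer a choice, which is exactly what is needed to guarantee $T_d,\dots,T_1\neq 1$. The last stage ($k=0$, a single remaining element, no choice) only determines $t_0$, and $T_0=\mathfrak q$ is unconstrained. This proves the lemma when $0\notin M$.

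The main obstacle is the case $0\in M$. Then $0$ must be placed as $\gamma_d$: otherwise $p_d(0)=0$ and the basis property fails, because every $p_k$ then vanishes at $0$. So the first stage has no freedom, and $t_d=\mathfrak g(0)/\prod_{\gamma\in M\setminus\{0\}}(-\gamma)$ might equal $1$. From the second stage on, the argument above applies unchanged. I would first try to show directly that $t_d\neq 1$ in this case, and otherwise add the hypothesis $0\notin M$. That hypothesis is harmless for the intended application: Lemma~\ref{lemma: polynomials P Q} states that the roots of $P$ are nonzero, and $M$ will be (the negatives of) those roots, giving the ridge parameters with $-\lambda_j\notin\mathcal S_c^+$. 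Finally, the $t$'s will be converted into weights $\xi_t$ via Lemma~\ref{lemma: sd is in mathcal F}, where the relation $1-\xi_t=t_k/(\text{tail product})$ requires the condition $T_{j+1}\neq 1$.
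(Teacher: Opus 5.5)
Your argument is correct and complete when $0\notin M$. You should not try to rescue the case $0\in M$: there the statement is false, so the extra hypothesis is necessary, not just convenient.

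Take $d=1$, $M=\{0,a\}$ with $a\neq 0$, and $\mathfrak g(x)=\mathfrak q x-a$. The ordering $\gamma_0=0$ only produces $(t_0+t_1)x$, which has no constant term. The ordering $\gamma_0=a$ forces $t_1=1$ and $t_0=\mathfrak q-1$, which violates \eqref{coeff_non_zero} at $j=0$.

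In fact your first-stage analysis is sharp: for $d\ge 1$ and $0\in M$, the lemma holds exactly when $\mathfrak g(0)\neq\prod_{\gamma\in M\setminus\{0\}}(-\gamma)$. If $\mathfrak g(0)=0$, put $0$ last, get $t_d=0$, and continue as you do.

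The paper's proof tacitly uses the same hypothesis. It selects $\gamma_{k+1}\neq 0$ with $R_k(\gamma_{k+1})\neq 0$ and divides by $\gamma_{k+1}^{d-k-1}$, but its root count only guarantees such a node when $0\notin M$. This is harmless in the application, since the roots of $P$ are nonzero by Lemma~\ref{lemma: polynomials P Q}.

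Your route is genuinely different from the paper's.

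\begin{itemize}
\item The paper builds the ordering from the front. It obtains $t_{k+1}$ by interpolating $\mathfrak g$ at the node $\gamma_{k+1}$; all terms of higher index vanish there, so the system is triangular. It then encodes the requirement on the prefix sum $t_0+\dots+t_{k+1}$ as non-vanishing at $\gamma_{k+1}$ of an auxiliary monic degree-$d$ polynomial $R_k$ that already vanishes at $\gamma_0,\dots,\gamma_k$. Counting roots shows some unused node works.
\item You build it from the back, peeling off $t_d,t_{d-1},\dots$ by evaluation at $0$ and division by $x$. The nodes then enter only through $\prod(-\gamma)$, so the candidate values of $t_k$ are the distinct multiples $c\cdot(-\gamma)$, and at most one of them is bad.
\end{itemize}

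The paper's version is a single uniform induction, phrased directly in the prefix sums that later define the $\xi_i$. Yours needs the case split on $\mathfrak g_{d-k}(0)=0$. In exchange, it also gives uniqueness of the coefficients for a fixed ordering, and it shows precisely where and why $0\in M$ breaks the statement.
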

\begin{proof}[Proof of Theorem~\ref{thm:sd_optimal}(a)]
    From Lemma~\ref{lemma: polynomials P Q}  we know that $\deg P = \rk + 1$ and $P$ has $\rk + 1$ distinct non-zero real roots and $\deg Q = \rk$. Next, apply Lemma~\ref{lemma: lagrange interpolation} for $d = \rk$, $\mathfrak g = Q$ and $M$ to be the set of real roots of $P$ to exhibit an ordering $(\gamma_0, \dots, \gamma_\rk)$ of the roots of $P$ and $\rk+1$ real numbers $t_0, \dots, t_{\rk}$ such that
$$ Q(x)
= t_{\rk} (x - \gamma_{\rk-1})\cdots (x-\gamma_{0})
+ t_{\rk-1} x (x - \gamma_{\rk-2})\cdots (x-\gamma_{0})
+ \cdots
+ t_0 x^{\rk}$$
and equation~\ref{coeff_non_zero} holds, where $\mathfrak q$ is the coefficient of $x^\rk$ monomial in $Q$. $Q$ is monic, hence $\mathfrak q = 1$.
Since the display above holds for all $x$, the coefficient of $x^\rk$ has to be the same, hence $1 = t_0 + t_1 + \dots + t_{\rk}$. 
Finally we choose \begin{equation}\label{eq: roots are negative ridge}
    \lambda_i = - \gamma_i, \qquad \qquad \xi_i = \frac{t_0 + t_1 + \dots + t_{i-1}}{t_0 + t_1 + \dots + t_i},\end{equation} which are well-defined due to equation~\eqref{coeff_non_zero}. Note that $$\xi_\rk\dots\xi_{i+1}(1-\xi_i) = \frac{t_0 + t_1 + \dots + t_{\rk-1}}{t_0 + t_1 + \dots + t_{\rk}} \times \dots \times \frac{t_0 + t_1 + \dots + t_{i}}{t_0 + t_1 + \dots + t_{i+1}} \times \frac{t_i}{t_0 + t_1 + \dots + t_{i}} = t_i,$$ since the terms in the product cancel alternatively, and the total sum is 1. Since $P$ is monic, then $P(x) = (x- \gamma_0)\dots (x-\gamma_\rk)$ hence 
\begin{align*}
f_\ast^{\mathrm{pred}}(x)
&= \frac{Q(x)}{(x-\gamma_0)\cdots(x-\gamma_\rk)} \\
&= \frac{
t_{\rk} (x-\gamma_{\rk-1})\cdots(x-\gamma_0)
+ t_{\rk-1}x (x-\gamma_{\rk-2})\cdots(x-\gamma_0)
+ \cdots
+ t_0 x^\rk
}{(x-\gamma_0)\cdots(x-\gamma_\rk)} \\
&= \frac{t_{\rk}}{x-\gamma_\rk}
+ \frac{t_{\rk-1}x}{(x-\gamma_\rk)(x-\gamma_{\rk-1})}
+ \cdots
+ \frac{t_0 x^\rk}{(x-\gamma_\rk)(x-\gamma_{\rk-1})\cdots(x-\gamma_0)} \\
&= \frac{1-\xi_\rk}{x+\lambda_\rk} + \frac{\xi_\rk(1-\xi_{\rk-1}) x}{(x+\lambda_\rk)(x+\lambda_{\rk-1})} + \dots  + \frac{\xi_\rk \dots \xi_1\, x^{\rk}}{(x+\lambda\rk)(x+\lambda_{\rk-1})\dots (x+\lambda_0)}\\
&= f_\rk(x),
\end{align*}
as defined in Lemma~\ref{lemma: sd is in mathcal F}.  We conclude that the optimal shrinkage function $f_\ast^{\mathrm{pred}}$ is achieved by $\rk$ steps of self-distillation, initialized at a Ridge estimator $\widehat\bbeta^{(0)}$ with parameter $\lambda_0$. The proof that the Ridge parameters $(\lambda_0, \dots, \lambda_\rk)$ are strictly distinct, exactly $\rk$ of the $\rk+1$ Ridge parameters are negative and $-\lambda_0, \dots, -\lambda_\rk \notin \cS_c^+$ follows by Lemma~\ref{lemma: polynomials P Q}.
\end{proof}

\subsection{Proof of Lemma~\ref{lemma: polynomials P Q}}\label{proof of lemma r+1 roots}

\begin{proof}
Recall from Lemma~\ref{lemma: define A and solve f_ast} that
$$f_\ast^{\mathrm{pred}}(x)
=
\frac{\sx^2r^2\omega_0\, \nu(x) + \sum_{j=1}^\rk b_{j}^{(1)} \nu_{-j}(x)}{\sx^2 r^2x\left(\omega_0 \nu(x) + \sum_{j=1}^\rk \omega_j \nu_{-j}(x)\right) + c\sx^2\, \sigma_{\boldsymbol{\varepsilon}}^2 \nu(x)},
\qquad x \in \mathcal{S}_c^+.$$
Consider the denominator $$P^0(x) := {\sx^2 r^2x\left(\omega_0 \nu(x) + \sum_{j=1}^\rk \omega_j \nu_{-j}(x)\right) + c\sx^2\, \sigma_{\boldsymbol{\varepsilon}}^2 \nu(x)}$$ 
and the numerator $$Q^0(x) := \sx^2r^2\omega_0\, \nu(x) + \sum_{j=1}^\rk b_{j}^{(1)} \nu_{-j}(x). $$
Recall from Lemma~\ref{lemma: F_mp << F_delta} that $\nu_j(x)$ is a linear function $$\nu_j(x) = \frac{\delta_j(x_j^\star - x)}{c\sx^2\,(\delta_j + \sx^2\,)}.$$
Thus $\nu(x)$ is a degree $\rk$ polynomial and $\nu_{-j}(x)$ is a degree $\rk - 1$ polynomial for all $j$~\eqref{eq: def nu_j}.  
Hence $P^0$ is an $\rk+1$ degree polynomial with leading coefficient $$\mathfrak p := \sx^2\,r^2\omega_0 (-1)^\rk \prod_{j=1}^\rk \frac{\delta_j}{c\sx^2\,(\delta_j + \sx^2\,)} \neq 0.$$
Similarly, $Q^0$ is a degree $\rk$ polynomial with the same leading coefficient $\mathfrak p$. Thus if we define $P = P^0/\mathfrak p$ and $Q = Q^0/\mathfrak p$ then $$f_\ast^{\mathrm{pred}}= \frac{Q^0}{P^0} =\frac{Q}{P}$$
and $P,Q$ are both monic polynomials. 
Finally, we need to prove that $P$ or equivalently $P^0$ has $\rk + 1$ distinct, non-zero real roots that are not in $\overline{\mathcal{S}_c}$. Recall from the definition of $\nu_j$ that $\nu_j(x_j^\star) = 0$ and $\nu_j(x) < 0 \iff x > x_j^\star$. 
Recall from Lemma~\ref{lemma: distinct x_jstar} that $\{x_1^\star, \dots, x^\star_\rk\}$ are all distinct and larger than all numbers in $\mathcal{S}_c$. Assume without loss of generality that $x_1^\star < x_2^\star < \dots <x_\rk^\star$. Recall from Remark~\ref{rmk: lagrange} that $\nu_{-i}(x_j^\star) = 0$ for all $i\neq j$.  Thus, by definition of $P^0$ we have $$P^0(x_i^\star) = \sx^2\,r^2x_i^\star \omega_i \nu_{-i}(x_i^\star).$$ 
This means that $P^0(x_i^\star)$ has the same sign as $(-1)^{i-1}$ since $$\nu_1(x_i^\star), \dots, \nu_{i-1}(x_i^\star) < 0 < \nu_{i+1}(x_i^\star), \dots , \nu_\rk(x_i^\star).$$
Thus, the sequence $P^0(x_1^\star), P^0(x_2^\star), \dots, P^0(x_\rk^\star)$ alternates sign, so by the intermediate value theorem  there is a root $\gamma_i$ of $P^0$ such that $x_i^\star < \gamma_i < x_{i+1}^\star$. This gives us $\rk-1$ real roots and $\gamma_1,\dots, \gamma_{\rk-1} \notin \overline{\mathcal{S}_c}$.
Next, note that $\lim_{x\to \infty} P^0(x) = (-1)^\rk \infty$ and since $P^0(x_\rk^\star)$ has the sign of $(-1)^{\rk-1}$ there exists another root $\gamma_\rk > x_\rk^\star $, and thus $\gamma_\rk \notin \overline{\mathcal{S}_c}$. 
Since $P^0$ is a degree $\rk+1$ polynomial, and we found $\rk$ real roots, we conclude that $P^0$ has indeed $\rk+1$ real roots. Let the other root be $\gamma_0$ such that
$P^0(x) = \mathfrak p(x-\gamma_0)\dots (x-\gamma_\rk)$. 
By Vieta's formulae, $$P^0(0) = c\sx^2\,\sigma_{\boldsymbol{\varepsilon}}^2\nu(0) = \mathfrak p (-1)^{\rk+1} \prod_{j=0}^\rk \gamma_j = -\sx^2\,r^2\omega_0 \prod_{j=1}^\rk \frac{\delta_j}{c\sx^2\,(\delta_j + \sx^2\,)} \prod_{j=0}^\rk \gamma_j.$$
Since $\nu_j(0) > 0 $ for all $j$, we deduce that all terms in $P^0(0)$  are positive, hence $P^0(0) > 0$. Since $\gamma_1, \dots, \gamma_\rk$ are all positive, we deduce that $\gamma_0 < 0$ and hence $\gamma_0 \notin \overline{\mathcal{S}_c}$. The proof is now complete.
\end{proof}


\subsection{Proof of Lemma~\ref{lemma: lagrange interpolation}}
\begin{proof}
We choose the numbers $\gamma_k, t_k$ inductively over $k \leq  d$ such that  \begin{equation}\label{coeff_match_induction}\mathfrak g(\gamma_k) = t_{k}\gamma_k^{d-k}(\gamma_k - \gamma_{k-1})\dots (\gamma_k - \gamma_0) + t_{k-1}\gamma_k^{d-k+1}(\gamma_k - \gamma_{k-2})\dots (\gamma_k - \gamma_0) + \dots + t_0 \gamma_k^d
\end{equation}
and such that~\eqref{coeff_non_zero} is satisfied. 
The base case $k = -1$ (no pairs chosen) is trivial. Next, assume we have chosen $\gamma_0, \dots, \gamma_k$ and $t_0, \dots, t_k$ for some $-1 \leq k\leq d-1$ such that~\eqref{coeff_non_zero} and~\eqref{coeff_match_induction} are satisfied for all $0 \leq j \leq k$. 
Consider the following polynomial: 
\begin{align*}
R_k(x) & := \mathfrak g(x) - \Big[ t_0 x^d + t_1x^{d-1}(x - \gamma_0) + \dots +  t_k x^{d-k}( x- \gamma_0) \dots (x - \gamma_{k-1}) \Big] \\
& \hspace{13em}+ (t_0 + \dots + t_k - \mathfrak q + 1)x^{d - k- 1}(x- \gamma_0) \dots (x - \gamma_{k}).
\end{align*}
where for $k = -1$ the bracket term is empty. Note that the bracket term has degree at most $d$, and so does the last term. Since $\deg \mathfrak g \leq d$, we deduce that $\deg R_k \leq d$. 
Moreover, we immediately deduce that the coefficient of $x^d$ in $R_k$ is $$\mathfrak q - (t_0 + \dots + t_k) + (t_0 + \dots + t_k - \mathfrak q +1) = 1.$$
This means that $\deg R_k = d$ hence $R_k$ has at most $d$ real roots. Note that by the base case of induction, equation~\eqref{coeff_match_induction} yields that $R_k$ vanishes at $\gamma_0, \dots, \gamma_k$. Since $M$ has $d+1$ elements, we can choose $\gamma_{k+1} \neq 0$ that was not chosen before such that $R_k(\gamma_{k+1}) \neq 0$. 
Finally, let $$t_{k+1} = \frac{\mathfrak g(\gamma_{k+1}) - \Big[ t_{k}\gamma_{k+1}^{d-k}(\gamma_{k+1} - \gamma_{k-1})\dots (\gamma_{k+1} - \gamma_0) + \dots + t_1 \gamma_{k+1}^{d-1}(\gamma_{k+1} - \gamma_0) + t_0 \gamma_{k+1}^d\Big]}{\gamma_{k+1}^{d-k-1}(\gamma_{k+1} - \gamma_k)\dots (\gamma_{k+1} - \gamma_0)},$$
which is well-defined since the roots $\gamma_0, \dots, \gamma_{k+1}$ are distinct and non-zero. Multiplying both sides of this equality by the denominator yields equation~\eqref{coeff_match_induction} for $\gamma_{k+1}$. 
Note that 
\begin{align*}
& R_k(\gamma_{k+1}) \\
&= \mathfrak g(\gamma_{k+1}) - \Big[ t_0 \gamma_{k+1}^d + t_1\gamma_{k+1}^{d-1}(\gamma_{k+1} - \gamma_0) + \dots +  t_k \gamma_{k+1}^{d-k}( \gamma_{k+1}- \gamma_0) \dots (\gamma_{k+1} - \gamma_{k-1}) \Big] \\
& \hspace{13em}+ (t_0 + \dots + t_k - \mathfrak q + 1)\gamma_{k+1}^{d - k- 1}(\gamma_{k+1}- \gamma_0) \dots (\gamma_{k+1} - \gamma_{k}) \\
&= t_{k+1}\gamma_{k+1}^{d-k-1}(\gamma_{k+1} - \gamma_k)\dots (\gamma_{k+1} - \gamma_0) + (t_0 + \dots + t_k - \mathfrak q + 1)\gamma_{k+1}^{d - k- 1}(\gamma_{k+1}- \gamma_0) \dots (\gamma_{k+1} - \gamma_{k})\\
&=  (t_0 + \dots + t_k + t_{k+1} - \mathfrak q + 1)\gamma_{k+1}^{d - k- 1}(\gamma_{k+1}- \gamma_0) \dots (\gamma_{k+1} - \gamma_{k}).
\end{align*}
Since $R_k(\gamma_{k+1}) \neq 0$, we deduce that $t_0 + t_1 + \dots + t_{k+1} \neq \mathfrak q - 1$ and the induction step is complete.
We claim that this is a valid choice of coefficients. Consider the polynomial $$\mathfrak g(x) - \big[ t_d (x - \gamma_{d-1})\cdots (x-\gamma_{0})
+t_{d-1} x (x - \gamma_{d-2})\cdots (x-\gamma_{0})
+ \cdots
+ t_0 x^{d}\big],$$
which has degree at most $d$ since $\deg \mathfrak g \leq d$. By the construction, equation~\ref{coeff_match_induction} yields that this polynomial vanishes at $\gamma_0, \dots, \gamma_{d}$. We conclude that it must be the 0 polynomial, hence equation~\ref{coeff_match} holds. Finally, by construction, equation~\eqref{coeff_non_zero} is satisfied so the proof is complete.
\end{proof}

\section{Necessity of $\rk$ Distillation Steps - Proof of Theorem~\ref{thm:sd_optimal}(b)}\label{app: E}
\subsection{Proof of Theorem~\ref{thm:sd_optimal}(b)}
We construct a family of examples where $\rk$-step SD is actually required to achieve the optimal shrinkage function $f_\ast^{\mathrm{pred}}$, and thus proving Theorem~\ref{thm:sd_optimal}(b). Recall polynomials $P, Q$ from Lemma~\ref{lemma: polynomials P Q}.  First, we provide a purely algebraic condition on $P$ and $Q$ that 
guarantees that $\rk$ is the smallest number of distillation steps at which the optimum is 
achieved.
\begin{lemma}\label{lemma: coprime}
   Recall $P, Q$ from Lemma~\ref{lemma: polynomials P Q}. If $P$ and $Q$ are coprime, i.e. do not share a root, then $f_\ast^{\mathrm{pred}}$ can not be achieved by $(\rk-k)$-step self-distillation procedure for any $1 \le k \le \rk$.
\end{lemma}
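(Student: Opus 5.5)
The plan is a degree count on reduced fractions. If $P$ and $Q$ are coprime, then $f_\ast^{\mathrm{pred}} = Q/P$ is in lowest terms, with denominator of degree exactly $\rk+1$ and numerator of degree $\rk$. By Lemma~\ref{lemma: sd is in mathcal F}, any $m$-step self-distilled shrinkage function $f_m$, with $m=\rk-k\le \rk-1$, can be written over the common denominator $\prod_{t=0}^{m}(x+\lambda_t)$. The resulting numerator is
\[
N_m(x)=\sum_{j=0}^{m}(1-\xi_j)\Bigl(\prod_{t=j+1}^{m}\xi_t\Bigr)x^{m-j}\prod_{t=0}^{j-1}(x+\lambda_t),
\]
which has degree at most $m$. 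So $f_m = N_m/D_m$ with $\deg D_m = m+1\le \rk$.

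The next step is to turn equality of shrinkage rules into equality of rational functions. Suppose $f_m$ realizes the optimal rule. The relevant notion of ``realizes'' is equality on $\mathcal S_c^+$, or at least on $\mathcal S_c^+\setminus\{0\}$, since Lemma~\ref{lemma: f_ast is unique} pins the optimizer down only there. Then $f_m = Q/P$ holds on the bulk $\mathcal S_c$, an interval with nonempty interior. Two cases need a remark. If some $-\lambda_t$ lies in $\mathcal S_c$, the estimator is not in $\mathcal F$, or the pseudoinverse convention changes values only on a finite set, which does not matter for polynomial identities on an interval. Away from the finitely many poles we get $N_m P = Q D_m$ on infinitely many points, hence as polynomials.

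Now use coprimality. From $Q D_m = N_m P$ with $\gcd(P,Q)=1$, we get $P \mid D_m$. This requires $D_m\not\equiv 0$, which is automatic since $D_m$ is a product of monic linear factors. Then $\rk+1=\deg P\le \deg D_m = m+1\le \rk$, a contradiction. One degenerate case remains: the reduced form of $f_m$ could have an even smaller denominator, but that only strengthens the inequality. Also $N_m\equiv 0$ is impossible because $Q\neq 0$ ($Q$ is monic).

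The only delicate point is the passage from ``same shrinkage rule'' to polynomial identity: the equality is only known on $\mathcal S_c^+\setminus\{0\}$, and the chosen $\lambda_t$ might create poles. The remedy is to restrict to a subinterval of $\mathrm{int}(\mathcal S_c)$ free of the finitely many points $-\lambda_t$ and roots of $P$. Since $P$ has no roots in $\overline{\mathcal S_c}$ by Lemma~\ref{lemma: polynomials P Q}, only the $-\lambda_t$ need to be excluded. On that subinterval the identity principle for polynomials applies.
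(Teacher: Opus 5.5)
Your proof is correct and follows the paper's argument. You clear denominators to write the $(\rk-k)$-step rule as a ratio with numerator degree at most $\rk-k$ and denominator degree $\rk-k+1$, use coprimality of $P,Q$ to get that $P$ divides this denominator, and derive the contradiction $\rk+1\le \rk-k+1$. Your extra care in getting the polynomial identity from agreement on a subinterval of $\mathrm{int}(\mathcal S_c)$ that avoids the points $-\lambda_t$ makes explicit a step the paper leaves implicit.
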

From Lemma~\ref{lemma: define A and solve f_ast}, we know that $f_\ast^{\mathrm{pred}}$ solves a fixed point equation: 
\begin{equation}\label{eq: fixed point of f_ast}
f_\ast^{\mathrm{pred}}(x) = g(x) - \sum_{j=1}^\rk \delta_j \alpha_j^2\langle f_\ast^{\mathrm{pred}}, h_j\rangle_w\, h_j(x).
\end{equation}
Let $A_j := \langle f_\ast^{\mathrm{pred}}, h_j\rangle_w$. Using the definitions of $g, \{h_j\}_j$ we have: $$ f_\ast^{\mathrm{pred}}(x) = \dfrac{\sx^2\,r^2 + \sum_{j=1}^\rk \delta_j \alpha_j^2 \,\mu_j(x)}{w(x)} - \sum_{j=1}^\rk \frac{\delta_j\alpha_j^2 A_j \mu_j(x)}{w(x)} = \frac{\sx^2\,r^2 + \sum_{j=1}^\rk \mu_j(x) \delta_j\alpha_j^2 (1 - A_j)}{\sx^2\,r^2x + c \sx^2\,\sigma_{\boldsymbol{\varepsilon}}^2 \mu_0(x)}.$$
For simplicity, we assume that $\sx = 1$ which is without loss of generality by scaling. In the following, we will fix any choice of parameters $r,c,\omega_0, \dots, \omega_\rk$ and fix $\delta_1, \dots , \delta_{\rk} > \max\{\sqrt{c}, 2c\}$. The latter assumption that all spikes are above the BBP phase transition is to simplify the exposition and is not crucial.  We first establish an uniform, a priori bound.

\begin{lemma}\label{lemma: 2-norm ineq}
For any choice of parameters $r,c,\omega_0, \dots, \omega_\rk, \sigma_{\boldsymbol{\varepsilon}}^2$ and $\delta_1, \dots, \delta_\rk$ we have
\begin{align*}
	\sum_{j=1}^\rk \delta_j\alpha_j^2(1-A_j)^2 \leq \sum_{j=1}^\rk \delta_j\alpha_j^2.
	\end{align*}
\end{lemma}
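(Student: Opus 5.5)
The bound follows from comparing the optimal risk with the risk of the trivial shrinkage function $f \equiv 0$; optimality does the rest. By Lemma~\ref{lemma: define A and solve f_ast}, $f_\ast^{\mathrm{pred}}$ minimizes the limiting risk over $\mathcal{F}$, and $f\equiv 0$ lies in $\mathcal{F}$. Hence
\[
\mathcal{R}^{\mathrm{pred}}(f_\ast^{\mathrm{pred}};c)\le \mathcal{R}^{\mathrm{pred}}(0;c)=\sx^2 r^2+\sum_{j=1}^\rk \delta_j\alpha_j^2 ,
\]
where the equality is read off from Lemma~\ref{prop: general formula}: with $f=0$ the variance term vanishes and each integral of $1$ against a probability measure equals $1$.

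Next I rewrite the optimal risk so that the quantities $A_j=\langle f_\ast^{\mathrm{pred}},h_j\rangle_w$ appear explicitly. The bias formula of Lemma~\ref{prop: general formula} contains the terms $\delta_j\alpha_j^2\bigl(\int (1-xf(x))\,\mathrm dF_{\delta_j}\bigr)^2$. Using $\mathrm dF_{\delta_j}=\mu_j\,\mathrm dF_\alpha$ (Lemma~\ref{lemma: def mu_j mu_0}) and $h_j=\mu_j/w$, I get
\[
\int x f(x)\,\mathrm dF_{\delta_j}=\int x w h_j f\,\mathrm dF_\alpha=\langle f,h_j\rangle_w .
\]
Therefore, at $f=f_\ast^{\mathrm{pred}}$, each such term equals exactly $\delta_j\alpha_j^2(1-A_j)^2$.

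Every other contribution to $\mathcal{R}^{\mathrm{pred}}(f_\ast^{\mathrm{pred}};c)$ is nonnegative. These are the term $\sx^2 r^2\int(1-xf)^2\,\mathrm dF_\alpha$ and the variance term $c\sx^2\sigma_\beps^2\int x f^2\,\mathrm dF_{\mathrm{MP}}$, the latter nonnegative because $x\ge 0$ on the support. This yields
\[
\sum_{j=1}^\rk\delta_j\alpha_j^2(1-A_j)^2+\sx^2 r^2\int(1-xf_\ast^{\mathrm{pred}})^2\,\mathrm dF_\alpha\le \sx^2r^2+\sum_{j=1}^\rk\delta_j\alpha_j^2 .
\]

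The remaining step is to remove the extra $\sx^2 r^2$ on the right-hand side, and this is where the main obstacle lies. The cleanest route is to show $\int(1-xf_\ast^{\mathrm{pred}})^2\,\mathrm dF_\alpha\ge 1$, but that is false in general, so I would not pursue it.

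I would instead use the Hilbert-space identity \eqref{eq: Risk out f up to prop}. It gives
\[
\mathcal{R}^{\mathrm{pred}}(f_\ast^{\mathrm{pred}};c)=\sx^2r^2+\sum_{j=1}^\rk\delta_j\alpha_j^2-\langle \mathcal{A}f_\ast^{\mathrm{pred}},f_\ast^{\mathrm{pred}}\rangle_w .
\]
I then expand $\sum_j\delta_j\alpha_j^2(1-A_j)^2=\sum_j\delta_j\alpha_j^2-2\sum_j\delta_j\alpha_j^2A_j+\sum_j\delta_j\alpha_j^2A_j^2$. With this expansion, the claim is equivalent to $\sum_j\delta_j\alpha_j^2A_j^2\le 2\sum_j\delta_j\alpha_j^2A_j$.

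To prove this, I pair the fixed-point equation \eqref{eq: fixed point of f_ast} with $f_\ast^{\mathrm{pred}}$ in $\langle\cdot,\cdot\rangle_w$, which gives $\|f_\ast^{\mathrm{pred}}\|_w^2+\sum_j\delta_j\alpha_j^2A_j^2=\langle g,f_\ast^{\mathrm{pred}}\rangle_w$. I then substitute the decomposition \eqref{eq: g is linear in h_j}, $g=\sx^2r^2\omega_0h_0+\sum_j(\delta_j+\sx^2)\alpha_j^2h_j$. This expresses $\langle g,f\rangle_w$ through $A_0:=\langle f,h_0\rangle_w$ and the $A_j$.

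The final inequality would then follow by combining this identity with Cauchy--Schwarz on the weighted inner product. The delicate part is controlling the $\sx^2$-weighted cross terms, and if direct bookkeeping fails I would fall back on comparing with the competitor $f=g$ truncated appropriately.
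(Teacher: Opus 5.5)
Your first steps are correct: $\mathcal{R}^{\mathrm{pred}}(0;c)=\sigma_0^2r^2+\sum_j\delta_j\alpha_j^2$, and at the optimum the spike terms are exactly $\delta_j\alpha_j^2(1-A_j)^2$. But $f\equiv 0$ is the wrong competitor. It leaves the surplus $\sigma_0^2r^2\bigl(1-\int(1-xf_\ast^{\mathrm{pred}})^2\,\mathrm{d}F_\alpha\bigr)$, which, as you observe, can be positive.

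Your second route does not repair this. Pairing the fixed-point equation with $f_\ast^{\mathrm{pred}}$ only gives $\langle\mathcal{A}f_\ast^{\mathrm{pred}},f_\ast^{\mathrm{pred}}\rangle_w=\langle g,f_\ast^{\mathrm{pred}}\rangle_w$. Your ``$\sigma_0^2$-weighted cross terms'' are
$\sigma_0^2r^2\omega_0A_0+\sigma_0^2\sum_j\alpha_j^2A_j=\sigma_0^2r^2\int xf_\ast^{\mathrm{pred}}\,\mathrm{d}F_\alpha=\langle f_0,f_\ast^{\mathrm{pred}}\rangle_w$, where $f_0:=\sigma_0^2r^2/w$. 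Substituting the identity therefore turns the target into $\langle f_0,f_\ast^{\mathrm{pred}}\rangle_w-\|f_\ast^{\mathrm{pred}}\|_w^2\le\sum_j\delta_j\alpha_j^2A_j$, which is only a restatement. Cauchy--Schwarz bounds the left side by $\|f_0\|_w^2/4$, which has no relation to the right side (whose sign you do not control). The fallback competitor is left unspecified, so the decisive step is missing.

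The missing idea is to split $g=f_0+\sum_j\delta_j\alpha_j^2h_j$ rather than through $h_0$. Then note that $\Delta_k:=\langle f_0,h_k\rangle_w=\int \sigma_0^2r^2x\,w(x)^{-1}\,\mathrm{d}F_{\delta_k}(x)\in[0,1]$, because $0\le\sigma_0^2r^2x/w(x)\le 1$ on the support. The paper tests the fixed-point equation against each $h_k$. In vector form this gives $(\mathbf{I}+\mathbf{H}\mathfrak{D}_-)(\mathbf{1}-A)=\mathbf{1}-\Delta$, with $\mathbf{H}=(\langle h_j,h_k\rangle_w)_{j,k\ge 1}$ and $\mathfrak{D}_-=\mathrm{diag}(\delta_j\alpha_j^2)$. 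It then uses that $(\mathbf{I}+\mathfrak{D}_-^{1/2}\mathbf{H}\mathfrak{D}_-^{1/2})^{-1}$ is a contraction, giving $\sum_j\delta_j\alpha_j^2(1-A_j)^2\le\sum_j\delta_j\alpha_j^2(1-\Delta_j)^2\le\sum_j\delta_j\alpha_j^2$.

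Your optimality idea also works once you change the competitor. The risk equals $\sigma_0^2r^2-\|f_0\|_w^2+\|f-f_0\|_w^2+\sum_j\delta_j\alpha_j^2(1-\langle f,h_j\rangle_w)^2$, and $f_0$ is exactly $f_\ast^{\mathrm{est}}\in\mathcal{F}$. So $\mathcal{R}^{\mathrm{pred}}(f_\ast^{\mathrm{pred}};c)\le\mathcal{R}^{\mathrm{pred}}(f_0;c)$ yields the same bound at once.

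Likewise, the Cauchy--Schwarz step you were after does go through if you pair $f_\ast^{\mathrm{pred}}-f_0=\sum_j\delta_j\alpha_j^2(1-A_j)h_j$ with $f_\ast^{\mathrm{pred}}-f_0$ instead of with $f_\ast^{\mathrm{pred}}$. This gives $\sum_j\delta_j\alpha_j^2(1-A_j)^2\le\sum_j\delta_j\alpha_j^2(1-A_j)(1-\Delta_j)$, and weighted Cauchy--Schwarz finishes.
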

In particular, we deduce that $1 - A_j$ are all bounded uniformly. This is also true as $\sigma_{\boldsymbol{\varepsilon}}^2 \to \infty$, so by Bolzano-Weierstrass applied $\rk$ times we can choose a sequence $\{\sigma_k^2\}_{k=1}^\infty$ that diverges to $\infty$ such that for all $j\in [\rk]$ $$A_j^{\sigma_i} \xrightarrow[]{i\to \infty} A_j^\infty < \infty,$$
where $\{A_1^{\sigma}, A_2^{\sigma}, \dots , A_\rk^{\sigma}\}$ are the coefficients obtained for $\sigma_{\boldsymbol{\varepsilon}}^2 = \sigma^2$, keeping all the other parameters fixed.  It is clear that $|1 - A_j^\infty|$ are uniformly bounded for all $j$. 

Next, we need to introduce the following notation. Let 
$$P_i(x):= P_{\sigma^2_i}(x) = x\left(\omega_0 \nu(x) + \sum_{i=1}^\rk \omega_i \nu_{-i}(x)\right) + c\, \frac{\sigma_i^2}{r^2}\nu(x),$$
$$Q_i(x):= Q_{\sigma^2_i}(x) = \omega_0 \nu(x) + \sum_{i=1}^\rk \omega_i \nu_{-i}(x) + \sum_{j=1}^\rk \delta_j\omega_j (1-A_j^{\sigma_i}) \nu_{-j}(x),$$
$$Q_\infty(x) = \omega_0 \nu(x) + \sum_{i=1}^\rk \omega_i \nu_{-i}(x) + \sum_{j=1}^\rk \delta_j\omega_j (1- A_j^\infty) \nu_{-j}(x).$$
Note that by the same argument as in Lemma~\ref{lemma: polynomials P Q}, $P_i, Q_i$ and $Q_\infty$ are real polynomials with $\deg P_i = \rk + 1$ and $\deg Q_i = \deg Q_\infty = \rk$. In the following, we will work under the assumption that  $\delta_k|1 - A_k^{\sigma_i}| < 1/2$ for all $k\in[\rk]$ and all large enough $i$. The following results present an example of parameters when this inequality holds.
\begin{lemma}\label{deltas_example}
For all $\sigma_i^2$ large enough, we have $\delta_k|1 - A_k^{\sigma_i}| < 1/2$ for all $k \in [\rk]$. 
\end{lemma}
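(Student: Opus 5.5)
The plan is to compute the limit of $A_j^{\sigma}$ as $\sigma^2\to\infty$ explicitly. I expect to find $A_j^\infty = 1 - c/\delta_j^2$ (with $\sx=1$), so that $\delta_j|1-A_j^\infty| = c/\delta_j < 1/2$ by the standing assumption $\delta_j > 2c$. The claim for all large $\sigma_i^2$ then follows by continuity of the limit.

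\emph{Step 1: rewriting $A_j$.} By the definition of $h_j=\mu_j/w$ and Lemma~\ref{lemma: def mu_j mu_0},
\[
A_j=\langle f_\ast^{\mathrm{pred}},h_j\rangle_w=\int x f_\ast^{\mathrm{pred}}(x)\,\mathrm dF_{\delta_j}(x).
\]
Since $\delta_j>\sqrt c$, the measure $F_{\delta_j}$ has a bulk part and an atom at $x_j^\star$ of mass $m_j=(\delta_j^2-c)/(\delta_j(\delta_j+c))$; when $c>1$ there is also an atom at $0$, which contributes nothing because of the factor $x$. Set $u_j^\sigma := x_j^\star f_\ast^{\mathrm{pred}}(x_j^\star)$. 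Then $A_j^\sigma$ equals a bulk integral plus $m_j u_j^\sigma$.

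\emph{Step 2: the bulk part vanishes.} The competitor $f\equiv 0$ gives a risk bounded independently of $\sigma^2$. Hence, by \eqref{eq: risk f in inner form} (or directly from Lemma~\ref{prop: general formula}), the variance term $c\sigma^2\int x (f_\ast^{\mathrm{pred}})^2\,\mathrm dF_{\mathrm{MP}}$ stays bounded. This forces $\int_{\mathcal S_c} x (f_\ast^{\mathrm{pred}})^2 f_{\mathrm{MP}} = O(1/\sigma^2)$. On $\mathcal S_c$ the densities $f_{\delta_j}$, $f_\alpha$ and $f_{\mathrm{MP}}$ are mutually comparable, since $\nu_j$ is bounded above and below there. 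Cauchy--Schwarz therefore shows that the bulk contributions to every integral of $x f_\ast^{\mathrm{pred}}$ and of $x^2 (f_\ast^{\mathrm{pred}})^2$ tend to $0$. The point is that $\mu_0(x_j^\star)=0$, so $w(x_j^\star)=r^2x_j^\star$ does not grow with $\sigma^2$; the noise penalizes only the bulk, while the values at the atoms remain free.

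\emph{Step 3: the limiting problem at the atoms.} Up to $o(1)$, the risk becomes a separable function of $(u_1,\dots,u_\rk)$. The first bias term contributes $r^2\omega_j m_j(1-u_j)^2$ at the atom $x_j^\star$ of $F_\alpha$, plus $r^2\omega_0+r^2\sum_j\omega_j(1-m_j)$ from the bulk. The spike term contributes $\delta_j\alpha_j^2(1-m_ju_j)^2$, where $\alpha_j^2=\omega_j r^2$. Minimizing $m(1-u)^2+\delta(1-mu)^2$ gives $u_j=(1+\delta_j)/(1+\delta_j m_j)$. Hence
\[
1-A_j^\infty=1-m_ju_j=\frac{1-m_j}{1+\delta_j m_j}=\frac{c(\delta_j+1)/(\delta_j(\delta_j+c))}{\delta_j(\delta_j+1)/(\delta_j+c)}=\frac{c}{\delta_j^2}.
\]

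\emph{Step 4: convergence of the minimizers.} To make Step 3 rigorous, I would run a $\Gamma$-convergence-type sandwich. The upper bound uses a competitor in $\mathcal F$ that vanishes on a neighborhood of $\mathcal S_c$ and equals $u_j/x_j^\star$ near each $x_j^\star$. This is possible because the $x_j^\star$ are distinct and lie at positive distance from the bulk (Lemma~\ref{lemma: distinct x_jstar}). The lower bound comes from Step 2. Strict convexity of the atom problem then forces $u_j^\sigma\to u_j$ along the whole family, not merely a subsequence. Consequently $A_j^{\sigma_i}\to 1-c/\delta_j^2$, and for large $i$ we get $\delta_j|1-A_j^{\sigma_i}|<1/2$ because $c/\delta_j<1/2$.

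I expect Step 4 to be the main obstacle. One must control the cross-contamination in $A_j$ from bulk values of $f_\ast^{\mathrm{pred}}$ near the edge, and make sure that Lemma~\ref{lemma: 2-norm ineq} together with the $O(1/\sigma^2)$ bulk bound suffices for the error terms to be uniformly $o(1)$.
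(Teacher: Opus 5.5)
Your argument is correct and reaches the same limit $A_j^\infty = 1 - c/\delta_j^2$ as the paper, but by a different route.

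\textbf{The paper's route.} The paper never invokes the minimizing property of $f_\ast^{\mathrm{pred}}$ in this step. It starts from the finite-dimensional linear system $A_k = \Delta_k + \sum_j \delta_j\alpha_j^2(1-A_j)\langle h_j,h_k\rangle_w$ derived in the proof of Lemma~\ref{lemma: 2-norm ineq}. It then shows by dominated convergence that the bulk parts of $\langle h_i,h_j\rangle_w$ and of $\Delta_i$ vanish as $\sigma^2\to\infty$. This happens because of the factor $c\sigma^2\mu_0$ in $w$, while $\mu_0(x_i^\star)=0$ keeps the atom contributions fixed. In the limit the Gram matrix becomes diagonal with entries $m_i/\alpha_i^2$ and $\Delta_i\to m_i$, where $m_i$ is your atom mass. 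Passing to the limit along the Bolzano--Weierstrass subsequence gives the scalar equation $A_k^\infty = m_k + \delta_k m_k(1-A_k^\infty)$.

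\textbf{What your route buys.} Your variational sandwich replaces this Gram-matrix bookkeeping with an explanation of why the limit has this form. The noise kills the bulk, and what remains is a decoupled scalar problem at each outlier atom, whose minimizer you compute. You also get convergence along the whole family $\sigma^2\to\infty$, not only along a subsequence.

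\textbf{Step 4 is easier than you expect.} The price of your route is that you work with the nonlinear risk functional. However, the step you flag as the main obstacle closes quickly, for three reasons.
\begin{itemize}
\item Your competitor vanishes on a neighbourhood of $\mathcal S_c\cup\{0\}$, which carries all of $F_{\mathrm{MP}}$. So its variance term is exactly zero, and its risk does not depend on $\sigma^2$ at all.
\item Boundedness of $u_j^\sigma$ follows from $\mathcal R^{\mathrm{pred}}(f_\ast^{\mathrm{pred}};c)\le \mathcal R^{\mathrm{pred}}(0;c)$, since every term of the risk is nonnegative and $F_\alpha$ has positive mass $\omega_j m_j$ at $x_j^\star$. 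Boundedness is what you need to replace $(1-A_j^\sigma)^2$ by $(1-m_ju_j^\sigma)^2+o(1)$ in the lower bound.
\item The bulk contamination of $A_j$ and of the bias terms is already handled by your Step 2, via comparable densities on $\mathcal S_c$ plus Cauchy--Schwarz.
\end{itemize}
Strict convexity of $m_j(1-u)^2+\delta_j(1-m_ju)^2$ then forces $u_j^\sigma\to(1+\delta_j)/(1+\delta_j m_j)$, which completes the argument.
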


Next, let us summarize the idea of the proof. Note that if all real roots $z$ of $Q_i$ are at constant (uniform in $i$) positive distance from $x_1^\star, \dots, x_\rk^\star$ and do not escape to $\infty$ as $\sigma \to \infty$, then $\nu(z) \neq 0$, so we can make $\sigma^2$ large enough to make the numerator non-zero. That is $P_i$ and $Q_i$ cannot share a root. We make this rigorous via the next two lemmas. 

\begin{lemma}\label{away_from_roots}
Assume $\delta_k|1 - A_k^{\sigma_i}| < 1/2$ for all $k\in[\rk]$ and $i \geq 0$. Then there exists $\varepsilon > 0$ and $N_{\varepsilon}$ such that for all $i \geq N_\varepsilon$, all real roots $z$ of $Q_i$ satisfy $$\min_{j \in [\rk]}|z - x_j^\star|  > \eta_\varepsilon, $$
where $\eta_\varepsilon > 0$ is a positive constant that depends on $\varepsilon$,
i.e. for all $i$ large enough the roots of $Q_i$ are at positive distance of the roots of $\nu(x)$. 
\end{lemma}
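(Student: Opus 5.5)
We want to show that the real roots of $Q_i$ stay uniformly away from the atoms $x_1^\star,\dots,x_\rk^\star$. The idea is to evaluate $Q_i$ at each atom, show that these values are bounded away from zero uniformly in $i$, and then use continuity with a uniform Lipschitz bound. All the Lagrange-type terms vanish at $x_j^\star$ except one (Remark~\ref{rmk: lagrange}): $\nu(x_j^\star)=0$, and $\nu_{-k}(x_j^\star)=0$ for $k\neq j$. Hence
\[
Q_i(x_j^\star)=\omega_j\,\nu_{-j}(x_j^\star)\bigl(1+\delta_j(1-A_j^{\sigma_i})\bigr).
\]
By hypothesis $\delta_j|1-A_j^{\sigma_i}|<1/2$, so the last factor is at least $1/2$. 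Moreover $\omega_j>0$ and $\nu_{-j}(x_j^\star)\neq 0$ (distinct atoms, Lemma~\ref{lemma: distinct x_jstar}). Therefore
\[
|Q_i(x_j^\star)|\ \ge\ \tfrac12\,\omega_j|\nu_{-j}(x_j^\star)|=:2\varepsilon_j>0 .
\]
This bound is uniform in $i$. Set $\varepsilon:=\min_j\varepsilon_j$.

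Next we control the oscillation of $Q_i$ near each atom, uniformly in $i$. Write $Q_i=\omega_0\nu+\sum_k\bigl(\omega_k+\delta_k\omega_k(1-A_k^{\sigma_i})\bigr)\nu_{-k}$. Each coefficient lies in the bounded interval $[\omega_k/2,\,3\omega_k/2]$, again by the hypothesis, or alternatively by Lemma~\ref{lemma: 2-norm ineq}. The polynomials $\nu$ and $\nu_{-k}$ are fixed and do not depend on $i$. Hence on the compact set $K:=[\min_j x_j^\star-1,\ \max_j x_j^\star+1]$ the derivatives $Q_i'$ are bounded by a constant $L$ independent of $i$. We take $\eta_\varepsilon:=\min\{1,\varepsilon/L\}$. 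For $|z-x_j^\star|\le\eta_\varepsilon$ the mean value theorem then gives
\[
|Q_i(z)|\ \ge\ 2\varepsilon-L\eta_\varepsilon\ \ge\ \varepsilon>0 .
\]
So no root of $Q_i$ lies within distance $\eta_\varepsilon$ of any $x_j^\star$.

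This argument actually holds for every $i$, so any $N_\varepsilon$ works, for instance $N_\varepsilon=0$. If the hypothesis is only available for large $i$, as Lemma~\ref{deltas_example} provides, we take $N_\varepsilon$ to be that threshold. The only step needing care is the uniform-in-$i$ Lipschitz constant $L$. This is where the boundedness of the $1-A_k^{\sigma_i}$ enters, and it rests on the fact that the $\sigma_i$-dependence of $Q_i$ is only through these bounded coefficients. The degree of $Q_i$ is fixed at $\rk$, and $\sigma_i$ appears only in $P_i$, not in $Q_i$.
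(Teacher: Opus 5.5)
Your proof is correct, but it takes a different route from the paper's.

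The paper never bounds $Q_i(x_j^\star)$ for finite $i$. It works through the limit polynomial $Q_\infty$, built from the subsequential limits $A_j^\infty$, in three steps:
\begin{enumerate}
\item Continuity of the single polynomial $Q_\infty$ yields $\eta_\varepsilon$ with $|Q_\infty(x)-Q_\infty(x_j^\star)|\le\varepsilon$ on $[x_j^\star-\eta_\varepsilon,\,x_j^\star+\eta_\varepsilon]$.
\item The convergence $A_j^{\sigma_i}\to A_j^\infty$ yields $\sup_{I_\varepsilon}|Q_i-Q_\infty|\le\varepsilon$ for $i\ge N_\varepsilon$.
\item A root of $Q_i$ near $x_k^\star$ would then force $|Q_\infty(x_k^\star)|\le 2\varepsilon$. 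This contradicts $|Q_\infty(x_k^\star)|\ge \omega_k|\nu_{-k}(x_k^\star)|/2$ once $\varepsilon=\min_k \omega_k|\nu_{-k}(x_k^\star)|/8$.
\end{enumerate}

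You replace ``continuity of $Q_\infty$ plus uniform convergence $Q_i\to Q_\infty$'' by equicontinuity of the family $\{Q_i\}$. That is, you use a uniform derivative bound $L$ coming from the bounded coefficients $\omega_k\bigl(1+\delta_k(1-A_k^{\sigma_i})\bigr)$. You combine it with a lower bound on $|Q_i(x_j^\star)|$ proved separately for each $i$.

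Your route has three advantages. It never needs the subsequence or the limits $A_j^\infty$. It gives an explicit $\eta_\varepsilon=\min\{1,\varepsilon/L\}$. It holds for every $i$, so $N_\varepsilon$ is vacuous under the stated hypothesis.

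The paper's route uses the hypothesis only through its limit form $\delta_k|1-A_k^\infty|\le 1/2$. That is precisely the quantity computed in Lemma~\ref{deltas_example}, where $\delta_k|1-A_k^\infty|=c/\delta_k$.

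The difference is mostly organizational. The per-$i$ evaluation $Q_i(x_k^\star)=\omega_k\nu_{-k}(x_k^\star)\bigl(1+\delta_k(1-A_k^{\sigma_i})\bigr)$ that you use is the same identity the paper invokes in the next lemma, Lemma~\ref{lemma: M for roots}.

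One cosmetic point: take $L$ to be any strictly positive upper bound on $\sup_K|Q_i'|$, so that $\varepsilon/L$ is well defined.
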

The second claim is that roots of $Q_i$ do not escape to $\infty$.
\begin{lemma}\label{lemma: M for roots}
Assume $\delta_k|1 - A_k^{\sigma_i}| < 1/2$ for all $k\in[\rk]$ and $i \geq 0$. Let $\varepsilon$ and $N_\varepsilon$ from Lemma~\ref{away_from_roots}. Then, there exists a constant $M$ that does not depend on $i$ such that $|z| \leq M$ for all real roots of $Q_i$ with $i\geq N_\varepsilon$. 
\end{lemma}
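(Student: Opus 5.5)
We need to prove Lemma \ref{lemma: M for roots}: real roots of $Q_i$ are uniformly bounded for $i\ge N_\varepsilon$. The plan is to show that the leading coefficient of $Q_i$ stays bounded away from zero while all its other coefficients stay bounded, and then apply Cauchy's root bound.

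\textbf{Coefficient structure.} Each $\nu_j$ is affine, $\nu_j(x)=\delta_j(x_j^\star-x)/(c(\delta_j+1))$ with $\sx=1$. Hence $\nu$ has degree $\rk$ with leading coefficient
\[
\ell:=(-1)^\rk\prod_j \delta_j/(c(\delta_j+1))\neq 0 .
\]
Each $\nu_{-j}$ has degree $\rk-1$. The coefficients of $\nu,\nu_{-j}$ depend only on $c,\delta_1,\dots,\delta_\rk$, which are fixed, so they do not depend on $i$. Writing $Q_i(x)=\sum_{m=0}^\rk q_m^{(i)}x^m$, we have:
\begin{itemize}
\item the top coefficient is $q_\rk^{(i)}=\omega_0\ell$, independent of $i$ and nonzero since $\omega_0>0$ (Lemma \ref{lemma: f_alpha});
\item every other coefficient is a linear combination of the fixed coefficients of $\nu$ and the $\nu_{-j}$, with weights $\omega_0$, $\omega_j$, and $\delta_j\omega_j(1-A_j^{\sigma_i})$.
\end{itemize}

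\textbf{Uniform bound on the weights.} By the hypothesis $\delta_k|1-A_k^{\sigma_i}|<1/2$, or alternatively by Lemma \ref{lemma: 2-norm ineq}, which gives $|1-A_j|\le(\sum_k\delta_k\alpha_k^2/(\delta_j\alpha_j^2))^{1/2}$ uniformly in $\sigma$, the weights $\delta_j\omega_j(1-A_j^{\sigma_i})$ are bounded by a constant independent of $i$. Therefore there is $K<\infty$, independent of $i$, with $|q_m^{(i)}|\le K$ for all $m<\rk$ and all $i\ge N_\varepsilon$.

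\textbf{Root bound.} Cauchy's bound states that every root $z$ (real or complex) of $Q_i$ satisfies
\[
|z|\le 1+\max_{m<\rk}|q_m^{(i)}|/|q_\rk^{(i)}|\le 1+K/(\omega_0|\ell|)=:M .
\]
This constant depends only on the fixed parameters. (If $\rk=0$ the statement is vacuous.)

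There is no real obstacle here. The only point needing care is that the leading coefficient does not degenerate as $i$ grows. This holds because the $A_j$-terms multiply the degree-$(\rk-1)$ polynomials $\nu_{-j}$, so they never touch the $x^\rk$ coefficient, which is carried by $\omega_0\nu$ alone.
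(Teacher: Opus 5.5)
Your proof is correct, and it takes a different and more direct route than the paper. The paper's proof first uses the hypothesis to get $1+\delta_k(1-A_k^{\sigma_i})>1/2>0$. From $Q_i(x_k^\star)=\omega_k\nu_{-k}(x_k^\star)\bigl(1+\delta_k(1-A_k^{\sigma_i})\bigr)$ it then deduces that $Q_i$ alternates in sign at the ordered outliers. This yields $\rk-1$ real roots $z_k\in(x_k^\star,x_{k+1}^\star)$, and by degree count the last root $z_\rk$ is real too. Finally, the paper bounds $z_\rk$ by comparing the Vieta expression for $Q_i(0)$ with its direct evaluation via $\nu_j(0)$, using the boundedness of $|1-A_j^{\sigma_i}|$ from Lemma~\ref{lemma: 2-norm ineq}.

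You instead observe that the $A_j^{\sigma_i}$-dependence enters only through the degree-$(\rk-1)$ polynomials $\nu_{-j}$. Hence the leading coefficient of $Q_i$, which comes from $\omega_0\nu$ alone, is fixed and nonzero since $\omega_0>0$. The remaining coefficients are uniformly bounded, and Cauchy's bound then finishes the argument.

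Your argument buys several things:
\begin{itemize}
\item It is shorter.
\item It bounds all complex roots, not only the real ones.
\item It uses neither $i\ge N_\varepsilon$ nor the sign information in the hypothesis; uniform boundedness of $A_j^{\sigma_i}$, already supplied by Lemma~\ref{lemma: 2-norm ineq}, suffices, as you note.
\end{itemize}

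The paper's argument buys more structure: every root of $Q_i$ is real and positive, with $\rk-1$ of them interlacing the outliers. This mirrors the root-location argument of Lemma~\ref{lemma: polynomials P Q}. However, that extra information is not needed where this lemma is used in the proof of Theorem~\ref{thm:sd_optimal}(b). There only $|z|\le M$ for real roots matters, and since $P$ has only real roots, a common root of $P$ and $Q_i$ would have to be real anyway.
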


Using these, we conclude the section with the proof of Theorem~\ref{thm:sd_optimal}(b).

\begin{proof}[Proof of Theorem~\ref{thm:sd_optimal}(b)]
We claim that there exists $\sigma_{\boldsymbol{\varepsilon}}^2$ such that $P$ and $Q$ do not share a root.  Choose $\varepsilon, \eta_\varepsilon, N_{\varepsilon}$ from Lemma~\ref{away_from_roots} and $M$ from Lemma~\ref{lemma: M for roots}.  Since $\nu$ vanishes only on $\{x_k^\star\}_k$ we deduce that $$\eta_\text{min} := \min_{j\in [\rk]}\inf_{x \notin [x_j^\star - \eta_\varepsilon, x_j^\star + \eta_\varepsilon]}|\nu(x)| > 0.$$
On the other, hand 
$$\eta_\text{max}:= \sup_{|x| < M } |x(\omega_0 \nu(x) + \sum_{i=1}^\rk \omega_i \nu_{-i}(x))| < \infty$$
is finite, since it is the restriction of a polynomial to a compact interval. Note that $\eta_\text{min},\eta_\text{max}$ depend only on $r, c, \omega_0,\omega_1, \dots \omega_\rk$ and $\delta_1, \dots, \delta_\rk$. Since $\sigma_i^2 \to \infty$ we can choose $i \geq N_{\varepsilon}$ such that $$\sigma_i^2 > \frac{r^2 \eta_\text{max}}{c \eta_\text{min}}$$ Let $z$ be any real root of $Q_i$. From Lemma~\ref{away_from_roots} and Lemma~\ref{lemma: M for roots} we know that  $\min_{j \in [\rk]}|z - x_j^\star|  > \eta_\varepsilon$ and $|z| < M$. 
Suppose for the sake of contradiction that $$P_i(z) =  z(\omega_0 \nu(x) + \sum_{i=1}^\rk \omega_i \nu_{-i}(z)) + c\, \frac{\sigma_i^2}{r^2}\nu(z) = 0.$$
This implies $$c\,\frac{\sigma_i^2}{r^2} \eta_{\text{min}} \leq c\,\frac{\sigma_i^2}{r^2} |\nu(z)| = \left|z(\omega_0 \nu(x) + \sum_{i=1}^\rk \omega_i \nu_{-i}(z))\right| \leq \eta_\text{max},$$
which is a contradiction. Thus for $\sigma_{\boldsymbol{\varepsilon}}^2 =\sigma_i^2$, $P$ and $Q$ do not share a root. Lemma~\ref{lemma: coprime} now implies the conclusion.
\end{proof}

\subsection{Proof of Lemma~\ref{lemma: coprime}}
\begin{proof}
    Recall from Lemma~\ref{lemma: sd is in mathcal F} that the spectral shrinkage function corresponding to a $t$-step SD is $$f_t(x) = \frac{1-\xi_t}{x+\lambda_t} + \frac{\xi_t(1-\xi_{t-1}) x}{(x+\lambda_t)(x+\lambda_{t-1})} + \dots + \frac{\xi_t \dots \xi_1\, x^{t}}{(x+\lambda_t)(x+\lambda_{t-1})\dots (x+\lambda_0)}$$
    which, after raising to a common denominator, is the ratio of a polynomial $U(x)$ and a polynomial $V(x)$ with  $\deg U \leq t$ and $\deg V = t+1$. 
    Recall from Lemma~\ref{lemma: polynomials P Q} that $\deg P = \rk + 1$ and $\deg Q = \rk$. If $f_t = f_\ast^{\mathrm{pred}}$, then $$\frac{Q(x)}{P(x)} =\frac{U(x)}{V(x)},$$
    hence $P(x)U(x) = Q(x)V(x)$
    and since $(P,Q) = 1$ then $P(x) \mid V(x) $ so $$t+1 = \deg V(x) \geq \deg P(x) = \rk+1.$$
    This proves that one cannot achieve $f_\ast^{\mathrm{pred}}$ with less than $\rk$ rounds of self-distillation. \end{proof}

\subsection{Proof of Lemma~\ref{lemma: 2-norm ineq}}
\begin{proof}
Recall from equation~\eqref{eq: fixed point of f_ast} that $$f_\ast^{\mathrm{pred}}(x) = g(x) - \sum_{j=1}^\rk \delta_j \alpha_j^2 A_j h_j(x),$$
so taking the $xw(x)$-inner product~\eqref{eq: define inner} with $h_k$ we have $$A_k = \langle g, h_k\rangle_w - \sum_{j=1}^\rk \delta_j \alpha_j^2 A_j \langle h_j, h_k\rangle_w .$$
From equation~\eqref{eq: define w ,g , h_j} we know $$g(x) = \frac{r^2}{w(x)} + \sum_{j=1}^\rk \delta_j\alpha_j^2 h_{j}(x),$$ thus
\begin{align*}
    \langle g, h_k\rangle_w &= \int {r^2 x h_k(x)} \mathrm{d} F_{\alpha}(x) + \sum_{j=1}^\rk \delta_j \alpha_j^2 \langle h_j, h_k \rangle_w \\ &= \int \frac{r^2 x \mu_k(x)}{w(x)} \mathrm{d} F_{\alpha}(x) + \sum_{j=1}^\rk \delta_j \alpha_j^2 \langle h_j, h_k \rangle_w \\
    &= \underbrace{\int \frac{r^2 x}{w(x)} \mathrm{d} F_{\delta_k}(x)}_{\Delta_k} + \sum_{j=1}^\rk \delta_j \alpha_j^2 \langle h_j, h_k \rangle_w.
\end{align*}
We deduce that \begin{equation}\label{eq: A_k = delta_k + ...}
    A_k = \Delta_k  + \sum_{j=1}^\rk \delta_j \alpha_j^2 (1-A_j)\langle h_j, h_k \rangle_w.
\end{equation}
Recall $\bH$ and $\mathfrak D$ from section~\ref{HD is positive-definite}. Let $\mathfrak D_{-}$ be $\mathfrak D$ without the first entry. In matrix form, the system above is \[
\underbrace{\begin{pmatrix}
1 + \delta_1 \alpha_1^2 \langle h_1, h_1\rangle_w & \cdots & \delta_\rk \alpha_\rk^2 \langle h_\rk, h_1\rangle_w \\
\vdots & \ddots & \vdots \\
\delta_1 \alpha_1^2 \langle h_1, h_\rk\rangle_w & \cdots & 1 + \delta_\rk \alpha_\rk^2 \langle h_\rk, h_\rk\rangle_w
\end{pmatrix}}_{\mathbf{I}_\rk + \bH \mathfrak D_{-}}
\begin{pmatrix}
1-A_1 \\ \vdots \\ 1-A_\rk
\end{pmatrix}
=
\begin{pmatrix}
1- \Delta_1\\
\vdots \\
1- \Delta_\rk
\end{pmatrix}.
\]
This means \begin{align}\label{eq: inverse HD}
\begin{pmatrix}
1-A_1 \\ \vdots \\ 1-A_\rk
\end{pmatrix}
=
({\mathbf{I}_\rk + \bH\mathfrak D_{-}})^{-1}
\begin{pmatrix}
1- \Delta_1\\
\vdots \\
1- \Delta_\rk
\end{pmatrix},
\end{align}
or equivalently
$$\mathfrak{D}_{-}^{1/2}\begin{pmatrix}
1-A_1 \\ \vdots \\ 1-A_\rk
\end{pmatrix}
=
(\mathbf{I}_\rk + \mathfrak{D}_{-}^{1/2}\bH\mathfrak{D}_{-}^{1/2})^{-1}
\mathfrak{D}_{-}^{1/2}\begin{pmatrix}
1- \Delta_1\\
\vdots \\
1- \Delta_\rk
\end{pmatrix}.
$$
As proved in Section~\ref{HD is positive-definite}, $\mathfrak{D}_{-}^{1/2}\bH\mathfrak{D}_{-}^{1/2}$ is positive-definite, so the operator norm of the inverse is $$\|({\mathbf{I}_\rk + \mathfrak{D}_{-}^{1/2}\bH\mathfrak{D}_{-}^{1/2}}
)^{-1}\|_{\mathrm{2}} \leq 1.$$ 
We deduce that \begin{align}\label{2-norm ineq}
	\sum_{j=1}^\rk \delta_j\alpha_j^2(1-A_j)^2 \leq \sum_{j=1}^\rk \delta_j\alpha_j^2(1-\Delta_j)^2. 
	\end{align}
Next, we prove that $\Delta_k \in (0,1)$ for all $k \in [\rk]$, thus equation~\eqref{2-norm ineq} implies that all terms in the right hand side are $\leq 1$, hence proving Lemma~\ref{lemma: 2-norm ineq}.
    Recall from the previous section~\ref{lemma: F_mp << F_delta} that $F_{\delta_k}$ is a probability measure that is absolutely continuous on $\mathcal{S}_c = ((1-\sqrt{c})^2 , (1 + \sqrt{c})^2 )$ and has two possible atoms $$F_{\delta_k}(\{0\}) = \frac{c-1}{c+\delta_k} \cdot \mathbf{1}[c > 1] \qquad F_{\delta_k}(x_k^\star) = \frac{\delta_k^2 - c}{\delta_k(c + \delta_k)} \cdot \mathbf{1}[\delta_k > \sqrt{c}]$$
Since $w(x) = r^2x + c\sigma_{\boldsymbol{\varepsilon}}^2\mu_0(x) > r^2x$ in the bulk and $\mu_0(x_k^\star) = 0$ , it is clear that $1 > \Delta_k > 0$. 
\end{proof}

\subsection{Proof of Lemma~\ref{deltas_example}}

\begin{proof}
We start by proving the following limit: $$\lim_{{\sigma_{\beps}^2} \to \infty} \langle h_i ,h_j\rangle_w = \frac{\delta_i^2 - c}{\alpha_i^2 \delta_i(\delta_i + c)} \cdot \mathbf{1}[i=j].$$
By definition of the $xw(x)$-inner product and~\ref{lemma: def mu_j mu_0}, we have $$\langle h_i ,h_j\rangle_w = \int xw(x)h_i(x)h_j(x)\, \mathrm{d}F_\alpha =\int \frac{x\mu_j(x)}{w(x)}\mathrm{d}F_{\delta_i} = \int \frac{x\mu_j(x)}{r^2x + c\sigma_{\boldsymbol{\varepsilon}}^2\mu_0(x)}\mathrm{d}F_{\delta_i}.$$
Recall that $\mu_0(x_i^\star) = 0$ and $\mu_j(x_i^\star) = 0$ if $i\neq j$ and $\omega_i^{-1}$ if $i = j$. We deduce that
\begin{align}
    \langle h_i ,h_j\rangle_w &=  \int_{\mathcal{S}_c} \frac{x\mu_j(x)}{r^2x + c\sigma_{\boldsymbol{\varepsilon}}^2\mu_0(x)}\mathrm{d}F_{\delta_i} + \frac{\delta_i^2 - c}{\delta_i(\delta_i + c)}\cdot  \frac{x\mu_j(x_i^\star)}{r^2x_i^\star}\cdot \mathbf{1}[i=j]\\
    &= \int_{\mathcal{S}_c} \frac{x\mu_j(x)}{r^2x + c\sigma_{\boldsymbol{\varepsilon}}^2\mu_0(x)}\mathrm{d}F_{\delta_i} + \frac{\delta_i^2 - c}{\alpha_i^2\delta_i(\delta_i + c)}\cdot \mathbf{1}[i=j].
    \end{align}
Since $\mu_0(x) \neq 0$ for $x\in \mathcal{S}_c$, conclusion follows by Dominated Convergence.

Similarly, we can conclude that $$\lim_{\sigma_{\boldsymbol{\varepsilon}}^2\to \infty }\Delta_i =\frac{\delta_i^2 - c}{\delta_i(\delta_i + c)}.$$
Taking limits as $\sigma^2 \to \infty$ along $\{\sigma_k^2\}_{k=1}^\infty$ in~\eqref{eq: A_k = delta_k + ...} yields
\begin{align*}
A_k^\infty
&=
\frac{\delta_k^2-c}{\delta_k(\delta_k+c)}
+
\delta_k\alpha_k^2(1-A_k^\infty)\cdot
\frac{\delta_k^2-c}{\alpha_k^2\delta_k(\delta_k+c)} \\
&=
\frac{\delta_k^2-c}{\delta_k(\delta_k+c)}
+
\frac{\delta_k^2-c}{\delta_k+c}(1-A_k^\infty),
\end{align*}
which by straightforward algebraic manipulations is equivalent to $$A_{k}^\infty = 1 - \frac{c}{\delta_k^2}.$$
In particular,  for $i$ large enough we have $1 - A_k^{\sigma_i} > 0$ for all $k \in [\rk]$  and since $$\delta_k|1 - A_k^\infty| = \frac{c}{\delta_k} < \frac{1}{2},$$
we can choose $i$ large enough so that the inequality above is also true for all $A_k^{\sigma_i}$. 
\end{proof}

\subsection{Proof of Lemma~\ref{away_from_roots}}
\begin{proof}
Fix some $\varepsilon > 0$ that we will choose later. Note that $Q_{\infty}(x)$ is a polynomial so there exists $\eta_\varepsilon > 0$ such that $$\max_{j\in [\rk]}\sup_{x \in [x_j^\star - \eta_\varepsilon, x_j^\star + \eta_\varepsilon]}|Q_\infty(x) - Q_\infty(x_j^\star)| \leq \varepsilon.$$
Consider the interval $I_\varepsilon = [x_1^\star - \eta_\varepsilon, x_\rk^\star + \eta_\varepsilon]$. Note that $\nu_{-1}, \dots, \nu_{-\rk}$ are continuous polynomial functions so there exists $M_\varepsilon > 0$ such that $$\max_{j \in [\rk]}\sup_{x \in I_\varepsilon} |\nu_{-j}(x)| \leq M_\varepsilon.$$
By triangle inequality, note that $$|Q_{i}(x) - Q_\infty(x)| \leq \sum_{j=1}^\rk |\delta_j\omega_j (A_j^\infty- A_j^{\sigma_i}) \nu_{-j}(x)| \leq \sum_{j=1}^\rk M_\varepsilon|\delta_j \omega_j| |A_j^\infty- A_j^{\sigma_i}|.$$
Recall that $A_j^{\sigma_i} \to A_j^\infty$ as $i \to \infty$ for all $j$. Since $\{\delta_k, \omega_k\}_k$ are fixed, we deduce that there exists $N_{\varepsilon}$ such that $$\sup_{x\in I_\varepsilon}|Q_i(x) - Q_{\infty}(x)| \leq \varepsilon$$ for all $i \geq N_{\varepsilon}$.  Choose $i\geq N_{\varepsilon}$ and let $z$ be a real root of $Q_i$. Assume that $z \in [x_{k}^\star - \eta_\varepsilon,x_{k}^\star + \eta_\varepsilon] \subseteq I_\varepsilon$ for some $k \in [\rk]$. We deduce that $$|Q_\infty(x_k^\star)| \leq |Q_\infty(x_k^\star) - Q_{\infty}(z)| + |Q_\infty(z) - Q_{i}(z)| \leq \varepsilon + \varepsilon = 2\varepsilon.$$

However, by definition $$Q_{\infty}(x_k^\star) = \omega_k\nu_{-k}(x_k^\star)\left[1 + \delta_k (1- A_k^\infty)\right].$$

Finally, using the main assumption we have $\delta_k|1 - A_k^\infty| \leq 1/2$, so \begin{equation}\label{eq: bound on delta(1-A_k) > 1/2}
1 + \delta_k (1- A_k^\infty) \geq 1 - |\delta_k (1- A_k^\infty)| \geq 1/2.
\end{equation}
This means that $$\left|\frac{\omega_k\nu_{-k}(x_k^\star)}{2}\right| \leq |Q_\infty(x_k^\star)| \leq 2\varepsilon.$$
Thus if we choose $$\varepsilon = \min_{k \in [\rk]}\left|\frac{\omega_k\nu_{-k}(x_k^\star)}{8}\right| > 0,$$ we obtain a contradiction, and the proof is complete. 
\end{proof}

\subsection{Proof of Lemma~\ref{lemma: M for roots}}
\begin{proof}
Fix any $i \geq N_\varepsilon$. First, recall from equation~\eqref{eq: bound on delta(1-A_k) > 1/2} that $1 + \delta_k (1- A_k^{\sigma_i}) > 1/2 > 0$. 

We deduce that $Q_i(x_k^\star) = \omega_k\nu_{-k}(x_k^\star) (1 +\delta_k (1 - A_k^{\sigma_i}))$. Since the second term is positive, we deduce that $Q_i(x_k^\star)$ has the same sign as $(-1)^{k-1}$ since $$\nu_1(x_k^\star), \dots, \nu_{k-1}(x_k^\star) < 0 < \nu_{k+1}(x_k^\star), \dots , \nu_\rk(x_k^\star)$$ (see \ref{proof of lemma r+1 roots} for a similar argument). In particular, there is a root $z_k$ of $Q_i$ such that $x_k^\star < z_k < x_{k+1}^\star$. This gives us $\rk-1$ real roots. By Vieta's formula the other root must also be real $z_\rk$ and we have $$Q_i(0) = r^2 \omega_0 (-1)^\rk\prod_{j=1}^\rk \frac{\delta_j}{c(\delta_j + 1)} (-1)^\rk z_1\dots z_\rk = r^2 \omega_0 \prod_{j=1}^\rk \frac{\delta_j}{c(\delta_j + 1)}  z_1\dots z_\rk.$$
On the other hand, we can evaluate directly using $\nu_j(0) = (\delta_j + c)/c,$
$$Q_i(0) = \omega_0 \nu(0) + \sum_{j=1}^\rk \omega_j \frac{\nu(0)}{\nu_j(0)}(1 + \delta_j(1 - A_j^{\sigma_i})) = \prod_{j=1}^\rk \frac{\delta_j + c}{c} \left(\omega_0 + \sum_{j=1}^\rk \frac{c \, \omega_j}{\delta_j + c} (1 + \delta_j(1 - A_j^{\sigma_i}))\right).$$
In particular, it is immediate that $z_\rk > 0$. Moreover, using $z_k > x_k^\star$ for $k < \rk$, we deduce that $$\prod_{j=1}^\rk \frac{\delta_j + c}{c} \left(\omega_0 + \sum_{j=1}^\rk \frac{c \, \omega_j}{\delta_j + c} (1 + \delta_j(1 - A_j^{\sigma_i}))\right)  >  r^2 \omega_0 \prod_{j=1}^\rk \frac{\delta_j}{c(\delta_j + 1)}  x_1^\star\dots x_{\rk - 1}^\star z_\rk=  r^2 z_\rk \, \omega_0 \prod_{j=1}^{\rk-1} \frac{\delta_j + c}{c} $$
using the definition of $x_k^\star$. We conclude that $$0 < z_\rk < \frac{1}{{r^2 \omega_0}}\cdot \frac{\delta_\rk + c}{c} \cdot \left(\omega_0 + \sum_{j=1}^\rk \frac{c \, \omega_j}{\delta_j + c} (1 + \delta_j(1 - A_j^{\sigma_i}))\right)$$
and since the $|1 - A_j^{\sigma_i}|$'s are bounded by~\eqref{2-norm ineq} for all $i\geq N_{\varepsilon}$, the conclusion follows.
\end{proof}

\section{$f_\ast^{\mathrm{pred}}$ versus other Estimators - Proofs of Theorems~\ref{thm: f_ast dominates}(a), (b) and (c)}\label{app: F}
As discussed in Section~\ref{sec:theory}, many of the common estimators used in practice correspond to or are well-approximated by simple shrinkage rules. In this section, we prove Theorem~\ref{thm: f_ast dominates} parts (a), (b) and (c) i.e., the shrinkage rule $f_\ast^{\mathrm{pred}}$ we computed in Appendix~\ref{app: B} achieves a strictly smaller loss than the min-norm interpolator, self-distillation shrinkage rules that use the same $\lambda$ for all rounds of distillation (in particular Ridge Regression) and finally, early stopped gradient descent.

\subsection{Notation}\label{notations for SVD}
We consider the SVD decomposition
\(
\bX = \bU \bar{\boldsymbol{S}} \bW^\top,
\)
where $\bU \in \mathbb{R}^{n\times p}$ has orthogonal columns, $\bW \in \mathbb{R}^{p\times p}$ is orthogonal, and
$\bar{\boldsymbol{S}} \in \mathbb{R}^{n\times p}$ is rectangular diagonal with singular values
$\bar s_1 \geq \dots \geq \bar s_{ \min\{n,p\}}>0$ on the main diagonal (and zeros elsewhere), where $\mathrm{rank}(\bX)= \min\{n,p\}$ a.s. Recall that the sample covariance matrix is
\[
\widehat{\boldsymbol{\Sigma}} = \frac{1}{n} \bX^\top \bX
= \frac{1}{n} \bW \bar{\boldsymbol{S}}^\top \bar{\boldsymbol{S}} \bW^\top
= \bW \boldsymbol{\Lambda} \bW^\top,
\]
where
\(
d_j = \bar s_j^2/n.
\) Since
\[
\bX^\top \by
= (\bU \bar{\boldsymbol{S}} \bW^\top)^\top \by
= \bW \bar{\boldsymbol{S}} \bU^\top \by
= \sum_{j=1}^p \bar s_j \bw_j (\bu_j^\top \by),
\]
for any function $f \in \mathcal{F}$ we have
\begin{equation}\label{eq: SVD general formula}    
f(\widehat{\boldsymbol{\Sigma}}) \frac{\bX^\top \by}{n}
= \bW f(\boldsymbol{\Lambda}) \bW^\top \frac{\bW \bar{\boldsymbol{S}} \bU^\top \by}{n}
= \sum_{j=1}^p \frac{f(d_j) \bar s_j}{n} \bw_j (\bu_j^\top \by).
\end{equation}

\subsection{Proof of Theorem~\ref{thm: f_ast dominates}(a)}\label{proof: min norm}
Consider the min-norm interpolator $$\widehat\bbeta_{\mathrm{mn}} =\argmin_\bbeta \{\|\bbeta\|_2 : \bX\bbeta = \by\}.$$ In particular, if $n > p$ then $\widehat\bbeta_{\mathrm{mn}} = \widehat\bbeta_{\mathrm{OLS}}$ a.s. Using the notations introduced in Appendix~\ref{notations for SVD}, it is well-known that $$\widehat\bbeta_{\mathrm{mn}} = \sum_{j=1}^{\min\{n,p\}} \frac{\bu_j^\top \by}{\bar s_j} \bw_j.$$
It is well-known \cite{hastie2022surprises} that the asymptotic risk of the min-norm interpolator is infinite for $c = 1$, so work with $c \neq 1$. The key claim that proves that $\widehat\bbeta_{\mathrm{mn}}$ has a larger prediction risk than $f_\ast^{\mathrm{pred}}$ is the following Lemma. 

\begin{lemma}\label{lemma: min-norm is worse}
    There exists $f_0 \in \mathcal{F}$ such that $f_0 \not\equiv f_\ast^{\mathrm{pred}}$ in $\mathcal{S}_c^+ \setminus\{0\}$ and $$ \lim_{n \to \infty} \|\widehat\bbeta_{\mathrm{mn}} - \bbeta_0\|_{\boldsymbol{\Sigma}}^2 \stackrel{a.s.}{=} \mathcal{R}^{\mathrm{pred}}(f_0; c).$$
\end{lemma}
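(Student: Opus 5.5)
The plan is to exhibit $f_0$ as a modification of $x\mapsto 1/x$ away from the spectrum. Using the SVD notation of Appendix~\ref{notations for SVD}, the min-norm interpolator is $\sum_{j\le \min\{n,p\}} \bar s_j^{-1}(\bu_j^\top\by)\bw_j$. Comparing with \eqref{eq: SVD general formula}, this coincides with $\widehat\bbeta_{f}$ for any $f$ satisfying $f(d_j)=1/d_j$ at the nonzero eigenvalues $d_j$. At the zero eigenvalues (present when $c>1$), the contribution of $f(d_j)$ is multiplied by $\bar s_j=0$, so those values play no role.

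Concretely, take $\eta>0$ smaller than $\tfrac12\sx^2(1-\sqrt c)^2$; this is positive because $c\neq 1$. Define $f_0(x)=1/x$ for $x\ge \eta$, and extend $f_0$ continuously and boundedly to $[0,\eta]$, say $f_0(x)=x/\eta^2$ there. The first step is to verify $f_0\in\mathcal F$. It is continuous everywhere on $[0,\infty)$, so both conditions of Assumption~\ref{assm:F} hold, including near the atom at $0$ when $c>1$.

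The second step is to show $\widehat\bbeta_{\mathrm{mn}}=\widehat\bbeta_{f_0}$ eventually almost surely. By the Bai--Yin/BBP eigenvalue localization already used in Lemma~\ref{lem:int-conv}, almost surely for all large $n$ every nonzero eigenvalue of $\widehat\bSigma$ exceeds $\sx^2(1-\sqrt c)^2-\eta>\eta$. The remaining $p-\min\{n,p\}$ eigenvalues are exactly $0$, and these carry the factor $\bar s_j=0$. Hence the two estimators agree for all large $n$, and Lemma~\ref{prop: general formula} gives
\[
\lim\|\widehat\bbeta_{\mathrm{mn}}-\bbeta_0\|_{\bSigma}^2=\mathcal R^{\mathrm{pred}}(f_0;c)
\]
almost surely.

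The third step is $f_0\not\equiv f_\ast^{\mathrm{pred}}$ on $\mathcal S_c^+\setminus\{0\}$. Suppose instead $Q(x)/P(x)=1/x$ on the bulk $\mathcal S_c$, which is an interval of positive length. Then the polynomial identity $xQ(x)=P(x)$ holds everywhere, forcing $P(0)=0$. This contradicts Lemma~\ref{lemma: polynomials P Q}, which says $P$ has only nonzero roots; directly, $P^0(0)=c\sx^2\sigma_\beps^2\nu(0)>0$.

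The main point requiring care is the second step: the exact zero eigenvalues when $c>1$ must be handled via the $\bar s_j$ factor rather than through $f_0$, and the gap of the smallest nonzero eigenvalue from $0$ must hold uniformly almost surely. For this I would cite the same almost-sure convergence of the extreme eigenvalues invoked in Lemma~\ref{lem:int-conv}.
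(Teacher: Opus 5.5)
Your proposal is correct and follows essentially the same route as the paper. Replace $1/x$ near the origin by a continuous modification $f_0\in\mathcal F$. Use almost-sure localization of the nonzero eigenvalues away from $0$, with the zero eigenvalues killed by $\bar s_j=0$ when $c>1$, to get $\widehat\bbeta_{\mathrm{mn}}=\widehat\bbeta_{f_0}$ eventually, and then apply Lemma~\ref{prop: general formula}.

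Your third step is where you improve on the paper. The identity $xQ(x)=P(x)$ forces $P(0)=0$, contradicting $P^0(0)=c\sx^2\sigma_{\boldsymbol{\varepsilon}}^2\nu(0)>0$; this gives a rigorous reason for $f_0\not\equiv f_\ast^{\mathrm{pred}}$, where the paper only offers a one-line remark. You also treat $c<1$ and $c>1$ together rather than deferring one case.
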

The proof of Theorem~\ref{thm: f_ast dominates}(a) now follows from Lemma~\ref{lemma: f_ast is unique}.

\subsection{Proof of Theorem~\ref{thm: f_ast dominates}(b)}
The key step is to reduce the proof to the $0$-step estimator, which is not optimal,  as shown by the following lemma.
\begin{lemma}\label{lemma: better than Ridge}
Assume $\rk \geq 1$. Then $f_\ast^{\mathrm{pred}}$ has strictly smaller risk than 
any Ridge estimator with $-\lambda \notin \mathcal{S}_c^+$.
\end{lemma}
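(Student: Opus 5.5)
Since $f_\lambda(x)=(x+\lambda)^{-1}$ lies in $\mathcal F$ whenever $-\lambda\notin\mathcal S_c^+$, Lemma~\ref{lemma: f_ast is unique} gives $\mathcal R^{\mathrm{pred}}(f_\ast^{\mathrm{pred}};c)<\mathcal R^{\mathrm{pred}}(f_\lambda;c)$ as soon as $f_\lambda\not\equiv f_\ast^{\mathrm{pred}}$ on $\mathcal S_c^+\setminus\{0\}$. The risk decomposition~\eqref{eq: Risk out f up to prop} yields the same conclusion directly: the excess risk is $\langle\mathcal A(f_\lambda-f_\ast),f_\lambda-f_\ast\rangle_w\ge\|f_\lambda-f_\ast\|_w^2$, and this is positive unless the two functions agree on $\mathcal S_c^+\setminus\{0\}$. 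It therefore suffices to show that no ridge function coincides with $f_\ast^{\mathrm{pred}}$ on $\mathcal S_c^+\setminus\{0\}$.

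\textbf{Step 1: reduce to an identity of rational functions.} Suppose $f_\lambda=f_\ast^{\mathrm{pred}}$ on $\mathcal S_c^+\setminus\{0\}$. This set contains the nondegenerate interval $\mathcal S_c$ (for $c\ne1$, and also for $c=1$ on $(0,4\sx^2]$). Both functions are rational, and $f_\ast^{\mathrm{pred}}=Q/P$ by Lemma~\ref{lemma: polynomials P Q}. Two rational functions agreeing on an interval agree identically, so
\[
Q(x)(x+\lambda)=P(x)\quad\text{as polynomials.}
\]
Thus $P$ would have to factor as $(x+\lambda)Q(x)$, and the rest of the proof shows this factorization is impossible.

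\textbf{Step 2: derive the contradiction.} We work with the unnormalized forms $P^0,Q^0$ from the proof of Lemma~\ref{lemma: polynomials P Q}. Both have the same leading coefficient $\mathfrak p$, so the identity reads $P^0=(x+\lambda)Q^0$. We evaluate this at each outlier location $x_j^\star$, using that $\nu(x_j^\star)=0$ and $\nu_{-i}(x_j^\star)=0$ for $i\ne j$ (Remark~\ref{rmk: lagrange}):
\[
P^0(x_j^\star)=\sx^2r^2x_j^\star\,\omega_j\,\nu_{-j}(x_j^\star),\qquad Q^0(x_j^\star)=b_j^{(1)}\nu_{-j}(x_j^\star).
\]
Since $\nu_{-j}(x_j^\star)\neq0$, this forces
\[
b_j^{(1)}\,(x_j^\star+\lambda)=\sx^2r^2\omega_j x_j^\star\qquad\text{for every } j.
\]
Next we compare the coefficient of $x^{\rk}$ on both sides of $P^0=(x+\lambda)Q^0$, or equivalently evaluate at $0$: $c\sx^2\sigma_\beps^2\nu(0)=\lambda Q^0(0)$. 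These relations give $\rk+1$ constraints on the single free parameter $\lambda$.

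We then check them against the linear system~\eqref{eq: linear system with b0_simple}. Solving the constraints gives $b_j^{(1)}=\sx^2r^2\omega_jx_j^\star/(x_j^\star+\lambda)$. Substituting these into $f_\ast=g-\sum_j\delta_j\alpha_j^2\langle f_\ast,h_j\rangle_wh_j$, with $f_\ast=1/(x+\lambda)$, makes $A_j=\langle f_\lambda,h_j\rangle_w$ satisfy $\delta_j\alpha_j^2(1-A_j)=b_j^{(1)}-\sx^2r^2\omega_j$ after matching $\mu_j$ coefficients. The left side involves integrals of $x/(x+\lambda)$ against $F_{\delta_j}$, while the right side is explicit, and the goal is to show the two are incompatible.

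\textbf{Main obstacle.} This last incompatibility is the difficult part, and the generic overdetermination argument may fail for special parameter values. A cleaner route is to compare the exact excess risk directly. The $f$-dependent part of the risk of ridge is $\langle f,f\rangle_w-2\langle f,g\rangle_w+\sum_j\delta_j\alpha_j^2\langle f,h_j\rangle_w^2$, and its first-order optimality in $f$ over all of $\mathcal F$ must hold at $f_\lambda$. Since $\mathcal A f_\lambda-g=\sum_j c_jh_j+(\text{multiple of }h_0)$ with coefficients determined by $\lambda$, optimality requires $\mathcal Af_\lambda=g$. By Theorem~\ref{thm: main}(c), however, $f_\ast$ has numerator degree $\rk$ with $\rk+1$ distinct denominator roots, exactly one negative (Lemma~\ref{lemma: polynomials P Q}). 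So $P=(x+\lambda)Q$ would force $Q$ to have all the $\rk$ positive roots $\gamma_1,\dots,\gamma_\rk$ of $P$, interlacing with the $x_j^\star$. We then use the sign pattern $Q^0(x_j^\star)=b_j^{(1)}\nu_{-j}(x_j^\star)$ together with $b_j^{(1)}$ having the sign of $\omega_jx_j^\star/(x_j^\star+\lambda)$. If $\lambda>-x_1^\star$, all $b_j^{(1)}>0$, so $Q^0$ alternates sign at the $x_j^\star$ in the same pattern as $P^0$. This places a root of $Q$ in each interval $(x_j^\star,x_{j+1}^\star)$ and one beyond $x_\rk^\star$. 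The remaining task is to locate $\gamma_0=-\lambda$ relative to these roots and obtain a root-count contradiction. The case $\lambda<-x_1^\star$ reverses some signs and needs to be handled separately.
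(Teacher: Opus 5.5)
Your reduction matches the paper's. By Lemma~\ref{lemma: f_ast is unique} it suffices to rule out the identity $P^0=(x+\lambda)Q^0$. Your evaluation at the outliers correctly gives $b_j^{(1)}(x_j^\star+\lambda)=\sx^2\alpha_j^2x_j^\star$, and row $j$ of \eqref{eq: linear system with b0_simple} gives $b_j^{(1)}=\sx^2\alpha_j^2+\delta_j\alpha_j^2(1-A_j)$, where $A_j=\langle f_\lambda,h_j\rangle_w$.

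The gap is that you never draw a contradiction from these relations, and the root-counting route you fall back on cannot produce one. Purely algebraic consequences of $P^0=(x+\lambda)Q^0$ are never contradictory. If $-\lambda$ is any root of $P$, then $P^0/(x+\lambda)$ has degree $\rk$ and leading coefficient $\mathfrak p$. Subtracting $\sx^2r^2\omega_0\nu$ leaves a polynomial of degree at most $\rk-1$, which is a combination of the Lagrange basis $\nu_{-1},\dots,\nu_{-\rk}$. So some coefficients $b_j$ always realize the factorization. The sign pattern you infer from $b_j^{(1)}=\sx^2\alpha_j^2x_j^\star/(x_j^\star+\lambda)$ is itself a consequence of the identity, so it is automatically consistent with the interlacing. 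For example, with $-\lambda=\gamma_0<0$ the quotient has roots $\gamma_1,\dots,\gamma_\rk$, which is exactly what your all-$b_j>0$ analysis predicts. The relation at $x=0$ is likewise automatic. Any contradiction must use the actual value of $A_j$, which you mention but never exploit.

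The missing step is short. Since $F_{\delta_j}$ is a probability measure and $A_j=\int x f_\lambda\,\mathrm{d}F_{\delta_j}=\int\frac{x}{x+\lambda}\,\mathrm{d}F_{\delta_j}(x)$, you have $1-A_j=\lambda\int\frac{\mathrm{d}F_{\delta_j}(x)}{x+\lambda}$. Your two expressions for $b_j^{(1)}$ then combine to
\[
\delta_j\,\lambda\int\frac{\mathrm{d}F_{\delta_j}(x)}{x+\lambda}=-\frac{\sx^2\,\lambda}{x_j^\star+\lambda}.
\]
Here $\lambda\neq0$, because $P^0(0)=c\sx^2\sigma_{\boldsymbol{\varepsilon}}^2\nu(0)>0$ means $0$ is not a root of $P$; so you may divide by $\lambda$. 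Now take $j$ with $x_j^\star$ minimal. By the proof of Lemma~\ref{lemma: polynomials P Q}, the root $-\lambda$ of $P$ is either negative, or positive and strictly larger than $x_j^\star$. That is, either $\lambda>0$ or $x_j^\star+\lambda<0$. Since $\mathrm{supp}(F_{\delta_j})\subseteq[0,x_j^\star]$, in both cases $x+\lambda$ has the sign of $x_j^\star+\lambda$ on the whole support. Hence $\delta_j\int\frac{\mathrm{d}F_{\delta_j}}{x+\lambda}$ and $-\sx^2/(x_j^\star+\lambda)$ have opposite signs, a contradiction. This is the paper's argument: one spike, the one with the smallest outlier, plus a sign comparison settles $\lambda>0$ and $\lambda<0$ together, with no overdetermination or root counting.
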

Using this Lemma, the proof of Theorem~\ref{thm: f_ast dominates}(b) becomes purely algebraic.

\begin{proof}[Proof of Theorem~\ref{thm: f_ast dominates}(b)]
Suppose for contradiction that there exists an SD procedure that uses the same 
$\lambda$ at each step and achieves prediction risk no greater than that of 
$f_\ast^{\mathrm{pred}}$.

Recall from Lemma~\ref{lemma: sd is in mathcal F} that any SD procedure with $\lambda:=\lambda_0 = \lambda_1 = \dots = \lambda_t$ corresponds to a shrinkage function $f_t \in \mathcal{F}$ given by $$f_t(x) = \frac{1-\xi_t}{x+\lambda} + \frac{\xi_t(1-\xi_{t-1}) x}{(x+\lambda)^2} + \dots +  \frac{\xi_t \xi_{t-1} \dots \xi_2(1-\xi_1)\, x^{t-1}}{(x+\lambda)^{t}} + \frac{\xi_t \dots \xi_1\, x^{t}}{(x+\lambda)^{t+1}},$$
which after raising to a common denominator is $$f_t(x) = \frac{U(x)}{(x+\lambda)^{t+1}},$$
for some monic polynomial with real coefficients $U(x)$ with $\deg U =  t$ as $$(1-\xi_t) + \xi_t(1-\xi_{t-1}) + \dots + \xi_t \xi_{t-1} \dots \xi_2(1-\xi_1) + \xi_t \xi_{t-1} \dots \xi_2\xi_1 = 1.$$ Recall from Lemma~\ref{lemma: polynomials P Q} that $$f_\ast^{\mathrm{pred}}(x) = \frac{Q(x)}{P(x)},$$
where $P$ has $\rk + 1$ distinct real roots.  Since $f_t \in \mathcal{F}$ and $f_\ast^{\mathrm{pred}}$ achieves the smallest risk across all function in $\mathcal{F}$ then they must have equal asymptotic risk. Thus, by Lemma~\ref{lemma: f_ast is unique} $$\frac{Q(x)}{P(x)} = \frac{U(x)}{(x+\lambda)^{t+1}},$$
for all $x\in \mathcal{S}_c$. Equivalently, we have $Q(x)(x+\lambda)^{t+1} = U(x)P(x)$. Since this is a polynomial equation that holds for all $x\in \mathcal{S}_c$ which is infinite, then it must hold for all $x\in \mathbb{R}$. Since $P$ has $\rk + 1$ distinct roots, then $(x+\lambda)^2 \nmid P(x) $ so $(x+\lambda)^t \mid U(x)$. Since $U$ is monic, we deduce that we must have $U(x) = (x+\lambda)^{t}$ hence $$f_\ast^{\mathrm{pred}}(x) =\frac{Q(x)}{P(x)} = \frac{1}{x + \lambda},$$
which is a contradiction to Lemma~\ref{lemma: better than Ridge}.

\end{proof}

\subsection{Proof of Theorem~\ref{thm: f_ast dominates}(c)}
Consider gradient descent applied to the squared error loss
$$
\mathcal{L}_n(\bbeta) = \frac{1}{2n}\|\by - \bX \bbeta\|_2^2 .
$$
The gradient is
$$
\nabla \mathcal{L}_n(\bbeta) = -\frac{1}{n}\bX^\top(\by - \bX\bbeta) = \widehat{\boldsymbol{\Sigma}} \bbeta - \frac{1}{n}\bX^\top \by.
$$
Thus, gradient descent with step size $\eta > 0$ updates as
$$
\widehat \bbeta_{t+1} = \widehat \bbeta_t - \eta \nabla \mathcal{L}_n(\widehat \bbeta_t) = \widehat \bbeta_t + \eta \frac{1}{n}\bX^\top(\by - \bX \widehat \bbeta_t) = (\mathbf{I}_p - \eta \widehat {\boldsymbol{\Sigma}})\widehat \bbeta_t + \eta \frac{1}{n}\bX^\top \by .
$$
Suppose we initialize at $\widehat \bbeta_0 = 0$ and run gradient descent for $T$ steps. 
Unravelling the recursion yields: 
$$
\widehat \beta_T = (\mathbf{I}_p - \eta \widehat {\boldsymbol{\Sigma}})^T \widehat \bbeta_0 + \eta \sum_{k=0}^{T-1}(\mathbf{I}_p - \eta \widehat {\boldsymbol{\Sigma}})^k\frac{\bX^\top \by}{n}.
$$
Since $\widehat \bbeta_0 = 0$, this simplifies to
$$
\widehat \beta_T =\eta\left(\mathbf{I}_p+ (\mathbf{I}_p - \eta \widehat {\boldsymbol{\Sigma}})+ \cdots+ (\mathbf{I}_p - \eta \widehat {\boldsymbol{\Sigma}})^{T-1}\right)\frac{\bX^\top \by}{n}
 = \bW \, f_T(\boldsymbol{\Lambda})\, \bW^\top\frac{\bX^\top \by}{n},
$$
where the shrinkage function $f_T$ is: 
$$
f_T(x)=\eta \sum_{k=0}^{T-1} (1 - \eta x)^k=\eta \left(1 + (1 - \eta x) + \cdots + (1 - \eta x)^{T-1}\right),
$$
which is a polynomial in $x$ of degree $T-1$. In particular, $f_T$ is not equal to rational functions $f_\ast^{\mathrm{pred}}$ or $f_\ast^{\text{ast}}$. By Lemma~\ref{lemma: f_ast is unique}, we deduce that early-stopped gradient descent is suboptimal.
\subsection{Proof of Lemma~\ref{lemma: min-norm is worse}}
\begin{proof}
We distinguish between the two cases \(c>1\) and \(c<1\). Assume for now that \(c>1\) and without loss of generality that $p > n$. 
Let \(0<\eta<\sx^2\,(\sqrt c-1)^2\) be fixed and define \(f_0:\mathbb R_{\geq 0}\to\mathbb R\) by
\[
f_0(x)=
\begin{cases}
0, & x\in [0,\eta/2),\\[4pt]
\dfrac{1}{x}, & x\in \big(\sx^2\,(1 - \sqrt c)^2-\eta/2,\infty\big),
\end{cases}
\]
and interpolate smoothly on \((\eta/2,\,\sx^2\,(\sqrt c-1)^2-\eta/2)\) so that \(f_0\) is smooth on \(\mathbb R_{\geq 0}\). Since \(f_*^{\rm pred}\) does not have an atom at \(0\), we have \(f_0\neq f_*^{\rm pred}\) on the support \(\mathcal S_c\).

\medskip

Next, we claim that there exists an event \(\Omega_0\) of probability one such that
\[
\widehat\bbeta_{\mathrm{mn}}(n,\omega)=\widehat\bbeta_{f_0}(n,\omega)
\]
for all \(\omega\in\Omega_0\) and all \(n\ge N(\omega)\). Recall from \cite{baik2004} that
\[
d_{n+1}=\cdots=d_p=0,
\qquad
d_n \xrightarrow[]{\mathrm{a.s.}} \sx^2\,(1 - \sqrt c)^2,
\]
where \(d_1\ge \cdots \ge d_p\) are the eigenvalues of \(\widehat{\boldsymbol{\Sigma}}\). Let \(\Omega_0\) be the event of probability one on which this convergence holds. Then for every \(\omega\in\Omega_0\), there exists \(N(\omega)\) such that
\[
d_n(n,\omega)>\sx^2\,(\sqrt c-1)^2-\eta/2
\]
for all \(n\ge N(\omega)\). In particular, for all such \(n\), every nonzero eigenvalue of \(\widehat{\boldsymbol{\Sigma}}\) lies in the region where \(f_0(x)=1/x\), while the zero eigenvalues lie in the region where \(f_0(x)=0\).
Therefore, using \eqref{eq: SVD general formula}, for all \(n\ge N(\omega)\),
\begin{align*}
f_0(\widehat{\boldsymbol{\Sigma}})\frac{\bX^\top \by}{n}
&= \sum_{j=1}^p \frac{f_0(d_j)\,\bar s_j}{n}\, \bw_j (\bu_j^\top \by) \\
&= \sum_{j=1}^n \frac{\bar s_j}{n d_j}\, \bw_j (\bu_j^\top \by)
   + \sum_{j=n+1}^p 0 \cdot \frac{\bar s_j}{n}\, \bw_j (\bu_j^\top \by) \\
&= \sum_{j=1}^n \frac{1}{\bar s_j}\, \bw_j (\bu_j^\top \by).
\end{align*}
Since \(d_j=\bar s_j^2/n\), the last expression is exactly the minimum-norm interpolator hence,
\[
\widehat\bbeta_{f_0}(n,\omega)=\widehat\bbeta_{\mathrm{mn}}(n,\omega)
\]
for all \(\omega\in\Omega_0\) and all \(n\ge N(\omega)\). It follows that
\[\lim_{n\to\infty}\|\widehat\bbeta_{f_0}-\bbeta_0\|_{\boldsymbol{\Sigma}}^2
\stackrel{a.s.}{=}
\lim_{n\to\infty} \|\widehat\bbeta_{\mathrm{mn}}-\bbeta_0\|_{\boldsymbol{\Sigma}}^2 
\]
which proves the claim. The proof for $c < 1$ follows mutatis mutandis using $d_p \xrightarrow[]{\mathrm{a.s.}} \sx^2\,(1 - \sqrt c)^2$. The details are omitted for brevity.
\end{proof}
\subsection{Proof of Lemma~\ref{lemma: better than Ridge}}
\begin{proof}
Suppose for the sake of contradiction that there was $\lambda$ such that the Ridge estimator with parameter $\lambda$ is at least as good as $f_\ast^{\mathrm{pred}}$. Since all Ridge estimators are in $\mathcal{F}$, and $f_\ast^{\mathrm{pred}}$ optimizes over $\mathcal{F}$, the two shrinkage rules must have the same asymptotic risk. Assume without loss of generality that $x_1^\star = \min_j \{x_j^\star\}$.
Recall $P, Q$ from Lemma~\ref{lemma: polynomials P Q} and note that we must have $$\frac{Q(x)}{P(x)} = \frac{1}{x+\lambda},$$ or equivalently $Q(x)(x+\lambda) = P(x)$
for all $x \in \mathcal{S}_c$. This is a polynomial equation, so it must hold for all $x \in \mathbb{R}$. In particular, $-\lambda$ must be a root of $P$ which from the proof of Lemma~\ref{lemma: polynomials P Q} are either negative or greater than $x_1^\star$. Thus $\lambda \neq 0$.
Using the formula from equation~\ref{eq: fixed point of f_ast} and Lemma~\ref{lemma: f_ast is unique}, we deduce that $$\frac{\sx^2\,r^2 \left(\omega_0 \nu(x) + \sum_{i=1}^\rk \omega_i \nu_{-i}(x)\right) + \sum_{j=1}^\rk \delta_j\alpha_j^2 (1- A_j) \nu_{-j}(x)}{\sx^2 r^2x\left(\omega_0 \nu(x) + \sum_{i=1}^\rk \omega_i \nu_{-i}(x)\right) + c\sx^2\, \sigma_{\boldsymbol{\varepsilon}}^2 \nu(x)} = \frac{1}{x+ \lambda}.$$
From equation~\ref{eq: fixed point of f_ast} we also know $$A_1 = \langle f_\ast^\mathrm{pred}, h_1\rangle_w = \int f_\ast^{\mathrm{pred}}(x) h_1(x) x w(x) \, \mathrm{d}F_{\alpha}.$$
Using equation~\eqref{eq: define w ,g , h_j} and Lemma~\ref{lemma: def mu_j mu_0}, we can rewrite the equality above as \begin{align}\label{eq: aj = }
A_1
&= \int f_\ast^{\mathrm{pred}}(x)\,\frac{\mu_1(x)}{w(x)}\,x\,w(x)\, \mathrm{d}F_{\alpha}(x) \notag \\
&= \int x f_\ast^{\mathrm{pred}}(x)\,\mu_1(x)\, \mathrm{d}F_{\alpha}(x) \notag\\
&= \int x f_\ast^{\mathrm{pred}}(x)\, \mathrm{d}F_{\delta_1}(x) \notag \\
&= \int \frac{x}{x+\lambda}\, \mathrm{d}F_{\delta_1}(x).
\end{align}
On the other hand, evaluating $f_\ast^{\mathrm{pred}}$ at $x_1^\star$ yields \begin{align*}
    f_\ast^{\mathrm{perd}}(x_1^\star) &= \frac{\sx^2\,r^2\omega_1 \nu_{-1}(x_1^\star) + \delta_1 \alpha_1^2(1- A_1)\nu_{-1}(x_1^\star)}{\sx^2\,r^2x_1^\star \omega_1 \nu_{-1}(x_1^\star)}\\
    &= \frac{\sx^2\, +\delta_1(1 - A_1)}{\sx^2\,x_1^\star},\end{align*}
since $r^2 \omega_1 = \alpha_1^2$. Combining this equality with equation \eqref{eq: aj = } we have \[
\frac{1}{\lambda+x_1^\star}
=\frac{\sx^2\,+\delta_1\displaystyle\int \frac{\lambda}{x+\lambda}\,\mathrm dF _{\delta_1}(x)}{\sx^2\,x_1^\star},
\]
which after multiplying both sides by $x_1^\star$ and rearranging yields
\[
\delta_1\int \frac{\lambda}{x+\lambda}\,\mathrm dF _{\delta_1}(x)
=-\frac{\lambda\sx^2\,}{\lambda+x_1^\star}.
\]
Since $\lambda \neq 0$ we deduce that
\[
\delta_1\int \frac{1}{x+\lambda}\,\mathrm dF _{\delta_1}(x)
=-\frac{\sx^2\,}{\lambda+x_1^\star}.
\]
We consider two cases separately. If $\lambda > 0$ then the left member is positive since all terms in the integral are positive, whereas the right member is negative, thus a contradiction. 

If $\lambda < 0$, then $-\lambda > 0$ is a positive root of $P$, thus it must be larger than $x_1^\star > \sx^2\,(1+\sqrt{c})^2$ or equivalently $ x_1^\star + \lambda < 0$. Since $F_{\delta_1}$ has bulk $\mathcal{S}_c$ and two possible atoms at 0 and $x_j^\star$ (see Lemma~\ref{lemma: F_mp << F_delta}), then all terms in the integral above are negative, whereas the right member is positive, thus a contradiction. The proof is now complete.
\end{proof}

\section{$f_\ast^{\mathrm{pred}}$ versus PCR - Proofs of 
Theorems~\ref{thm: f_ast dominates}(d) and (e)}\label{app: G}

\subsection{Notation and General form of PCR}
Throughout, we assume that
\[
m_n \leq \rank(\bX)=\min\{n,p\},
\]
since any additional components are redundant.  Following \cite{Green_2025} equation (25), define
\[
Q_c(x)=1-F_{\mathrm{MP}}(x),
\]
and for \(\tau'\in\bigl[0,\min\{1,1/c\}\bigr]\), let \(Q_c^{-1}(\tau')\in \mathcal S_c\) denote the unique value such that
\[
\int_{Q_c^{-1}(\tau')}^{(1+\sqrt{c})^2} f_{\mathrm{MP}}(x)\,dx=\tau'.
\]
It is well-known that if $m/p\to \tau \in (0, \min\{1,1/c\})$ then we can invert the quantiles at the continuity points (see \cite{Green_2025}) to deduce that \begin{align}\label{eq: convergence of d_m}
d_m \xrightarrow[]{a.s.} Q_{c}^{-1}(\tau).\end{align}

 We start by introducing standard notations for the PCR estimator.
 Let $\bU_m \in \mathbb{R}^{n\times m}$ and $\bW_m \in \mathbb{R}^{p\times m}$ denote the
first $m$ columns of $\bU$ and $\bW$, respectively, and let
$\bar\bS_m = \mathrm{diag}(s_1,\dots,s_m)$.
The $m$-component PCR estimator is
\begin{equation}\label{eq: form for m PCR}
    \widehat\bbeta_{\mathrm{PCR},m}
= \bW_m \bar \bS_m^{-1} \bU_m^\top \by
= \sum_{j=1}^m \bw_j \frac{\bu_j^\top \by}{\bar s_j},
\end{equation}
where $\bu_j$ and $\bv_j$ are the $j$-th columns of $\bU$ and $\bW$. Another way to obtain the PCR estimator is to define $\mathcal{P}_m : \mathbb{R}^p \to \mathbb{R}^p $ be the projection onto the $m$ leading eigenspaces of $\widehat{\boldsymbol{\Sigma}}$,
$$\mathcal{P}_m = \bW_m\bW_m^\top = \sum_{i=1}^m \bw_i \bw_i^\top,$$
and likewise, let $\mathcal{P}_m ^\perp = \mathbf{I}_p - \mathcal{P}_m $ be the projection on the orthogonal space.
First, using standard concentration results, we prove the following Lemma which will be used for both Theorem~\ref{thm: f_ast dominates}(d) and (e).

\begin{lemma}\label{lemma: concentration PCR}
If $m/p \to \tau \in [0, \min\{1, 1/c\})$, then as $n \to \infty$
$$\|\widehat\bbeta_{\mathrm{PCR},m}-\bbeta_0\|_{\boldsymbol{\Sigma}}^2 - \mathbb{E}\left[\|\widehat\bbeta_{\mathrm{PCR},m}-\bbeta_0\|_{\boldsymbol{\Sigma}}^2 \mid \bX \right] \xrightarrow[]{\text{a.s.}} 0.$$
\end{lemma}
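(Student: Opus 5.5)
We follow the template of the proof of Lemma~\ref{prop: general formula}, specifically its treatment of the error term $\mathcal{E}_f(\bX,\by)$. The PCR estimator \eqref{eq: form for m PCR} can be written as $\widehat\bbeta_{\mathrm{PCR},m} = \bW_m \bLambda_m^{-1}\bW_m^\top \bX^\top\by/n$, where $\bLambda_m=\mathrm{diag}(d_1,\dots,d_m)$. Set $\bG_m := \bW_m\bLambda_m^{-1}\bW_m^\top$, which is a function of $\bX$ only. Substituting $\by=\bX\bbeta_0+\beps$ gives
\[
\widehat\bbeta_{\mathrm{PCR},m}-\bbeta_0 = -\mathcal{P}_m^\perp\bbeta_0 + \bG_m\frac{\bX^\top\beps}{n},
\]
since $\bG_m\widehat\bSigma = \mathcal{P}_m$. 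Expanding the $\bSigma$-norm yields three pieces: a bias term that is deterministic given $\bX$, a cross term $-2\,\bbeta_0^\top\mathcal{P}_m^\perp\bSigma\bG_m\bX^\top\beps/n$, and a quadratic form $\beps^\top \bA_n\beps/n$ with $\bA_n := \bX\bG_m\bSigma\bG_m\bX^\top/n$. The conditional expectation given $\bX$ keeps the bias term, kills the cross term, and replaces the quadratic form by $\sigma_\beps^2\tr(\bA_n)/n$. The difference in the statement is therefore exactly the cross term plus $\beps^\top\bA_n\beps/n-\sigma_\beps^2\tr(\bA_n)/n$. We will show each vanishes almost surely using Lemma~\ref{lem:linear_concentration} and Lemma~\ref{lem:quadratic_concentration}, both of which apply because $\beps$ is independent of $\bX$.

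For the cross term, let $\ba_n := 2\bX\bG_m\bSigma\mathcal{P}_m^\perp\bbeta_0$. Then $\|\ba_n\|_2^2 = 4n\,\bbeta_0^\top\mathcal{P}_m^\perp\bSigma\bG_m\widehat\bSigma\bG_m\bSigma\mathcal{P}_m^\perp\bbeta_0$, and since $\bG_m\widehat\bSigma\bG_m=\bG_m$ we get
\[
\frac{\|\ba_n\|_2^2}{n}\le 4\|\bSigma\|_{\mathrm{op}}^2\,\|\bG_m\|_{\mathrm{op}}\,\|\bbeta_0\|_2^2 .
\]
For the quadratic term, $\|\bA_n\|_{\mathrm{op}} \le \|\widehat\bSigma\|_{\mathrm{op}}\|\bG_m\|_{\mathrm{op}}^2\|\bSigma\|_{\mathrm{op}}$; equivalently, $\bA_n$ has the same nonzero spectrum as $\bSigma^{1/2}\bG_m\bSigma^{1/2}$, so $\|\bA_n\|_{\mathrm{op}}\le\|\bSigma\|_{\mathrm{op}}\|\bG_m\|_{\mathrm{op}}$. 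The quantities $\|\bSigma\|_{\mathrm{op}}=\sx^2+\max_j\delta_j$, $\|\bbeta_0\|_2\to r$, and $\|\widehat\bSigma\|_{\mathrm{op}}$ (which converges a.s.\ to $\max(x_1^\star,\sx^2(1+\sqrt c)^2)$ by \cite{baik2004}) are all eventually bounded. Hence everything reduces to showing $\limsup_n \|\bG_m\|_{\mathrm{op}} = \limsup_n 1/d_m <\infty$ almost surely.

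This lower bound on the $m$-th eigenvalue is the main point, and it is where the hypothesis $\tau<\min\{1,1/c\}$ enters. We treat two cases.
\begin{itemize}
\item If $\tau\in(0,\min\{1,1/c\})$, then \eqref{eq: convergence of d_m} gives $d_m\to Q_c^{-1}(\tau)$ almost surely. Because $\tau<\min\{1,1/c\}$, the quantile $Q_c^{-1}(\tau)$ lies strictly above the lower edge of the bulk when $c\le 1$; when $c>1$ it lies inside the nonzero bulk rather than at the atom at $0$. In both situations it is strictly positive.
\item If $\tau=0$, we have $m\le$ a vanishing fraction of $p$, and the $m$-th eigenvalue is eventually at least the $\lceil \epsilon p\rceil$-th eigenvalue for any fixed $\epsilon>0$. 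Choosing $\epsilon$ small, that eigenvalue converges to $Q_c^{-1}(\epsilon)>0$ by the same quantile argument.
\end{itemize}
In either case $d_m$ is eventually bounded below by a positive constant almost surely. Intersecting the probability-one events from the eigenvalue convergences with those from the two concentration lemmas completes the argument.
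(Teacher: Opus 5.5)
Your proposal is correct and follows essentially the same route as the paper. It uses the same bias/cross/quadratic decomposition, applies Lemmas~\ref{lem:linear_concentration} and~\ref{lem:quadratic_concentration} to the same vector and matrix (you write them through $\bW_m\boldsymbol{\Lambda}_m^{-1}\bW_m^\top$ and the identity $\bW_m\boldsymbol{\Lambda}_m^{-1}\bW_m^\top\widehat{\boldsymbol{\Sigma}}\,\bW_m\boldsymbol{\Lambda}_m^{-1}\bW_m^\top=\bW_m\boldsymbol{\Lambda}_m^{-1}\bW_m^\top$ rather than the SVD factors $\bU_m\bar{\bS}_m^{-1}\bW_m^\top$), and handles the lower bound on $d_m$ with the same two cases $\tau>0$ and $\tau=0$.
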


\subsection{Proof of Theorem~\ref{thm: f_ast dominates}(d)}
In this section, we prove that $f_\ast^{\mathrm{pred}}$ achieves smaller asymptotic than any PCR procedure that uses any fixed number of retained components $m_n = m$. The proof is based on one Lemma that shows that if we wanted to perform better than $f_\ast^{\mathrm{pred}}$ using PCR, we would need more components than the number of spikes that are above the BBP phase transition. Recall that $\rk^+ \leq \rk$ is the number of $\delta_j$'s that are above the BBP phase transition.

\begin{lemma}\label{pcr_with_all_spiked}
For all $m \leq \rk^+$, there exists $f_{\text{pcr}} \in \mathcal{F}$ such that $f_{\text{pcr}} \not\equiv f_\ast^{\mathrm{pred}}$ on $\mathcal{S}_c^+$ and \[
\lim_{n\to\infty}\|\widehat\bbeta_{\mathrm{PCR},m}-\bbeta_0\|_{\boldsymbol{\Sigma}}^2
=\mathcal{R}^{\mathrm{pred}}(f_{\mathrm{pcr}}; c).
\]
\end{lemma}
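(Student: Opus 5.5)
The plan follows the proof of Lemma~\ref{lemma: min-norm is worse}: realise PCR exactly as a deterministic spectral shrinkage estimator, eventually almost surely, using eigenvalue separation. Fix $m\le \rk^+$ and order the supercritical outliers $x_1^\star>\dots>x_{\rk^+}^\star$; by Lemma~\ref{lemma: distinct x_jstar} they are distinct and lie strictly to the right of $\mathcal S_c$. By \cite{baik2004}, almost surely $d_j\to x_j^\star$ for $j\le \rk^+$ and $d_{\rk^++1}\to\sx^2(1+\sqrt c)^2$.

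We choose $\eta>0$ smaller than half the minimal gap among the points $\{x_j^\star\}_{j\le\rk^+}\cup\{\sx^2(1+\sqrt c)^2\}$. Define $f_{\mathrm{pcr}}(x)=1/x$ on the $\eta$-neighbourhoods of $x_1^\star,\dots,x_m^\star$, and $f_{\mathrm{pcr}}=0$ on the $\eta$-neighbourhoods of $\mathcal S_c$, of $0$ and of $x_{m+1}^\star,\dots,x_{\rk^+}^\star$. On the gaps we interpolate smoothly. The resulting function is continuous near $\mathcal S_c^+$, so $f_{\mathrm{pcr}}\in\mathcal F$. On the full-probability event where the eigenvalue convergences hold, for $n\ge N(\omega)$ the top $m$ eigenvalues fall in the windows where $f_{\mathrm{pcr}}=1/x$ and all others fall where $f_{\mathrm{pcr}}=0$. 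By \eqref{eq: SVD general formula},
\[
\widehat\bbeta_{f_{\mathrm{pcr}}}=\sum_{j=1}^m \frac{\bar s_j}{n d_j}\bw_j\bu_j^\top\by=\sum_{j=1}^m \frac{\bu_j^\top\by}{\bar s_j}\bw_j=\widehat\bbeta_{\mathrm{PCR},m}.
\]
Hence the two losses coincide eventually, and Lemma~\ref{prop: general formula} gives the limit $\mathcal R^{\mathrm{pred}}(f_{\mathrm{pcr}};c)$. This step does not need Lemma~\ref{lemma: concentration PCR}, which only matters for growing $m$.

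It remains to show $f_{\mathrm{pcr}}\not\equiv f_\ast^{\mathrm{pred}}$ on $\mathcal S_c^+$, and this should be easy. On $\mathcal S_c$ we have $f_{\mathrm{pcr}}\equiv 0$. By Lemma~\ref{lemma: polynomials P Q}, $f_\ast^{\mathrm{pred}}=Q/P$ with $Q$ monic of degree $\rk$, a nonzero polynomial. It therefore has finitely many roots and cannot vanish on the interval $\mathcal S_c$, which has nonempty interior since $c>0$. This suffices because the uniqueness statement of Lemma~\ref{lemma: f_ast is unique} is taken on $\mathcal S_c^+\setminus\{0\}$, and $\mathcal S_c\setminus\{0\}$ still contains an interval. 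Combining this with that lemma yields strictly larger risk for PCR, which is what Theorem~\ref{thm: f_ast dominates}(d) needs.

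For $m>\rk^+$ fixed, the same construction does not directly apply. The $(\rk^++1)$-th through $m$-th eigenvalues all stick to the bulk edge, so no deterministic $f$ separates them from the rest. I expect the main obstacle in the overall proof of (d) to be this case rather than the lemma itself. The likely route is to approximate by the $m=\rk^+$ case, showing that finitely many edge components contribute a vanishing change to the loss; that change would be controlled via the concentration in Lemma~\ref{lemma: concentration PCR} and delocalisation of edge eigenvectors.
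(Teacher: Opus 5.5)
Your proof is correct and follows the paper's argument. As in the paper, you realize $m$-component PCR exactly as $\widehat\bbeta_{f_{\mathrm{pcr}}}$ eventually almost surely, using eigenvalue separation around the outliers, and then invoke Lemma~\ref{prop: general formula}. You are more explicit than the paper on two points: for $m<\rk^+$ you set $f_{\mathrm{pcr}}=0$ near the non-retained outliers, and you justify $f_\ast^{\mathrm{pred}}\not\equiv 0$ on $\mathcal S_c$ by noting that its numerator $Q$ from Lemma~\ref{lemma: polynomials P Q} is a nonzero polynomial; the paper only sketches the first and asserts the second.
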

 
Using this characterization of the prediction risk, we are ready to prove 
Theorem~\ref{thm: f_ast dominates}(d).
\begin{proof}[Proof of Theorem~\ref{thm: f_ast dominates}(d)]
The key observation is that adding more than $\rk^+$ components is redundant in the limit. 
First, from Lemma~\ref{lemma: concentration PCR}, we know that
\begin{align}\label{eq: limit is just expected pcr}
    \lim_{n\to \infty}\|\widehat\bbeta_{\mathrm{PCR},m}-\bbeta_0\|_{\boldsymbol{\Sigma}}^2  
    \stackrel{\text{a.s.}}{=} 
    \lim_{n\to \infty} \mathbb{E}\left[\|\widehat\bbeta_{\mathrm{PCR},m}
    -\bbeta_0\|_{\boldsymbol{\Sigma}}^2 \mid \bX \right].
\end{align}
We can now compute the limit on the right-hand side of~\eqref{eq: limit is just 
expected pcr} using Theorem~3 in~\cite{Green_2025}. Indeed, for any $m \geq \rk^+$, 
the contribution of the outliers vanishes in equation~(59) of~\cite{Green_2025}, 
so the limit no longer depends on $m$. In particular,
\[
\lim_{n\to \infty}\mathbb{E}\left[\|\widehat\bbeta_{\mathrm{PCR},\rk^+}
-\bbeta_0\|_{\boldsymbol{\Sigma}}^2 \mid \bX \right] 
\stackrel{\text{a.s.}}{=} 
\lim_{n\to \infty} \mathbb{E}\left[\|\widehat\bbeta_{\mathrm{PCR},m}
-\bbeta_0\|_{\boldsymbol{\Sigma}}^2 \mid \bX \right],
\]
and combining this with Lemma~\ref{lemma: concentration PCR} applied to $\rk^+$ 
components and~\eqref{eq: limit is just expected pcr}, we deduce that
\[
\lim_{n\to \infty}\|\widehat\bbeta_{\mathrm{PCR},m}-\bbeta_0\|_{\boldsymbol{\Sigma}}^2  
\stackrel{\text{a.s.}}{=} 
\lim_{n\to \infty}\|\widehat\bbeta_{\mathrm{PCR},\rk^+}-\bbeta_0\|_{\boldsymbol{\Sigma}}^2.
\]
Combining this with Lemma~\ref{pcr_with_all_spiked} and 
Lemma~\ref{lemma: f_ast is unique}, we conclude that the asymptotic risk of 
PCR with finitely many components is strictly larger than that of 
$f_\ast^{\mathrm{pred}}$, thus completing the proof of 
Theorem~\ref{thm: f_ast dominates}(d).
\end{proof}
\subsection{Proof of Theorem~\ref{thm: f_ast dominates}(e)}
In this section, we prove that \(f_\ast^{\mathrm{pred}}\) is strictly better than PCR with \(m\) components for any sequence \(m=m_n\) satisfying $m_n/p \to \tau \in\bigl[0, \min(1,1/c)\bigr)$ as $n\to \infty$. The first observation is that, in the proportional regime, it is enough to restrict attention to the case
\(
\tau\in\bigl(0,\min\{1,1/c\}\bigr).
\) Indeed, if \(\tau=0\), then equation (59) in \cite{Green_2025} implies that 
$$\lim_{n\to\infty}\mathbb{E}\left[\|\widehat\bbeta_{\mathrm{PCR},m}-\bbeta_0\|_{\boldsymbol{\Sigma}}^2 \mid \bX \right] = \lim_{n\to\infty}\mathbb{E}\left[\|\widehat\bbeta_{\mathrm{PCR},\rk^+}-\bbeta_0\|_{\boldsymbol{\Sigma}}^2 \mid \bX \right], $$
since in that case the contribution of the bulk vanishes in their formula. By Lemma~\ref{eq: limit is just expected pcr}, we deduce that  $$\lim_{n\to\infty}\|\widehat\bbeta_{\mathrm{PCR},m}-\bbeta_0\|_{\boldsymbol{\Sigma}}^2 = \lim_{n\to\infty}\|\widehat\bbeta_{\mathrm{PCR},\rk^+}-\bbeta_0\|_{\boldsymbol{\Sigma}}^2$$
and by Lemma~\ref{pcr_with_all_spiked} applied for $\rk^+$ components, the right term is strictly larger than the prediction risk of \(f_\ast^{\mathrm{pred}}\). Thus, throughout this section, we will assume the following.
\begin{assumption}\label{assumption: prop regime PCR}
    $$\lim_{n\to\infty}\frac{m_n}{p}=\tau\in\bigl(0,\min\{1,1/c\}\bigr).$$
\end{assumption}
Since $\tau$ is assumed to be an interior value, it follows that
\(
Q_c^{-1}(\tau)\in \mathrm{int}(\mathcal S_c).
\) We continue with the following lemma.
\begin{lemma}\label{lemma: proportional regime PCR}
 Assume that \ref{assumption: prop regime PCR} holds. Then as $n\to\infty$,
\[
\lim_{n \to \infty} \|\widehat\bbeta_{\mathrm{PCR},m} - \bbeta_0\|^2_{{\boldsymbol{\Sigma}}} =
\sx^2 r^2\int_{0}^{Q_{c}^{-1}(\tau)} \mathrm dF _{\tau}(x)
+ \sum_{j=1}^\rk \delta_j \tau_j^2 \int_{0}^{Q_{c}^{-1}(\tau)} \mathrm dF _{\delta_j}(x)
+ c\sx^2\,\sigma_{\boldsymbol{\varepsilon}}^2\int_{Q_{c}^{-1}(\tau)}^{(1+\sqrt{c})^2} \frac{f_{\mathrm{MP}}(x)}{x}\,dx.
\]
\end{lemma}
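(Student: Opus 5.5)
The plan is to treat PCR with $m$ components as a spectral shrinkage estimator with the random threshold $d_m$. Then I will use the convergence \eqref{eq: convergence of d_m}, $d_m \to Q_c^{-1}(\tau)\in\mathrm{int}(\mathcal S_c)$, to replace the random threshold by a deterministic one at the limit. By \eqref{eq: form for m PCR} and \eqref{eq: SVD general formula}, $\widehat\bbeta_{\mathrm{PCR},m}=f_m(\widehat\bSigma)\bX^\top\by/n$ with $f_m(x)=x^{-1}\mathbf 1\{x\ge d_m\}$. This holds up to ties, which occur with probability zero in the bulk, or else are handled by a direct count. The decomposition in the proof of Lemma~\ref{prop: general formula} goes through verbatim for this $f_m$. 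One has $\mathcal P_m^\perp = \bI_p-\widehat\bSigma f_m(\widehat\bSigma)$, so the bias is
\[
\sx^2\bbeta_0^\top\mathcal P_m^\perp\bbeta_0+\sum_j\delta_j(\bbeta_0^\top\mathcal P_m^\perp\bv_j)^2 = \sx^2\|\bbeta_0\|^2\widehat B_n([0,d_m))+\sum_j\delta_j(\bbeta_0^\top\bv_j)^2\,\widehat C_n^j([0,d_m))^2 ,
\]
and the variance is $\frac{\sx^2\sigma_\beps^2 p}{n}\int_{[d_m,\infty)}x^{-1}\,dF_{\widehat\bSigma}+O(1/n)$. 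The cross and quadratic noise terms vanish by Lemma~\ref{lemma: concentration PCR}. Equivalently, I will apply Lemmas~\ref{lem:linear_concentration}--\ref{lem:quadratic_concentration}, noting that $\|f_m(\widehat\bSigma)\|_{\mathrm{op}}\le 1/d_m$ is bounded because $d_m$ stays in the bulk interior.

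Next I will sandwich the threshold. Fix $\epsilon>0$ small enough that $[q-\epsilon,q+\epsilon]\subset\mathrm{int}(\mathcal S_c)$, where $q=Q_c^{-1}(\tau)$. Almost surely, eventually $q-\epsilon<d_m<q+\epsilon$. Monotonicity then bounds the positive measures $\widehat B_n([0,d_m))$ and $F_{\widehat\bSigma}$-integrals between their values at $q\pm\epsilon$. For $\widehat C_n^j$, which is signed, I will instead bound $|\widehat C_n^j([q-\epsilon,q+\epsilon])|$ by Cauchy--Schwarz, as in Lemma~\ref{lem:tv-bound}, by $\frac{\|\bbeta_0\|}{|\bbeta_0^\top\bv_j|}\,\widehat B_n([q-\epsilon,q+\epsilon])^{1/2}\,\widehat D_n^j([q-\epsilon,q+\epsilon])^{1/2}$. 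Each factor converges to a quantity that is $O(\epsilon)$, because $F_\alpha$ and $F_{\delta_j}$ have bounded densities near $q$ (Lemma~\ref{lemma:f_delta}(d)).

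At the fixed thresholds $q\pm\epsilon$, which are continuity points of all limit measures, weak convergence (Lemmas~\ref{lemma: f_alpha}, \ref{lemma: hat D nj}) gives $\widehat B_n([0,q\pm\epsilon))\to F_\alpha([0,q\pm\epsilon))$. For the signed $\widehat C_n^j$, vague convergence plus the uniform TV bound alone does not give convergence of indicator integrals. I will instead approximate $\mathbf 1_{[0,q)}$ by continuous functions differing from it only on $[q-\epsilon,q+\epsilon]$, and control the error with the same Cauchy--Schwarz bound. The variance integral converges by Lemma~\ref{lem:int-conv}, applied to a continuous surrogate of $x^{-1}\mathbf 1\{x\ge q\}$ with the same $\epsilon$-error control. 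Letting $\epsilon\to0$ yields the displayed formula, where $\sx^2 r^2 F_\alpha([0,q))$ appears as the first term, written $F_\tau$ in the statement, and $\alpha_j^2(F_{\delta_j}([0,q)))^2$ appears in the spike term. The outlier atoms lie above $q$, so they contribute to neither bias term.

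The main obstacle is handling the signed measures $\widehat C_n^j$ with the discontinuous, random threshold. The Cauchy--Schwarz localization against the positive measures $\widehat B_n$ and $\widehat D_n^j$, whose limits put no mass near the interior point $q$, is the step that makes this work.
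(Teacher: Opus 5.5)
Your proposal is correct, and its skeleton is the paper's: the conditional bias--variance split, the measures $\widehat B_n$ and $\widehat C_n^{\,j}$ evaluated below the random cutoff $d_m$, the convergence $d_m\to q:=Q_c^{-1}(\tau)\in\mathrm{int}(\mathcal S_c)$, and Lemma~\ref{lemma: concentration PCR} for the noise terms.

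You depart from the paper in two places. For the variance, the paper quotes Theorem~3 of \cite{Green_2025} to get $c\sigma_{\boldsymbol{\varepsilon}}^2\int_q^b x^{-2}|\underline{m}(x)|^{-2}f_{\mathrm{MP}}(x)\,dx$, and then simplifies it with Lemma~\ref{lemma: companion bash}. You instead expand $\frac{\sigma_{\boldsymbol{\varepsilon}}^2}{n}\operatorname{tr}\bigl((\mathcal P_m\widehat{\boldsymbol{\Sigma}}\mathcal P_m)^{\dagger}\boldsymbol{\Sigma}\bigr)$ directly into $\frac{\sx^2\sigma_{\boldsymbol{\varepsilon}}^2p}{n}\int_{[d_m,\infty)}x^{-1}\,dF_{\widehat{\boldsymbol{\Sigma}}}$ plus spike terms of order $1/(nd_m)$. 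Your version is self-contained and exploits that $\boldsymbol{\Sigma}-\sx^2\mathbf I_p$ has finite rank; the paper's route is the one that would survive for a general $\boldsymbol{\Sigma}$.

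The second departure is more substantive. The paper disposes of $\widehat C_n^{\,j}([0,d_m))$ with ``by the same argument, using the signed measures and their vague limits.'' Vague convergence plus a uniform total-variation bound does not justify this: $\delta_{q-1/n}-\delta_{q+1/n}\to0$ vaguely, yet it gives mass $1$ to $[0,q)$. Your Cauchy--Schwarz bound on the variation of $\widehat C_n^{\,j}$ over $[q-\epsilon,q+\epsilon]$, namely $\frac{\|\boldsymbol{\beta}_0\|_2}{|\boldsymbol{\beta}_0^\top\boldsymbol{v}_j|}\,\widehat B_n([q-\epsilon,q+\epsilon])^{1/2}\,\widehat D_n^{\,j}([q-\epsilon,q+\epsilon])^{1/2}$, combined with a continuous cutoff, supplies exactly the missing uniform control near the threshold. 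Likewise, your sandwich argument makes explicit the paper's ``standard limiting argument'' for $\widehat B_n$.

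Two minor points. First, you need not assume ties are null events, which is not automatic for discrete entries: eventually $\{i:d_i<q-\epsilon\}\subseteq\{i>m\}\subseteq\{i:d_i<q+\epsilon\}$ by indices alone. Second, like the paper's proof, you obtain the squared spike term $\delta_j\alpha_j^2F_{\delta_j}([0,q))^2$. So in the displayed statement the spike integral should be squared, $F_\tau$ and $\tau_j$ should read $F_\alpha$ and $\alpha_j$, and the upper limit $(1+\sqrt c)^2$ should be $\sx^2(1+\sqrt c)^2$.
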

Next, recall from Lemma~\ref{prop: general formula} that for a general function $f\in \mathcal{F}$ the asymptotic risk of $\widehat\bbeta_f$ is
\begin{align*}
\rout \stackrel{\text{a.s.}}{=} \sx^2\,r^2\int (1 - xf(x))^2 \ \mathrm dF _\alpha(x) \ + \sum_{j=1}^\rk\delta_j \alpha_j^2\left(\int (1 - xf(x)) \ \mathrm dF _{\delta_j}(x)\right)^2 + c\sx^2\,\sigma_{\boldsymbol{\varepsilon}}^2 \int xf^2(x) \ \mathrm dF _{\rm MP}(x)  \,.
\end{align*}
Using Lemma~\ref{lemma: proportional regime PCR}, we deduce that the limit of $m$-PCR is the same as that of $$f_\text{PCR}(x) = \begin{cases}
    0, \qquad \text{if } x < Q_{c}^{-1}(\tau), \\
    1/x, \qquad \text{otherwise}. 
    \end{cases}$$
Note that \(f_{\mathrm{PCR}}\) is not continuous on \(\mathcal{S}_c\), and therefore \(f_{\mathrm{PCR}} \notin \mathcal{F}\). Thus, compared to the proofs of Lemma~\ref{pcr_with_all_spiked} and Lemma~\ref{lemma: min-norm is worse}, extra work is required to conclude. The next technical Lemma proves that PCR is suboptimal to our $f_\ast^{\mathrm{pred}}$ procedure. 
\begin{lemma}\label{lemma: technical approx of PCR}
 Assume \ref{assumption: prop regime PCR} holds. Then, there exists a uniformly bounded sequence of functions \(\{f_n\}_n\), with \(f_n \in \mathcal{F}\), and a constant \(\eta > 0\) such that:
\begin{enumerate}
    \item \(\lim_{n\to\infty} f_n(x)=f_{\mathrm{PCR}}(x)\) for all \(x\in\mathbb{R}_{\geq 0}\),
    \item \(\mathcal{R}^\mathrm{pred}(f_n; c) \geq \mathcal{R}^\mathrm{pred}(f_\ast^{\mathrm{pred}}; c) + \eta\).
\end{enumerate}
\end{lemma}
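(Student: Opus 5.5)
The natural route is to build the $f_n$ by mollifying the jump of $f_{\mathrm{PCR}}$ at $\tau^\ast:=Q_c^{-1}(\tau)\in\mathrm{int}(\mathcal S_c)$, and to obtain the gap $\eta$ from strict convexity of the quadratic risk functional \eqref{eq: risk f in inner form}.

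\textbf{Step 1: construction of $f_n$.} Fix $\epsilon_0>0$ with $[\tau^\ast-\epsilon_0,\tau^\ast+\epsilon_0]\subset\mathrm{int}(\mathcal S_c)$. For $n\ge1$ let $f_n(x)=0$ for $x\le\tau^\ast-\epsilon_0/n$ and $f_n(x)=1/x$ for $x\ge\tau^\ast$, with linear interpolation in between. Each $f_n$ is continuous on $[0,\infty)$, hence $f_n\in\mathcal F$. The family is uniformly bounded by $1/(\tau^\ast-\epsilon_0)$ on $[0,\infty)$, and $f_n\to f_{\mathrm{PCR}}$ pointwise everywhere (including at $\tau^\ast$, where $f_n(\tau^\ast)=1/\tau^\ast=f_{\mathrm{PCR}}(\tau^\ast)$).

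\textbf{Step 2: limit of the risks.} The formula of Lemma~\ref{prop: general formula} involves integrals of $(1-xf)^2$, $1-xf$ and $xf^2$ against the finite measures $F_\alpha,F_{\delta_j},F_{\mathrm{MP}}$. These have compact support and are absolutely continuous near $\tau^\ast$. By dominated convergence, $\mathcal R^{\mathrm{pred}}(f_n;c)\to\mathcal R^\ast(f_{\mathrm{PCR}})$, where $\mathcal R^\ast$ denotes the same expression evaluated at the bounded measurable function $f_{\mathrm{PCR}}$.

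\textbf{Step 3: the gap.} By \eqref{eq: Risk out f up to prop}, which is an algebraic identity valid for any bounded measurable $f$,
\[
\mathcal R^\ast(f)-\mathcal R^{\mathrm{pred}}(f_\ast^{\mathrm{pred}};c)=\langle\mathcal A(f-f_\ast^{\mathrm{pred}}),f-f_\ast^{\mathrm{pred}}\rangle_w\ \ge\ \|f-f_\ast^{\mathrm{pred}}\|_w^2 .
\]
Apply this with $f=f_{\mathrm{PCR}}$. On $(\tau^\ast-\epsilon_0,\tau^\ast)$ we have $f_{\mathrm{PCR}}=0$, while $f_\ast^{\mathrm{pred}}$ is a nonzero rational function. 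Its numerator $Q$ has at most $\rk$ roots, so $f_\ast^{\mathrm{pred}}\neq0$ on a subinterval. The weight $xw(x)f_\alpha(x)$ is strictly positive on $\mathrm{int}(\mathcal S_c)$. Hence $2\eta:=\|f_{\mathrm{PCR}}-f_\ast^{\mathrm{pred}}\|_w^2>0$. By Step 2, $\mathcal R^{\mathrm{pred}}(f_n;c)\ge\mathcal R^{\mathrm{pred}}(f_\ast^{\mathrm{pred}};c)+\eta$ for all large $n$. Discarding finitely many indices, or shrinking $\eta$ to cover them via Lemma~\ref{lemma: f_ast is unique}, gives the claim for all $n$.

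\textbf{Main obstacle.} The delicate point is justifying the identity \eqref{eq: Risk out f up to prop} for the discontinuous $f_{\mathrm{PCR}}\notin\mathcal F$. The derivation in Lemma~\ref{lemma: define A and solve f_ast} only uses that the integrals are finite. Since $f_{\mathrm{PCR}}$ is bounded and the measures are finite and compactly supported, this is the case, and the identity extends. An alternative is to apply the identity to $f_n$ directly and pass to the limit in $\|f_n-f_\ast^{\mathrm{pred}}\|_w^2$, again by dominated convergence.
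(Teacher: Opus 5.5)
Your proof is correct, but it reaches the gap by a different route from the paper. The paper never leaves $\mathcal{F}$. It fixes a point $\eta$ with $\sx^2(1-\sqrt c)^2<\eta<Q_c^{-1}(\tau)$ and builds smooth $f_n=\psi_n(x)/x$ that all vanish on $[0,\eta]$. It then applies $\mathcal{R}^{\mathrm{pred}}(f_n;c)-\mathcal{R}^{\mathrm{pred}}(f_\ast^{\mathrm{pred}};c)=\langle\mathcal{A}(f_n-f_\ast^{\mathrm{pred}}),f_n-f_\ast^{\mathrm{pred}}\rangle_w\ge\|f_n-f_\ast^{\mathrm{pred}}\|_w^2$ directly to each $f_n\in\mathcal{F}$. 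Finally, it bounds the right side below by $\int_{\sx^2(1-\sqrt c)^2}^{\eta}x\,w(x)\,f_\ast^{\mathrm{pred}}(x)^2\,\mathrm{d}F_\alpha(x)>0$, a constant independent of $n$. That yields property 2 for every $n$ at once, with no limiting argument and no need to extend the quadratic identity beyond $\mathcal{F}$.

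You instead evaluate the gap at the discontinuous limit $f_{\mathrm{PCR}}$. This forces you to justify \eqref{eq: Risk out f up to prop} for bounded measurable functions. Your justification is sound: the Radon--Nikodym rewriting and the completion of the square only use finiteness of the integrals and $\mathcal{A}f_\ast^{\mathrm{pred}}=g$ on $\mathcal{S}_c^+$. You then transfer back to $f_n$ by dominated convergence and patch the finitely many early indices.

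What your route buys is an exact expression for the asymptotic excess risk of PCR, namely $\langle\mathcal{A}(f_{\mathrm{PCR}}-f_\ast^{\mathrm{pred}}),f_{\mathrm{PCR}}-f_\ast^{\mathrm{pred}}\rangle_w$. Your Step 2 is also exactly the dominated-convergence step the paper performs afterwards to deduce Theorem~\ref{thm: f_ast dominates}(e). In addition, your piecewise-linear construction guarantees $f_n(\tau^\ast)=f_{\mathrm{PCR}}(\tau^\ast)$. The paper's transition on $[Q_c^{-1}(\tau)-\mathfrak e_n,\,Q_c^{-1}(\tau)+\mathfrak e_n]$ does not guarantee this, though harmlessly, since that point is $F_\alpha$-null.

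Note, however, that your $f_n$ already vanish on the fixed interval $[0,\tau^\ast-\epsilon_0]$ for every $n$. So the paper's shortcut was available to you, and it would make Steps 2--3 and the finite-index patch unnecessary for the lemma itself.
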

Indeed, since the functions \(f_n\) are uniformly bounded and the supports of \(F_\alpha\), \(F_{\delta_j}\), and \(F_{\mathrm{MP}}\) are all bounded subsets of the nonnegative real line, we may apply the dominated convergence theorem using Lemma~\ref{prop: general formula} to deduce that
\[
\lim_{n\to \infty} \mathcal{R}^\mathrm{pred}(f_n)
=
\lim_{n \to \infty} \|\widehat\bbeta_{\mathrm{PCR},m} - \bbeta_0\|^2_{{\boldsymbol{\Sigma}}}.
\]
Taking the limit as $n \to \infty$ in property 2 in the above Lemma completes the proof of Theorem~\ref{thm: f_ast dominates}(e).

\subsection{Proof of Lemma~\ref{lemma: concentration PCR}}
\begin{proof}
   For simplicity of notation, define
\[
T_n:= \|\widehat\bbeta_{\mathrm{PCR},m}-\bbeta_0\|_{\boldsymbol{\Sigma}}^2.
\]
Using \(\by=\bX\bbeta_0+\beps\), we have
\[
\widehat{\bbeta}_{\mathrm{PCR},m}
=
\bW_m \bar{\bS}_m^{-1}\bU_m^\top \by
=
\mathcal{P}_m \bbeta_0
+
\bW_m \bar{\bS}_m^{-1}\bU_m^\top \beps,
\]
so
\[
\widehat{\bbeta}_{\mathrm{PCR},m}-\bbeta_0
=
-\mathcal{P}_m ^\perp\bbeta_0
+
\bW_m \bar{\bS}_m^{-1}\bU_m^\top \beps.
\]
Therefore,
\[
T_n
=
\left\|\mathcal{P}_m ^\perp\bbeta_0\right\|_{\boldsymbol{\Sigma}}^{2}
+
\left\|\bW_m \bar{\bS}_m^{-1}\bU_m^\top \beps\right\|_{\boldsymbol{\Sigma}}^{2}
-2\,
\left(\mathcal{P}_m ^\perp\bbeta_0\right)^\top
\boldsymbol{\Sigma}
\bW_m \bar{\bS}_m^{-1}\bU_m^\top \beps.
\]

Since \(\beps \indep \bX\), \(\mathbb E[\beps\mid \bX]=0\), and
\(\cov(\beps\mid \bX)=\sigma_{\beps}^2 \bI_n,
\) we obtain
\[
\begin{aligned}
T_n-\mathbb E[T_n\mid \bX]
&=
-2\,
\bigl(\mathcal{P}_m ^\perp\bbeta_0\bigr)^\top
\boldsymbol{\Sigma}\,
\bW_m \bar{\bS}_m^{-1}\bU_m^\top \beps
\\
&\quad+
\beps^\top
\bU_m \bar{\bS}_m^{-1}\bW_m^\top
\boldsymbol{\Sigma}\,
\bW_m \bar{\bS}_m^{-1}\bU_m^\top
\beps -
\mathbb E\!\left[
\beps^\top
\bU_m \bar{\bS}_m^{-1}\bW_m^\top
\boldsymbol{\Sigma}\,
\bW_m \bar{\bS}_m^{-1}\bU_m^\top
\beps
\,\middle|\, \bX
\right]
\\
&=
-2\,
\bbeta_0^\top \mathcal{P}_m ^\perp
\boldsymbol{\Sigma}\,
\bW_m \bar{\bS}_m^{-1}\bU_m^\top \beps
\\
&\quad+
\beps^\top
\bU_m \bar{\bS}_m^{-1}\bW_m^\top
\boldsymbol{\Sigma}\,
\bW_m \bar{\bS}_m^{-1}\bU_m^\top
\beps
-
\sigma_{\beps}^2\,\tr \left(
\boldsymbol{\Sigma}\,\bW_m \bar{\bS}_m^{-2}\bW_m^\top
\right).
\end{aligned}
\]
Finally, as in Section~\ref{proof: general formula}, we use standard 
concentration results to show that both terms are negligible. Define
\[
\ba_n = 2n\,\bU_m\bar{\bS}_m^{-1}\bW_m^\top\boldsymbol{\Sigma}
\mathcal{P}_m ^\perp\bbeta_0 \in \mathbb{R}^n,
\]
so that the cross term equals $-\ba_n^\top\beps/n$. As in Section~\ref{proof: general formula}, $\|\bW_m\|_{\mathrm{op}} = \|\mathcal{P}_m^\perp\|_{\mathrm{op}} = 1$,
and $\|\boldsymbol{\Sigma}\|_{\mathrm{op}} = \sx^2\, + \max_{j\in[r]}\delta_j$
are all deterministically of order 1. Since $d_i = \bar{s}_i^2/n$, we have 
$$\|\bar{\bS}_m^{-1}\|_{\mathrm{op}} = s_m^{-1} = \frac{1}{\sqrt{n\,d_m}},$$
hence,
\[
\frac{\|\ba_n\|_2^2}{n} \leq 
\frac{4\|\boldsymbol{\Sigma}\|_{\mathrm{op}}^2\,\|\bbeta_0\|_2^2}{d_m}.\]
If $\tau = 0$, then there exists a small constant $\eta > 0$ such that $d_m \geq d_{\lfloor p\eta\rfloor}$ for all large enough $n$. Since the lower bound $d_{\lfloor p\eta\rfloor}$ converges to a point in $\text{int} \left(\mathcal{S}_c\right)$, $d_m$ is almost surely bounded away from zero for all large $n$. Similarly if $\tau > 0$, then $d_m \to Q^{-1}_{c}(\tau) \in \text{int}(\mathcal{S}_c) $ almost surely, so $d_m$ is almost surely bounded away from zero for all large $n$. In both cases, we can conclude that almost surely,
$$\limsup_{n\to\infty}\frac{\|\ba_n\|_2^2}{n} < \infty,$$
so we can apply Lemma~\ref{lem:linear_concentration} to deduce that 
$$-2\,
\bbeta_0^\top \mathcal{P}_m ^\perp
\boldsymbol{\Sigma}\,
\bW_m \bar{\bS}_m^{-1}\bU_m^\top \beps \xrightarrow[]{\text{a.s.}} 0.$$
Similarly, define
\[
\bA_n = n\,\bU_m\bar{\bS}_m^{-1}\bW_m^\top\boldsymbol{\Sigma}
\bW_m\bar{\bS}_m^{-1}\bU_m^\top \in \mathbb{R}^{n\times n},
\]
so that the quadratic term equals $\beps^\top\bA_n\beps/n - \sigma_\beps^2\tr(\bA_n)/n$, since
\[
\frac{\sigma_\beps^2}{n}\,\tr(\bA_n) 
= \sigma_\beps^2\, \tr\left(
\boldsymbol{\Sigma}\,\bW_m\bar{\bS}_m^{-2}\bW_m^\top
\right),
\]
where we used the cyclic property of the trace. Its operator norm satisfies
\[
\|\bA_n\|_{\mathrm{op}} \leq n\,\|\bar{\bS}_m^{-1}\|_{\mathrm{op}}^2\,
\|\boldsymbol{\Sigma}\|_{\mathrm{op}},\]
which yields $\limsup_{n\to \infty}\|\bA_n\|_{\mathrm{op}} < \infty$ by the same argument as above. Lemma~\ref{lem:quadratic_concentration} gives
\[
\beps^\top
\bU_m \bar{\bS}_m^{-1}\bW_m^\top
\boldsymbol{\Sigma}\,
\bW_m \bar{\bS}_m^{-1}\bU_m^\top
\beps
-
\sigma_{\beps}^2\,\tr \left(
\boldsymbol{\Sigma}\,\bW_m \bar{\bS}_m^{-2}\bW_m^\top
\right)  = \frac{\beps^\top\bA_n\beps}{n} - \frac{\sigma_\beps^2\tr(\bA_n)}{n} \xrightarrow{\text{a.s.}} 0,\]
thus completing the proof.
\end{proof}

\subsection{Proof of Lemma~\ref{pcr_with_all_spiked}}
\begin{proof}
We start with the case \(m = \rk^+=1\). Let \(x_1^\star\) be the outlier that corresponds to $\delta_1 > \sx^2\,(1+\sqrt{c})^2$ as in Lemma~\ref{lemma:f_delta}(b). Choose
\[
0<\eta<x_1^\star-\sx^2\,(1+\sqrt c)^2.
\]
As in the proof of Lemma~\ref{lemma: min-norm is worse}, we define \(f_{\mathrm{pcr}}\) by
\[
f_{\mathrm{pcr}}(x)=
\begin{cases}
\dfrac{1}{x}, & x \in (x_1^\star-\eta/2,\;x_1^\star+\eta/2),\\[4pt]
0, & x\in [0,x_1^\star-\eta)\cup(x_1^\star+\eta,\infty)
\end{cases},
\]
and interpolate smoothly on \((x_1^\star-\eta,\;x_1^\star-\eta/2)\) and \((x_1^\star+\eta/2,\;x_1^\star+\eta)\) so that \(f_{\mathrm{pcr}}\) is smooth on \(\mathbb R_{\geq 0}\). Note that \(f_{\mathrm{pcr}}\equiv 0\) on the bulk, whereas \(f_\ast\) is not identically zero there. 
We now claim that there exists an event \(\Omega\) of probability one such that
\[
\widehat\bbeta_{f_{\mathrm{pcr}}}(n,\omega)=\widehat\bbeta_{\mathrm{PCR},1}(n,\omega)
\]
for all \(\omega\in\Omega\) and all \(n\ge N(\omega)\). Indeed, by \cite{baik2004},
\[
d_1 \xrightarrow[]{\mathrm{a.s.}} x_1^\star,\qquad
d_2 \xrightarrow[]{\mathrm{a.s.}} \sx^2\,(1+\sqrt c)^2,\qquad
d_p \xrightarrow[]{\mathrm{a.s.}}
\begin{cases}
\sx^2\,(1-\sqrt c)^2, & \text{if } c<1,\\[4pt]
0, & \text{if } c\ge 1.
\end{cases}
\]
Hence there exists an event \(\Omega\) of probability one on which all of the above convergences hold simultaneously. For each \(\omega\in\Omega\), and all \(n\) large enough, we have
\[
d_1(n,\omega)\in (x_1^\star-\eta/2,\;x_1^\star+\eta/2),
\]
while
\[
d_2(n,\omega),\dots,d_p(n,\omega)\in [0,x_1^\star-\eta)\cup(x_1^\star+\eta,\infty).
\]
By construction of \(f_{\mathrm{pcr}}\), it follows that for all such \(n\),
\[
f_{\mathrm{pcr}}(d_1)=\frac{1}{d_1},
\qquad
f_{\mathrm{pcr}}(d_j)=0,\quad j\ge 2.
\]
Using \eqref{eq: SVD general formula}, we obtain
\begin{align*}
f_{\mathrm{pcr}}(\widehat{\boldsymbol{\Sigma}})\frac{\bX^\top \by}{n}
&= \sum_{j=1}^p \frac{f_{\mathrm{pcr}}(d_j)\,\bar s_j}{n}\, \bw_j (\bu_j^\top \by) \\
&= \frac{\bar s_1}{n d_1}\, \bw_1 (u_1^\top \by).
\end{align*}
Since \(d_1=\bar s_1^2/n\), this simplifies to
\[
f_{\mathrm{pcr}}(\widehat{\boldsymbol{\Sigma}})\frac{\bX^\top \by}{n}
= \frac{1}{\bar s_1}\, \bw_1 (u_1^\top \by),
\]
which is exactly the one-component PCR estimator~\eqref{eq: form for m PCR}. Therefore,
\[
\widehat\bbeta_{f_{\mathrm{pcr}}}(n,\omega)=\widehat\bbeta_{\mathrm{PCR},1}(n,\omega)
\]
for all \(\omega\in\Omega\) and all \(n\ge N(\omega)\). Consequently,
\[
\lim_{n\to\infty}\|\widehat\bbeta_{\mathrm{PCR},1}-\bbeta_0\|_{\boldsymbol{\Sigma}}^2
=
\lim_{n\to\infty}\|\widehat\bbeta_{f_{\mathrm{pcr}}}-\bbeta_0\|_{\boldsymbol{\Sigma}}^2,
\]
and the conclusion follows. 

The general case follows by the same argument. Since the outliers 
$x_j^\star$ are pairwise distinct by Lemma~\ref{lemma: distinct x_jstar}, one can choose sufficiently small disjoint neighborhoods 
around each of them on which the shrinkage function equals $1/x$, 
and then smoothly interpolate between these neighborhoods on the remainder of the domain. The 
extension is immediate and we omit the details.
\end{proof}

\subsection{Proof of Lemma~\ref{lemma: proportional regime PCR}}

\begin{proof}

Using equation (47) in \cite{Green_2025}, we have the following bias-variance decomposition:
\begin{equation}\label{eq: formula for PCR Risk}
    \mathbb{E}\Big[\|\widehat\bbeta_{\mathrm{PCR},m} - \bbeta_0\|^2_{{\boldsymbol{\Sigma}}} \mid \bX \Big] = \|{\boldsymbol{\Sigma}}^{1/2}\mathcal{P}_m^\perp  \bbeta_0\|^2 + \frac{\sigma_{\boldsymbol{\varepsilon}}^2}{n} \tr\left((\mathcal{P}_m \widehat{\boldsymbol{\Sigma}}\mathcal{P}_m )^{\dagger} {\boldsymbol{\Sigma}}\right).
\end{equation}
 We use the following decomposition  $${\boldsymbol{\Sigma}}^{1/2} = \sx\,\mathbf{I}_p + \sum_{j=1}^\rk \left(\sqrt{\delta_j + \sx^2\,} - \sx\,\right)\bv_j \bv_j^\top,$$
which is straightforward from the definition of ${\boldsymbol{\Sigma}}$. We deduce that 
\begin{align*}
\Big\|{\boldsymbol{\Sigma}}^{1/2}\mathcal{P}_m^\perp \bbeta_0\Big\|^2
&=
\Big\|\sx\,\mathcal{P}_m^\perp \bbeta_0
+\sum_{j=1}^{\rk}\bigl(\sqrt{\delta_j+\sx^2}-\sx\bigr)
\bv_j \bv_j^\top \mathcal{P}_m^\perp \bbeta_0\Big\|^2 \\
&=
\|\sx\,\mathcal{P}_m^\perp \bbeta_0\|^2
+\sum_{j=1}^{\rk}\left(\sqrt{\delta_j+\sx^2\,}-\sx\right)^2
\bigl(\bv_j^\top \mathcal{P}_m^\perp \bbeta_0\bigr)^2 +\,2\sum_{j=1}^{\rk}\sx\left(\sqrt{\delta_j+\sx^2}-\sx\right)
\bigl(\bv_j^\top \mathcal{P}_m^\perp \bbeta_0\bigr)^2 \\
&=
\sx^2\,\|\mathcal{P}_m^\perp \bbeta_0\|^2
+\sum_{j=1}^{\rk}\delta_j
\bigl(\bv_j^\top \mathcal{P}_m^\perp \bbeta_0\bigr)^2.
\end{align*}
Using the measure $\widehat B_n$ from Lemma~\ref{lemma: f_alpha}, we obtain
\begin{equation}\label{eq: b hat in PCR}
\frac{\|\mathcal P_m^\perp \bbeta_0\|^2}{\|\bbeta_0\|^2}
=
\int_{[0,d_{m+1}]} d\widehat B_n(x).    
\end{equation}
Recall that \(d_m\to Q_c^{-1}(\tau) \in \mathrm{int}(\mathcal{S}_c)\) almost surely. Moreover, \(\widehat B_n \xrightarrow{d} F_\alpha\) almost surely by Lemma~\ref{lemma: f_alpha}. Therefore, since $Q_c^{-1}(\tau)$ is not an atom of $F_{\alpha}$, a standard limiting argument gives
\[
\int_{[0,d_m)} d\widehat B_n(x)
\stackrel{\mathrm{a.s.}}{\longrightarrow}
\int_{0}^{Q_c^{-1}(\tau)} \mathrm dF _\alpha(x).
\]
By the same argument, using the signed measures \(\widehat C_n^j\) and their vague limits \(F_{\delta_j}\) from \ref{lemma: hat C nj}, we obtain
\[
\frac{\bv_j^\top\mathcal{P}_m^\perp\bbeta_0}{\bv_j^\top \bbeta_0}
=
\int_{[0, d_m)} d\widehat C_n^j(x)
\stackrel{\mathrm{a.s.}}{\longrightarrow}
\int_{0}^{Q_c^{-1}(\tau)} \mathrm dF _{\delta_j}(x).
\]
Therefore,
\[
(\bv_j^\top\mathcal{P}_m^\perp\bbeta_0)^2
\stackrel{\mathrm{a.s.}}{\longrightarrow}
\alpha_j^2 \left(\int_{0}^{Q_c^{-1}(\tau)} \mathrm dF _{\delta_j}(x)\right)^2.
\]
Finally, by Theorem 3 in \cite{Green_2025},
\[
\frac{\sigma_{\boldsymbol{\varepsilon}}^2}{n} \tr\left((\mathcal{P}_m \widehat{\boldsymbol{\Sigma}}\mathcal{P}_m )^{\dagger} {\boldsymbol{\Sigma}}\right)
\xrightarrow{\text{a.s.}}
c\sigma_{\boldsymbol{\varepsilon}}^2   \int_{Q_{c}^{-1}(\tau)}^b \frac{1}{x^2 |\underline{m}(x)|^2} f_{\mathrm{MP}}(x) \, \mathrm{d}x,
\]
where \(\underline m\) is the companion Stieltjes transform of the Marchenko--Pastur law (see equation~\eqref{eq:comp_st_1}). 
By Lemma~\ref{lemma: companion bash}, we have
\[
c\sigma_{\boldsymbol{\varepsilon}}^2   \int_{Q_{c}^{-1}(\tau)}^b \frac{1}{x^2 |\underline{m}(x)|^2} f_{\mathrm{MP}}(x) \, \mathrm{d}x
=
c\sx^2\,\sigma_{\boldsymbol{\varepsilon}}^2\int_{Q_{c}^{-1}(\tau)}^b \frac{f_{\mathrm{MP}}(x)}{x}\,dx.
\]
Combining the three limits with Lemma~\ref{lemma: concentration PCR} gives the result.
\end{proof}

\subsection{Proof of Lemma~\ref{lemma: technical approx of PCR}}
\begin{proof}Fix \(\sx^2\,(1-\sqrt{c})^2<\eta<Q_c^{-1}(\tau)\). Choose \(\mathfrak e_n\downarrow 0\) such that
$
Q_c^{-1}(\tau)-\mathfrak e_n>\eta$
for all \(n\).  Let \(\psi_n:\mathbb R_{\ge 0}\to[0,1]\) be a smooth function satisfying
\[
\psi_n(x)=
\begin{cases}
0, & x\le Q_c^{-1}(\tau)-\mathfrak e_n,\\[4pt]
1, & x\ge Q_c^{-1}(\tau)+\mathfrak e_n,
\end{cases}
\]
and define
\[
f_n(x) =\frac{\psi_n(x)}{x}.
\]
It is straightforward to check that \(f_n\to f_{\mathrm{PCR}}(x)\) pointwise on \(\mathbb R_{\ge 0}\). Moreover, \(\{f_n\}_n\) are smooth and uniformly bounded by \(1/\eta\), so Lemma~\ref{lemma: technical approx of PCR}(1) is proved. 

Next, we prove the second property. Recall from equation~\eqref{eq: Risk out f up to prop} that for any $f \in \mathcal{F}$ the limiting risk $\mathcal{R}^\mathrm{pred}(f ; c)$ is, up to an additive constant, equal to $$\left\langle   \mathcal{A}(f-f_\ast^{\mathrm{pred}}),  f-f_\ast^{\mathrm{pred}}\right\rangle_w  - \langle Af_\ast, f_\ast\rangle_w.$$
Using equation~\ref{eq: Apsi psi}, we can immediately deduce that $$\mathcal{R}^\mathrm{pred}(f_n; c) - \mathcal{R}^\mathrm{pred}(f_\ast^{\mathrm{pred}}; c) =\left\langle  \mathcal{A}(f_n-f_\ast^{\mathrm{pred}}),  f_n-f_\ast^{\mathrm{pred}}\right\rangle_w \geq \|f_n - f_\ast^{\mathrm{pred}}\|_w^2.$$
Recall from~\ref{eq: define w ,g , h_j} that $w(x) > 0$ for all $x\in \mathcal S_c^+$ and note that \(f_n(x)=0\) for all \(x\le \eta\). We deduce that
\begin{align*}
\|f_n-f_\ast^{\mathrm{pred}}\|_w^2
&=
\int x\,w(x)\bigl(f_n(x)-f_\ast^{\mathrm{pred}}(x)\bigr)^2\,\mathrm dF _\alpha(x) \\
&\ge
\int_{\sx^2\,(1-\sqrt c)^2}^{\eta}
x\,w(x)\bigl(f_n(x)-f_\ast^{\mathrm{pred}}(x)\bigr)^2\,\mathrm dF _\alpha(x) \\
&=
\int_{\sx^2\,(1-\sqrt c)^2}^{\eta}
x\,w(x)f_\ast^{\mathrm{pred}}(x)^2\,\mathrm dF _\alpha(x).
\end{align*}
Thus, we can define $\rho$ as the lower bound above, and note that $\rho > 0$ because $f_\ast^{\mathrm{pred}}$ cannot be constant 0 on $\mathcal{S}_c$ by Lemma~\ref{lemma: polynomials P Q}.
\end{proof}

\section{Estimation Risk - Proof of Theorem~\ref{thm: main_thm est}}\label{app: H}
\subsection{Proof of Theorem~\ref{thm: main_thm est}(a) and (b)}
In this section, we prove that the same ideas used for the Out-of-sample risk hold for estimation risk. We prove the first two steps of Theorem~\ref{thm: main_thm est} following the steps of proof of Theorem~\ref{thm: main} mutatis mutandis. First, we provide a general formula for $\rest$ that holds for all $f \in \mathcal{F}$.

\begin{lemma}\label{lemma: general formula for f_est} For any $f \in \mathcal{F}$ we have
\begin{align*}
\rest & \overset{a.s.}{=}  r^2\int (1 - xf(x))^2 \ \mathrm dF _\alpha(x)  + c{\sigma_{\beps}^2} \int xf^2(x) \ \mathrm dF _{\rm MP}(x).
\end{align*}
\end{lemma}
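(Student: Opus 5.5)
The plan is to mirror the proof of Lemma~\ref{prop: general formula}, replacing the $\bSigma$-norm with the Euclidean norm. From \eqref{formula for beta_f},
\[
\widehat\bbeta_f-\bbeta_0=-\bW\bigl(\mathbf I_p-\boldsymbol\Lambda f(\boldsymbol\Lambda)\bigr)\bW^\top\bbeta_0+\bW f(\boldsymbol\Lambda)\bW^\top\frac{\bX^\top\beps}{n}.
\]
Expanding the squared Euclidean norm gives three pieces: a bias term, a quadratic noise term, and a cross term.

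The bias term is $\bbeta_0^\top\bW(\mathbf I_p-\boldsymbol\Lambda f(\boldsymbol\Lambda))^2\bW^\top\bbeta_0$, which equals $\|\bbeta_0\|_2^2\int(1-xf(x))^2\,\mathrm d\widehat B_n(x)$. This holds for all large $n$ almost surely, because Assumption~\ref{assm:F} guarantees that $f$ is finite at every eigenvalue eventually. Lemma~\ref{lemma: f_alpha} together with Lemma~\ref{lem:int-conv} then gives the limit $r^2\int(1-xf)^2\,\mathrm dF_\alpha$. Compared with the prediction case, there is no $\bv_j$-spike contribution here, since $\bSigma$ no longer appears between the two factors. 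In particular, the signed measures $\widehat C_n^j$ are not needed.

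The noise term is $\beps^\top\bX f(\widehat\bSigma)^2\bX^\top\beps/n^2$. Set $\bA_n=\bX f(\widehat\bSigma)^2\bX^\top/n$. Then $\|\bA_n\|_{\rm op}\le\|\widehat\bSigma\|_{\rm op}\|f(\widehat\bSigma)\|_{\rm op}^2$, which stays bounded almost surely by the argument in Lemma~\ref{prop: general formula}. Lemma~\ref{lem:quadratic_concentration} therefore reduces this term to
\[
\frac{\sigma_\beps^2}{n}\tr\bigl(\widehat\bSigma f(\widehat\bSigma)^2\bigr)=\sigma_\beps^2\frac pn\int xf^2(x)\,\mathrm dF_{\widehat\bSigma}(x).
\]
Proposition~\ref{prop: esd MP} and Lemma~\ref{lem:int-conv} send this to $c\sigma_\beps^2\int xf^2\,\mathrm dF_{\rm MP}$.

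The cross term is $-2\ba_n^\top\beps/n$ with $\ba_n=\bX\bW f(\boldsymbol\Lambda)(\mathbf I_p-\boldsymbol\Lambda f(\boldsymbol\Lambda))\bW^\top\bbeta_0$. Its squared norm satisfies
\[
\|\ba_n\|^2=n\,\bbeta_0^\top\bW\,\boldsymbol\Lambda f^2(\boldsymbol\Lambda)\bigl(\mathbf I_p-\boldsymbol\Lambda f(\boldsymbol\Lambda)\bigr)^2\bW^\top\bbeta_0,
\]
so $\limsup\|\ba_n\|^2/n<\infty$ almost surely, and Lemma~\ref{lem:linear_concentration} kills it.

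The only delicate point is the same as before: $f$ need not be bounded globally, so every convergence must be justified through the eventual localization of the spectrum in Lemma~\ref{lem:int-conv}. Here this is simpler than in the prediction case, because only the probability measures $\widehat B_n$ and $F_{\widehat\bSigma}$ appear.
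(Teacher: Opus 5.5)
Your proposal is correct and follows essentially the same route as the paper's proof. The paper also uses the three-term expansion of $\|\widehat\bbeta_f-\bbeta_0\|_2^2$, identifies the bias through $\widehat B_n$ and Lemma~\ref{lemma: f_alpha} and the variance through $F_{\widehat\bSigma}$, and disposes of the cross and noise fluctuations via Lemmas~\ref{lem:linear_concentration} and~\ref{lem:quadratic_concentration}, with Lemma~\ref{lem:int-conv} justifying the limits; your explicit computation of $\|\ba_n\|_2^2$ and your remark that $\widehat C_n^j$ is unnecessary here are both accurate.
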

Note that the formula is very similar to that in Lemma~\ref{prop: general formula} for the prediction risk, the only difference being the absence of the squared integrals terms. This significantly simplifies the process of finding $f_\ast^{\mathrm{est}}$.  

\begin{proof}[Proof of Theorem~\ref{thm: main_thm est}(a) and (b)]
Recall $w(x), g(x)$ and $\mu_j(x)$ from \eqref{eq: define w ,g , h_j}. Expanding everything in the formula from Lemma~\ref{lemma: general formula for f_est}  we have:
\[
\begin{aligned}
\rest
&= r^2 
- 2 r^2 \int x\,f(x)\, \mathrm{d}F_{\alpha}(x) + r^2 \int x^2f(x)^2 \mathrm dF _{\alpha} +  c\,{\sigma_{\beps}^2} \int xf^2(x) \ \mathrm dF _{\rm MP}(x) \\
&= r^2 +  \int \Bigl(r^2x^2 + c\,\sigma_{\beps}^{2}\,x\,\mu_0(x)\Bigr)\,f(x)^2\,\mathrm dF _\alpha(x) - 
2 r^2 \int x\,f(x)\, \mathrm{d}F_{\alpha}(x)\\
&= r^2 +  \int \left[\left(r^2x^2 + c\,\sigma_{\beps}^{2}\,x\,\mu_0(x)\right)\,f(x)^2 - 2 r^2 x\,f(x)\right]\,\mathrm dF _\alpha(x)
\end{aligned},
\]
where we used the Radon-Nikodym derivative $\mu_0$ of $F_{\mathrm{MP}}$ w.r.t. $F_{\alpha}$ from Lemma~\ref{lemma: def mu_j mu_0}.
Note that for each $x$ the integrand is a quadratic in $f(x)$ which is uniquely minimized for $x \in \mathcal{S}_c^+ \setminus \{0\}$ at $$f_\ast^{\mathrm{est}}(x) = \frac{r^2}{r^2x + c\sigma_\beps^2\mu_0(x)} = \frac{r^2 \left(\omega_0 \nu(x) + \sum_{i=1}^\rk \omega_i \nu_{-i}(x)\right)}{r^2 x\left(\omega_0 \nu(x) + \sum_{i=1}^\rk \omega_i \nu_{-i}(x)\right) + c \sigma_{\boldsymbol{\varepsilon}}^2 \nu(x)}.$$
Uniqueness follows, since any function that achieves equality must agree 
with the minimum of the quadratic on a set of $F_{\alpha}$-measure $1$. 
Since the functions must be continuous, this proves part~(a).

Next, note that the denominator of $f_\ast^{\mathrm{est}}(x)$ is the same as the denominator of $f_\ast^{\mathrm{pred}}$ up to a factor of $\sx^2$ (see Theorem~\ref{thm: main}). Thus, using Lemma~\ref{lemma: polynomials P Q} we know that the denominator has $\rk + 1$ real roots, and a similar application of Lemma~\ref{lemma: lagrange interpolation} yields the first part of Theorem~\ref{thm: main_thm est}(b). We omit the details.

Finally, we prove that $\rk$ steps are always necessary to achieve $f_\ast^\mathrm{est}$. Note that any common root of the denominator and numerator in the display above must also be a root of $\nu(x)$. Recall from Lemma~\ref{lemma: F_mp << F_delta} that the only roots of $$\nu(x) = \nu_1(x) \cdot \nu_2(x)\cdots \nu_\rk(x)$$ are the outliers $x_1^\star, \dots, x_\rk^\star$. However, by Lemma~\ref{lemma: polynomials P Q}, we know that the denominator has exactly $\rk + 1$ real roots that lie outside $\overline{\mathcal{S}_c}$. In particular, $x_1^\star, \dots, x_\rk^\star$ can not be common roots, so by the same method as in Lemma~\ref{lemma: coprime} conclusion follows. 
\end{proof}

\subsection{Proof of Theorem~\ref{thm: main_thm est}(c)}
\begin{proof}
The proofs that $f_\ast^{\mathrm{est}}$ achieves smaller risk than the min-norm interpolator, early stopped gradient descent and PCR with a fixed number of components follow mutatis mutandis from their corresponding Theorems for prediction risk. We omit the details for brevity. 

Next, as proved in the previous section, the denominator $f_\ast^{\mathrm{est}}$ has $\rk + 1$ distinct, real roots and the rational function is irreducible. In particular, Ridge regression is suboptimal and so is any procedure that uses the same $\lambda$ across distillation steps.

Finally, the only case left is PCR with diverging number of retained components. As before, we work under Assumption~\ref{assumption: prop regime PCR}. Using the same concentration inequalities as in Lemma~\ref{lemma: concentration PCR}, we infer that
$$\lim_{n\to \infty}\|\widehat\bbeta_{\mathrm{PCR},m} - \bbeta_0\|^2_{2}  \,\stackrel{\text{a.s.}}{=} \,\lim_{n\to \infty}\mathbb{E}\Big[\|\widehat\bbeta_{\mathrm{PCR},m} - \bbeta_0\|^2_{2} \mid \bX \Big],$$
where we omit the details for the sake of brevity. Thus, it is enough to work with the conditional asymptotic risk. 
From equation (27) in \cite{Green_2025}, \begin{equation}\label{eq: formula for est PCR Risk}
    \mathbb{E}\Big[\|\widehat\bbeta_{\mathrm{PCR},m} - \bbeta_0\|^2_{2} \mid \bX \Big] = \|\mathcal{P}_m^\perp  \bbeta_0\|^2 + \frac{\sigma_{\boldsymbol{\varepsilon}}^2}{n} \tr\left((\mathcal{P}_m \widehat{\boldsymbol{\Sigma}}\mathcal{P}_m )^{\dagger}\right).
\end{equation}
As proved in Lemma~\ref{lemma: proportional regime PCR}, the limit of the bias term is $$\lim_{n\to\infty}\|\mathcal{P}_m^\perp  \bbeta_0\|_2^2 \, \stackrel{\text{a.s.}}{=} \, r^2  \int_{0}^{Q_{c}^{-1}(\tau)} \mathrm dF _{\alpha}(x),$$
and using Theorem 1 in ~\cite{Green_2025} 
$$\lim_{n\to\infty}\frac{\sigma_{\boldsymbol{\varepsilon}}^2}{n} \tr\left((\mathcal{P}_m \widehat{\boldsymbol{\Sigma}}\mathcal{P}_m )^{\dagger}\right) \stackrel{\text{a.s.}}{=} \,  c\sigma_{\boldsymbol{\varepsilon}}^2\int_{Q_{c}^{-1}(\tau)}^{(1+\sqrt{c})^2} \frac{f_{\mathrm{MP}}(x)}{x}\,\mathrm{d}x.$$

As in the prediction error case, the limit of $m$-PCR is the same as that of $$f_\text{PCR}(x) = \begin{cases}
    0, \qquad \text{if } x < Q_{c}^{-1}(\tau) \\
    1/x, \qquad \text{otherwise}. 
    \end{cases}$$
We can now finish using the same approximation as in Lemma~\ref{lemma: technical approx of PCR}. This completes the proof of Theorem~\ref{thm: main_thm est}(c).
\end{proof}

\subsection{Proof of Lemma~\ref{lemma: general formula for f_est}}
\begin{proof}
Recall from~\eqref{formula for beta_f} that: 
$$
\widehat \bbeta_f = \bW(\boldsymbol{\Lambda} f(\boldsymbol{\Lambda}))\bW^\top\bbeta_0 + \bW f(\boldsymbol{\Lambda})\bW^\top\frac{\bX^\top \boldsymbol{\varepsilon}}{n}.
$$
Therefore, following the same ideas as in Section~\ref{proof: general formula},
\begin{align*}
\|\widehat\bbeta_f - \bbeta_0\|_{2}^2
&=\bbeta_0^\top \bW\bigl(\mathbf{I}_p - \boldsymbol{\Lambda} f(\boldsymbol{\Lambda})\bigr)^2
\bW^\top\bbeta_0\\
&\quad - 2\,\bbeta_0^\top\bW\bigl(\mathbf{I}_p - \boldsymbol{\Lambda} f(\boldsymbol{\Lambda})\bigr)
f(\boldsymbol{\Lambda})\bW^\top\frac{\bX^\top\boldsymbol{\varepsilon}}{n}\\
&\quad + \frac{\boldsymbol{\varepsilon}^\top\bX}{n}\bW f^2(\boldsymbol{\Lambda})\bW^\top
\frac{\bX^\top\boldsymbol{\varepsilon}}{n},
\end{align*}
where we used $\bW^\top\bW = \mathbf{I}_p$. Alternatively,
\[
\|\widehat{\bbeta}_f - \bbeta_0\|_{2}^2
=
\mathcal{B}_{f}^{\mathrm{est}}(\bX, \by) + \mathcal{V}_{f}^{\mathrm{est}}(\bX, \by)
+ \mathcal{E}_{f,n}^{\mathrm{est}}(\bX, \by),
\]
where
\begin{align*}
\mathcal{B}_{f}^{\mathrm{est}}(\bX,\by)
&:=
\bbeta_0^\top \bW\bigl(\mathbf{I}_p - \boldsymbol{\Lambda} f(\boldsymbol{\Lambda})\bigr)^2
\bW^\top\bbeta_0, \\
\mathcal{V}_{f}^{\mathrm{est}}(\bX, \by)
&:=
\frac{\sigma_\beps^2}{n^2}
\tr\left(\bX f^2(\widehat{\boldsymbol{\Sigma}}) \bX^\top \right) = 
\frac{\sigma_\beps^2}{n}
\tr\left(\widehat{\boldsymbol{\Sigma}} f^2(\widehat{\boldsymbol{\Sigma}}) \right), \\
\mathcal{E}_{f}^{\mathrm{est}}(\bX, \by)
&:= - 2\,\bbeta_0^\top\bW\bigl(\mathbf{I}_p - \boldsymbol{\Lambda} f(\boldsymbol{\Lambda})\bigr)
f(\boldsymbol{\Lambda})\bW^\top\frac{\bX^\top\boldsymbol{\varepsilon}}{n} 
+ \left[\frac{\beps^\top \bX f^2(\widehat{\boldsymbol{\Sigma}}) \bX^\top \beps}{n^2} 
- \mathcal{V}_{f}^{\mathrm{est}}(\bX, \by)\right].
\end{align*}
Note that for all 
sufficiently large $n$ we can rewrite the bias term using the empirical measures introduced in Appendix~\ref{section: limits of empirical measures} as
$$\mathcal{B}^\mathrm{est}_{f}(\bX, \by) := \|\bbeta_0\|_2^2 \int \left(1 - xf(x)\right)^2 \,\mathrm{d}\widehat B_n(x).$$
Using Lemma~\ref{lemma: f_alpha} and  Lemma~\ref{lem:int-conv}, we deduce that
\begin{align*}
\mathcal{B}^\mathrm{est}_f \xrightarrow[]{a.s.} r^2\int (1 - xf(x))^2 \ \mathrm dF _\alpha(x).
\end{align*}
Consider now the variance term and note that for all 
sufficiently large $n$ we can rewrite it as 
$$\mathcal{V}^\mathrm{est}_f = \frac{{\sigma_{\beps}^2}\, p }{n} \int xf^2(x) \, \mathrm{d}F_{\widehat{\boldsymbol{\Sigma}}}(x).$$
Using Lemma~\ref{lemma:f_delta} and Lemma~\ref{lemma: hat D nj}, we know that $F_{\widehat{\boldsymbol{\Sigma}}} \xrightarrow[]{d} F_{\mathrm{MP}}$ and $\widehat D_n^j \xrightarrow{d} F_{\delta_j}$. 
Using Lemma~\ref{lem:int-conv} and  Lemma~\ref{lem:int-conv} again, we know that $$\mathcal{V}^\mathrm{est}_f \xrightarrow{\text{a.s.}} {\sigma_{\beps}^2} c \int xf^2(x)\, \mathrm{d}F_{\mathrm{MP}}.$$
Finally, $\mathcal{E}_{f}^{\mathrm{est}}(\bX, \by) \xrightarrow{\text{a.s.}} 0$ 
by the same concentration argument used in Section~\ref{proof: general formula}. This concludes the proof.
\end{proof}

\section{Decentralized Learning - Proof of Theorem~\ref{thm: product_of_shrinkage}}\label{app: I}

\subsection{Notation and The Bias--Variance Decomposition}\label{bias variance decomp section}

We begin by rewriting the aggregated estimator in a form that absorbs the aggregation weights into the shrinkage functions themselves, which makes the subsequent bias--variance decomposition more transparent. Throughout, the index \(\ell\) refers to a client and ranges over \(\{1,\dots,K\}\), the index \(j\) refers to a spike and ranges over \(\{1,\dots,\rk\}\), and the index \(i\) refers to an eigenvector and ranges over \(\{1,\dots,p\}\).
For each client \(\ell\), the corresponding  spectral shrinkage estimator is
\[
\widehat\bbeta_\ell
= f_\ell(\widehat{\boldsymbol{\Sigma}}_\ell)\frac{\bX_\ell^\top \by_\ell}{n_\ell}
= {\bW}_\ell f_\ell(\bLambda_\ell) \bW_\ell^\top \frac{\bX_\ell^\top \by_\ell}{n_\ell}, \qquad \text{where }\quad 
\widehat{\boldsymbol{\Sigma}}_\ell= \bW_\ell \bLambda_\ell \bW_\ell^\top.
\]

The next proposition rewrites each local estimator in a form that separates signal and noise, and then introduces a rescaled spectral shrinkage function that incorporates the aggregation weight.

\begin{lemma}\label{prop: normalized estimator federated}
For each client \(\ell\), define
\[
\widetilde f_\ell(x):=K \rho_\ell f_\ell(x),
\qquad
\widetilde\bbeta_\ell
:=\widetilde f_\ell(\widehat{\boldsymbol{\Sigma}}_\ell)\frac{\bX_\ell^\top \by_\ell}{n_\ell}.
\]
Moreover, if we let $g_\ell(x):=x\widetilde f_\ell(x)-1$, then
\[
\widetilde\bbeta_\ell-\bbeta_0
=
g_\ell(\widehat{\boldsymbol{\Sigma}}_\ell)\bbeta_0
+
\widetilde f_\ell(\widehat{\boldsymbol{\Sigma}}_\ell)\frac{\bX_\ell^\top \boldsymbol{\varepsilon}_\ell}{n_\ell},
\]
and hence
\[
\widehat\bbeta_{\mathrm{agg}}-\bbeta_0
=
\frac{1}{K}\sum_{\ell=1}^K(\widetilde\bbeta_\ell-\bbeta_0) = .\frac{1}{K}\sum_{\ell=1}^K g_\ell(\widehat{\boldsymbol{\Sigma}}_\ell)\bbeta_0 + \frac{1}{K}\sum_{\ell=1}^K \widetilde f_\ell(\widehat{\boldsymbol{\Sigma}}_\ell)\frac{\bX_\ell^\top \boldsymbol{\varepsilon}_\ell}{n_\ell} \,.
\]
\end{lemma}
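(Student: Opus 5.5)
The plan is a direct algebraic computation, mirroring the single-client decomposition \eqref{formula for beta_f} in the proof of Lemma~\ref{prop: general formula}. First I note that $\widetilde f_\ell = K\rho_\ell f_\ell$ is just a scalar multiple of $f_\ell$. It therefore inherits membership in $\mathcal F$, with the same convention \eqref{eq: f(A)} for infinite values. Moreover $\widetilde f_\ell(\widehat{\bSigma}_\ell) = K\rho_\ell\, f_\ell(\widehat{\bSigma}_\ell)$, so $\widetilde\bbeta_\ell = K\rho_\ell\widehat\bbeta_\ell$. Summing over $\ell$ and dividing by $K$ gives $\frac1K\sum_\ell \widetilde\bbeta_\ell = \sum_\ell \rho_\ell\widehat\bbeta_\ell = \widehat\bbeta_{\mathrm{agg}}$ by \eqref{eq:hat_beta_agg}. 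Subtracting $\bbeta_0 = \frac1K\sum_\ell \bbeta_0$ yields the first equality for $\widehat\bbeta_{\mathrm{agg}}-\bbeta_0$.

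Next I derive the per-client formula. Substituting $\by_\ell = \bX_\ell\bbeta_0 + \beps_\ell$ gives
\[
\widetilde\bbeta_\ell = \widetilde f_\ell(\widehat{\bSigma}_\ell)\widehat{\bSigma}_\ell\bbeta_0 + \widetilde f_\ell(\widehat{\bSigma}_\ell)\frac{\bX_\ell^\top\beps_\ell}{n_\ell}.
\]
Both $\widetilde f_\ell(\widehat{\bSigma}_\ell)$ and $\widehat{\bSigma}_\ell$ are diagonal in the eigenbasis $\bW_\ell$. Hence their product is $\bW_\ell\,\bLambda_\ell\widetilde f_\ell(\bLambda_\ell)\,\bW_\ell^\top$, which is exactly $h(\widehat{\bSigma}_\ell)$ for $h(x)=x\widetilde f_\ell(x)$. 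Subtracting $\bbeta_0 = \bW_\ell\bW_\ell^\top\bbeta_0$ then gives $g_\ell(\widehat{\bSigma}_\ell)\bbeta_0$ with $g_\ell(x)=x\widetilde f_\ell(x)-1$. Plugging this into the averaged identity gives the final display.

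The only delicate point is the convention at eigenvalues $d_i$ where $|\widetilde f_\ell(d_i)|=\infty$. There $\widetilde f_\ell(\widehat{\bSigma}_\ell)$ contributes $0$, while the functional calculus for $g_\ell$ would assign $-1$ rather than $\infty$. I would handle this in either of two ways. One option is to define $g_\ell(\widehat{\bSigma}_\ell)$ directly as $\widehat{\bSigma}_\ell\widetilde f_\ell(\widehat{\bSigma}_\ell)-\bI_p$, under which the identity holds for every $n$. The other is to invoke Assumption~\ref{assm:F} together with \cite{baik2004}: almost surely, for all large $n$, every eigenvalue lies where $\widetilde f_\ell$ is finite. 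This matches the argument in Lemma~\ref{prop: general formula}, and it is the only step requiring any care. Everything else is routine.
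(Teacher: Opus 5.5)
Your proposal is correct and follows the paper's own (omitted) argument: the paper says the identity is obtained by repeating the first part of the proof of Lemma~\ref{prop: general formula} client by client, i.e.\ substituting $\by_\ell=\bX_\ell\bbeta_0+\beps_\ell$, using that $\widetilde f_\ell(\bLambda_\ell)$ and $\bLambda_\ell$ commute, and noting $\frac1K\sum_\ell\widetilde\bbeta_\ell=\sum_\ell\rho_\ell\widehat\bbeta_\ell$, which is what you do. One small correction to your remark on conventions: at an eigenvalue $d_i\neq 0$ with $|\widetilde f_\ell(d_i)|=\infty$ one also has $|g_\ell(d_i)|=\infty$, so the rule \eqref{eq: f(A)} sets that direction of $g_\ell(\widehat{\bSigma}_\ell)$ to $0$, whereas the algebra produces $d_i\cdot 0-1=-1$; either of your fixes resolves this mismatch (reading $g_\ell(\widehat{\bSigma}_\ell)$ as $\widehat{\bSigma}_\ell\widetilde f_\ell(\widehat{\bSigma}_\ell)-\bI_p$, as \eqref{formula for beta_f} effectively does, or using eventual almost-sure finiteness at all eigenvalues).
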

Lemma~\ref{prop: normalized estimator federated} is useful because it expresses the aggregated estimator as an average of \(K\) terms with the same normalization. This allows the conditional prediction risk to be decomposed into a bias term and \(K\) separate variance contributions. The proof is a straightforward algebraic manipulation and follows the first part of Section~\ref{proof: general formula}. We omit the details.
Using these notations, we can deduce a standard bias--variance decomposition of the prediction risk of the aggregate estimator. 

\begin{lemma}\label{prop: bias variance decomposition federated}
The following decomposition holds:
\[
\|\widehat\bbeta_{\mathrm{agg}} - \bbeta_0\|_{\boldsymbol{\Sigma}}^2
=
\|\mathcal{B}^{\mathrm{agg}}\|_{\boldsymbol{\Sigma}}^2 
+ \sum_{\ell=1}^K \mathcal{V}_{\ell}^{\mathrm{agg}}
+ \mathcal{E}^{\mathrm{agg}},
\]
where the bias term is
\[
\mathcal{B}^{\mathrm{agg}}
:= \frac{1}{K}\sum_{\ell=1}^K g_\ell(\widehat{\boldsymbol{\Sigma}}_\ell)\bbeta_0,
\]
the variance contribution of client $\ell$ is
\[
\mathcal{V}_{\ell}^{\mathrm{agg}}
:=
\frac{\sigma_{\boldsymbol{\varepsilon}}^2}{K^2 n_\ell}
\tr\bigl(
\boldsymbol{\Sigma}\, \widehat{\boldsymbol{\Sigma}}_\ell\, 
{\widetilde{f}_\ell}^2(\widehat{\boldsymbol{\Sigma}}_\ell)
\bigr),
\]
and $\mathcal{E}^{\mathrm{agg}} \xrightarrow{a.s.} 0$ as $p \to \infty$.
\end{lemma}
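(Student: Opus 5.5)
The plan is to expand the squared $\boldsymbol{\Sigma}$-norm directly, following the single-client argument of Lemma~\ref{prop: general formula}. By Lemma~\ref{prop: normalized estimator federated},
\[
\widehat\bbeta_{\mathrm{agg}}-\bbeta_0=\mathcal{B}^{\mathrm{agg}}+\frac1K\sum_{\ell=1}^K \boldsymbol{\eta}_\ell,
\qquad \boldsymbol{\eta}_\ell:=\widetilde f_\ell(\widehat{\boldsymbol{\Sigma}}_\ell)\frac{\bX_\ell^\top\boldsymbol{\varepsilon}_\ell}{n_\ell}.
\]
Expanding the quadratic form gives three groups of terms. The first is the pure bias $\|\mathcal{B}^{\mathrm{agg}}\|_{\boldsymbol{\Sigma}}^2$. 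The second consists of the linear cross terms $\frac{2}{K}\sum_\ell (\mathcal{B}^{\mathrm{agg}})^\top\boldsymbol{\Sigma}\boldsymbol{\eta}_\ell$. The third is the noise quadratic form $\frac{1}{K^2}\sum_{\ell,\ell'}\boldsymbol{\eta}_\ell^\top\boldsymbol{\Sigma}\boldsymbol{\eta}_{\ell'}$. I define $\mathcal{E}^{\mathrm{agg}}$ as everything except the bias and the diagonal conditional-mean parts $\sum_\ell \mathcal{V}^{\mathrm{agg}}_\ell$. It then remains to show that $\mathcal{E}^{\mathrm{agg}}\to0$ almost surely, term by term.

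I first fix an almost-sure event on which, for all large $n$, the spectrum of every $\widehat{\boldsymbol{\Sigma}}_\ell$ lies in the $\eta$-neighborhood of $\mathcal{S}_c^+$. The same BBP/\cite{baik2004} argument used in Lemma~\ref{lem:int-conv}, applied to each of the finitely many clients, provides this event. On it, $\|\widetilde f_\ell(\widehat{\boldsymbol{\Sigma}}_\ell)\|_{\mathrm{op}}$, $\|g_\ell(\widehat{\boldsymbol{\Sigma}}_\ell)\|_{\mathrm{op}}$ and $\|\widehat{\boldsymbol{\Sigma}}_\ell\|_{\mathrm{op}}$ are all bounded uniformly in $n$.

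\textbf{Linear cross terms.} Each cross term equals $\ba_{\ell}^\top\boldsymbol{\varepsilon}_\ell/n_\ell$ with $\ba_\ell:=\frac{2}{K}\bX_\ell\widetilde f_\ell(\widehat{\boldsymbol{\Sigma}}_\ell)\boldsymbol{\Sigma}\mathcal{B}^{\mathrm{agg}}$. The vector $\ba_\ell$ depends on all $\bX_{1:K}$ but is independent of $\boldsymbol{\varepsilon}_\ell$, and
\[
\|\ba_\ell\|^2/n_\ell\lesssim \|\widehat{\boldsymbol{\Sigma}}_\ell\|_{\mathrm{op}}\|\widetilde f_\ell\|_{\mathrm{op}}^2\|\boldsymbol{\Sigma}\|_{\mathrm{op}}^2\,\|\mathcal{B}^{\mathrm{agg}}\|^2,
\]
which is bounded because $\|\mathcal{B}^{\mathrm{agg}}\|\le \max_\ell\|g_\ell\|_{\mathrm{op}}\|\bbeta_0\|$. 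Lemma~\ref{lem:linear_concentration} therefore gives $\ba_\ell^\top\boldsymbol{\varepsilon}_\ell/n_\ell\to0$. The cross terms involve $\boldsymbol{\varepsilon}$ only through the single block $\boldsymbol{\varepsilon}_\ell$, so this application is legitimate.

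\textbf{Diagonal noise terms.} For $\ell=\ell'$, set $\bA_\ell:=\frac{1}{n_\ell}\bX_\ell\widetilde f_\ell\boldsymbol{\Sigma}\widetilde f_\ell\bX_\ell^\top$, so that the term is $\frac{1}{K^2}\boldsymbol{\varepsilon}_\ell^\top\bA_\ell\boldsymbol{\varepsilon}_\ell/n_\ell$. Since $\|\bA_\ell\|_{\mathrm{op}}$ is bounded and, by cyclicity of the trace, $\sigma_{\boldsymbol{\varepsilon}}^2\tr(\bA_\ell)/(K^2 n_\ell)=\mathcal{V}^{\mathrm{agg}}_\ell$, Lemma~\ref{lem:quadratic_concentration} shows that the difference between this term and $\mathcal{V}^{\mathrm{agg}}_\ell$ vanishes.

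\textbf{Off-diagonal noise terms (main obstacle).} For $\ell\neq\ell'$ the term is the bilinear form $\boldsymbol{\varepsilon}_\ell^\top\bM\boldsymbol{\varepsilon}_{\ell'}$ with $\bM:=\frac{1}{K^2n_\ell n_{\ell'}}\bX_\ell\widetilde f_\ell\boldsymbol{\Sigma}\widetilde f_{\ell'}\bX_{\ell'}^\top$, and the noise blocks are independent. I handle it by conditioning in two stages. First, conditionally on $\bX_{1:K}$ and $\boldsymbol{\varepsilon}_{\ell'}$, write the term as $\ba^\top\boldsymbol{\varepsilon}_\ell/n_\ell$ with $\ba:=n_\ell\bM\boldsymbol{\varepsilon}_{\ell'}$. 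Then
\[
\|\ba\|^2/n_\ell\le n_\ell\|\bM\|_{\mathrm{op}}^2\|\boldsymbol{\varepsilon}_{\ell'}\|^2 = O\!\left(\frac{\|\boldsymbol{\varepsilon}_{\ell'}\|^2}{n_{\ell'}}\right),
\]
because $\|\bM\|_{\mathrm{op}}=O(1/\sqrt{n_\ell n_{\ell'}})$. By the strong law, $\|\boldsymbol{\varepsilon}_{\ell'}\|^2/n_{\ell'}\to\sigma_{\boldsymbol{\varepsilon}}^2$ almost surely, so $\limsup\|\ba\|^2/n_\ell<\infty$ almost surely. Lemma~\ref{lem:linear_concentration} then yields that the off-diagonal term vanishes.

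The point needing care is this last application of Lemma~\ref{lem:linear_concentration}, whose hypothesis requires $\ba$ to be independent of the noise it multiplies. Here $\ba$ is independent of $\boldsymbol{\varepsilon}_\ell$ but random through $\boldsymbol{\varepsilon}_{\ell'}$. Since the lemma is stated for random $\ba_n$ independent of $\boldsymbol{\varepsilon}$, this independence suffices, but the almost-sure bound on $\|\ba\|^2/n_\ell$ must hold on a single probability-one event. That event is the intersection of the spectral event fixed at the start with the strong-law event for $\boldsymbol{\varepsilon}_{\ell'}$.

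Summing the finitely many vanishing pieces gives $\mathcal{E}^{\mathrm{agg}}\to0$ almost surely, which proves the decomposition.
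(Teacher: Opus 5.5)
Your proposal is correct and follows essentially the same route as the paper: the same expansion via Lemma~\ref{prop: normalized estimator federated}, Lemma~\ref{lem:linear_concentration} for the bias--noise cross terms, and Lemma~\ref{lem:quadratic_concentration} for the diagonal noise terms. For the cross-client terms you also match the paper, applying Lemma~\ref{lem:linear_concentration} with a vector built from $\boldsymbol{\varepsilon}_{\ell'}$, which is independent of $\boldsymbol{\varepsilon}_\ell$. The only cosmetic difference is how you bound $\|\bX_{\ell'}^\top\boldsymbol{\varepsilon}_{\ell'}\|_2^2/n_{\ell'}^2$: you use $\|\widehat{\boldsymbol{\Sigma}}_{\ell'}\|_{\mathrm{op}}\|\boldsymbol{\varepsilon}_{\ell'}\|_2^2/n_{\ell'}$ and the strong law, while the paper uses Lemma~\ref{lem:quadratic_concentration} together with the Marchenko--Pastur trace limit; both give the required almost-sure bound.
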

In the next lemma, we rewrite the conditional bias norm in a form that separates the bulk contribution from the spike contributions. 
\begin{lemma}
\label{prop: bias decomposition federated}
We can decompose $\|\mathcal{B}^{\mathrm{agg}}\|_{\boldsymbol{\Sigma}}^2$ as
\begin{align}\label{eq: B_0 avg}
\left\|\frac1K\sum_{\ell=1}^K g_\ell(\widehat{\boldsymbol{\Sigma}}_\ell)\bbeta_0\right\|^2_{{\boldsymbol{\Sigma}}}
&=
\frac{\sx^2}{K^2}\,\bbeta_0^\top \left(\sum_{\ell=1}^K g_\ell(\widehat{\boldsymbol{\Sigma}}_\ell)\right)^2 \bbeta_0
+
\frac{1}{K^2}
\sum_{j=1}^{\rk}\delta_j\,
\left(\bbeta_0^\top \sum_{\ell=1}^K g_\ell(\widehat{\boldsymbol{\Sigma}}_\ell) \bv_j\right)^2
\notag\\
& := \mathcal{B}_{0}^{\mathrm{agg}} + \sum_{j=1}^{\rk} \mathcal{B}_{j}^{\mathrm{agg}}.
\end{align}
\end{lemma}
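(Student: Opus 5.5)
This identity is purely algebraic, so no limit theorems are needed. I would derive it directly from the spiked form of $\boldsymbol{\Sigma}$ in Assumption~\ref{assm:dgp}. Write $\bG := \sum_{\ell=1}^K g_\ell(\widehat{\boldsymbol{\Sigma}}_\ell)$, so that $\mathcal{B}^{\mathrm{agg}} = K^{-1}\bG\bbeta_0$. Then
\[
\|\mathcal{B}^{\mathrm{agg}}\|_{\boldsymbol{\Sigma}}^2 = \frac{1}{K^2}\,\bbeta_0^\top \bG^\top \boldsymbol{\Sigma}\,\bG\,\bbeta_0 .
\]

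The first step is to note that each $g_\ell(\widehat{\boldsymbol{\Sigma}}_\ell)$ is symmetric. By definition~\eqref{eq: f(A)}, $g_\ell(\widehat{\boldsymbol{\Sigma}}_\ell) = \bW_\ell\, g_\ell(\bLambda_\ell)\bW_\ell^\top$ with $g_\ell(\bLambda_\ell)$ real diagonal. Here one should note that $g_\ell(x) = x\widetilde f_\ell(x) - 1$ is applied spectrally. For all large $n$, every eigenvalue is finite for $\widetilde f_\ell \in \mathcal{F}$ by Assumption~\ref{assm:F} and \cite{baik2004}. In any case the convention \eqref{eq: f(A)} produces a real diagonal matrix, so symmetry holds. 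It follows that $\bG$, as a sum of symmetric matrices, is symmetric, i.e.\ $\bG^\top = \bG$.

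The second step is to substitute $\boldsymbol{\Sigma} = \sx^2\mathbf{I}_p + \sum_{j=1}^{\rk}\delta_j\bv_j\bv_j^\top$. This gives
\[
\bbeta_0^\top \bG \boldsymbol{\Sigma} \bG \bbeta_0 = \sx^2\,\bbeta_0^\top \bG^2\bbeta_0 + \sum_{j=1}^{\rk}\delta_j\,(\bbeta_0^\top \bG\bv_j)(\bv_j^\top\bG\bbeta_0).
\]
By symmetry of $\bG$, $\bv_j^\top \bG\bbeta_0 = \bbeta_0^\top\bG\bv_j$, so each spike term equals $\delta_j(\bbeta_0^\top\bG\bv_j)^2$. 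Dividing by $K^2$ yields exactly \eqref{eq: B_0 avg}, and defining $\mathcal{B}_0^{\mathrm{agg}}$ and $\mathcal{B}_j^{\mathrm{agg}}$ as the respective summands completes the argument.

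The only point requiring any care is the symmetry of $\bG$. The matrices $g_\ell(\widehat{\boldsymbol{\Sigma}}_\ell)$ for different clients do not commute, so the argument must not use commutativity. It uses only symmetry of each summand, which holds regardless. With that observed, the remainder is the same expansion used for \eqref{eq: bias formula} in the proof of Lemma~\ref{prop: general formula}.
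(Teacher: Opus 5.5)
Your proposal is correct and follows the paper's proof essentially verbatim: expand the $\boldsymbol{\Sigma}$-norm, substitute the spiked form of $\boldsymbol{\Sigma}$, and use the symmetry of $\sum_{\ell} g_\ell(\widehat{\boldsymbol{\Sigma}}_\ell)$ to write each spike term as a square. Your remark that only symmetry of each summand is needed, and not commutativity across clients, is the same observation the paper relies on.
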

Here \(\mathcal B^{\mathrm{agg}}_0\) corresponds to the contribution from the isotropic part of \({\boldsymbol{\Sigma}}\), while each \(\mathcal B^{\mathrm{agg}}_j\) captures the contribution of the spike direction \(\bv_j\).
It is immediate from Lemma~\ref{prop: bias variance decomposition federated} and Lemma~\ref{prop: bias decomposition federated} that to quantify the limiting prediction risk, we need to establish the limit of the bulk bias term \(\mathcal B^{\mathrm{agg}}_0\), the spike bias terms \(\mathcal B^{\mathrm{agg}}_j\), and the client-specific variance terms \(\mathcal V^{\mathrm{agg}}_\ell\). 

\subsection{Proof of Theorem~\ref{thm: product_of_shrinkage}}\label{sec: app h proof of agg}
From the definition of $\mathcal R^\mathrm{pred}_K(\rho_1, \cdots, \rho_k, f_1,\dots, f_K)$ (Equation \eqref{eq: opt agg}) and the definition of $(\tilde f_1, \dots, \tilde f_k)$ and $\hat \bbeta_{\rm agg}$ (Lemma \ref{prop: general formula}), we have: 
$$
\mathcal R^\mathrm{pred}_K(\rho_1, \cdots, \rho_k, f_1,\dots, f_K) = \mathcal R^\mathrm{pred}_K(\widetilde f_1,\dots,\widetilde f_K):= \lim_{\substack{p \to \infty}} \|\widehat\bbeta_\mathrm{agg} - \bbeta_0\|_{\boldsymbol{\Sigma}}^2
$$
First, we argue that the limit exists almost surely. 
Towards that end, we use the same tools as used in Section~\ref{app: B} and in the proof of Theorem~\ref{lemma:non-free limit}. In particular, using the Radon--Nikodym identities from Lemma~\ref{lemma: def mu_j mu_0} and the functions introduced in~\eqref{eq: define w ,g , h_j}, we obtain the following:  
\begin{lemma}\label{prop: general formula for agg}
For any functions $f_1, \dots, f_K \in \mathcal{F}$ and $\rho_1, \dots, \rho_k \in \reals$, the almost sure limiting prediction risk 
of $\widehat{\bbeta}_{\mathrm{agg}}$ is
\[
\begin{aligned}
\mathcal R^\mathrm{pred}_K(\widetilde f_1,\dots,\widetilde f_K)
& = \sum_{\ell=1}^K \|\widetilde f_\ell\|_w^2
-2K\sum_{\ell=1}^K \langle g,\widetilde f_\ell\rangle_w
+2\sx^2\,r^2\omega_0\sum_{\ell < {\ell'}}\langle h_0,\widetilde f_\ell\rangle_w\langle h_0,\widetilde f_{\ell'}\rangle_w \\
&\quad
+2\sum_{j=1}^{\rk}\sx^2\,\alpha_j^2\sum_{\ell < {\ell'}}\langle h_j,\widetilde f_\ell\rangle_w\langle h_j,\widetilde f_{\ell'}\rangle_w
+\sum_{j=1}^{\rk}\delta_j\alpha_j^2
\left(\sum_{\ell=1}^K \langle h_j,\widetilde f_\ell\rangle_w\right)^2 +K^2\left(r^2+\sum_{j=1}^{\rk}\delta_j\alpha_j^2\right).
\end{aligned}
\]
\end{lemma}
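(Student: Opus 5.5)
The plan is to start from the decomposition of Lemma~\ref{prop: bias variance decomposition federated} and Lemma~\ref{prop: bias decomposition federated}. Since $\mathcal E^{\mathrm{agg}}\to 0$ almost surely, it suffices to find the almost sure limits of $\mathcal B_0^{\mathrm{agg}}$, $\mathcal B_j^{\mathrm{agg}}$ and $\mathcal V_\ell^{\mathrm{agg}}$. Each of these will then be rewritten through the $xw(x)$-weighted inner product $\langle\cdot,\cdot\rangle_w$ on $L_2(F_\alpha)$, using the Radon--Nikodym derivatives $\mu_0,\mu_j$ of Lemma~\ref{lemma: def mu_j mu_0}. All clients share the aspect ratio $c$, so the measures $F_{\mathrm{MP}}$, $F_{\delta_j}$ and $F_\alpha$ are the same for every client, and a single inner product serves for all of them.

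\textbf{Variance and spike bias terms.} For $\mathcal V_\ell^{\mathrm{agg}}$, I repeat the argument of Lemma~\ref{prop: general formula} client by client. Write $\bSigma = \sx^2\bI_p + \sum_j \delta_j\bv_j\bv_j^\top$. The spike part contributes $O(1/n_\ell)$, and the bulk part converges to $\frac{c\sx^2\sigma_\beps^2}{K^2}\int x\widetilde f_\ell^2\,\mathrm dF_{\mathrm{MP}} = \frac{c\sx^2\sigma_\beps^2}{K^2}\int x\mu_0\widetilde f_\ell^2\,\mathrm dF_\alpha$. For $\mathcal B_j^{\mathrm{agg}}$, the quantity $\bbeta_0^\top\sum_\ell g_\ell(\widehat\bSigma_\ell)\bv_j$ is a sum of single-matrix bilinear forms. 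Each one converges, by Lemma~\ref{lemma: hat C nj} and Lemma~\ref{lem:int-conv}, to $\alpha_j\int(x\widetilde f_\ell-1)\,\mathrm dF_{\delta_j}=\alpha_j(\langle h_j,\widetilde f_\ell\rangle_w-1)$. Squaring gives the term $\delta_j\alpha_j^2(\sum_\ell\langle h_j,\widetilde f_\ell\rangle_w - K)^2/K^2$.

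\textbf{Bulk bias term (the main obstacle).} Expanding $\bbeta_0^\top(\sum_\ell g_\ell)^2\bbeta_0$ gives diagonal terms $\bbeta_0^\top g_\ell(\widehat\bSigma_\ell)^2\bbeta_0$. These are single-matrix quadratic forms and converge to $r^2\int g_\ell^2\,\mathrm dF_\alpha$ by Lemma~\ref{lemma: f_alpha}. The remaining cross terms $\bbeta_0^\top g_\ell(\widehat\bSigma_\ell)g_{\ell'}(\widehat\bSigma_{\ell'})\bbeta_0$ with $\ell\neq\ell'$ involve two independent sample covariances. Here I invoke Theorem~\ref{lemma:non-free limit} with $\phi=g_\ell$ and $\psi=g_{\ell'}$, which are in $\mathcal F$ because $\widetilde f_\ell$ are. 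The limit is
\[
r^2\Bigl[\omega_0\Bigl(\int g_\ell\,\mathrm dF_{\mathrm{MP}}\Bigr)\Bigl(\int g_{\ell'}\,\mathrm dF_{\mathrm{MP}}\Bigr)+\sum_j\omega_j\Bigl(\int g_\ell\,\mathrm dF_{\delta_j}\Bigr)\Bigl(\int g_{\ell'}\,\mathrm dF_{\delta_j}\Bigr)\Bigr].
\]
Using $\int x\widetilde f\,\mathrm dF_{\mathrm{MP}}=\langle h_0,\widetilde f\rangle_w$ and $\int x\widetilde f\,\mathrm dF_{\delta_j}=\langle h_j,\widetilde f\rangle_w$, this equals $r^2[\omega_0(\langle h_0,\widetilde f_\ell\rangle_w-1)(\langle h_0,\widetilde f_{\ell'}\rangle_w-1)+\sum_j\omega_j(\langle h_j,\widetilde f_\ell\rangle_w-1)(\langle h_j,\widetilde f_{\ell'}\rangle_w-1)]$. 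The hard step is making sure Theorem~\ref{lemma:non-free limit} really applies to these $g_\ell$. That includes checking the almost-sure eventual finiteness of $g_\ell$ on the spectra and the continuity hypotheses of $\mathcal F$. Otherwise I would approximate $g_\ell$ by functions agreeing on neighbourhoods of $\mathcal S_c^+$, as in Lemma~\ref{lem:int-conv}.

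\textbf{Assembly.} Multiply everything by $K^2$, which matches the normalisation of the stated formula; presumably the risk is defined up to this factor. Next collect terms. The diagonal pieces together with the variance give $\sum_\ell\|\widetilde f_\ell\|_w^2$, with weight $\sx^2r^2x^2+c\sx^2\sigma_\beps^2x\mu_0 = xw$. The linear terms in $\widetilde f_\ell$ combine, via $g=\sx^2r^2\omega_0h_0+\sum_j(\delta_j+\sx^2)\alpha_j^2h_j$ from \eqref{eq: g is linear in h_j}, into $-2K\sum_\ell\langle g,\widetilde f_\ell\rangle_w$. The constant terms sum to $K^2(r^2+\sum_j\delta_j\alpha_j^2)$, using $\omega_0+\sum_j\omega_j=1$ and $r^2\omega_j=\alpha_j^2$. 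The bilinear cross terms give the $h_0$ and $h_j$ products with coefficients $\sx^2r^2\omega_0$ and $\sx^2\alpha_j^2$. Finally, the spike-bias squares supply $\sum_j\delta_j\alpha_j^2(\sum_\ell\langle h_j,\widetilde f_\ell\rangle_w)^2$. I would do this bookkeeping carefully, tracking the $\sx^2$ factors, which Theorem~\ref{lemma:non-free limit} suppresses.
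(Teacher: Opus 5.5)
Your proposal is correct and follows the paper's proof essentially step by step. It uses the same bias--variance decomposition, single-matrix limits for the variance, spike-bias and diagonal bulk terms, and Theorem~\ref{lemma:non-free limit} for the cross-client bulk terms; the final regrouping into the $xw$-weighted inner product uses \eqref{eq: g is linear in h_j} where the paper uses the equivalent identity $\omega_0\mu_0+\sum_j\omega_j\mu_j=1$. You are also right that the displayed formula is $K^2$ times the limit in \eqref{eq: limit of beta_agg}, and once $\sx^2$ is tracked the constant is $K^2(\sx^2r^2+\sum_j\delta_j\alpha_j^2)$, as in Lemma~\ref{prop: decoupled formula}.
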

To obtain the optimal shrinkage rule for each client, we minimize the limiting risk with respect to $(\tilde f_1, \dots, \tilde f_K)$. We first show that at the minimum value, all shrinkage rules $\widetilde f_\ell$ are equal to each other, which essentially proves the second part of Theorem~\ref{thm: product_of_shrinkage}.
To do this, we introduce the mean and the centered functions
\[
\bar \vartheta :=\frac1K\sum_{\ell=1}^K \widetilde f_\ell,
\qquad
\vartheta_\ell:=\widetilde f_\ell-\bar \vartheta \,.
\]
It is immediate that $\sum_{\ell=1}^K \vartheta_\ell=0$. By some simple algebraic manipulation, we can rewrite the limiting risk, obtained in Lemma \ref{prop: general formula for agg}, as follows: 
\begin{lemma}
\label{prop: decoupled formula}
Under the assumptions of Lemma~\ref{prop: general formula for agg},
\begin{align*}
\mathcal R^\mathrm{pred}_K(\widetilde f_1,\dots,\widetilde f_K)
&\stackrel{\text{a.s.}}{=} \sum_{\ell=1}^K \|\vartheta_\ell\|_w^2
-\sx^2\,r^2\omega_0\sum_{\ell=1}^K \langle h_0,\vartheta_\ell\rangle_w^2
-\sum_{j=1}^{\rk}\sx^2\,\alpha_j^2\,\sum_{\ell=1}^K \langle h_j,\vartheta_\ell\rangle_w^2 \\
&\quad
+K\|\bar \vartheta\|_w^2
-2K^2\langle g,\bar \vartheta\rangle_w
+\sx^2\,r^2\omega_0\,K(K-1)\langle h_0,\bar \vartheta\rangle_w^2 \\
&\quad
+\sum_{j=1}^{\rk}\alpha_j^2\bigl(K(K-1)\sx^2\,+\delta_jK^2\bigr)\langle h_j,\bar \vartheta\rangle_w^2
+K^2\left(\sx^2\,r^2+\sum_{j=1}^{\rk}\delta_j\alpha_j^2\right).
\end{align*}
\end{lemma}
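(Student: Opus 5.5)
This is a purely algebraic rewriting of the formula in Lemma~\ref{prop: general formula for agg}. I will substitute $\widetilde f_\ell = \bar\vartheta + \vartheta_\ell$ and expand each term using bilinearity of $\langle\cdot,\cdot\rangle_w$. Every cross term that is linear in the $\vartheta_\ell$ disappears because of the constraint $\sum_\ell \vartheta_\ell = 0$.

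\textbf{Diagonal and linear terms.} First, $\sum_\ell\|\widetilde f_\ell\|_w^2 = K\|\bar\vartheta\|_w^2 + 2\langle\bar\vartheta,\sum_\ell\vartheta_\ell\rangle_w + \sum_\ell\|\vartheta_\ell\|_w^2$, and the middle term is zero. Second, $\sum_\ell\langle g,\widetilde f_\ell\rangle_w = K\langle g,\bar\vartheta\rangle_w$, which gives the term $-2K^2\langle g,\bar\vartheta\rangle_w$. The constant $K^2(r^2+\sum_j\delta_j\alpha_j^2)$ carries over unchanged; I would check the factor $\sx^2$ on $r^2$ against the normalization in Lemma~\ref{prop: general formula for agg}.

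\textbf{Pairwise terms.} For a fixed $h\in\{h_0,\dots,h_\rk\}$, write $a_\ell := \langle h,\widetilde f_\ell\rangle_w = \langle h,\bar\vartheta\rangle_w + \langle h,\vartheta_\ell\rangle_w$. I will use the identity
\[
2\sum_{\ell<\ell'} a_\ell a_{\ell'} = \Big(\sum_\ell a_\ell\Big)^2 - \sum_\ell a_\ell^2 .
\]
Here $\sum_\ell a_\ell = K\langle h,\bar\vartheta\rangle_w$, and the cross terms again vanish, so $\sum_\ell a_\ell^2 = K\langle h,\bar\vartheta\rangle_w^2 + \sum_\ell\langle h,\vartheta_\ell\rangle_w^2$. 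Hence
\[
2\sum_{\ell<\ell'}a_\ell a_{\ell'} = K(K-1)\langle h,\bar\vartheta\rangle_w^2 - \sum_\ell\langle h,\vartheta_\ell\rangle_w^2 .
\]
Applying this with $h=h_0$ and coefficient $\sx^2 r^2\omega_0$ produces the terms $\sx^2 r^2\omega_0 K(K-1)\langle h_0,\bar\vartheta\rangle_w^2$ and $-\sx^2 r^2\omega_0\sum_\ell\langle h_0,\vartheta_\ell\rangle_w^2$. Applying it with $h=h_j$ and coefficient $\sx^2\alpha_j^2$ produces $K(K-1)\sx^2\alpha_j^2\langle h_j,\bar\vartheta\rangle_w^2$ and $-\sx^2\alpha_j^2\sum_\ell\langle h_j,\vartheta_\ell\rangle_w^2$.

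\textbf{Spike-square term.} This term equals $\delta_j\alpha_j^2\big(\sum_\ell a_\ell\big)^2 = \delta_j\alpha_j^2K^2\langle h_j,\bar\vartheta\rangle_w^2$. Adding it to the $K(K-1)\sx^2\alpha_j^2$ piece gives the stated coefficient $\alpha_j^2\big(K(K-1)\sx^2+\delta_jK^2\big)$.

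Collecting all terms yields the claimed formula, and the almost-sure qualifier is inherited directly from Lemma~\ref{prop: general formula for agg}. No analytic difficulty arises. The only real obstacle is bookkeeping: matching the constants and the $\sx^2$ factors exactly as they appear in Lemma~\ref{prop: general formula for agg}, since any discrepancy there would be a typo to reconcile rather than a mathematical gap.
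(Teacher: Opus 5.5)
Your proof is correct and follows the paper's argument. You substitute $\widetilde f_\ell=\bar\vartheta+\vartheta_\ell$, remove every term linear in the $\vartheta_\ell$ via $\sum_\ell\vartheta_\ell=0$, and turn the pairwise sums into squares with $2\sum_{\ell<\ell'}a_\ell a_{\ell'}=(\sum_\ell a_\ell)^2-\sum_\ell a_\ell^2$. The paper first expands the product and then applies that identity only to the $\vartheta_\ell$-parts, which is the same computation.

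Your suspicion about the constant is justified. The paper writes $K^2(r^2+\sum_j\delta_j\alpha_j^2)$ in Lemma~\ref{prop: general formula for agg} but $K^2(\sx^2 r^2+\sum_j\delta_j\alpha_j^2)$ in this statement. This is a typo in a term that does not depend on the $\widetilde f_\ell$, so it has no effect on the argument.
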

Crucially, note that in the form above the mean and the centered functions decouple so we can minimize them separately.  Indeed, the next Lemma shows that the term that depends on $\bar \vartheta$ is always nonnegative. This is the key step forcing all optimal functions $\widetilde f_\ell$ to coincide on \(\mathcal S_c^+ \setminus \{0\}\).

\begin{lemma}\label{all f' need to be the same}
For every \(\phi \in \mathcal{F}\),
\[
\|\phi\|_w^2
-\sx^2\,r^2\omega_0\langle h_0,\phi\rangle_w^2
-\sum_{j=1}^{\rk}\sx^2\,\alpha_j^2\, \langle h_j,\phi\rangle_w^2
\ge 0.
\]
Moreover, equality holds if and only if \(\phi\) vanishes on \(\mathcal S_c^+\setminus\{0\}\).
\end{lemma}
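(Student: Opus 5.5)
The plan is to rewrite every inner product against the common measure $F_\alpha$ and then apply a weighted Cauchy--Schwarz inequality, one spike at a time, with weight $\mu_j$. By definition $h_j=\mu_j/w$ on $\mathcal S_c^+$ for $j\in\{0,1,\dots,\rk\}$. Hence
\[
\langle h_j,\phi\rangle_w=\int x\,\phi(x)\,\mu_j(x)\,\mathrm dF_\alpha(x).
\]
Since $\sx^2\alpha_j^2=\sx^2 r^2\omega_j$ for $j\ge1$, the quantity to be subtracted from $\|\phi\|_w^2$ equals
\[
\sx^2 r^2\sum_{j=0}^{\rk}\omega_j\Bigl(\int x\phi\,\mu_j\,\mathrm dF_\alpha\Bigr)^2 .
\]

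\textbf{Step 1 (Cauchy--Schwarz with weight $\mu_j$).} Each $\mu_j$ is a nonnegative Radon--Nikodym derivative by Lemma~\ref{lemma: def mu_j mu_0}, and $w>0$ on $\mathcal S_c^+$. Writing $x\phi\mu_j=\bigl(\sqrt{xw}\,\phi\sqrt{\mu_j}\bigr)\bigl(\sqrt{x/w}\,\sqrt{\mu_j}\bigr)$ gives
\[
\Bigl(\int x\phi\,\mu_j\,\mathrm dF_\alpha\Bigr)^2\le \int x w\phi^2\mu_j\,\mathrm dF_\alpha\cdot\int \frac{x}{w}\,\mu_j\,\mathrm dF_\alpha .
\]

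\textbf{Step 2 (bounding the second factor).} Since $w(x)=\sx^2r^2x+c\sx^2\sigma_\beps^2\mu_0(x)\ge \sx^2r^2x$ on $\mathcal S_c^+$, we have $x/w\le 1/(\sx^2r^2)$. Because $\int\mu_j\,\mathrm dF_\alpha=1$ (it is the density of $F_{\delta_j}$, or of $F_{\mathrm{MP}}$ when $j=0$), this yields
\[
\sx^2r^2\int (x/w)\,\mu_j\,\mathrm dF_\alpha\le 1 .
\]
Summing Step 1 against $\omega_j$ and using the identity $\sum_{j=0}^{\rk}\omega_j\mu_j\equiv1$ from the proof of Lemma~\ref{lemma: define A and solve f_ast}, the subtracted quantity is at most
\[
\int xw\phi^2\Bigl(\sum_j\omega_j\mu_j\Bigr)\,\mathrm dF_\alpha=\|\phi\|_w^2 ,
\]
which is the claimed inequality.

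\textbf{Step 3 (equality case).} I expect this to be the only delicate point, because the pointwise bound $x/w\le 1/(\sx^2r^2)$ is tight at the outlier atoms $x_j^\star$ (where $\mu_0=0$) and at $x=0$. The fix is to use strictness in integrated form. On $\mathrm{int}(\mathcal S_c)$ we have $\mu_0>0$ and $\sigma_\beps^2>0$, so $x/w<1/(\sx^2r^2)$ there. Each $F_{\delta_j}$ and $F_{\mathrm{MP}}$ puts positive mass on the bulk, by Lemma~\ref{lemma:f_delta}(d)(i). Therefore, for every $j$, $\sx^2r^2\int (x/w)\,\mu_j\,\mathrm dF_\alpha<1$ strictly.

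Now suppose equality holds in the lemma. Then for each $j$ the product of Steps 1--2 must be tight. Since the second factor is strictly below $1/(\sx^2 r^2)$, this forces $\int xw\phi^2\mu_j\,\mathrm dF_\alpha=0$ for all $j$. Summing with the weights $\omega_j>0$ gives $\|\phi\|_w^2=0$. By the argument in the proof of Lemma~\ref{lemma: f_ast is unique}, this implies $\phi\equiv0$ on $\mathcal S_c^+\setminus\{0\}$; that argument uses continuity of $\phi\in\mathcal F$ on the bulk and positivity of $w$ at the atoms $x_j^\star$. Conversely, if $\phi$ vanishes on $\mathcal S_c^+\setminus\{0\}$, then every term is zero: the atom at $0$ contributes nothing because of the factor $x$ in all integrands.
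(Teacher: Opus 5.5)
Your argument is correct, but it routes the Cauchy--Schwarz step differently from the paper.

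The paper first discards the noise part of $w$, getting $\|\phi\|_w^2\ge\sx^2r^2\int x^2\phi^2\,\mathrm dF_\alpha$. It then splits $F_\alpha$ into its mixture components and applies the unweighted bound $\bigl(\int x\phi\,\mathrm dF\bigr)^2\le\int x^2\phi^2\,\mathrm dF$ for each $F\in\{F_{\mathrm{MP}},F_{\delta_1},\dots,F_{\delta_\rk}\}$. You instead factor $x\phi=\sqrt{xw}\,\phi\cdot\sqrt{x/w}$ and spend $w\ge\sx^2r^2x$ only on the $\phi$-free factor. The identity $\sum_j\omega_j\mu_j\equiv1$ then reassembles $\|\phi\|_w^2$ exactly.

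The payoff of your version is in the equality case. The strict bound $\sx^2r^2\int(x/w)\,\mathrm dF_{\delta_j}<1$ (and likewise for $F_{\mathrm{MP}}$) forces $\|\phi\|_w=0$. Lemma~\ref{lemma: f_ast is unique} then gives vanishing on the bulk and at the outlier atoms at once.

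In the paper's version, equality yields $\int x\phi^2\mu_0\,\mathrm dF_\alpha=0$, and the paper concludes via ``$\mu_0>0$ on $\mathcal S_c^+$''. But $\mu_0(x_j^\star)=0$, so this only forces $\phi=0$ on the bulk. One needs an extra step: the Cauchy--Schwarz equality for $F_{\delta_j}$, together with $0<F_{\delta_j}(\{x_j^\star\})<1$, forces $\phi(x_j^\star)=0$. This is exactly the subtlety you anticipated, and your formulation avoids it.

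Both arguments need $\sigma_\beps^2>0$ for the equality characterization. For $c<1$ and $\sigma_\beps^2=0$, $\phi(x)=1/x$ attains equality. You state this condition explicitly; the paper uses it tacitly.
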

Now we have all the tools required to prove Theorem~\ref{thm: product_of_shrinkage}.
\begin{proof}[Proof of Theorem~\ref{thm: product_of_shrinkage}]
    From Lemma~\ref{prop: decoupled formula}, we know that optimizing $\mathcal R^\mathrm{pred}_K(\widetilde f_1,\dots,\widetilde f_K)$ is equivalent to optimizing the prediction risk as a function of $(\vartheta_1, \dots, \vartheta_k ,\bar{v})$. By Lemma~\ref{all f' need to be the same}, we can conclude that any optimizer must have $$\vartheta_{\ast, 1}(x) = \vartheta_{\ast, 2}(x) = \dots = \vartheta_{\ast, K}(x) = 0, \qquad \qquad \text{for } x \in \mathcal S_c^+\setminus\{0\}.$$ Thus, it is enough to minimize 
\begin{align*}
& K\|\bar \vartheta\|_w^2
-2K^2\langle g,\bar \vartheta\rangle_w
+\sx^2\,r^2\omega_0\,K(K-1)\langle h_0,\bar \vartheta\rangle_w^2 \\
& \qquad +\sum_{j=1}^{\rk}\alpha_j^2\,\bigl(K(K-1)\sx^2\,+\delta_jK^2\bigr)\langle h_j,\bar \vartheta\rangle_w^2
+K^2\left(\sx^2\,r^2+\sum_{j=1}^{\rk}\delta_j\alpha_j^2\right),
\end{align*}
or equivalently, after dropping terms that do not depend on $\bar \vartheta$,
\begin{align*}
\mathcal R^\mathrm{pred}_K(\bar \vartheta):= \|\bar \vartheta\|_w^2
-2K\langle g,\bar \vartheta\rangle_w
+\sx^2\,r^2\omega_0\,(K-1)\langle h_0,\bar \vartheta\rangle_w^2
+\sum_{j=1}^{\rk}\alpha_j^2\,\bigl((K-1)\sx^2\,+\delta_jK\bigr)\langle h_j,\bar \vartheta\rangle_w^2.
\end{align*}

As in Lemma~\ref{lemma: define A and solve f_ast}, we can characterize the minimizer using a linear operator. Define $\mathcal{A}_K : \mathcal{F} \to \mathcal{F}$ as $$\mathcal{A}_K f =  f +\sx^2\,r^2\omega_0(K-1)\,  \langle f, h_0\rangle_w \, h_0  + \sum_{j=1}^\rk \left((K-1)\sx^2\, + K\delta_j\right)\,  \alpha_j^2 \, \langle f, h_j\rangle_w\, h_j.$$\
For one client $K=1$, the operator $\cA_K$ simplies to $\mathcal{A}_1 = \mathcal A$ where $\mathcal{A}$ is the operator introduced in Lemma~\ref{lemma: define A and solve f_ast}. 
It is straightforward to check that $\mathcal{A}_K$ satisfies the same properties as $\mathcal{A}$; in particular, $\mathcal{A}_K$ is positive-semidefinite, self-adjoint and 
$$
\mathcal{R}^\text{avg}(\bar \vartheta) = \langle \mathcal{A}_K\, \bar \vartheta, \bar \vartheta\rangle_w - 2K\langle g, \bar \vartheta\rangle_w.
$$
Next, we prove that there exists $f_K^\ast \in \mathcal{F}$ such that $[\mathcal{A}_Kf_K^\ast](x) = g(x)$ for all $x \in \mathcal{S}_c^+$.  As in Section~\ref{HD is positive-definite}, we consider a candidate $$f_K^\ast(x) = \sum_{j=0}^\rk b^{(K)}_j h_j(x)$$ and find real coefficients $\{b^{(K)}_0, b^{(K)}_1,  \dots, b^{(K)}_\rk\}$ for which 
\begin{align*}
\mathcal{A}_K f_K^\ast
& =
\sum_{i=0}^{\rk} b^{(K)}_i h_i
+
\sx^2\,r^2\omega_0(K-1)
\left(\sum_{i=0}^{\rk} b^{(K)}_i \langle h_i,h_0\rangle_w\right) h_0
\\
& \qquad \qquad \qquad +
\sum_{j=1}^{\rk}
\bigl((K-1)\sx^2\,+K\delta_j\bigr)\alpha_j^2
\left(\sum_{i=0}^{\rk} b^{(K)}_i \langle h_i,h_j\rangle_w\right) h_j
\end{align*}
is exactly equal to $g(x)$.  Equivalently, recall $\bH$ from Section~\ref{HD is positive-definite} and define
\[
\mathfrak D_K
:=
\mathrm{diag}\Bigl(\sx^2\,r^2\omega_0\left(K-1\right), \bigl((K-1)\sx^2\,+K\delta_1\bigr)\alpha_1^2,\dots,
\bigl((K-1)\sx^2\,+K\delta_\rk\bigr)\alpha_\rk^2\Bigr).
\]
Thus, exactly as in the $K=1$ case, using~\ref{eq: g is linear in h_j}, the equality $[\mathcal{A}_Kf_K^\ast](x) = g(x)$ on $x\in \mathcal{S}_c^+$ is equivalent to solving 
\begin{align}\label{eq: linear system with b0}
    \left(\mathbf I_{\rk+ 1} + \mathfrak D_K \bH\right) \begin{pmatrix}
    b^{(K)}_0\\ b^{(K)}_1 \\\vdots\\ b^{(K)}_{\rk} 
\end{pmatrix} = \begin{pmatrix}
    \sx^2\,r^2\omega_0\\ (\delta_1 + \sx^2\,)\alpha_1^2 \\\vdots\\ (\delta_\rk + \sx^2\,)\alpha_\rk^2 
\end{pmatrix}.
\end{align}

By the same reasoning as in~\eqref{eq: HD is invertible}, the matrix $\mathbf I_{\rk+ 1} + \mathfrak D_K \bH$ is invertible and the claim is proven.
Next, we simplify the formula for $f_K^\ast$ to put it in self-distillation form. Since $f_K^\ast(x) = \sum_{j=0}^\rk b^{(K)}_j h_j(x)$ we deduce that \begin{equation}
f_K^\ast(x) = \frac{\sum_{j=0}^\rk b^{(K)}_j \mu_j(x)}{w(x)} = \frac{ b^{(K)}_0\nu(x) + \sum_{j=1}^\rk b^{(K)}_j \nu_{-j}(x)}{\sx^2\,r^2x\left(\omega_0 \nu(x) + \sum_{i=1}^\rk \omega_i \nu_{-i}(x)\right) + c\sx^2\, \sigma_{\boldsymbol{\varepsilon}}^2 \nu(x)}.\end{equation}

Next, we find the optimal $\bar \vartheta^\ast$ that minimizes $\mathcal{R}^\text{avg}(\bar{v})$. First, it is immediate that $\mathcal{A}_K (Kf_K^\ast) = Kg$. Since \(\mathcal A_K\) is self-adjoint, we deduce that
\[
\mathcal{R}^\text{avg}(\bar \vartheta) = \langle \mathcal A_K \bar \vartheta,\bar \vartheta\rangle_w - 2K\langle g,\bar \vartheta\rangle_w
=
\langle \mathcal A_K(\bar \vartheta-Kf_K^\ast),\bar \vartheta-Kf_K^\ast\rangle_w
-\langle \mathcal A_K Kf_K^\ast,Kf_K^\ast\rangle_w.
\]
Mutatis mutandis as in the proof of Lemma~\ref{lemma: define A and solve f_ast}, we deduce that the optimal $\bar \vartheta^\ast = Kf_K^\ast$ and it is unique on $\mathcal{S}_c^+ \setminus \{0\}$. Combining this fact with Lemma~\ref{all f' need to be the same} we conclude that for the (unique) optimal choice $(\widetilde f_1^\ast, \widetilde f_2^\ast, \dots, \widetilde f_K^\ast)$ all of them have to be equal to $Kf_K^\ast$. 

Recall now the relation between $(\rho_\ell, f_\ell)$ and $\widetilde f_{\ell}$ from Lemma~\ref{prop: normalized estimator federated}: $$K\rho_\ell f_\ell(x) = \widetilde f_\ell(x).$$ We deduce that for each client $\ell$ the optimal choice of weight and self-distillation shrinkage procedure $(\rho_{\ast,\ell}, f^{\rm pred}_{*,\ell})$ must satisfy \begin{align}\label{eq: f_opt federated} \rho_{\ast,\ell}\, f^{\rm pred}_{*,1}(x) =\widetilde f_\ell^\ast(x) =  f^\ast_K(x) = \frac{ b^{(K)}_0\nu(x) + \sum_{j=1}^\rk b^{(K)}_j \nu_{-j}(x)}{\sx^2\,r^2 x\left(\omega_0 \nu(x) + \sum_{i=1}^\rk \omega_i \nu_{-i}(x)\right) + c\sx^2\, \sigma_{\boldsymbol{\varepsilon}}^2 \nu(x)}.\end{align}
From Lemma~\ref{lemma: sd is in mathcal F}, it is clear that any self-distillation procedure is the ratio of two monic polynomials. Since $b^{(K)}_0 \neq 0$ by Assumption~\ref{assm: b0^K neq 0}, we can thus uniquely identify $\rho_{\ast,\ell} = b^{(K)}_0 /(\sx^2\,r^2\omega_0)$. The proof that $f^{\rm pred}_{*,\ell}$ is a valid self-distillation procedure follows now from Lemma~\ref{lemma: polynomials P Q} (since the denominator is the same as that of $f_\ast^{\mathrm{pred}}$) and Lemma~\ref{lemma: lagrange interpolation}.
\end{proof}

\subsection{Proof of Lemma~\ref{prop: bias variance decomposition federated}}
\begin{proof}
By Lemma~\ref{prop: normalized estimator federated},
\[
\widehat\bbeta_{\mathrm{agg}}-\bbeta_0
=
\underbrace{\frac{1}{K}\sum_{\ell=1}^K
g_\ell(\widehat{\boldsymbol{\Sigma}}_\ell)\bbeta_0}_{\mathcal{B}^{\mathrm{agg}}}
+
\frac{1}{K}\sum_{\ell=1}^K
\widetilde f_\ell(\widehat{\boldsymbol{\Sigma}}_\ell)\frac{\bX_\ell^\top \boldsymbol{\varepsilon}_\ell}{n_\ell}.
\]
Expanding the squared $\boldsymbol{\Sigma}$-norm as a quadratic gives
\begin{align*}
\|\widehat\bbeta_{\mathrm{agg}}-\bbeta_0\|_{\boldsymbol{\Sigma}}^2
&=
\|\mathcal{B}^{\mathrm{agg}}\|_{\boldsymbol{\Sigma}}^2 \\
&\quad+
\frac{2}{K^2}\sum_{\ell=1}^K\sum_{\ell'=1}^K
\bbeta_0^\top g_\ell(\widehat{\boldsymbol{\Sigma}}_\ell)
\boldsymbol{\Sigma}\,
\widetilde f_{\ell'}(\widehat{\boldsymbol{\Sigma}}_{\ell'})
\frac{\bX_{\ell'}^\top\boldsymbol{\varepsilon}_{\ell'}}{n_{\ell'}} \\
&\quad+
\frac{1}{K^2}\sum_{\ell=1}^K
\frac{\boldsymbol{\varepsilon}_\ell^\top \bX_\ell}{n_\ell}
\widetilde f_\ell(\widehat{\boldsymbol{\Sigma}}_\ell)
\boldsymbol{\Sigma}\,
\widetilde f_\ell(\widehat{\boldsymbol{\Sigma}}_\ell)
\frac{\bX_\ell^\top\boldsymbol{\varepsilon}_\ell}{n_\ell} \\
&\quad+
\frac{2}{K^2}\sum_{\ell< \ell'}
\frac{\boldsymbol{\varepsilon}_\ell^\top \bX_\ell}{n_\ell}
\widetilde f_\ell(\widehat{\boldsymbol{\Sigma}}_\ell)
\boldsymbol{\Sigma}\,
\widetilde f_{\ell'}(\widehat{\boldsymbol{\Sigma}}_{\ell'})
\frac{\bX_{\ell'}^\top\boldsymbol{\varepsilon}_{\ell'}}{n_{\ell'}}.
\end{align*}
Adding and subtracting the conditional expectation of the diagonal quadratic
terms, we can write
\[
\|\widehat\bbeta_{\mathrm{agg}}-\bbeta_0\|_{\boldsymbol{\Sigma}}^2
=
\|\mathcal{B}^{\mathrm{agg}}\|_{\boldsymbol{\Sigma}}^2
+
\mathcal{V}^{\mathrm{agg}}
+
\mathcal{E}^{\mathrm{agg}},
\]
where the variance term is
\[
\mathcal{V}^{\mathrm{agg}}
=
\frac{\sigma_\beps^2}{K^2}
\sum_{\ell=1}^K
\frac{1}{n_\ell}
\tr\left(
\widehat{\boldsymbol{\Sigma}}_\ell\,
\widetilde f_\ell^2(\widehat{\boldsymbol{\Sigma}}_\ell)\,
\boldsymbol{\Sigma}
\right) := 
\sum_{\ell=1}^K\mathcal{V}^{\mathrm{agg}}_{\ell}
\]
and the error term $\mathcal{E}^{\mathrm{agg}}$ collects three negligible
contributions:
\begin{align*}
\mathcal{E}^{\mathrm{agg}}
&:=
\frac{2}{K^2}\sum_{\ell, \ell' = 1}^K
\bbeta_0^\top g_\ell(\widehat{\boldsymbol{\Sigma}}_\ell)
\boldsymbol{\Sigma}\,
\widetilde f_\ell(\widehat{\boldsymbol{\Sigma}}_{\ell'})
\frac{\bX_{\ell'}^\top\boldsymbol{\varepsilon}_{\ell'}}{n_{\ell}'} \\
&\quad+
\frac{1}{K^2}\sum_{\ell=1}^K
\left[
\frac{\boldsymbol{\varepsilon}_\ell^\top \bX_\ell}{n_\ell}
\widetilde f_\ell(\widehat{\boldsymbol{\Sigma}}_\ell)
\boldsymbol{\Sigma}\,
\widetilde f_\ell(\widehat{\boldsymbol{\Sigma}}_\ell)
\frac{\bX_\ell^\top\boldsymbol{\varepsilon}_\ell}{n_\ell}
-
\frac{\sigma_\beps^2}{n_\ell}
\tr\left(
\widehat{\boldsymbol{\Sigma}}_\ell\,
\widetilde f_\ell^2(\widehat{\boldsymbol{\Sigma}}_\ell)\,
\boldsymbol{\Sigma}
\right)
\right]\\
&\quad+
\frac{2}{K^2}\sum_{\ell < \ell'}
\frac{\boldsymbol{\varepsilon}_\ell^\top \bX_\ell}{n_\ell}
\widetilde f_\ell(\widehat{\boldsymbol{\Sigma}}_\ell)
\boldsymbol{\Sigma}\,
\widetilde f_{\ell'}(\widehat{\boldsymbol{\Sigma}}_{\ell'})
\frac{\bX_{\ell'}^\top\boldsymbol{\varepsilon}_{\ell'}}{n_{\ell'}}.
\end{align*}
The first two contributions to $\mathcal{E}^{\mathrm{agg}}$ converge to
zero almost surely by the same concentration arguments as in
Section~\ref{proof: general formula}: the first linear terms vanish by
Lemma~\ref{lem:linear_concentration}, the second quadratic terms vanish
by Lemma~\ref{lem:quadratic_concentration}.  Finally, we justify that the cross-client term is negligible too. For fixed $\ell \neq \ell'$, define
\[
\ba_p := \frac{1}{n_{\ell'}} \bX_\ell \widetilde{f}_\ell(\widehat{\boldsymbol{\Sigma}}_\ell) 
\boldsymbol{\Sigma}\, \widetilde{f}_{\ell'}(\widehat{\boldsymbol{\Sigma}}_{\ell'}) 
\bX_{\ell'}^\top \beps_{\ell'} \in \mathbb{R}^{n_\ell},
\]
so that the $(\ell,\ell')$ cross-client term equals $\ba_p^\top \beps_\ell / n_\ell$. By independence across machines, the vector $\ba_p$ is independent 
of $\beps_\ell$. Using $\bX_\ell^\top \bX_\ell = n_\ell \widehat{\boldsymbol{\Sigma}}_\ell$,
\begin{align*}
\frac{\|\ba_p\|_2^2}{n_\ell} 
&= \frac{1}{K^4 n_{\ell'}^2}\, 
\beps_{\ell'}^\top \bX_{\ell'} \widetilde{f}_{\ell'}(\widehat{\boldsymbol{\Sigma}}_{\ell'}) 
\boldsymbol{\Sigma}\, \widetilde{f}_\ell(\widehat{\boldsymbol{\Sigma}}_\ell)\, 
\widehat{\boldsymbol{\Sigma}}_\ell\, \widetilde{f}_\ell(\widehat{\boldsymbol{\Sigma}}_\ell) 
\boldsymbol{\Sigma}\, \widetilde{f}_{\ell'}(\widehat{\boldsymbol{\Sigma}}_{\ell'}) 
\bX_{\ell'}^\top \beps_{\ell'} \\
&\leq \frac{
\|\widetilde{f}_\ell(\widehat{\boldsymbol{\Sigma}}_\ell)\|_{\mathrm{op}}^2\, 
\|\boldsymbol{\Sigma}\|_{\mathrm{op}}^2\, 
\|\widetilde{f}_{\ell'}(\widehat{\boldsymbol{\Sigma}}_{\ell'})\|_{\mathrm{op}}^2\, 
\|\widehat{\boldsymbol{\Sigma}}_\ell\|_{\mathrm{op}}}{K^4} 
\cdot \frac{\|\bX_{\ell'}^\top \beps_{\ell'}\|_2^2}{n_{\ell'}^2}.
\end{align*}
The first factor is almost surely bounded for large $p$ by the same operator norm bounds of Section~\ref{proof: general formula}.  For the second factor, observe that $\|\bX_{\ell'}^\top \beps_{\ell'}\|^2 = \beps_{\ell'}^\top (\bX_{\ell'}\bX_{\ell'}^\top)\beps_{\ell'}$ is a quadratic form in $\beps_{\ell'}$, with $\bX_{\ell'}$ independent of $\beps_{\ell'}$. Applying Lemma~\ref{lem:quadratic_concentration} directly, we obtain
\[
\frac{\beps_{\ell'}^\top(\bX_{\ell'}\bX_{\ell'}^\top)\beps_{\ell'}}{n_{\ell'}^2}
-\sigma_\beps^2 \,\frac{\tr(\bX_{\ell'}\bX_{\ell'}^\top)}{n_{\ell'}^2}
\xrightarrow{a.s.} 0.
\]
Moreover,
\[
\sigma_\beps^2 \,\frac{\tr(\bX_{\ell'}\bX_{\ell'}^\top)}{n_{\ell'}^2}
=
\sigma_\beps^2 \,\frac{p}{n_{\ell'}}\,\frac{1}{p}\tr(\widehat\bSigma_{\ell'})
\xrightarrow[]{\text{a.s.}} \sigma_\beps^2 c \int x\,\mathrm{d}F_{\mathrm{MP}},
\]
by the standard Marchenko--Pastur limit from Lemma~\ref{lemma:f_delta}(a), applied to client~$\ell'$. We therefore deduce that
$$\lim_{p\to \infty} \frac{\|\bX_{\ell'}^\top \beps_{\ell'}\|_2^2}{n_{\ell'}^2} = \sigma_\beps^2 c \int x\,\mathrm{d}F_{\mathrm{MP}} < \infty,$$
hence $\limsup_p \|\ba_p\|^2/n_\ell < \infty$ almost surely, and Lemma~\ref{lem:linear_concentration} gives $\ba_p^\top\beps_\ell/n_\ell \xrightarrow{a.s.} 0$. This concludes the proof.
\end{proof}
\subsection{Proof of Lemma~\ref{prop: bias decomposition federated}}
\begin{proof}
By definition of the \({\boldsymbol{\Sigma}}\)-norm,
\[
\left\|\frac1k\sum_{\ell=1}^K g_\ell(\widehat{\boldsymbol{\Sigma}}_\ell)\bbeta_0\right\|^2_{{\boldsymbol{\Sigma}}}
=
\frac{1}{K^2}\bbeta_0^\top
\left(\sum_{\ell=1}^K g_\ell(\widehat{\boldsymbol{\Sigma}}_\ell)\right)
{\boldsymbol{\Sigma}}
\left(\sum_{\ell=1}^K g_\ell(\widehat{\boldsymbol{\Sigma}}_\ell)\right)\bbeta_0.
\]
Using the spiked decomposition of the covariance
\(
{\boldsymbol{\Sigma}} = \sx^2\mathbf{I}_p + \sum_{j=1}^{\rk}\delta_j \bv_j \bv_j^\top,
\)
we obtain
\begin{align*}
\left\|\frac1k\sum_{\ell=1}^K g_\ell(\widehat{\boldsymbol{\Sigma}}_\ell)\bbeta_0\right\|^2_{{\boldsymbol{\Sigma}}}
&=
\frac{\sx^2}{K^2}\bbeta_0^\top
\left(\sum_{\ell=1}^K g_\ell(\widehat{\boldsymbol{\Sigma}}_\ell)\right)^2
\bbeta_0 +
\frac{1}{K^2}
\sum_{j=1}^{\rk}\delta_j\,
\bbeta_0^\top
\left(\sum_{\ell=1}^K g_\ell(\widehat{\boldsymbol{\Sigma}}_\ell)\right)
\bv_j \bv_j^\top
\left(\sum_{\ell=1}^K g_\ell(\widehat{\boldsymbol{\Sigma}}_\ell)\right)\bbeta_0.
\end{align*}
Now each matrix \(g_\ell(\widehat{\boldsymbol{\Sigma}}_\ell)\) is symmetric, because it is obtained by applying a spectral shrinkage function to the symmetric matrix \(\widehat{\boldsymbol{\Sigma}}_\ell\). Hence the sum \(\sum_{\ell=1}^K g_\ell(\widehat{\boldsymbol{\Sigma}}_\ell)\) is also symmetric, and so
\[
\bbeta_0^\top
\left(\sum_{\ell=1}^K g_\ell(\widehat{\boldsymbol{\Sigma}}_\ell)\right)
\bv_j \bv_j^\top
\left(\sum_{\ell=1}^K g_\ell(\widehat{\boldsymbol{\Sigma}}_\ell)\right)\bbeta_0
=
\left[
\bbeta_0^\top
\left(\sum_{\ell=1}^K g_\ell(\widehat{\boldsymbol{\Sigma}}_\ell)\right)
\bv_j
\right]^2.
\]
which gives exactly the claimed decomposition.
\end{proof}

\subsection{Proof of Lemma~\ref{prop: general formula for agg}}
\begin{proof}

  Using the decomposition from Lemma~\ref{prop: bias variance decomposition federated} and Lemma~\ref{prop: bias decomposition federated}, it is enough to compute the limit of each term $\mathcal{B}_{0}^{\mathrm{agg}}, \mathcal{B}_{j}^{\mathrm{agg}}, \mathcal{V}_{\ell}^\mathrm{agg}$ individually.   First, note that for each $\ell \in \{1, 2, \dots K\}$ $$\mathcal{V}_{\ell}^{\mathrm{agg}} \xrightarrow[]{\text{a.s.}} 
    \frac{c\sx^2{\sigma_{\beps}^2}}{K^2} \int x\widetilde{f}_\ell(x)^2 
    \, \mathrm{d}F_{\mathrm{MP}}(x)$$
follows mutatis mutandis from the result in~\eqref{limit of variance}. 
Similarly, for each $j \in \{1, 2,\dots, \rk\}$, $$K^2\mathcal{B}_{j}^{\mathrm{agg}} \xrightarrow{\text{a.s.}} 
    \delta_j \alpha_j^2 \left( \sum_{\ell=1}^K \int (x\widetilde{f}_\ell(x) - 1) 
    \, \mathrm{d}F_{\delta_j} \right)^2,$$
which can be rearranged to
\begin{align*}
K^2\mathcal{B}_{j}^{\mathrm{agg}} &\xrightarrow{\text{a.s.}} \delta_j \alpha_j^2 \left( \sum_{\ell=1}^K \int (x\widetilde  f_\ell(x) - 1) \, \mathrm dF_{\delta_j} \right)^2 = \delta_j \alpha_j^2 \left[ \left( \sum_{\ell=1}^K \int x \widetilde f_\ell \, \mathrm dF_{\delta_j} \right)^2 - 2K \sum_{\ell=1}^K \int x \widetilde f_\ell \, \mathrm dF _{\delta_j} + K^2 \right].
\end{align*}
The proof follows by the same method as in~\eqref{limit of bias term}, using the corresponding empirical measure $\widehat C_n^j$ (see Lemma~\ref{lemma: hat C nj}) of each client and adding the limits together. We skip the details for the sake of brevity. 
Finally, using Theorem~\ref{lemma:non-free limit}, we can immediately find the contribution of the bulk bias $\mathcal{B}_{0}^{\mathrm{agg}}$ as follows

$$\frac{K^2\mathcal{B}_{0}^{\mathrm{agg}}}{\sx^2\,\|\bbeta_0\|_2^2} 
    \xrightarrow{\text{a.s.}} 
    2\omega_0 \sum_{\ell < {\ell'}}\int g_\ell \,\mathrm{d}F_{\mathrm{MP}}
    \cdot \int g_{\ell'} \,\mathrm{d}F_{\mathrm{MP}} 
    + 2\sum_{j=1}^\rk \omega_j \sum_{\ell < {\ell'}}  
    \left(\int g_\ell \,\mathrm{d}F_{\delta_j}\right) 
    \left(\int g_{\ell'} \,\mathrm{d}F_{\delta_j}\right) 
    + \sum_{\ell = 1}^K \int g_\ell^2 \, \mathrm{d}F_\alpha.$$

Using the definition of $g_\ell$ in Lemma~\ref{prop: normalized estimator federated} we obtain
\begin{align*}
K^2 \mathcal{B}_{0}^{\mathrm{agg}} &\xrightarrow{\text{a.s.}} 2\sx^2 r^2\omega_0 \left[ \sum_{\ell < {\ell'}} \left( \int x \widetilde{f}_\ell \, \mathrm dF _{\mathrm{MP}} \right) \left( \int x \widetilde{f}_{\ell'} \, \mathrm dF _{\mathrm{MP}} \right) - (K-1)\sum_{\ell=1}^K \int x \widetilde f_\ell \, \mathrm dF _{\mathrm{MP}} + \binom{K}{2} \right] \\
&\quad + 2\sx^2\,r^2 \sum_{j=1}^{\rk} \omega_j \left[ \sum_{\ell < {\ell'}} \left( \int x \widetilde f_\ell \, \mathrm dF_{\delta_j} \right) \left( \int x \widetilde f_{\ell'} \, \mathrm dF _{\delta_j} \right) - (K-1)\sum_{\ell=1}^K \int x \widetilde f_\ell \, \mathrm dF _{\delta_j} + \binom{K}{2} \right] \\
&\quad + \sx^2\,r^2 \left[ \sum_{\ell=1}^K \int x^2 \widetilde f_\ell^2 \, \mathrm dF _\alpha - 2 \sum_{\ell=1}^K \int x \widetilde f_\ell \, \mathrm dF _\alpha + K \right].
\end{align*}
Therefore, using the Radon-Nikodym derivatives from Lemma~\ref{lemma: def mu_j mu_0}, and collecting all terms together, the full limit is
\begin{align*}
\mathcal R^\mathrm{pred}_K(\widetilde f_1,\dots,\widetilde f_K) &= \sum_{\ell=1}^K \int \left( \sx^2\,r^2x^2 + c\sx^2\,{\sigma_{\beps}^2} x \mu_0(x) \right) \widetilde f_\ell(x)^2 \, \mathrm{d}F_\alpha(x) \\
&\quad - 2 \sum_{\ell=1}^K \int x \widetilde f_\ell(x) \left[ \sx^2\,r^2 + \sx^2\,r^2\omega_0 (K-1) \mu_0(x) + \sum_{j=1}^{\rk} \alpha_j^2 \left(K\delta_j + \sx^2 (K-1)\right) \mu_j(x) \right] \, \mathrm{d}F_\alpha(x) \\
&\quad + 2\sx^2\,r^2\omega_0 \sum_{\ell < {\ell'}} \left( \int x \widetilde f_\ell \mu_0 \, \mathrm{d}F_\alpha \right) \left( \int x \widetilde f_{\ell'} \mu_0 \, \mathrm{d}F_\alpha \right)  \\ 
&\quad+ 2 \sum_{j=1}^{\rk} \sx^2\,\alpha_j^2 \sum_{\ell < {\ell'}} \left( \int x \widetilde f_\ell \mu_j \, \mathrm{d}F_\alpha \right) \left( \int x \widetilde f_{\ell'} \mu_j \, \mathrm{d}F_\alpha \right) \\
&\quad + \sum_{j=1}^{\rk} \delta_j \alpha_j^2 \left( \sum_{\ell=1}^K \int x \widetilde f_\ell \mu_j \, \mathrm{d}F_\alpha \right) ^2 + K^2 \left( \sx^2\,r^2 + \sum_{j=1}^{\rk} \delta_j \alpha_j^2 \right).
\end{align*}
Recall now from Lemma~\ref{lemma: def mu_j mu_0} that
\[
\omega_0 \mu_0(x) + \sum_{j=1}^\rk \omega_j \mu_j(x) = 1
\qquad\text{for all }x\in\mathcal S_c^+,
\]
and \(\alpha_j^2=r^2\omega_j\), so we can simplify the linear term to obtain
\begin{align*}
\mathcal R^\mathrm{pred}_K(\widetilde f_1,\dots,\widetilde f_K) &= \sum_{\ell=1}^K \int \left( \sx^2\,r^2x^2 + c\sx^2\,{\sigma_{\beps}^2} x \mu_0(x) \right) \widetilde f_\ell(x)^2 \, \mathrm{d}F_\alpha(x)\\
&\quad- 2 \sum_{\ell=1}^K \int x \widetilde f_\ell(x) \left[ \sx^2\,r^2 K + K \sum_{j=1}^{\rk} \delta_j \alpha_j^2 \mu_j(x) \right] \, \mathrm{d}F_\alpha(x) \\
&\quad + 2\sx^2\,r^2\omega_0 \sum_{\ell < {\ell'}} \left( \int x \widetilde f_\ell \mu_0 \, \mathrm{d}F_\alpha \right) \left( \int x \widetilde f_{\ell'} \mu_0 \, \mathrm{d}F_\alpha \right)\\ 
&\quad+ 2 \sum_{j=1}^{\rk} \sx^2\,\alpha_j^2 \sum_{\ell < {\ell'}} \left( \int x \widetilde f_\ell \mu_j \, \mathrm{d}F_\alpha \right) \left( \int x \widetilde f_{\ell'} \mu_j \, \mathrm{d}F_\alpha \right) \\
&\quad + \sum_{j=1}^{\rk} \delta_j \alpha_j^2 \left( \sum_{\ell=1}^K \int x \widetilde f_\ell \mu_j \, \mathrm{d}F_\alpha \right) ^2  + K^2 \left( r^2 + \sum_{j=1}^{\rk} \delta_j \alpha_j^2 \right).
\end{align*}
Using the auxiliary functions defined in \eqref{eq: define w ,g , h_j}, and the definition of the $xw(x)$-weighted inner product~\eqref{eq: define inner} conclusion follows by straightforward algebraic manipulations.
\end{proof}

\subsection{Proof of Lemma~\ref{prop: decoupled formula}}
\begin{proof}
Substituting \(\widetilde f_\ell=\bar \vartheta+\vartheta_\ell\), we first compute
\begin{align}\label{eq: f tilde_ell with bar v}
\sum_{\ell=1}^K \|\widetilde f_\ell\|_w^2
=
\sum_{\ell=1}^K \|\bar \vartheta+\vartheta_\ell\|_w^2
=
\sum_{\ell=1}^K \|\vartheta_\ell\|_w^2
+2\sum_{\ell=1}^K \langle \vartheta_\ell,\bar \vartheta\rangle_w
+K\|\bar \vartheta\|_w^2 = \sum_{\ell=1}^K \|\vartheta_\ell\|_w^2
+K\|\bar \vartheta\|_w^2.
\end{align}
where the middle term vanishes since \(\sum_{\ell=1}^K \vartheta_\ell=0\).
Next,
\begin{align}\label{eq: g with bar v}
 -2K\sum_{\ell=1}^K \langle g,\widetilde f_\ell\rangle_w
=
-2K\sum_{\ell=1}^K \langle g,\bar \vartheta+\vartheta_\ell\rangle_w
=
-2K^2\langle g,\bar \vartheta\rangle_w,
\end{align}
again because \(\sum_{\ell=1}^K \vartheta_\ell=0\). Now fix \(j\in\{0,1,\dots,\rk\}\). Then
\[
\begin{aligned}
\sum_{\ell < {\ell'}}\langle h_j,\widetilde f_\ell\rangle_w\langle h_j,\widetilde f_{\ell'}\rangle_w
&=
\sum_{\ell < {\ell'}}
\bigl(\langle h_j,\bar \vartheta\rangle_w+\langle h_j,\vartheta_\ell\rangle_w\bigr)
\bigl(\langle h_j,\bar \vartheta\rangle_w+\langle h_j,\vartheta_{\ell'}\rangle_w\bigr) \\
&=
\binom{K}{2}\langle h_j,\bar \vartheta\rangle_w^2
+\langle h_j,\bar \vartheta\rangle_w
\sum_{\ell < {\ell'}}\bigl(\langle h_j,\vartheta_\ell\rangle_w+\langle h_j,\vartheta_{\ell'}\rangle_w\bigr) +\sum_{\ell < {\ell'}}\langle h_j,\vartheta_\ell\rangle_w\langle h_j,\vartheta_{\ell'}\rangle_w.
\end{aligned}
\]
Because each \(\langle h_j,\vartheta_\ell\rangle_w\) appears exactly \(K-1\) times in the mixed sum,
\[
\sum_{\ell < {\ell'}}\bigl(\langle h_j,\vartheta_\ell\rangle_w+\langle h_j,\vartheta_{\ell'}\rangle_w\bigr)
=
(K-1)\sum_{\ell=1}^K \langle h_j,\vartheta_\ell\rangle_w
=
0,
\]
and so
\begin{align}\label{eq: cross h_j h_ell with bar v}
\sum_{\ell < {\ell'}}\langle h_j,\widetilde f_\ell\rangle_w\langle h_j,\widetilde f_{\ell'}\rangle_w
=
\binom{K}{2}\langle h_j,\bar \vartheta\rangle_w^2
+\sum_{\ell < {\ell'}}\langle h_j,\vartheta_\ell\rangle_w\langle h_j,\vartheta_{\ell'}\rangle_w.
\end{align}
Also, it is immediate by definition that
\begin{align}\label{eq: single h_j with bar v}
\left(\sum_{\ell=1}^K \langle h_j,\widetilde f_\ell\rangle_w\right)^2
=
K^2\langle h_j,\bar \vartheta\rangle_w^2.
\end{align}
Collecting the above identities ~\eqref{eq: f tilde_ell with bar v}, ~\eqref{eq: g with bar v}, \eqref{eq: cross h_j h_ell with bar v} and ~\eqref{eq: single h_j with bar v} gives
\[
\begin{aligned}
\mathcal R^\mathrm{pred}_K(\widetilde f_1,\dots,\widetilde f_K)
&=
\sum_{\ell=1}^K \|\vartheta_\ell\|_w^2
K\|\bar \vartheta\|_w^2
-2K^2\langle g,\bar \vartheta\rangle_w 
+2\sx^2\,r^2\omega_0\sum_{\ell < {\ell'}}\langle h_0,\vartheta_\ell\rangle_w\langle h_0,\vartheta_{\ell'}\rangle_w \\
&\quad
+2\sx^2\,r^2\omega_0\binom{K}{2}\langle h_0,\bar \vartheta\rangle_w^2
+2\sum_{j=1}^{\rk}\sx^2\,\alpha_j^2
\left[
\binom{K}{2}\langle h_j,\bar \vartheta\rangle_w^2
+\sum_{\ell < {\ell'}}\langle h_j,\vartheta_\ell\rangle_w\langle h_j,\vartheta_{\ell'}\rangle_w
\right] \\
&\quad +\sum_{j=1}^{\rk}\delta_j\alpha_j^2 K^2\langle h_j,\bar \vartheta\rangle_w^2
+K^2\left(r^2+\sum_{j=1}^{\rk}\delta_j\alpha_j^2\right).
\end{aligned}
\]
Finally, since \(\sum_{\ell=1}^K \langle h_j,\vartheta_\ell\rangle_w=0\), we use
\[
\left(\sum_{\ell=1}^K \langle h_j,\vartheta_\ell\rangle_w\right)^2
=
\sum_{\ell=1}^K \langle h_j,\vartheta_\ell\rangle_w^2
+2\sum_{\ell < {\ell'}}\langle h_j,\vartheta_\ell\rangle_w\langle h_j,\vartheta_{\ell'}\rangle_w
=0,
\]
which implies that
\[
\sum_{\ell < {\ell'}}\langle h_j,\vartheta_\ell\rangle_w\langle h_j,\vartheta_{\ell'}\rangle_w
=
-\frac12\sum_{\ell=1}^K \langle h_j,\vartheta_\ell\rangle_w^2.
\]
Substituting this identity for \(j=0,1,\dots,\rk\) yields
\[
\begin{aligned}
\mathcal R^\mathrm{pred}_K(\widetilde f_1,\dots,\widetilde f_K)
&=\sum_{\ell=1}^K \|\vartheta_\ell\|_w^2
-\sx^2\,r^2\omega_0\sum_{\ell=1}^K \langle h_0,\vartheta_\ell\rangle_w^2
-\sum_{j=1}^{\rk}\sx^2\,\alpha_j^2\,\sum_{\ell=1}^K \langle h_j,\vartheta_\ell\rangle_w^2 \\
&\quad
+K\|\bar \vartheta\|_w^2
-2K^2\langle g,\bar \vartheta\rangle_w
+\sx^2\,r^2\omega_0\,K(K-1)\langle h_0,\bar \vartheta\rangle_w^2 \\
&\quad
+\sum_{j=1}^{\rk}\alpha_j^2\bigl(K(K-1)\sx^2\,+\delta_jK^2\bigr)\langle h_j,\bar \vartheta\rangle_w^2
+K^2\left(\sx^2\,r^2+\sum_{j=1}^{\rk}\delta_j\alpha_j^2\right),
\end{aligned}
\]
which is exactly the claimed formula.
\end{proof}

\subsection{Proof of Lemma~\ref{all f' need to be the same}}
\begin{proof}
We first rewrite the quadratic terms using the definitions from
Lemma~\ref{lemma: def mu_j mu_0} and \eqref{eq: define w ,g , h_j}. For the bulk term,
\begin{align*}
\langle h_0,\phi\rangle_w
&= \int x\,\phi(x)\,w(x)\,h_0(x)\,dF_\alpha(x) = \int x\,\phi(x)\,\mu_0(x)\,dF_\alpha(x)  = \int x\,\phi(x)\,dF_{\mathrm{MP}}(x),
\end{align*}
and therefore
\[
\sx^2\,r^2\omega_0\langle h_0,\phi\rangle_w^2
=
\sx^2\,r^2\omega_0\left(\int x\,\phi(x)\,dF_{\mathrm{MP}}(x)\right)^2.
\]
Similarly, for each spike \(j\),
\begin{align*}
\langle h_j,\phi\rangle_w
&= \int x\,\phi(x)\,w(x)\,h_j(x)\,dF_\alpha(x) = \int x\,\phi(x)\,\mu_j(x)\,dF_\alpha(x) = \int x\,\phi(x)\,dF_{\delta_j}(x),
\end{align*}
so that
\[
\alpha_j^2\langle h_j,\phi\rangle_w^2
=
\alpha_j^2\left(\int x\,\phi(x)\,dF_{\delta_j}(x)\right)^2.
\]
Next, since \(\mu_0(x)\ge 0\) on \(\mathcal S_c^+\), we have
\begin{align*}
\|\phi\|_w^2
&= \int x\,\phi(x)^2\,w(x)\,dF_\alpha(x) \\
&= \int x\,\phi(x)^2\bigl(\sx^2\,r^2x+c\sx^2\,\sigma_{\boldsymbol{\varepsilon}}^2\mu_0(x)\bigr)\,dF_\alpha(x) \\
&\ge \sx^2\,r^2\int x^2\phi(x)^2\,dF_\alpha(x).
\end{align*}
Recall from Lemma~\ref{lemma: f_alpha} that \(F_\alpha\) decomposes as a mixture, hence
\[
\sx^2\,r^2\int x^2\phi(x)^2\,dF_\alpha(x)
=
\sx^2\,r^2\omega_0\int x^2\phi(x)^2\,dF_{\mathrm{MP}}(x)
+\sum_{j=1}^{\rk}\sx^2\,\alpha_j^2\,\int x^2\phi(x)^2\,dF_{\delta_j}(x),
\]
where we used \(\alpha_j^2=r^2\omega_j\). Applying Cauchy--Schwarz with respect to the probability measures
\(F_{\mathrm{MP}}\) and \(F_{\delta_j}\) gives
\[
\left(\int x\,\phi(x)\,dF_{\mathrm{MP}}(x)\right)^2
\le
\int x^2\phi(x)^2\,dF_{\mathrm{MP}}(x),
\]
and, for each \(j\),
\[
\left(\int x\,\phi(x)\,dF_{\delta_j}(x)\right)^2
\le
\int x^2\phi(x)^2\,dF_{\delta_j}(x).
\]
Combining the previous displays, we obtain
\begin{align*}
\sx^2\,r^2\omega_0\langle h_0,\phi\rangle_w^2
+\sum_{j=1}^{\rk}\sx^2\,\alpha_j^2\,\langle h_j,\phi\rangle_w^2
&\le
\sx^2\,r^2\omega_0\int x^2\phi(x)^2\,dF_{\mathrm{MP}}(x)
+\sum_{j=1}^{\rk}\sx^2\,\alpha_j^2\,\int x^2\phi(x)^2\,dF_{\delta_j}(x) \\
&=
\sx^2\,r^2\int x^2\phi(x)^2\,dF_\alpha(x) \\
&\le \|\phi\|_w^2.
\end{align*}
This proves the inequality. For equality to hold, equality must occur in every step above. In particular,
\[
\int x\,\phi(x)^2\,\mu_0(x)\,dF_\alpha(x)=0.
\]
Since the integrand is nonnegative and \(\mu_0(x)>0\) on \(\mathcal S_c^+\), this implies
\(\phi(x)=0\) on \(\mathcal S_c^+\). Conversely, if \(\phi\) vanishes on \(\mathcal S_c^+\), then every term above is equal to zero, so equality holds.
\end{proof}

\subsection{Analysis of Assumption~\ref{assm: b0^K neq 0}}\label{sec: assm b0} 
Recall from equation~\eqref{eq: linear system with b0} that $b^{(K)}_0$ is found by solving the following linear system:
\begin{align}
    \left(\mathbf I_{\rk+ 1} + \mathfrak D_K \bH\right) \begin{pmatrix}
    b^{(K)}_0\\ b^{(K)}_1 \\\vdots\\ b^{(K)}_{\rk} 
\end{pmatrix} = \begin{pmatrix}
    \sx^2\,r^2\omega_0\\ (\delta_1 + \sx^2\,)\alpha_1^2 \\\vdots\\ (\delta_\rk + \sx^2\,)\alpha_\rk^2 
\end{pmatrix}.
\end{align}
In this section, we present certain conditions under which $b^{(K)}_0 \neq 0$, and further strengthen this property with simulations.

\subsubsection{Isotropic Model}
We begin with the isotropic case ${\boldsymbol{\Sigma}} = \sx^2\mathbf{I}_p$, corresponding to 
$\rk = 0$, in which the system~\eqref{eq: linear system with b0} can be solved  explicitly.

\begin{proposition}
Assume $\rk=0$. Then for every integer $K\ge 1$, the coefficient $b^{(K)}_0$ 
from~\eqref{eq: linear system with b0} is given by
\[
b^{(K)}_0=\frac{\sx^2\,r^2}{1+\sx^2\,r^2(K-1)\langle h_0,h_0\rangle_w},
\]
and in particular $b^{(K)}_0>0$.
\end{proposition}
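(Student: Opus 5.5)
When $\rk=0$ the linear system \eqref{eq: linear system with b0} collapses to a single scalar equation, so the plan is to write that equation down, solve it for $b^{(K)}_0$, and then check that the coefficient multiplying $b^{(K)}_0$ is positive.

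First, specialize the objects. With no spikes, $\nu\equiv 1$ (an empty product) and $\omega_0=1$. Hence $F_\alpha=F_{\mathrm{MP}}$ and $\mu_0\equiv 1$ on $\mathcal S_c^+$. The weight becomes $w(x)=\sx^2 r^2 x+c\sx^2\sigma_{\beps}^2$, and $h_0=1/w$. The Gram matrix $\bH$ is the $1\times 1$ matrix $\langle h_0,h_0\rangle_w$. The matrix $\mathfrak D_K$ is the scalar $\sx^2 r^2\omega_0(K-1)=\sx^2 r^2(K-1)$. The right-hand side is $\sx^2 r^2\omega_0=\sx^2 r^2$.

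The system therefore reads
\[
\bigl(1+\sx^2 r^2(K-1)\langle h_0,h_0\rangle_w\bigr)\,b^{(K)}_0=\sx^2 r^2 .
\]
Dividing through gives the displayed formula for $b^{(K)}_0$.

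The only step that needs care is positivity and finiteness of
\[
\langle h_0,h_0\rangle_w=\int \frac{x}{w(x)}\,\mathrm dF_{\mathrm{MP}}(x).
\]
The denominator $w(x)$ is bounded below by $c\sx^2\sigma_{\beps}^2>0$ on the compact support $\mathcal S_c\cup\{0\}$, so the integral is finite. The integrand is nonnegative, and it is strictly positive on the interior of the bulk, which has positive $F_{\mathrm{MP}}$-mass. The atom at $0$ (present when $c>1$) contributes nothing, since the integrand vanishes there.

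It follows that $\langle h_0,h_0\rangle_w>0$. Since $K\ge 1$, the factor $1+\sx^2r^2(K-1)\langle h_0,h_0\rangle_w$ is at least $1$. Combined with $\sx^2 r^2>0$, this gives $b^{(K)}_0>0$, in particular $b^{(K)}_0\neq 0$.
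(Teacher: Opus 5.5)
Your proposal is correct and follows the same route as the paper: you reduce \eqref{eq: linear system with b0} to the scalar equation $\bigl(1+\sx^2 r^2(K-1)\langle h_0,h_0\rangle_w\bigr)b_0^{(K)}=\sx^2 r^2$, solve it, and use $(K-1)\langle h_0,h_0\rangle_w\ge 0$. Your extra check that $\langle h_0,h_0\rangle_w$ is finite is a useful detail the paper omits; its strict positivity is not needed, since nonnegativity already makes the denominator at least $1$.
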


\begin{proof}
If $\rk=0$, then $\omega_0=1$ since we have no spike directions and the system reduces to the scalar equation
\[
\left(1+\sx^2\,r^2(K-1)\|h_0\|_w^2\right)b^{(K)}_0=\sx^2\,r^2.
\]
Hence
\[
b^{(K)}_0=\frac{\sx^2\,r^2}{1+\sx^2r^2(K-1)\|h_0\|_w^2}.
\]
Since $\|h_0\|_w^2\ge 0$, the denominator is strictly positive, so $b^{(K)}_0>0$.
\end{proof}
\noindent
Using equation~\eqref{eq: f_opt federated}, we deduce that 
\[
\rho_{\ast,1} = \cdots = \rho_{\ast,K} = \frac{b^{(K)}_0}{\sx^2r^2} = \frac{1}{1 + \sx^2\,r^2(K-1)\|h_0\|_w^2},
\]
and \[
f^{\rm pred}_{*,1} = \cdots = f^{\rm pred}_{*,K} = f_K^\ast\,,
\]
where $$f_K^\ast(x) = \frac{b^{(K)}_0}{\sx^2\,r^2x + c\sx^2\,\sigma_{\boldsymbol{\varepsilon}}^2} = \frac{b^{(K)}_0}{\sx^2\,r^2} \cdot \frac{1}{x + c\sigma_{\boldsymbol{\varepsilon}}^2/r^2}.$$
Thus, the optimal shrinkage rule for each client corresponds to tuned Ridge and for $\sx^2 = 1$ we recover the aggregated estimator from Theorem 4.5 in \cite{wonder}.
\subsubsection{One-Spike Model}
In this section, we analyze the one-spike case, ${\boldsymbol{\Sigma}} = \sx^2\mathbf{I}_p + 
\delta_1 \bv_1 \bv_1^\top$. To simplify the notation,
we write
\[
\delta:=\delta_1,\qquad \omega:=\omega_1,\qquad \omega_0=1-\omega,\qquad \alpha^2=r^2\omega.
\]
\begin{proposition}\label{prop:one_spike_b0_nonzero}
Assume $\rk=1$. Then for every integer $K\ge 1$, the coefficient $b^{(K)}_0$ 
from~\eqref{eq: linear system with b0} satisfies $b^{(K)}_0>0$. 
In particular, $b^{(K)}_0\neq0$.
\end{proposition}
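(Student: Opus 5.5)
The plan is to solve the $2\times 2$ system \eqref{eq: linear system with b0} explicitly by Cramer's rule and then sign the numerator and the determinant separately. Write $H_{ij}=\langle h_i,h_j\rangle_w$ for $i,j\in\{0,1\}$, $d_0:=\sx^2 r^2\omega_0(K-1)\ge 0$ and $d_1:=((K-1)\sx^2+K\delta)\alpha^2>0$. The matrix is then
\[
\mathbf I_2+\mathfrak D_K\bH=\begin{pmatrix}1+d_0H_{00} & d_0H_{01}\\ d_1H_{01} & 1+d_1H_{11}\end{pmatrix}.
\]
Cramer's rule gives
\[
b_0^{(K)}=\frac{\sx^2r^2\omega_0\,(1+d_1H_{11})-d_0H_{01}(\delta+\sx^2)\alpha^2}{\det(\mathbf I_2+\mathfrak D_K\bH)} .
\]
For the determinant I reuse the argument of \eqref{eq: HD is invertible}. $\mathfrak D_K$ is a nonnegative diagonal matrix, so $\mathfrak D_K\bH$ is similar to $\mathfrak D_K^{1/2}\bH\mathfrak D_K^{1/2}$ when $K>1$; it has a zero row when $K=1$. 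In either case its eigenvalues are real and nonnegative, so the determinant is a product of numbers that are each at least $1$, hence positive.

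It remains to show the numerator is positive. After factoring out $\sx^2r^2\omega_0>0$, this is equivalent to
\[
1+d_1H_{11}-(K-1)(\delta+\sx^2)\alpha^2H_{01}>0 .
\]
First note $d_1=((K-1)\sx^2+K\delta)\alpha^2\ge (K-1)(\delta+\sx^2)\alpha^2$. If $H_{01}\le 0$ the claim is immediate. Otherwise it suffices to prove $H_{11}\ge H_{01}$, because the leading $1$ then gives strict positivity. This inequality is the main obstacle, since $\mu_0\le\mu_1$ fails pointwise ($\nu_1(0)>1$).

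To prove $H_{11}\ge H_{01}$, I use the Radon--Nikodym identities of Lemma~\ref{lemma: def mu_j mu_0} to write both entries as integrals of the same test function against two different measures:
\[
H_{11}=\int \phi\,\mathrm dF_\delta,\qquad H_{01}=\int \phi\,\mathrm dF_{\mathrm{MP}},\qquad \phi(x):=\frac{x\,\mu_1(x)}{w(x)}=\frac{x}{\sx^2r^2x(\omega_0\nu_1(x)+\omega)+c\sx^2\sigma_\beps^2\nu_1(x)} .
\]
By Lemma~\ref{lemma: F_mp << F_delta}, $\mathrm dF_\delta/\mathrm dF_{\mathrm{MP}}=1/\nu_1$ on the common support, and $\nu_1$ is affine and decreasing. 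So $F_\delta$ is a likelihood-ratio (hence stochastic) upward tilt of $F_{\mathrm{MP}}$, at least on $\mathcal S_c\cup\{0\}$; the atom at $x_1^\star$ carries only $F_\delta$-mass and sits to the right of everything else. It therefore suffices to show that $\phi$ is nondecreasing on $\mathcal S_c^+$.

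For $x>0$ we have
\[
\frac{1}{\phi(x)}=\sx^2r^2(\omega_0\nu_1(x)+\omega)+c\sx^2\sigma_\beps^2\,\frac{\nu_1(x)}{x} .
\]
On the bulk, $\nu_1>0$ is decreasing and $\nu_1(x)/x$ is decreasing, so $\phi$ is increasing there. At $x=0$ we have $\phi=0$. At $x_1^\star$, where $\nu_1=0$, we get $1/\phi=\sx^2r^2\omega$, which is the smallest possible value of $1/\phi$ because $\nu_1\ge 0$ on $\mathcal S_c^+$. So $\phi$ is nondecreasing on the whole support.

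I will make the dominance step concrete as follows. Write $\int\phi\,\mathrm dF_\delta-\int\phi\,\mathrm dF_{\mathrm{MP}}=\int \phi\,(1-\nu_1)\,\mathrm dF_\delta$. Since $\int(1-\nu_1)\,\mathrm dF_\delta=0$, this equals $\int(\phi-\phi(x_0))(1-\nu_1)\,\mathrm dF_\delta$, where $x_0$ is the point at which $\nu_1(x_0)=1$. In this last integrand both factors change sign at $x_0$ in the same direction (both are $\le 0$ to the left of $x_0$ and $\ge 0$ to the right). The integrand is therefore nonnegative, which gives $H_{11}\ge H_{01}$.

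Hence the numerator is strictly positive, and $b_0^{(K)}>0$.
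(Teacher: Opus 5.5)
Your proof is correct and follows essentially the same route as the paper. You use Cramer's rule, positivity of $\det(\mathbf I_2+\mathfrak D_K\bH)$, the reduction to $\langle h_1,h_1\rangle_w\ge\langle h_0,h_1\rangle_w$, and a covariance inequality under $F_\delta$. Your identity $\int(1-\nu_1)\,\mathrm dF_\delta=0$ is the paper's $\int x\,\mathrm dF_\delta=\delta+\sx^2$, subtracting $\phi(x_0)$ is the standard proof of the Chebyshev correlation inequality the paper cites, and your $1/\phi$ monotonicity check replaces its derivative computation.

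One small fix: $x_0=\delta+\sx^2$ need not lie in $\mathcal S_c$. You should state that $1/\phi$ is positive and decreasing on all of $(0,x_\delta^\star]$ (since $\nu_1>0$ there), because the sign-matching at $x_0$ uses monotonicity on that whole interval, not just on the bulk.
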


\begin{proof}
When $\rk=1$, the linear system~\ref{eq: linear system with b0} becomes
\[
\begin{pmatrix}
1+(K-1)\sx^2\,r^2\omega_0\langle h_0,h_0\rangle_w &
(K-1)\sx^2\,r^2\omega_0\langle h_0,h_1\rangle_w \\[1mm]
\bigl((K-1)\sx^2\,+K\delta\bigr)\alpha^2\langle h_1,h_0\rangle_w &
1+\bigl((K-1)\sx^2\,+K\delta\bigr)\alpha^2\langle h_1,h_1\rangle_w
\end{pmatrix}
\begin{pmatrix}
b^{(K)}_0\\ b^{(K)}_1
\end{pmatrix}
=
\begin{pmatrix}
\sx^2\,r^2\omega_0\\ (\sx^2\,+\delta)\alpha^2
\end{pmatrix}.
\]
Since $\langle h_1,h_0\rangle_w=\langle h_0,h_1\rangle_w$, Cramer's rule gives
\begin{align*}
b^{(K)}_0
&=
\frac{
\sx^2r^2\omega_0\Bigl(1+\bigl((K-1)\sx^2+K\delta\bigr)\alpha^2\langle h_1,h_1\rangle_w\Bigr)
-(K-1)\sx^2r^2\omega_0(\sx^2+\delta)\alpha^2\langle h_0,h_1\rangle_w
}{
\det\bigl(\mathbf I_2+\mathfrak D_K\bH\bigr)
} \\
&=
\frac{
\sx^2r^2\omega_0\Bigl[
1+\delta\alpha^2\langle h_1,h_1\rangle_w
+(K-1)(\sx^2+\delta)\alpha^2\bigl(\langle h_1,h_1\rangle_w-\langle h_0,h_1\rangle_w\bigr)
\Bigr]
}{
\det \bigl(\mathbf I_2+\mathfrak D_K\bH\bigr)
}.
\end{align*}
By the same argument as in~\ref{eq: HD is invertible}, the denominator is positive, thus it is enough to prove that
\[
\langle h_1,h_1\rangle_w-\langle h_0,h_1\rangle_w \geq 0,
\]
or equivalently, using equation~\ref{eq: define w ,g , h_j} $$\langle h_1,h_1\rangle_w-\langle h_0,h_1\rangle_w = \int xw(x)h_1(x)(h_1(x) - h_0(x))\, \mathrm{d}F_{\alpha} = \int x(h_1(x) - h_0(x))\, \mathrm{d}F_{\delta} \geq 0.$$
Specializing the formulas in Lemma~\ref{lemma: def mu_j mu_0} to $\rk = 1$ we have 
$$\mu_0(x) = \frac{\nu(x)}{(1-\omega)\nu(x) + \omega}, \qquad \mu_1(x) = \frac{1}{\omega_0\nu(x) + \omega_1}, \qquad \nu(x) = \frac{\delta(x_\delta^\star - x)}{c\sx^2(\delta + \sx^2)},$$ 
so
\begin{align*}
\mu_1(x) - \mu_0(x) 
&= \frac{1-\nu(x)}{(1-\omega)\nu(x)+\omega} \qquad \text{and}\qquad
r^2x + c\sigma_{\boldsymbol{\varepsilon}}^2\mu_0(x) = \frac{r^2x\bigl[(1-\omega)\nu(x)+\omega\bigr]+c\sigma_{\boldsymbol{\varepsilon}}^2\nu(x)}
        {(1-\omega)\nu(x)+\omega}.
\end{align*}
Dividing cancels the common denominator:
\begin{align}\label{eq: xmu_1 - mu_0}
x(h_1 - h_0) = \frac{x(\mu_1-\mu_0)}{\sx^2\,r^2x+c\sx^2\,\sigma_{\boldsymbol{\varepsilon}}^2\mu_0} 
= \frac{x(1-\nu(x))}{\sx^2\,r^2x\bigl[(1-\omega)\nu(x)+\omega\bigr]+c\sx^2\,\sigma_{\boldsymbol{\varepsilon}}^2\nu(x)}.
\end{align}
Using the definition of $x_\delta^\star$ (see Lemma~\ref{lemma:f_delta}), we have
$x_\delta^\star = (\delta+\sx^2)(1+c\sx^2/\delta)$, so 
$$c\sx^2(\delta+\sx^2) - \delta x_\delta^\star 
= c\sx^2(\delta+\sx^2) - \delta(\delta+\sx^2)(1+c\sx^2/\delta) = -\delta(\delta+\sx^2),$$ 
and thus
\[
1-\nu(x) 
= 1 - \frac{\delta(x_\delta^\star-x)}{c\sx^2(\delta+\sx^2)} 
= \frac{c\sx^2(\delta+\sx^2)-\delta x_\delta^\star + \delta x}{c\sx^2(\delta+\sx^2)} 
= \frac{\delta\bigl(x-(\delta+\sx^2)\bigr)}{c\sx^2(\delta+\sx^2)}.
\]
Substituting the previous display back in equation~\eqref{eq: xmu_1 - mu_0}:
\begin{align}\label{eq: def phi corr ineq}
\frac{x(\mu_1-\mu_0)}{r^2x+c\sigma_{\boldsymbol{\varepsilon}}^2\mu_0} 
= \frac{\delta x}{c\sx^2(\delta+\sx^2)} 
  \cdot 
  \frac{x-(\delta + \sx^2)}{r^2x\bigl[(1-\omega)\nu(x)+\omega\bigr]+c\sigma_{\boldsymbol{\varepsilon}}^2\nu(x)}
= \bigl[x-(\delta+\sx^2)\bigr]\,\phi(x),
\end{align}
where
\[
\phi(x) = \frac{\delta x}
{\delta(x_\delta^\star-x)\bigl[(r^2-\alpha^2)x+c\sigma_{\boldsymbol{\varepsilon}}^2\bigr]
+\alpha^2 c\sx^2(\delta+\sx^2)x}.
\]
A quick computation gives
\[
\phi'(x) 
= \frac{\delta^2\bigl[(r^2-\alpha^2)x^2 + c\sigma_{\boldsymbol{\varepsilon}}^2 x_\delta^\star\bigr]}
       {\left[\delta(x_\delta^\star-x)\bigl[(r^2-\alpha^2)x+c\sigma_{\boldsymbol{\varepsilon}}^2\bigr]
+\alpha^2 c\sx^2(\delta+\sx^2)x\right]^2} > 0,
\]
hence $\phi$ is strictly increasing on the support of $F_\delta$.
Finally, note that 
\[
1 = \int 1 \, \mathrm{d}F_{\mathrm{MP}}= \int \nu(x)\,\mathrm{d}F_{\delta}(x)
= \frac{\delta}{c\sx^2(\delta+\sx^2)}\int \bigl(x_\delta^\star - x\bigr)\,\mathrm{d}F_{\delta}(x),
\]
where in the last step we substituted the definition of $\nu(x)$. Rearranging gives
\begin{align}\label{eq: mean of F_delta}
    \int x\,\mathrm{d}F_{\delta} 
= x_\delta^\star - \frac{c\sx^2(\delta+\sx^2)}{\delta} 
= (\delta+\sx^2)\Bigl(1+\frac{c\sx^2}{\delta}\Bigr) - \frac{c\sx^2(\delta+\sx^2)}{\delta} 
= \delta+\sx^2,
\end{align}
where we used $x_\delta^\star = (\delta+\sx^2)(1+c\sx^2/\delta)$.
Plugging \eqref{eq: def phi corr ineq} into \eqref{eq: xmu_1 - mu_0}, we deduce 
that 
\[
\langle h_1,h_1\rangle_w-\langle h_0,h_1\rangle_w 
= \int \bigl[x-(\delta+\sx^2)\bigr]\,\phi(x) \, \mathrm{d}F_{\delta}.
\]
Now let $T$ be a random variable with distribution $F_{\delta}$. 
By~\eqref{eq: mean of F_delta}, $\mathbb{E}[T] = \delta + \sx^2$, so we deduce that
\[
\langle h_1,h_1\rangle_w-\langle h_0,h_1\rangle_w = \cov(T,\, \phi(T)).
\]
Since $\phi$ is strictly increasing on the support of $F_\delta$, the conclusion 
follows by Chebyshev's correlation inequality.
\end{proof}
\subsubsection{General case}
Finally, for arbitrary number of spikes $\rk \geq 2$, we show that only finitely many values 
of $\sigma_{\boldsymbol{\varepsilon}}^2$ can make $b^{(K)}_0$ vanish. We prove this in the next Lemma.
\begin{lemma}\label{lemma:finitely_many_sigma_bad}
Fix $K\geq 2$, $c>0$, $r>0$, $(\delta_j)_{j=1}^{\rk}$, and 
$(\omega_j)_{j=0}^{\rk}$ satisfying Assumption~\ref{assm:main}. Then there 
exist at most finitely many values of $\sigma_{\boldsymbol{\varepsilon}}^2\geq 0$ such that
\[
b^{(K)}_0(\sigma_{\boldsymbol{\varepsilon}}^2)=0.
\]
\end{lemma}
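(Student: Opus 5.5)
The plan is to show that $b^{(K)}_0$, viewed as a function of $s:=\sigma_{\boldsymbol{\varepsilon}}^2$, is a real-analytic function on $[0,\infty)$ (indeed on a neighborhood of it) that is not identically zero. Once that is known, its zeros cannot accumulate at any finite point. To exclude accumulation at infinity, I will use the limit behaviour as $s\to\infty$.

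\textbf{Step 1 (analyticity via Cramer's rule).} By Cramer's rule applied to \eqref{eq: linear system with b0},
\[
b^{(K)}_0(s)=\frac{N(s)}{\det(\mathbf I_{\rk+1}+\mathfrak D_K\bH(s))}.
\]
Here $N(s)$ is a polynomial in the Gram entries $H_{ij}(s)=\langle h_i,h_j\rangle_w$. The matrix $\mathfrak D_K$ does not depend on $s$. By \eqref{eq: HD is invertible}, the determinant is positive for every $s\ge0$. So it suffices to study the zeros of $N$. I would show that each $H_{ij}(s)$ is real-analytic in $s$. As in the proof of Lemma~\ref{deltas_example}, write
\[
H_{ij}(s)=\int \frac{x\,\mu_i(x)\mu_j(x)}{\sx^2r^2x+c\sx^2 s\,\mu_0(x)}\,\mathrm dF_\alpha(x).
\]
The denominator is strictly positive for all $x\in\mathcal S_c^+$ and all $s\ge0$. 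It is positive in the bulk since $x\ge a>0$ when $c<1$; at the atom $0$ (when $c>1$) it reduces to $c\sx^2s\mu_0(0)$, and the integrand vanishes there anyway because of the factor $x$. At the atoms $x_j^\star$ we have $\mu_0=0$ and $x_j^\star>0$. I would therefore extend $s$ to a complex neighborhood of $[0,\infty)$ on which the denominator stays bounded away from zero uniformly on the compact set $\mathcal S_c^+$. Differentiation under the integral then gives holomorphy, so $N$ is real-analytic on $[0,\infty)$.

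\textbf{Step 2 (nonvanishing at infinity).} Compute $\lim_{s\to\infty}H_{ij}(s)$ exactly as in Lemma~\ref{deltas_example}. The bulk parts vanish by dominated convergence, because $\mu_0>0$ on $\mathcal S_c$. Only the atomic contributions survive, namely $\bH(\infty)=\mathrm{diag}(0,\kappa_1,\dots,\kappa_\rk)$ with $\kappa_j=F_{\delta_j}(\{x_j^\star\})/\alpha_j^2\ge0$. Then $\mathbf I+\mathfrak D_K\bH(\infty)$ is diagonal with first entry $1$, so the system decouples and
\[
b^{(K)}_0(\infty)=\sx^2r^2\omega_0>0,
\]
since $\omega_0>0$ by Lemma~\ref{lemma: f_alpha}. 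By continuity, $b^{(K)}_0(s)>0$ for all $s\ge S_0$ for some finite $S_0$. On the compact interval $[0,S_0]$, the function $N$ is real-analytic and not identically zero, because it is nonzero at $S_0$. Hence it has finitely many zeros there, which proves the lemma.

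\textbf{Main obstacle.} The delicate point is the uniform control needed for analyticity and dominated convergence near the edges and atoms. Specifically, one needs a complex neighborhood of each $s\ge0$ on which $\sx^2r^2x+c\sx^2 s\mu_0(x)$ has no zeros for $x\in\mathcal S_c^+$. This follows from positivity and continuity of $\mu_0$ on the compact set $\mathcal S_c^+$, using $\mu_0=0$ with $x>0$ at the outliers. The case $c>1$, with the atom at $0$ and $s$ near $0$, needs a separate check that the integrand is identically zero there. If analyticity in $s$ proves awkward, a fallback is to treat $1/s$ as a parameter near $0$, using only the limiting computation above. That alternative only yields nonvanishing for large $s$, so the analytic route is the primary plan.
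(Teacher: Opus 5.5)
Your proposal is correct and follows essentially the same route as the paper: real-analyticity of the Gram entries $\langle h_i,h_j\rangle_w$ in $\sigma_{\boldsymbol{\varepsilon}}^2$ (outlier contributions constant, the atom at $0$ killed by the factor $x$), the limit $b_0^{(K)}\to\sigma_0^2 r^2\omega_0>0$ as $\sigma_{\boldsymbol{\varepsilon}}^2\to\infty$ by dominated convergence on the bulk, and the identity theorem on a compact interval $[0,S_0]$. The differences are only cosmetic. You pass through Cramer's rule and compute the whole limiting matrix, whereas the paper only needs its first row and column to vanish. Your uniform complex-neighborhood argument for analyticity is a slightly more careful justification than the paper's pointwise one.
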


\begin{proof}
Recall from~\eqref{eq: linear system with b0} that the vector
\[
b(\sigma_{\boldsymbol{\varepsilon}}^2):=
\begin{pmatrix}
b^{(K)}_0(\sigma_{\boldsymbol{\varepsilon}}^2),& \!\!\!
b^{(K)}_1(\sigma_{\boldsymbol{\varepsilon}}^2),&\!\!\!
\dots & \!\!\!b^{(K)}_\rk(\sigma_{\boldsymbol{\varepsilon}}^2)
\end{pmatrix}\!^\top
\]
is defined by
\[
\Bigl(\bI_{\rk+1}+\mathfrak D_K\bH(\sigma_{\boldsymbol{\varepsilon}}^2)\Bigr)
b(\sigma_{\boldsymbol{\varepsilon}}^2)
=
\begin{pmatrix}
\sx^2r^2\omega_0\\
(\sx^2+\delta_1)\alpha_1^2\\
\vdots\\
(\sx^2+\delta_\rk)\alpha_\rk^2
\end{pmatrix}
:= \gamma.
\]
We first show that each entry $\bH_{ij}(\sigma_{\boldsymbol{\varepsilon}}^2)$ is 
real-analytic on $[0,\infty)$. Since the integrand contains the prefactor $x$, 
the atom at $0$ does not contribute. Thus, using the definitions 
in~\eqref{eq: define w ,g , h_j},
\[
\bH_{ij}(\sigma_{\boldsymbol{\varepsilon}}^2)
=
\int_a^b \frac{x\,\mu_i(x)\mu_j(x)}{\sx^2r^2x+c\sx^2\,\sigma_{\boldsymbol{\varepsilon}}^2\mu_0(x)}
\,\mathrm{d}F_\alpha(x)
+
\sum_{\ell=1}^\rk
\frac{x_\ell^\star\,\mu_i(x_\ell^\star)\mu_j(x_\ell^\star)}
{\sx^2r^2x_\ell^\star+c\sx^2\,\sigma_{\boldsymbol{\varepsilon}}^2\mu_0(x_\ell^\star)}
\,F_\alpha(\{x_\ell^\star\}).
\]
At every nonzero outlier atom $x_\ell^\star$, one has
\[
\mu_0(x_\ell^\star)=0,\qquad
\mu_i(x_j^\star)=\omega_i^{-1} \mathbf 1[i = j].
\]
Hence
\begin{align}\label{eq: expansion of H_ij tilde}
\bH_{ij}(\sigma_{\boldsymbol{\varepsilon}}^2)
=
\int_a^b \frac{x\,\mu_i(x)\mu_j(x)}{\sx^2r^2x+c\sx^2\,\sigma_{\boldsymbol{\varepsilon}}^2\mu_0(x)}
\,\mathrm{d}F_\alpha(x)
+
\frac{\mathbf{1}[{i=j\geq 1}]}{\sx^2\alpha_i^2}\,F_\alpha(\{x_i^\star\}).
\end{align}
The outlier term does not depend on $\sigma_{\boldsymbol{\varepsilon}}^2$. Since $x>0$ on 
$[a,b]$,
\[
r^2x+c\sigma_{\boldsymbol{\varepsilon}}^2\mu_0(x)\geq r^2x>0,
\]
so the bulk integrand is real-analytic in $\sigma_{\boldsymbol{\varepsilon}}^2$ for every 
$x\in[a,b]$. Since $[a,b]$ is compact, differentiation under the integral sign 
holds on every compact subinterval of $[0,\infty)$. Hence each 
$\bH_{ij}(\sigma_{\boldsymbol{\varepsilon}}^2)$ is real-analytic on $[0,\infty)$, 
and therefore
\[
b(\sigma_{\boldsymbol{\varepsilon}}^2)
=
\Bigl(\mathbf{I}_{\rk+1}+\mathfrak D_K\bH(\sigma_{\boldsymbol{\varepsilon}}^2)\Bigr)^{-1}
\gamma
\]
is real-analytic on $[0,\infty)$, and in particular so is $b^{(K)}_0(\sigma_{\boldsymbol{\varepsilon}}^2)$.
We next prove that
\begin{align}\label{eq: limit of b0}
\lim_{\sigma_{\boldsymbol{\varepsilon}}^2 \to \infty}b^{(K)}_0(\sigma_{\boldsymbol{\varepsilon}}^2) = \sx^2\,r^2\omega_0.
\end{align}
It suffices to show that $\bH_{0j}(\sigma_{\boldsymbol{\varepsilon}}^2)\to 0$ for all 
$0\leq j\leq \rk$, since then $\bH_{j0}(\sigma_{\boldsymbol{\varepsilon}}^2)\to 0$ 
by symmetry, and the first row and first column of 
$\bI_{\rk+1}+\mathfrak D_K\bH(\sigma_{\boldsymbol{\varepsilon}}^2)$ converge to those 
of the identity matrix. This implies that the first coordinate of the solution 
converges to the first coordinate of $\gamma$, namely $\sx^2\,r^2\omega_0$.
Using~\eqref{eq: expansion of H_ij tilde}, for all $0\leq j\leq \rk$,
\[
\bH_{0j}(\sigma_{\boldsymbol{\varepsilon}}^2)
=
\int_a^b 
\frac{x\,\mu_0(x)\mu_j(x)}{\sx^2\,r^2x+c\sx^2\,\sigma_{\boldsymbol{\varepsilon}}^2\mu_0(x)}
\,\mathrm{d}F_\alpha(x).
\]
For each $x\in[a,b]$, one has $\mu_0(x)>0$, so
\[
\frac{x\,\mu_0(x)\mu_j(x)}{\sx^2\,r^2x+c\sx^2\,\sigma_{\boldsymbol{\varepsilon}}^2\mu_0(x)}
\longrightarrow 0
\qquad\text{as }\sigma_{\boldsymbol{\varepsilon}}^2\to\infty.
\]
Moreover,
\[
0\leq
\frac{x\,\mu_0(x)\mu_j(x)}{\sx^2\,r^2x+c\sx^2\,\sigma_{\boldsymbol{\varepsilon}}^2\mu_0(x)}
\leq
\frac{x\,\mu_0(x)\mu_j(x)}{\sx^2\,r^2x}
=
\frac{\mu_0(x)\mu_j(x)}{\sx^2\,r^2},
\]
and the right-hand side is integrable on $[a,b]$. By dominated convergence, 
$\bH_{0j}(\sigma_{\boldsymbol{\varepsilon}}^2)\to 0$, and~\eqref{eq: limit of b0} 
follows. In particular, there exists $M>0$ such that $b^{(K)}_0(\sigma_{\boldsymbol{\varepsilon}}^2)
\neq 0$ for all $\sigma_{\boldsymbol{\varepsilon}}^2\geq M$.

It remains to consider the compact interval $[0,M]$. Since $b^{(K)}_0$ is 
real-analytic on $[0,M]$, if it had infinitely many zeros in $[0,M]$, then by 
compactness these zeros would have an accumulation point in $[0,M]$. By the 
identity theorem for real-analytic functions, $b^{(K)}_0$ would then vanish 
identically on $[0,\infty)$, contradicting $b^{(K)}_0(\sigma_{\boldsymbol{\varepsilon}}^2)\to 
\sx^2\,r^2\omega_0>0$ as $\sigma_{\boldsymbol{\varepsilon}}^2\to\infty$.

Therefore $b^{(K)}_0$ has only finitely many zeros in $[0,M]$ and none in 
$[M,\infty)$, which proves that there are at most finitely many values of 
$\sigma_{\boldsymbol{\varepsilon}}^2\geq 0$ such that $b^{(K)}_0(\sigma_{\boldsymbol{\varepsilon}}^2)=0$.
\end{proof}

\subsubsection{Simulations}

To numerically verify that $b^{(K)}_0 \neq 0$, we evaluate $b^{(K)}_0$ across a wide 
range of parameter configurations. Without loss of generality, we scale the covariance $\bSigma$ so that $\sx^2 = 1$. We fix base parameters $K=2$, $r=8$, 
$\sigma=5$, $c=4$, and base spike vectors 
$$\boldsymbol{\delta} = (1.0, 3.0, 0.1, 0.5, 4.5, 7.0, 7.5, 8.0)$$ and 
$$\boldsymbol{\alpha} = (3.0, 2.5, 2.0, 1.5, 1.2, 1.0, 0.8, 0.5).$$
For panels (a)--(d), we use the first two entries 
$\boldsymbol{\delta}_{1,2
} = (1.0, 3.0)$, $\boldsymbol{\alpha}_{1,2} = (3.0, 2.5)$, 
scaling the spike strengths by $t \in (1, 3)$ while holding the angles $\boldsymbol{\alpha}$ fixed, and varying one parameter at a 
time while holding the rest fixed at their base values. The results are 
displayed in Figure~\ref{fig:b0_heatmaps}.

\begin{figure}[H]
    \centering
    \includegraphics[width=1.1\linewidth]{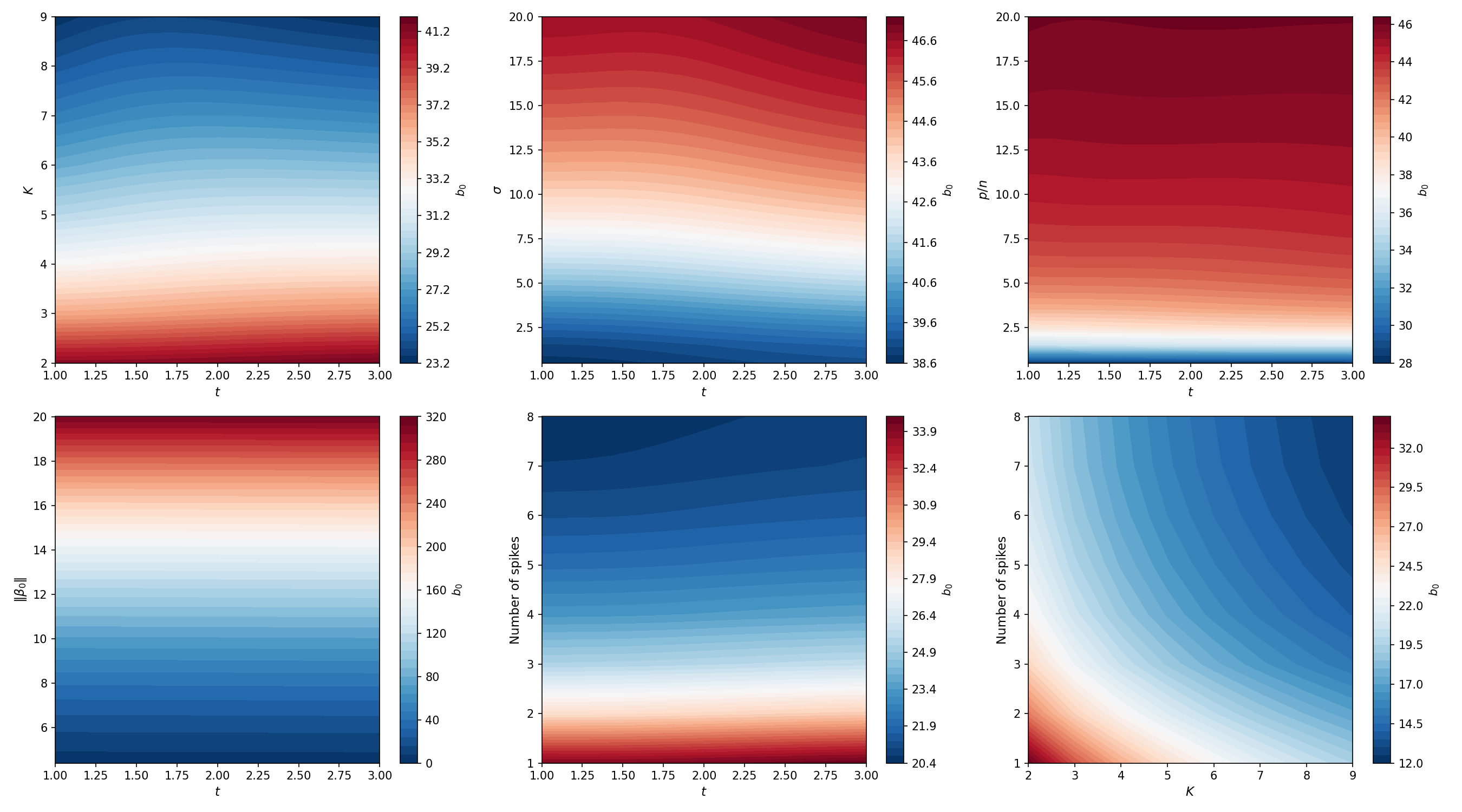}
    \caption{Numerical check that $b^{(K)}_0 \neq 0$ across a wide range 
    of parameter configurations.
    \textbf{(a)} $b^{(K)}_0$ vs.\ spike scale $t$ and number of clients $K \in \{2, \ldots, 9\}$.
    \textbf{(b)} $b^{(K)}_0$ vs.\ $t$ and noise level $\sigma \in (0.5, 20)$.
    \textbf{(c)} $b^{(K)}_0$ vs.\ $t$ and aspect ratio $c \in (0.5, 20)$, 
    covering both the underparametrized ($c < 1$) and overparametrized 
    ($c > 1$) regimes.
    \textbf{(d)} $b^{(K)}_0$ vs.\ $t$ and signal norm $r \in 
    (\sqrt{\sum_j \alpha_j^2},\ 20)$.
    \textbf{(e)} $b^{(K)}_0$ vs.\ $t$ and number of spikes 
    $\rk \in \{1, \ldots, 8\}$.
    \textbf{(f)} $b^{(K)}_0$ vs.\ $K$ and $\rk$ at fixed $t=1$, 
    jointly varying the number of clients and the number of 
    spikes. In all panels $b^{(K)}_0$ remains strictly positive.}
    \label{fig:b0_heatmaps}
\end{figure}

\section{Quadratic Forms of Products - Proof of Theorem~\ref{lemma:non-free limit}}\label{app: J}
In this section, we introduce several technical results needed to prove 
Theorem~\ref{lemma:non-free limit}. Recall $m(z)$ and $m_{\delta}(z)$ from 
Section~\ref{section: stieltjes} and Lemma~\ref{lemma: F_mp << F_delta}. 
Since Theorem~\ref{lemma:non-free limit} allows the two matrices to have 
different aspect ratios $c_\ell$ and $c_k$, we use the subscripts 
$m_{c_\ell}(z)$, $m_{c_k}(z)$ and $m_{\delta, \,c_\ell}(z)$, 
$m_{\delta, \,c_k}(z)$ to denote the corresponding Stieltjes transforms 
throughout this section.

We start by computing limits of products of resolvents of independent covariance matrices in the following preliminary Lemma. 

\begin{lemma}\label{lemma: product of non-free Ridge}
Let 
\[
\widehat{\boldsymbol{\Sigma}}_\ell = \frac{\bX_{\ell}^\top \bX_{\ell}}{n_\ell}, 
\qquad 
\widehat{\boldsymbol{\Sigma}}_k = \frac{\bX_{k}^\top \bX_{k}}{n_{k}}
\]
be two independent sample covariance matrices where the rows of  
$\bX_{\ell} \in \mathbb{R}^{n_\ell \times p}$ and $\bX_{k}\in \mathbb{R}^{n_k \times p}$ satisfy Assumption~\ref{assm:dgp}. Assume  that 
\(p/n_\ell \to c_\ell\) and \(p/n_{k} \to c_{k}\). Then, almost surely, for all $z_\ell,  z_k \in \mathbb{C}\setminus \mathbb{R}^+$ we have
\[
\frac{\bbeta_0^\top (\widehat{\boldsymbol{\Sigma}}_\ell - z_\ell\mathbf{I}_p)^{-1}
(\widehat{\boldsymbol{\Sigma}}_k -z_k\mathbf{I}_p)^{-1}\bbeta_0}{\|\bbeta_0\|_2^2}
\xrightarrow{\mathrm{a.s.}}
\omega_0\,m_{c_\ell}(z_\ell)\,m_{c_k}(z_k)
+
\sum_{j=1}^{\rk}\omega_j\,m_{\delta_j,\,c_\ell}(z_\ell)\,m_{\delta_j,\,c_k}(z_k).
\]
\end{lemma}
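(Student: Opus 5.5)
The plan is to condition on one of the two independent samples and apply the single-sample deterministic equivalent \eqref{eq:determinsitic_equivalent formula} twice, once to each resolvent. Fix $z_\ell,z_k\in\mathbb C\setminus\mathbb R_+$ and write $\bR_\ell:=(\widehat{\boldsymbol{\Sigma}}_\ell-z_\ell\mathbf I_p)^{-1}$ and $\bR_k:=(\widehat{\boldsymbol{\Sigma}}_k-z_k\mathbf I_p)^{-1}$. First I condition on $\bX_k$ and treat $\bTheta_n:=\bR_k\bbeta_0\bbeta_0^\top$ as a test matrix for $\bR_\ell$. This matrix has rank one, and its trace norm is $\|\bR_k\bbeta_0\|_2\|\bbeta_0\|_2\le \|\bbeta_0\|_2^2/\operatorname{dist}(z_k,\mathbb R_+)$, which is uniformly bounded. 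Since $\bTheta_n$ is independent of $\bX_\ell$, the deterministic equivalence of \cite{RubioMestre2011SpectralConvergence} applies to $\bR_\ell$ conditionally on $\bX_k$ (equivalently, on the product space via Fubini). This gives
\[
\bbeta_0^\top\bR_\ell\bR_k\bbeta_0-\bbeta_0^\top(a_{n,\ell}\boldsymbol{\Sigma}-z_\ell\mathbf I_p)^{-1}\bR_k\bbeta_0\xrightarrow{\mathrm{a.s.}}0 .
\]

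Next I expand the deterministic matrix using Proposition~\ref{lem:resolvent-spiked}:
\[
(a_{n,\ell}\boldsymbol{\Sigma}-z_\ell\mathbf I_p)^{-1}=\frac{1}{\sx^2a_{n,\ell}-z_\ell}\mathbf I_p-\sum_{j=1}^{\rk}\Big(\frac{1}{\sx^2a_{n,\ell}-z_\ell}-\frac{1}{\sx^2a_{n,\ell}-z_\ell+\delta_ja_{n,\ell}}\Big)\bv_j\bv_j^\top .
\]
Substituting this expansion, the quadratic form becomes a linear combination of $\bbeta_0^\top\bR_k\bbeta_0$ and the terms $(\bbeta_0^\top\bv_j)(\bv_j^\top\bR_k\bbeta_0)$, with scalar coefficients that converge by Lemma~\ref{lemma: a(z) = z + 1/m(z)}. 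The limits of these coefficients are $m_{c_\ell}(z_\ell)$ and $m_{c_\ell}(z_\ell)-m_{\delta_j,c_\ell}(z_\ell)$, respectively; this uses \eqref{eq: m_delta in terms of a(z)}. I then apply the one-sample results a second time, now to $\bR_k$. The proof of Lemma~\ref{lemma: f_alpha} gives
\[
\frac{\bbeta_0^\top\bR_k\bbeta_0}{\|\bbeta_0\|_2^2}\to\omega_0m_{c_k}(z_k)+\sum_j\omega_jm_{\delta_j,c_k}(z_k),
\]
and the proof of Lemma~\ref{lemma: hat C nj} gives $\bv_j^\top\bR_k\bbeta_0/(\bbeta_0^\top\bv_j)\to m_{\delta_j,c_k}(z_k)$. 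Collecting terms and using $(\bbeta_0^\top\bv_j)^2/\|\bbeta_0\|_2^2\to\omega_j$, the cross terms $\omega_jm_{c_\ell}m_{\delta_j,c_k}$ cancel. What remains is exactly $\omega_0m_{c_\ell}m_{c_k}+\sum_j\omega_jm_{\delta_j,c_\ell}m_{\delta_j,c_k}$.

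The step I expect to be the main obstacle is the first one: applying a deterministic equivalent against the random test matrix $\bR_k\bbeta_0\bbeta_0^\top$. Definition~\ref{def:DE} requires deterministic test matrices. I would handle this through independence. Conditionally on $\bX_k$, the test matrix is fixed with bounded trace norm, so on the product probability space the almost-sure convergence holds for almost every realization of $\bX_k$. If the cited theorem is stated only for deterministic sequences, I would instead invoke its quantitative version: a high-moment or concentration bound for quadratic forms $\bu^\top\bR_\ell\bv$ with $\|\bu\|,\|\bv\|$ bounded, uniform over such vectors. Borel--Cantelli would then upgrade this bound to almost-sure convergence. This is where the moment assumption $\mathbb E|Z_{ij}|^{8+\eta}<\infty$ enters.

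Finally, to get the statement simultaneously for all $z_\ell,z_k$, I would prove it on a countable dense subset of $(\mathbb C\setminus\mathbb R_+)^2$. I would then extend it using the fact that both sides are analytic and locally uniformly bounded, via Vitali's theorem.
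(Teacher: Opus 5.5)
Your proposal is correct and follows essentially the paper's route. Both arguments use deterministic equivalents for the resolvents, then the spiked resolvent formula of Proposition~\ref{lem:resolvent-spiked} together with the limits of $a_n(z)$, and finally a Vitali-type argument to make the convergence simultaneous in $(z_\ell,z_k)$. The only difference is in how the product is handled. The paper replaces both resolvents at once by citing a product rule for deterministic equivalents of independent matrices (Lemma D.8.4 of \cite{patil2023baggingoverparameterizedlearningrisk}) and then expands a fully deterministic product. You instead prove that step directly by conditioning on $\bX_k$, and then reuse the one-sample limits from Lemmas~\ref{lemma: f_alpha} and~\ref{lemma: hat C nj}; the cross terms $\omega_j m_{c_\ell}m_{\delta_j,c_k}$ then cancel exactly as you describe.
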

For each fixed $z_\ell, z_{k}$, almost sure convergence follows from the 
deterministic equivalents for products of independent random matrices developed 
in~\cite{wonder, patil2023baggingoverparameterizedlearningrisk}. We then extend 
this to simultaneous convergence over all $z$ using the following theorem, due  to Vitali and Porter; for a proof, see Section~2.4 in~\cite{schiff1993normal}.
\begin{theorem}\label{thm: vitali-porter}
Let \(D\subseteq \mathbb C\) be a connected open set and $\{f_n\}$ be a locally bounded sequence of analytic functions on $D$. If $\lim_{n\to\infty} f_n(z)$ exists for each $z$ 
belonging to a set $E \subseteq D$ which has an accumulation point in $D$, then 
$\{f_n\}$ converges uniformly on compact subsets of $D$ to an analytic function.
\end{theorem}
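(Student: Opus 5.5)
The plan is the classical normal-families argument: combine Montel's theorem, the identity theorem and a subsequence principle. No random matrix input is needed, since this is a purely complex-analytic statement.

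\emph{Step 1 (compactness).} Since $\{f_n\}$ is locally bounded on $D$, Cauchy's estimates on small discs give local uniform bounds on $\{f_n'\}$. Hence the family is locally equicontinuous. By Arzel\`a--Ascoli, applied on an exhaustion of $D$ by compacts $K_1\subset K_2\subset\cdots$ together with a diagonal argument (i.e.\ Montel's theorem), every subsequence of $\{f_n\}$ has a further subsequence converging uniformly on compact subsets of $D$. By Weierstrass' theorem, any such locally uniform limit is analytic on $D$.

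\emph{Step 2 (uniqueness of subsequential limits).} Let $g$ and $h$ be limits of two locally uniformly convergent subsequences. For each $z\in E$ the full sequence $f_n(z)$ converges, so $g(z)=h(z)=\lim_n f_n(z)$. Thus $g-h$ is analytic on $D$ and vanishes on $E$, which has an accumulation point in $D$. Since $D$ is connected, the identity theorem gives $g\equiv h$ on $D$. Call this common limit $f$; it is analytic.

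\emph{Step 3 (whole sequence converges).} Suppose $f_n\not\to f$ uniformly on some compact $K\subset D$. Then there are $\varepsilon>0$ and a subsequence with $\sup_K|f_{n_k}-f|\ge\varepsilon$ for all $k$. By Step 1 this subsequence has a further subsequence converging locally uniformly, and by Step 2 its limit must be $f$. This contradicts the lower bound $\varepsilon$ on $K$. Hence $f_n\to f$ uniformly on compact subsets of $D$.

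The only genuinely nontrivial ingredient is Step 1, namely Montel's theorem. I would either cite it directly or derive it from Cauchy estimates and Arzel\`a--Ascoli as sketched. The main point requiring care is the use of connectedness of $D$ in Step 2, which is exactly what lets the identity theorem propagate agreement from $E$ to all of $D$. In the application to Lemma~\ref{lemma: product of non-free Ridge}, on each of the connected components $\mathbb{C}^+$ and $\mathbb{C}^-$ of $\mathbb{C}\setminus\mathbb{R}_+$ one would take $E$ to be a countable dense set. Local boundedness there comes from the resolvent bound $\|(\widehat{\boldsymbol{\Sigma}}-z\mathbf{I}_p)^{-1}\|_{\mathrm{op}}\le 1/\mathrm{dist}(z,\mathbb{R}_+)$.
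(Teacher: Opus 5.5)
Your argument is correct and is the standard Montel-plus-identity-theorem proof given in the reference the paper cites for this result (Schiff, Section~2.4); the paper itself supplies no independent proof. One slip in your closing aside: $\mathbb{C}\setminus\mathbb{R}_+$ is connected, since the negative real axis joins $\mathbb{C}^+$ and $\mathbb{C}^-$, so these half-planes are not its components, and working on them separately would miss exactly the points $z=-\lambda$, $\lambda>0$, that the paper later needs (the paper instead applies the theorem on the connected sets $D_\delta$).
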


The key argument to prove Theorem~\ref{lemma:non-free limit} is that we can extend Lemma~\ref{lemma: product of non-free Ridge} to general functions in $\mathcal{F}$. The proof of the theorem proceeds in two steps. First, we extend Lemma~\ref{lemma: product of non-free Ridge} from first-order resolvents to arbitrary powers of resolvents. For this first step, we use the following standard Lemma from complex analysis, 
which is commonly employed in the study of high-dimensional ridge asymptotics; 
see, for instance, \cite{hastie2022surprises, wonder, 
patil2023baggingoverparameterizedlearningrisk}. For a proof, see Lemma~2.14 
in \cite{lifesaver}.
\begin{lemma}\label{lemma: vitali}
Let \(D\subseteq \mathbb C\) be a connected open set, and let \(f_n:D\to\mathbb C\) be analytic for each \(n\). Assume that the sequence \((f_n)\) is uniformly bounded on \(D\), that is, there exists \(M>0\) such that
\[
|f_n(z)|\le M
\qquad\text{for all } n \text{ and all } z\in D.
\]
If \(f_n(z)\to f(z)\) pointwise on \(D\) for some analytic function \(f\), then
\[
f_n'(z)\to f'(z)
\qquad\text{for all } z\in D.
\]
\end{lemma}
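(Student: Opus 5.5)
The plan is to upgrade the pointwise convergence $f_n\to f$ to locally uniform convergence, and then pass the convergence to derivatives through the Cauchy integral formula. Since $|f_n(z)|\le M$ for all $n$ and all $z\in D$, the sequence $(f_n)$ is in particular locally bounded on the connected open set $D$. By hypothesis, $\lim_n f_n(z)$ exists for every $z$ in the set $E:=D$, which has an accumulation point in $D$ because $D$ is open and nonempty. Theorem~\ref{thm: vitali-porter} then shows that $(f_n)$ converges uniformly on compact subsets of $D$ to some analytic function $\tilde f$. Since uniform convergence implies pointwise convergence and pointwise limits are unique, $\tilde f=f$ on $D$. Hence $f_n\to f$ uniformly on compact subsets of $D$; note also that $|f|\le M$ on $D$.

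Next, fix $z\in D$ and choose $\rho>0$ such that the closed disk $\overline{B(z,\rho)}$ lies in $D$; this is possible because $D$ is open. The circle $C_\rho=\{w:|w-z|=\rho\}$ is a compact subset of $D$. The functions $f_n-f$ are analytic on a neighborhood of the closed disk, so the Cauchy integral formula for the first derivative gives
\[
f_n'(z)-f'(z)=\frac{1}{2\pi i}\oint_{C_\rho}\frac{f_n(w)-f(w)}{(w-z)^2}\,\mathrm{d}w .
\]
Estimating the integral by the length of the contour times the supremum of the integrand yields
\[
|f_n'(z)-f'(z)|\le \frac{1}{\rho}\,\sup_{w\in C_\rho}|f_n(w)-f(w)| .
\]
The right-hand side tends to $0$ by the locally uniform convergence from the first step, applied to the compact set $C_\rho$. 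Therefore $f_n'(z)\to f'(z)$, and since $z\in D$ was arbitrary this proves the lemma. The same argument in fact gives locally uniform convergence of the derivatives.

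The only nontrivial step is the passage from pointwise to locally uniform convergence. This is exactly what Theorem~\ref{thm: vitali-porter} supplies. Alternatively, one could invoke Montel's theorem directly: every subsequence has a further subsequence converging locally uniformly, and its limit must be $f$ by pointwise convergence. Either route only needs the uniform bound $M$ to obtain local boundedness. After that, the Cauchy estimate is routine.
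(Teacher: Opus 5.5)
Your proof is correct. The uniform bound gives local boundedness, and Theorem~\ref{thm: vitali-porter} with $E=D$ upgrades pointwise convergence to locally uniform convergence; the Cauchy estimate $|f_n'(z)-f'(z)|\le \rho^{-1}\sup_{|w-z|=\rho}|f_n(w)-f(w)|$ then finishes it. The paper gives no argument of its own and simply cites Lemma~2.14 of \cite{lifesaver} (Vitali's convergence theorem), whose standard proof follows this same route of normal families plus the Cauchy formula, so your write-up is essentially a self-contained version of the cited result.
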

For any analytic function \(f\), we write \(f^{(t)}(z)\) for its \(t\)-th derivative evaluated at \(z\) and for notational convenience, for any analytic function \(f\) and any integer \(t\ge 1\), define
\[
f^{[t]}(z):=\frac{1}{(t-1)!}\,f^{(t-1)}(z).
\]
Using this notation, we can formalize the following Lemma.
\begin{lemma}\label{lemma: higher-order non-free Ridge}
Under the Assumptions of Lemma~\ref{lemma: product of non-free Ridge}, with probability one, for any integers $t_\ell, t_k \ge 1$ and any 
$z_\ell,z_k\in\mathbb{C}\setminus\mathbb{R}_+$, we have
\[
\frac{\bbeta_0^\top (\widehat{\boldsymbol{\Sigma}}_\ell-z_\ell\mathbf{I}_p)^{-t_\ell}
(\widehat\Sigma_{k}-z_k\mathbf{I}_p)^{-t_k}\bbeta_0}{\|\bbeta_0\|_2^2}
\xrightarrow{\mathrm{a.s.}}
\omega_0\,m_{c_\ell}^{[t_\ell]}(z_\ell)\,m_{c_k}^{[t_k]}(z_k)
+
\sum_{j=1}^{\rk}\omega_j\,m_{\delta_j,\,c_\ell}^{[t_\ell]}(z_\ell)\,
m_{\delta_j,\,c_k}^{[t_k]}(z_k).
\]
\end{lemma}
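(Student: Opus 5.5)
The plan is to obtain Lemma~\ref{lemma: higher-order non-free Ridge} from Lemma~\ref{lemma: product of non-free Ridge} by differentiating in $z_\ell$ and $z_k$. The starting point is the identity
\[
\frac{\partial^{t-1}}{\partial z^{t-1}}(\widehat{\boldsymbol{\Sigma}}-z\mathbf{I}_p)^{-1}=(t-1)!\,(\widehat{\boldsymbol{\Sigma}}-z\mathbf{I}_p)^{-t}.
\]
With the notation $f^{[t]}$, this gives
\[
\frac{\bbeta_0^\top (\widehat{\boldsymbol{\Sigma}}_\ell-z_\ell\mathbf{I}_p)^{-t_\ell}(\widehat{\boldsymbol{\Sigma}}_k-z_k\mathbf{I}_p)^{-t_k}\bbeta_0}{\|\bbeta_0\|_2^2}=\partial_{z_\ell}^{[t_\ell]}\partial_{z_k}^{[t_k]}F_n(z_\ell,z_k),
\]
where $F_n$ denotes the normalized bilinear form in Lemma~\ref{lemma: product of non-free Ridge}. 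So it suffices to show that mixed derivatives of $F_n$ converge to the corresponding mixed derivatives of the limit
\[
F(z_\ell,z_k)=\omega_0 m_{c_\ell}(z_\ell)m_{c_k}(z_k)+\sum_j\omega_j m_{\delta_j,c_\ell}(z_\ell)m_{\delta_j,c_k}(z_k).
\]
Because the limit factorizes, its mixed derivatives are exactly $\omega_0 m^{[t_\ell]}_{c_\ell}m^{[t_k]}_{c_k}+\sum_j\omega_j m^{[t_\ell]}_{\delta_j,c_\ell}m^{[t_k]}_{\delta_j,c_k}$.

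I would proceed one variable at a time. First fix $z_k$ and consider $z_\ell\mapsto F_n(z_\ell,z_k)$ on a domain $D$ at positive distance $\eta$ from $\mathbb{R}_+$, for instance $D=\{z:\Im z>\eta\}$ or $\{z:\operatorname{Re} z<-\eta\}$. On such a domain $\|(\widehat{\boldsymbol{\Sigma}}-z\mathbf{I}_p)^{-1}\|_{\mathrm{op}}\le 1/\eta$, so $|F_n|\le 1/\eta^2$ uniformly in $n$. This uniform bound lets me apply Lemma~\ref{lemma: vitali} repeatedly: $\partial_{z_\ell}^{j}F_n\to\partial_{z_\ell}^{j}F$ pointwise for every $j$. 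To iterate, I need the derivative sequence to stay uniformly bounded on a slightly smaller domain. This follows from the explicit operator bound $\|(\widehat{\boldsymbol{\Sigma}}-z\mathbf{I}_p)^{-t}\|_{\mathrm{op}}\le \eta^{-t}$, or alternatively from Cauchy estimates on a shrunken domain. Next, with $t_\ell$ fixed, I view $z_k\mapsto \partial^{t_\ell-1}_{z_\ell}F_n(z_\ell,z_k)$ as analytic and uniformly bounded by $(t_\ell-1)!\,\eta^{-t_\ell-1}$, and apply the same argument in $z_k$. Since $\mathbb{C}\setminus\mathbb{R}_+$ is covered by such domains, this gives the pointwise limit at every point.

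The main obstacle is the quantifier "with probability one, for all $z_\ell,z_k$". Lemma~\ref{lemma: product of non-free Ridge} already gives a single null set that works for all $z$. I would make this explicit by first taking convergence on a countable dense set of pairs with an accumulation point, for example rational points of $D\times D$, intersecting the countably many full-probability events. I would then use Theorem~\ref{thm: vitali-porter} in each variable, with the same local bound $1/\eta^2$, to upgrade to locally uniform convergence on all of $D$ on that single event. Locally uniform convergence of analytic functions in two variables implies locally uniform convergence of all mixed partial derivatives, by the Cauchy integral formula on polydiscs. This gives the statement simultaneously for all $t_\ell,t_k$ and all $z_\ell,z_k$ on one almost-sure event, and it bypasses the delicate iterated use of Lemma~\ref{lemma: vitali}. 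A secondary point is that on the atom-free domain the limits $m_{\delta,c}$ are analytic, which holds because they are Stieltjes transforms of the probability measures $F_{\delta,c}$ from Lemma~\ref{lemma:f_delta}.
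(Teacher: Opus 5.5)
Your proposal is correct and follows essentially the paper's route: the paper also works on a domain $D_\delta$ at distance $\delta$ from $\mathbb{R}_+$, uses $\|(\widehat{\boldsymbol{\Sigma}}-z\mathbf{I}_p)^{-t}\|_{\mathrm{op}}\le\delta^{-t}$ to keep each normalized quadratic form uniformly bounded, and repeatedly applies Lemma~\ref{lemma: vitali} together with $\partial_z(\widehat{\boldsymbol{\Sigma}}-z\mathbf{I}_p)^{-t}=t(\widehat{\boldsymbol{\Sigma}}-z\mathbf{I}_p)^{-(t+1)}$, organized as an induction on $t_\ell+t_k$ on the single probability-one event supplied by Lemma~\ref{lemma: product of non-free Ridge}. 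Your optional polydisc variant also works, but Vitali--Porter applied in each variable separately only gives separate locally uniform convergence, whereas the Cauchy formula on polydiscs needs joint locally uniform convergence; that follows from Montel's theorem in $\mathbb{C}^2$ (or from the uniform Lipschitz bounds given by the resolvent estimates), and in any case your iterated one-variable argument already suffices.
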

Fix now any real $\lambda > 0$ and let \[
\phi(x)=a_0+\sum_{q=1}^{Q} a_q (x+\lambda)^{-q}.
\]
By the integral representation of the Stieltjes transform, for every integer \(q\ge 1\),
\[
m^{[q]}(-\lambda)=\int \frac{1}{(x+\lambda)^q}\,\mathrm dF_{\mathrm{MP}}(x),
\qquad
m_{\delta}^{[q]}(-\lambda)=\int \frac{1}{(x+\lambda)^q}\,\mathrm dF_{\delta}(x).
\]
Thus
\[
a_0+\sum_{q=1}^{Q} a_q\,m^{[q]}(-\lambda)
=
\int \phi\,\mathrm dF_{\mathrm{MP}},
\qquad
a_0+\sum_{q=1}^{Q} a_q\,m_{\delta}^{[q]}(-\lambda)
=
\int \phi\,\mathrm dF_{\delta}.
\]
Therefore, since $-\lambda \in \mathbb{C}\setminus\mathbb{R}^+$, by linearity of the quadratic form and Lemma~\ref{lemma: higher-order non-free Ridge}, we obtain the following extension. 
\begin{corollary}\label{cor: linearity_extension_rational}
Let 
\[
\phi(x)=a_0+\sum_{q=1}^{Q} a_q (x+\lambda_\ell)^{-q},
\qquad
\psi(x)=b_0+\sum_{t=1}^{T} b_t (x+\lambda_k)^{-t_k},
\]
with $\lambda_\ell,\lambda_k>0$ and $a_0,a_q,b_0,b_t\in\mathbb{R}$. 
Then, with probability one,
\[
\frac{\bbeta_0^\top \phi(\widehat{\boldsymbol{\Sigma}}_\ell)\psi(\widehat{\boldsymbol{\Sigma}}_k)\bbeta_0}
{\|\bbeta_0\|_2^2}
\xrightarrow{\mathrm{a.s.}}
\omega_0 
\left(\int \phi \,\mathrm{d}F_{\mathrm{MP},\,c_\ell}\right)
\left(\int \psi \,\mathrm{d}F_{\mathrm{MP},\,c_k}\right)
+
\sum_{j=1}^{\rk}\omega_j
\left(\int \phi \,\mathrm{d}F_{\delta_j,\,c_\ell}\right)
\left(\int \psi \,\mathrm{d}F_{\delta_j,\,c_k}\right).
\]
\end{corollary}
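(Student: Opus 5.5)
The plan is to reduce the corollary to Lemma~\ref{lemma: higher-order non-free Ridge} evaluated at the real points $z_\ell=-\lambda_\ell$ and $z_k=-\lambda_k$, and then use bilinearity. These points lie in $\mathbb{C}\setminus\mathbb{R}_+$ because $\lambda_\ell,\lambda_k>0$. I read the exponent in $\psi$ as $(x+\lambda_k)^{-t}$, indexed by $t=1,\dots,T$.

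First, expand
\[
\phi(\widehat{\boldsymbol{\Sigma}}_\ell)\psi(\widehat{\boldsymbol{\Sigma}}_k)=\sum_{q=0}^{Q}\sum_{t=0}^{T}a_qb_t\,(\widehat{\boldsymbol{\Sigma}}_\ell+\lambda_\ell\mathbf{I}_p)^{-q}(\widehat{\boldsymbol{\Sigma}}_k+\lambda_k\mathbf{I}_p)^{-t},
\]
where a zeroth power means the identity. This splits the normalized quadratic form into finitely many pieces of three kinds.

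\emph{Case $q,t\ge1$.} Lemma~\ref{lemma: higher-order non-free Ridge} gives the limit
\[
\omega_0\,m^{[q]}_{c_\ell}(-\lambda_\ell)\,m^{[t]}_{c_k}(-\lambda_k)+\sum_{j}\omega_j\,m^{[q]}_{\delta_j,c_\ell}(-\lambda_\ell)\,m^{[t]}_{\delta_j,c_k}(-\lambda_k).
\]
By the integral representation recorded just before the corollary, this equals $\omega_0\int(x+\lambda_\ell)^{-q}\,\mathrm dF_{\mathrm{MP},c_\ell}\int(x+\lambda_k)^{-t}\,\mathrm dF_{\mathrm{MP},c_k}$ plus the corresponding spike terms.

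\emph{Case $q=t=0$.} The quadratic form is exactly $1$. We have $1=\omega_0+\sum_j\omega_j$ by Lemma~\ref{lemma: f_alpha}, and each factor $\int 1\,\mathrm dF$ equals $1$ because every measure involved is a probability measure.

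\emph{Mixed case, say $q=0$ and $t\ge1$.} Lemma~\ref{lemma: higher-order non-free Ridge} does not cover it, since it requires $t_\ell,t_k\ge1$. Here the form $\bbeta_0^\top(\widehat{\boldsymbol{\Sigma}}_k+\lambda_k\mathbf{I}_p)^{-t}\bbeta_0/\|\bbeta_0\|_2^2$ equals $\int(x+\lambda_k)^{-t}\,\mathrm d\widehat B_n$ for client $k$. By Lemma~\ref{lemma: f_alpha} together with Lemma~\ref{lem:int-conv} (the function $(x+\lambda_k)^{-t}$ is continuous and bounded on $[0,\infty)$), it converges a.s. to $\int(x+\lambda_k)^{-t}\,\mathrm dF_{\alpha,c_k}$. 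Expanding $F_\alpha=\omega_0F_{\mathrm{MP}}+\sum_j\omega_jF_{\delta_j}$ and multiplying by the trivial factor $\int 1\,\mathrm dF=1$ for client $\ell$ puts this term in the same product form. The case $t=0$, $q\ge1$ is symmetric.

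I expect this mixed case to be the only real point to check: it is the one place where the product lemma does not apply directly and the one-matrix result Lemma~\ref{lemma: f_alpha} has to be substituted.

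Finally, there are finitely many terms, each converging on a probability-one event, so their intersection still has probability one. Summing with weights $a_qb_t$ and using linearity of $\phi\mapsto\int\phi\,\mathrm dF$ in each factor, the double sum factorizes for each of the measures $F_{\mathrm{MP}}$ and $F_{\delta_j}$ separately. This gives
\[
\omega_0\Bigl(\int\phi\,\mathrm dF_{\mathrm{MP},c_\ell}\Bigr)\Bigl(\int\psi\,\mathrm dF_{\mathrm{MP},c_k}\Bigr)+\sum_j\omega_j\Bigl(\int\phi\,\mathrm dF_{\delta_j,c_\ell}\Bigr)\Bigl(\int\psi\,\mathrm dF_{\delta_j,c_k}\Bigr),
\]
as claimed.
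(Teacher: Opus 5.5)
Your proposal is correct and follows the same route as the paper: expand the product bilinearly, apply Lemma~\ref{lemma: higher-order non-free Ridge} at $z_\ell=-\lambda_\ell$ and $z_k=-\lambda_k$, and use the identity $m^{[q]}(-\lambda)=\int (x+\lambda)^{-q}\,\mathrm dF$. Your separate treatment of the terms involving the constants $a_0,b_0$ fills in a step that the paper's one-line ``by linearity'' argument skips, since Lemma~\ref{lemma: higher-order non-free Ridge} only covers exponents $t_\ell,t_k\ge 1$; you handle $q=t=0$ trivially, and the mixed case via Lemma~\ref{lemma: f_alpha} together with $F_\alpha=\omega_0F_{\mathrm{MP}}+\sum_j\omega_jF_{\delta_j}$.
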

Finally, we can combine Corollary~\ref{cor: linearity_extension_rational} with the Weierstrass approximation theorem to obtain Theorem~\ref{lemma:non-free limit}.

\begin{proof}[Proof of Theorem~\ref{lemma:non-free limit}]
Let \(\mathrm{spec}(\widehat{\boldsymbol{\Sigma}}_\ell)\) and \(
\mathrm{spec}(\widehat{\boldsymbol{\Sigma}}_k)
\)
be the spectra of \(\widehat{\boldsymbol{\Sigma}}_\ell\) and \(\widehat{\boldsymbol{\Sigma}}_k\), respectively. By \cite{baik2004}, there exists a deterministic constant \(M<\infty\) such that, almost surely, there exists a random $N: = N(\omega)$ such that for \(p \geq N\) 
\begin{align}\label{eq: spectrum in [0,M]}
    \mathrm{spec}(\widehat{\boldsymbol{\Sigma}}_\ell)\cup \mathrm{spec}(\widehat{\boldsymbol{\Sigma}}_k)\subset [0,M].
\end{align}

Next, fix $\phi \in \mathcal{F}_{c_\ell}$ and $\psi\in\mathcal{F}_{c_k}$. 
Since the test functions are only required to be continuous on 
$\mathcal{S}_{c_\ell}$ (resp.\ $\mathcal{S}_{c_k}$) and to take finite 
values at the outliers, we may argue exactly as in Lemma~\ref{lem:int-conv}. 
We first replace $\phi$ and $\psi$ by continuous functions on $[0,M]$ that 
agree with the originals on $\mathcal{S}_{c_\ell}^+$ (resp.\ 
$\mathcal{S}_{c_k}^+$) and on sufficiently small neighborhoods of the 
outlier locations. For simplicity, we keep writing $\phi$ and $\psi$ for 
these extensions. This modification does not affect the argument, since for 
all large $p$ the eigenvalues of $\widehat{\boldsymbol{\Sigma}}_\ell$ and $\widehat{\boldsymbol{\Sigma}}_k$ 
lie in the region where the original and extended functions coincide, and the 
limiting integrals are unchanged for the same reason; see 
Lemma~\ref{lem:int-conv} for a fully worked-out proof of a similar argument.
Now define
\[
\widetilde \phi(y):=\phi\left(\frac1y-1\right),
\qquad
\widetilde \psi(y):=\psi\left(\frac1y-1\right),
\qquad y\in \Bigl[\frac1{M+1},1\Bigr].
\]
Since \(y\mapsto \frac1y-1\) is continuous on \(\bigl[\frac1{M+1},1\bigr]\), both \(\widetilde\phi\) and \(\widetilde\psi\) are continuous on that interval. By the Weierstrass approximation theorem, there exist polynomials \(P_m\) and \(Q_m\) such that
\[
\sup_{y\in[\frac1{M+1},1]}|P_m(y)-\widetilde\phi(y)|\xrightarrow[]{m\to\infty} 0,
\qquad
\sup_{y\in[\frac1{M+1},1]}|Q_m(y)-\widetilde\psi(y)|\xrightarrow[]{m\to\infty} 0.
\]
Set
\[
\phi_m(x):=P_m\left(\frac1{x+1}\right),
\qquad
\psi_m(x):=Q_m\left(\frac1{x+1}\right).
\]Then \(\phi_m,\psi_m\) are rational functions as in Corollary~\ref{cor: linearity_extension_rational}, and
\[
\sup_{x\in[0,M]}|\phi_m(x)-\phi(x)|\xrightarrow[]{m\to\infty} 0,
\qquad
\sup_{x\in[0,M]}|\psi_m(x)-\psi(x)|\xrightarrow[]{m\to\infty} 0.
\]
For \(m\ge 1\), define
\[
L_p:=
\frac{\bbeta_0^\top \phi(\widehat{\boldsymbol{\Sigma}}_\ell)\psi(\widehat{\boldsymbol{\Sigma}}_k)\bbeta_0}{\|\bbeta_0\|_2^2},
\qquad
L_{p,m}:=
\frac{\bbeta_0^\top \phi_m(\widehat{\boldsymbol{\Sigma}}_\ell)\psi_m(\widehat{\boldsymbol{\Sigma}}_k)\bbeta_0}{\|\bbeta_0\|_2^2},
\]
and
\[
R:=
\omega_0 
\left(\int \phi \,\mathrm{d}F_{\mathrm{MP},\,c_\ell}\right)
\left(\int \psi \,\mathrm{d}F_{\mathrm{MP},\,c_k}\right)
+
\sum_{j=1}^{\rk}\omega_j
\left(\int \phi \,\mathrm{d}F_{\delta_j,\,c_\ell}\right)
\left(\int \psi \,\mathrm{d}F_{\delta_j,\,c_k}\right),
\]
\[
R_m:=
\omega_0 
\left(\int \phi_m \,\mathrm{d}F_{\mathrm{MP},\,c_\ell}\right)
\left(\int \psi_m \,\mathrm{d}F_{\mathrm{MP},\,c_k}\right)
+
\sum_{j=1}^{\rk}\omega_j
\left(\int \phi_m \,\mathrm{d}F_{\delta_j,\,c_\ell}\right)
\left(\int \psi_m \,\mathrm{d}F_{\delta_j,\,c_k}\right).
\]
For each fixed \(m\), we already know from Corollary~\ref{cor: linearity_extension_rational} that as $p\to \infty$
\begin{align}\label{eq: L-p,m to R_m}
L_{p,m}\xrightarrow{\mathrm{a.s.}}R_m.
\end{align}
Moreover, since \(\phi_m\xrightarrow[]{m\to\infty}\phi\) and \(\psi_m\xrightarrow{m\to\infty}\psi\) uniformly on \([0,M]\), we deduce that for all probability measures \(F \in \{F_{\mathrm{MP}}, F_{\delta_1}, \dots, F_{\delta_\rk}\}\) whose supports are subsets of \([0,M]\),
\[
\left|\int \phi_m\,\mathrm dF-\int \phi\,\mathrm dF\right|
\le \sup_{x\in[0,M]}|\phi_m(x)-\phi(x)|,
\qquad
\left|\int \psi_m\,\mathrm dF-\int \psi\,\mathrm dF\right|
\le \sup_{x\in[0,M]}|\psi_m(x)-\psi(x)|,
\]
and therefore \(R_m\xrightarrow{m\to\infty} R\).
Next, by the triangle inequality and the bound
\(
|\bbeta_0^\top A\bbeta_0|/\|\bbeta_0\|_2^2\le \|A\|_{\op},
\)
\[
\begin{aligned}
|L_p-L_{p,m}|
&\le
\left\|
\phi(\widehat{\boldsymbol{\Sigma}}_\ell)\psi(\widehat{\boldsymbol{\Sigma}}_k)
-
\phi_m(\widehat{\boldsymbol{\Sigma}}_\ell)\psi_m(\widehat{\boldsymbol{\Sigma}}_k)
\right\|_{\op} \\
&\le
\|\phi(\widehat{\boldsymbol{\Sigma}}_\ell)-\phi_m(\widehat{\boldsymbol{\Sigma}}_\ell)\|_{\op}\,
\|\psi(\widehat{\boldsymbol{\Sigma}}_k)\|_{\op}
+
\|\phi_m(\widehat{\boldsymbol{\Sigma}}_\ell)\|_{\op}\,
\|\psi(\widehat{\boldsymbol{\Sigma}}_k)-\psi_m(\widehat{\boldsymbol{\Sigma}}_k)\|_{\op}.
\end{aligned}
\]
By linearity,  for all \(p \geq N\) so that~\eqref{eq: spectrum in [0,M]} holds,
\[
\|\phi(\widehat{\boldsymbol{\Sigma}}_\ell)-\phi_m(\widehat{\boldsymbol{\Sigma}}_\ell)\|_{\op}
\le \sup_{x\in[0,M]}|\phi_m(x)-\phi(x)|,
\qquad
\|\psi(\widehat{\boldsymbol{\Sigma}}_k)-\psi_m(\widehat{\boldsymbol{\Sigma}}_k)\|_{\op}
\le \sup_{x\in[0,M]}|\psi_m(x)-\psi(x)|,
\]
and also
\[
\|\psi(\widehat{\boldsymbol{\Sigma}}_k)\|_{\op}\le \sup_{x\in[0,M]}|\psi(x)|,
\qquad
\|\phi_m(\widehat{\boldsymbol{\Sigma}}_\ell)\|_{\op}\le \sup_{x\in[0,M]}|\phi_m(x)|.
\]
Since \(\phi_m\to\phi\) uniformly on \([0,M]\), the quantity \(\sup_{x\in[0,M]}|\phi_m(x)|\) is bounded by a finite constant. Hence
\[
\sup_{p\geq N} |L_p-L_{p,m}| \xrightarrow[]{m\to\infty} 0.
\]
Finally, for all $p \geq N$
\begin{align*}
|L_p-R|
&\le |L_p-L_{p,m}|+|L_{p,m}-R_m|+|R_m-R| \\
&\le  \sup_{p\geq N} |L_p-L_{p,m}|+|L_{p,m}-R_m|+|R_m-R|
\end{align*}
To conclude fix $\eta > 0$, and choose $m$ large enough such that both the first term and the third term are less than $\eta/3$. Finally, we can choose $p \geq  N$ large enough in such that the middle term is also less than $\eta/3$ by~\eqref{eq: L-p,m to R_m}. The conclusion follows.
\end{proof}

\subsection{Proof of Lemma~\ref{lemma: product of non-free Ridge}}
\begin{proof}
First, we prove that for any $z_{\ell}, z_k\in \mathbb{C}\setminus \mathbb{R}^+$ conclusion holds almost surely. Recall from \eqref{eq:determinsitic_equivalent formula} that, for each  \(\ell\), there exists an analytic function \(a_{n_\ell}(z)\) on \(\mathbb C\setminus \mathbb R_+\) such that
\[
(\widehat{{\boldsymbol{\Sigma}}}_\ell-z\mathbf{I}_p)^{-1}
\asymp
(a_{n_\ell}(z){\boldsymbol{\Sigma}}-z\mathbf{I}_p)^{-1},
\qquad z\in \mathbb C\setminus \mathbb R_+.
\]
Similarly for \(k\), there exists an analytic function \(a_{n_k}(z)\) such that
\[
(\widehat{{\boldsymbol{\Sigma}}}_k-z\mathbf{I}_p)^{-1}
\asymp
(a_{n_k}(z){\boldsymbol{\Sigma}}-z\mathbf{I}_p)^{-1},
\qquad z\in \mathbb C\setminus \mathbb R_+.
\]
For any \(\delta>0\), define
\begin{align}\label{eq: D_delta}
D_\delta
:=
\{z\in\mathbb C\setminus \mathbb R_+:\ |\Im z|>\delta\} \cup \{z\in\mathbb C\setminus \mathbb R_+:\ \Re z<-\delta\}.
\end{align}
Since \(z_\ell\) and \(z_k\) are fixed and lie outside \(\mathbb R_+\), there exists \(\delta>0\) such that \(z_\ell,z_k\in D_\delta\). By the proof of Theorem~3.1(b) in \cite{wonder} (where \(x_p(z)\) corresponds to \(a_{n_\ell}(z)\) or \(a_{n_k}(z)\))
\begin{align}\label{eq: bound on D_delta}
\|(\widehat{\boldsymbol{\Sigma}}_\ell-z_\ell \mathbf{I}_p)^{-1}\|_2 \le \delta^{-1},
\qquad
\|(a_{n_\ell}(z_\ell){\boldsymbol{\Sigma}}-z_\ell \mathbf{I}_p)^{-1}\|_2 \le \delta^{-1},
\end{align}
and similarly for $k$. Hence the hypotheses of Lemma D.8.4 in \cite{patil2023baggingoverparameterizedlearningrisk} are satisfied, and therefore
\[
(\widehat{\boldsymbol{\Sigma}}_\ell-z_\ell \mathbf{I}_p)^{-1}
(\widehat{\boldsymbol{\Sigma}}_k-z_k \mathbf{I}_p)^{-1}
\asymp
(a_{n_\ell}(z_\ell){\boldsymbol{\Sigma}}-z_\ell\mathbf{I}_p)^{-1}
(a_{n_k}(z_k){\boldsymbol{\Sigma}}-z_k\mathbf{I}_p)^{-1}.
\]
In particular, if we let $\boldsymbol{\Theta}_n = \bbeta_0\bbeta_0^\top$ in equation~\eqref{eq:determinsitic_equivalent formula} as in a previous section~\ref{proof of quadratic f_alpha} then
\begin{align}\label{eq: two matrices quad form limit}
&\bbeta_0^\top (\widehat{\boldsymbol{\Sigma}}_\ell-z_\ell \mathbf{I}_p)^{-1}(\widehat{\boldsymbol{\Sigma}}_k-z_k\mathbf{I}_p)^{-1}\bbeta_0 -
\bbeta_0^\top (a_{n_\ell}{\boldsymbol{\Sigma}}-z_\ell \mathbf{I}_p)^{-1}(a_{n_k}{\boldsymbol{\Sigma}}-z_k\mathbf{I}_p)^{-1}\bbeta_0
\xrightarrow{\mathrm{a.s.}}0.
\end{align}
so it is enough to compute the limit of the deterministic quadratic form. 
Recall from Proposition~\ref{lem:resolvent-spiked} that
\[
(a_{n_\ell}{\boldsymbol{\Sigma}}-z_\ell \mathbf{I}_p)^{-1}
=
\frac{1}{\sx^2\,a_{n_\ell}-z_\ell}
\left[
\mathbf{I}_p-\sum_{j=1}^{\rk}
\frac{\delta_j\,a_{n_\ell}}{\sx^2\,a_{n_\ell}-z_\ell+\delta_j\,a_{n_\ell}}\,\bv_j \bv_j^\top
\right],
\]
with a similar formula for $k$. Expanding the product, and using the orthonormality of the spike vectors \(v_1,\dots,v_{\rk}\),
we obtain
\begin{align*}
&(a_{n_\ell}{\boldsymbol{\Sigma}}-z_\ell \mathbf{I}_p)^{-1}(a_{n_k}{\boldsymbol{\Sigma}}-z_k \mathbf{I}_p)^{-1} \\
&=
\frac{1}{(\sx^2\,a_{n_\ell}-z_\ell)(\sx^2\,a_{n_k}-z_k)}
\left[
\mathbf{I}_p
+\sum_{j=1}^{\rk}
\left(
-\frac{\delta_j\,a_{n_\ell}}{\sx^2\,a_{n_\ell}-z_\ell+\delta_j\,a_{n_\ell}}
-\frac{\delta_j\,a_{n_k}}{\sx^2\,a_{n_k}-z_k+\delta_j\,a_{n_k}}
\right.\right.\\
&\hspace{4.8cm}\left.\left.
+\frac{\delta_j\,a_{n_\ell}}{\sx^2\,a_{n_\ell}-z_\ell+\delta_j\,a_{n_\ell}}\cdot
\frac{\delta_j\,a_{n_k}}{\sx^2\,a_{n_k}-z_k+\delta_j\,a_{n_k}}
\right)\bv_j \bv_j^\top
\right] \\
&=
\frac{1}{(\sx^2\,a_{n_\ell}-z_\ell)(\sx^2\,a_{n_k}-z_k)}\mathbf{I}_p +
\sum_{j=1}^{\rk}
\left[
\frac{1}{(a_{n_\ell}(\sx^2\,+\delta_j)-z_\ell)(a_{n_k}(\sx^2\,+\delta_j)-z_k)}
-\frac{1}{(\sx^2\,a_{n_\ell}-z_\ell)(\sx^2\,a_{n_k}-z_k)}
\right]\bv_j \bv_j^\top.
\end{align*}
Taking the quadratic form against \(\bbeta_0\), we get
\begin{align*}
&\bbeta_0^\top (a_{n_\ell}{\boldsymbol{\Sigma}}-z_\ell \mathbf{I}_p)^{-1}(a_{n_k}{\boldsymbol{\Sigma}}-z_k \mathbf{I}_p)^{-1}\bbeta_0 \\
&=
\frac{\|\bbeta_0\|_2^2}{(\sx^2\,a_{n_\ell}-z_\ell)(\sx^2\,a_{n_k}-z_k)} +
\sum_{j=1}^{\rk}(\bbeta_0^\top \bv_j)^2
\left[
\frac{1}{(a_{n_\ell}(\sx^2\,+\delta_j)-z_\ell)(a_{n_k}(\sx^2\,+\delta_j)-z_k)}
-\frac{1}{(\sx^2\,a_{n_\ell}-z_\ell)(\sx^2\,a_{n_k}-z_k)}
\right].
\end{align*}
Rearranging terms,
\begin{align*}
&\bbeta_0^\top (a_{n_\ell}{\boldsymbol{\Sigma}}-z_\ell \mathbf{I}_p)^{-1}(a_{n_k}{\boldsymbol{\Sigma}}-z_k \mathbf{I}_p)^{-1}\bbeta_0 \\
&=
\left(\|\bbeta_0\|_2^2-\sum_{j=1}^{\rk}(\bbeta_0^\top \bv_j)^2\right)
\frac{1}{(\sx^2\,a_{n_\ell}-z_\ell)(\sx^2\,a_{n_k}-z_k)}
+
\sum_{j=1}^{\rk}(\bbeta_0^\top \bv_j)^2
\frac{1}{(a_{n_\ell}(\sx^2\,+\delta_j)-z_\ell)(a_{n_k}(\sx^2\,+\delta_j)-z_k)}.
\end{align*}
We now let \(n\to\infty\). By Lemma~\ref{lemma: a(z) = z + 1/m(z)} and equation~\ref{eq: m_delta in terms of a(z)}, we know that
\[
\frac{1}{\sx^2\,a_{n_\ell}-z_\ell}\to m_{c_\ell}(z_\ell),
\qquad
\frac{1}{a_{n_\ell}(\sx^2\,+\delta_j)-z_\ell}\to m_{\delta_j, \,c_\ell}(z_\ell),
\]
and similar limits hold for $k$. Hence
\begin{align*}
&\bbeta_0^\top (a_{n_\ell}{\boldsymbol{\Sigma}}-z_\ell \mathbf{I}_p)^{-1}(a_{n_k}{\boldsymbol{\Sigma}}-z_k \mathbf{I}_p)^{-1}\bbeta_0 \xrightarrow{\mathrm{a.s.}}
\left(r^2-\sum_{j=1}^{\rk}\alpha_j^2\right)m_{c_\ell}(z_\ell)m_{c_k}(z_k)
+
\sum_{j=1}^{\rk}\alpha_j^2\,m_{\delta_j,\,c_\ell}(z_\ell)m_{\delta_j, \,c_k}(z_k).
\end{align*}
The conclusion now follows after dividing by \(\|\bbeta_0\|_2^2\) and using equation~\eqref{eq: two matrices quad form limit}.

It remains to promote this to almost sure simultaneous convergence over all $z_\ell, z_k \in D_\delta$ on a single probability-one event via a 
standard density argument. Fix $\delta > 0$, and for all $z,w\in D_{\delta}$, define
\[
F_n(z, w) 
:= \frac{\beta_0^\top (\widehat\Sigma_\ell - z\mathbf{I}_p)^{-1}
(\widehat\Sigma_{\ell'} - w\mathbf{I}_p)^{-1}\beta_0}{\|\beta_0\|_2^2},
\qquad
F(z,w) 
:= \omega_0\,m_{c_\ell}(z)\,m_{c_k}(w) 
+ \sum_{j=1}^{\rk}\omega_j\,m_{c_\ell, \, \delta_j}(z)\,m_{c_k, \, \delta_j}(w).
\]
Both $F_n$ and $F$ are analytic in each variable on $D_\delta$, and 
by~\eqref{eq: bound on D_delta}, $|F_n(z,w)| \leq \delta^{-2}$ uniformly 
in $n$ and $z,w\in D_\delta$. Fix $w \in D_\delta$. By our previous work, 
$F_n(z, w) \xrightarrow{\mathrm{a.s.}} F(z,w)$ for each fixed $z \in D_\delta$. 
Let $E = \{z_k\}_{k \geq 1} \subset D_\delta$ be a countable set with an 
accumulation point in $D_\delta$, and define
\[
\Omega_{0,w} 
:= \bigcap_{k=1}^\infty \bigl\{\omega : F_n(z_k, w) \to F(z_k, w)\bigr\},
\]
which satisfies $\mathbb{P}(\Omega_{0,w}) = 1$ as a countable intersection of 
almost sure events. On $\Omega_{0,w}$, the sequence $z \mapsto F_n(z, w)$ 
consists of analytic functions on $D_\delta$, is uniformly bounded by $\delta^{-2}$, 
and converges pointwise on $E$. By Theorem~\ref{thm: vitali-porter}, $F_n(\cdot, w)$ 
converges uniformly on compact subsets of $D_\delta$ to an analytic limit.
Since this limit agrees with $F(\cdot, w)$ on $E$, the identity theorem
implies that the limit equals $F(\cdot, w)$ on all of $D_\delta$.
Hence, on $\Omega_{0,w}$, $F_n(z,w) \to F(z,w)$ for all $z \in D_\delta$.
Similarly, we can extend the convergence to all $w \in D_{\delta}$ so that 
on the same probability-one common set $\Omega_0$, 
we have $F_n(z,w) \to F(z,w)$ for all $z, w \in D_\delta$.

Finally, set $\delta_n = 1/n$, so that $D_{\delta_n}\subseteq D_{\delta_{n+1}}$ and 
$\bigcup_{n=1}^\infty D_{\delta_n} = \mathbb{C}\setminus\mathbb{R}^+$. 
By the argument above, for each $n$, there exists $\Omega_{0,n}$ with 
$\mathbb{P}(\Omega_{0,n})=1$ on which convergence holds for all 
$z,w\in D_{\delta_n}$. Setting
\[
\Omega_0 := \bigcap_{n=1}^\infty \Omega_{0,n},
\]
we have $\mathbb{P}(\Omega^*)=1$. For any $z_\ell, z_{\ell'}\in\mathbb{C}
\setminus\mathbb{R}^+$, choose $n$ large enough so that 
$z_\ell, z_{\ell'}\in D_{\delta_n}$ to complete the proof.
\end{proof}

\subsection{Proof of Lemma~\ref{lemma: higher-order non-free Ridge}}
\begin{proof}
Fix \(\delta>0\) and work on \(D_\delta\) (see the definition in \eqref{eq: D_delta}), where the resolvents are uniformly bounded in operator norm by~\eqref{eq: bound on D_delta}. For \(t_\ell,t_k\ge 1\) and \(z,w\in D_\delta\), define
\[
F_n^{(t_\ell,t_k)}(z,w)
:=
\frac{\bbeta_0^\top (\widehat{\boldsymbol{\Sigma}}_\ell-z\mathbf{I}_p)^{-t_\ell}
(\widehat{\boldsymbol{\Sigma}}_k-w\mathbf{I}_p)^{-t_k}\bbeta_0}{\|\bbeta_0\|_2^2},
\]
and
\[
F^{(t_\ell,t_k)}(z,w)
:=
\omega_0\,m_{c_\ell}^{[t_\ell]}(z)\,m_{c_k}^{[t_k]}(w)
+
\sum_{j=1}^{\rk}\omega_j\,m_{\delta_j, \, c_\ell}^{[t_\ell]}(z)\,m_{\delta_j, \, c_k}^{[t_k]}(w).
\]
We prove by induction on \(t_\ell + t_k\) that almost surely, for all $z,w\in D_\delta$
\[
F_n^{(t_\ell,t_k)}(z,w)\xrightarrow{\mathrm{a.s.}}F^{(t_\ell,t_k)}(z,w).
\]
The base case $t_\ell = t_k = 1$ follows from Lemma~\ref{lemma: product of non-free Ridge}, 
which gives a probability-one event $\Omega_0$ on which 
$F_n^{(1,1)}(z,w)\to F^{(1,1)}(z,w)$ for all $z,w\in D_\delta$ simultaneously.  From now on, we work on $\Omega_0$.
Now assume the claim holds for some pair \((t_\ell,t_k)\). We prove it for \((t_\ell + 1,t_k)\).  By~\eqref{eq: bound on D_delta}
\[
\|(\widehat{\boldsymbol{\Sigma}}_\ell-z\mathbf{I}_p)^{-t_\ell}\|_2\le \left(\|(\widehat{\boldsymbol{\Sigma}}_\ell-z\mathbf{I}_p)^{-1}\|_2\right)^{t_\ell} \le \delta^{-t_\ell},
\]
and similarly for $(k, w, t_k)$. We deduce the uniform bound
\[
|F_n^{(t_\ell,t_k)}(z,w)|
\le
\delta^{-(t_\ell + t_k)}
\qquad\text{for all } z,w\in D_\delta.
\]
Thus, for each fixed \(w\in D_\delta\), the functions \(z\mapsto F_n^{(t_\ell,t_k)}(z,w)\) are analytic and uniformly bounded on \(D_\delta\), and by the induction hypothesis they converge pointwise to an analytic function \(z\mapsto F^{(t_\ell,t_k)}(z,w)\). Hence, by Lemma~\ref{lemma: vitali},
\[
\partial_z F_n^{(t_\ell,t_k)}(z,w)\xrightarrow{\mathrm{a.s.}} \partial_z F^{(t_\ell,t_k)}(z,w).
\]
Now
\[
\partial_z(\widehat{\boldsymbol{\Sigma}}_\ell-z\mathbf{I}_p)^{-t_\ell}
=
t_\ell(\widehat{\boldsymbol{\Sigma}}_\ell-z\mathbf{I}_p)^{-(t_\ell+1)},
\]
so
\[
\partial_z F_n^{(t_\ell,t_k)}(z,w)=t_\ell\,F_n^{(t_\ell+1,t_k)}(z,w).
\]
On the other hand, since \(\frac{d}{dz}m^{[t_\ell]}(z)=t_\ell\,m^{[t_\ell+1]}(z)\) and similarly for \(m_{\delta}^{[t_\ell]}\),
\[
\partial_z F^{(t_\ell,t_k)}(z,w)=t_\ell\,F^{(t_\ell+1,t_k)}(z,w).
\]
Therefore
\[
F_n^{(t_\ell+1,t_k)}(z,w)\xrightarrow{\mathrm{a.s.}}F^{(t_\ell+1,t_k)}(z,w)
\qquad\text{for all } z,w\in D_\delta.
\]
The same argument, now differentiating with respect to \(w\) holds, so the induction step is complete. Thus, the claim holds for all integers \(t_\ell,t_k\ge 1\).  Finally, since \(z_\ell,z_k\in\mathbb C\setminus\mathbb R_+\) are fixed, we may choose \(\delta>0\) such that \(z_\ell,z_k\in D_\delta\), and then evaluating the preceding limit at \(z=z_\ell\) and \(w=z_k\) yields the claim.
\end{proof}

\end{document}